\numberwithin{equation}{section}
\newtheorem{theorem}[equation]{Theorem}
\newtheorem{corollary}[equation]{Corollary}
\newtheorem{lemma}[equation]{Lemma}
\newtheorem{proposition}[equation]{Proposition}
\theoremstyle{remark}
\newtheorem{remark}[equation]{Remark}
\theoremstyle{definition}
\newtheorem{definition}[equation]{Definition}
\newtheorem{example}[equation]{Example}
\newcommand*{\kokoni}[1]{\makebox[0pt][l]{#1}}
\newcounter{proofsec}
\let\expandafter\oldproof\csname\string\proof\endcsname
\renewenvironment{proof}[1][\proofname]{\begin{oldproof}[#1]
    \setcounter{proofsec}{-1}}{\end{oldproof}}
\newcommand{\proofsec}{\refstepcounter{proofsec}\noindent\textbf{\theproofsec.}~}
\newcommand{\Alg}{\ccat{Alg}}
\newcommand{\Bimod}{\ccat{Bimod}}
\newcommand{\blank}{(\;)}
\newcommand{\cat}{\mathcal}
\newcommand{\Cat}{\ccat{Cat}}
\newcommand{\ccat}{\mathrm}
\newcommand{\Coalg}{\ccat{Coalg}}
\DeclareMathOperator*{\colim}{colim}
\newcommand{\complete}{\widehatv}
\newcommand{\cotensor}{\Box}
\renewcommand{\dot}{\bullet}
\newcommand{\Dual}{\mathbb{D}}
\renewcommand{\equiv}{\sim}
\newcommand{\equivwith}{\simeq}
\newcommand{\equivto}{\xrightarrow{\equiv}}
\newcommand{\exact}{\mathrm{ex}}
\newcommand{\facemaps}{f}
\DeclareMathOperator{\Fibre}{Fibre}
\newcommand{\field}{\mathbb}
\newcommand{\Z}{\field{Z}}
\newcommand{\from}{\leftarrow}
\newcommand{\Fun}{\ccat{Fun}}
\newcommand{\id}{\mathrm{id}}
\newcommand{\includes}{\supset}
\newcommand{\intersect}{\cap}
\newcommand{\Intersect}{\bigcap}
\newcommand{\into}{\hookrightarrow}
\newcommand{\kore}{\textbf}
\newcommand{\kyokshoka}{}
\newcommand{\leftlocal}{\kyokshoka{\ell}}
\newcommand{\rightlocal}{\kyokshoka{r}}
\newcommand{\longfrom}{\longleftarrow}
\newcommand{\longto}{\longrightarrow}
\newcommand{\Loop}{\Omega}
\newcommand{\Map}{\mathrm{Map}}
\newcommand{\Mod}{\ccat{Mod}}
\newcommand{\naga}{\text{--}}
\newcommand{\naraba}{$\implies$} %
\newcommand{\op}{\mathrm{op}}
\newcommand*{\pairing}[1]{\left\langle#1\right\rangle}
\newcommand{\positive}{+}
\newcommand{\Simp}{\mathbf{\Delta}}
\newcommand{\Simpface}{\Simp_\facemaps}
\DeclareMathOperator{\sk}{sk}
\newcommand{\kocoprod}{\amalg}
\newcommand{\Space}{\ccat{Space}}
\newcommand{\Stable}{\ccat{Sta}}
\newcommand{\sub}{\subset}
\newcommand{\susp}{\Sigma}
\newcommand{\tensor}{\otimes}
\newcommand{\Tensor}{\bigotimes}
\DeclareMathOperator{\Tot}{Tot}
\newcommand*{\keepheight}[2]{#1{#2}\vphantom{{#2}}}
\newcommand{\unit}{\eta}
\newcommand{\counit}{\epsilon}
\newcommand{\unity}{\mathbf{1}}
\newcommand{\widebar}{\overline}
\newcommand*{\widehatv}[1]{\keepheight{\widehat}{#1}}
\newcommand{\zero}{\mathbf{0}}
\begin{document}
\setcounter{section}{-1}
\setcounter{equation}{-1}

\title[Koszul duality for filtered $E_n$-algebras]{Koszul duality
  between $E_n$-algebras and coalgebras in a filtered category}
\author[Matsuoka, Takuo]{Takuo Matsuoka}
\address{Department of Mathematics, Northwestern University,
Evanston, IL,
USA (Not current)}
\email{motogeomtop@gmail.com}
\subjclass[2010]{Primary: 16E40;
Secondary:
16D90
, 13B35
, 16W70
, 18D10
, 18G55
, 18E30
}
\keywords{Koszul duality, higher Morita category, $E_n$-algebra,
  filtration, completeness}
\date{}

\begin{abstract}
We study the Koszul duality between augmented $E_n$-algebras and
augmented $E_n$-coalgebras in a symmetric monoidal stable infinity
$1$-category equipped with a filtration in a suitable sense.
We obtain that the Koszul duality constructions restrict to an
equivalence between augmented algebras and coalgebras which have some
positivity and completeness with respect to the filtration.
We also obtain that the Koszul duality construction is functorial
between carefully constructed generalized Morita categories consisting
of those algebras\slash coalgebras in each dimension.
\end{abstract}

\maketitle 


\section{Introduction}
\label{sec:introduction}
\setcounter{subsubsection}{-1}
\setcounter{equation}{-1}

\subsubsection{}

Let $n$ be a non-negative integer.
The notion of an $E_n$-algebra was first introduced in iterated loop
space theory, in the work of Boardman and Vogt \cite{bv} (including
the case ``$n=\infty$'', which we exclude from our consideration in
this work).
$E_1$-algebra is an associative algebra, and an \emph{$E_n$-algebra}
can be inductively defined as an $E_{n-1}$-algebra with an additional
structure of an associative algebra commuting with the
$E_{n-1}$-structure.
In other words, a structure of an $E_n$-algebra consists of $n$-fold
associative structures and data for compatibility among them.

There is an issue that the notion of an $E_n$-algebra degenerates
(unless $n\le 1$) to that of a commutative algebra in a category whose
higher homotopical structure is degenerate.
Moreover, the kind of theory we aim to establish (the theory of the
Koszul duality) fails in such a setting even for (the case $n=1$ of)
associative algebras.
These issues force us to work in a homotopical setting.
In order to work in such a setting, we use the convenient language of
higher category theory.
(For the main body, note our conventions stated in Section
\ref{sec:terminology-notation}, which do not apply in this
introduction.)
We just remark here that associativity of an algebra in such a
setting means a data for homotopy coherent associativity (which in
particular is a structure rather than a property).

\subsubsection{}

In this paper, we study the Koszul duality between $E_n$-algebras and
$E_n$-coalgebras.
By the Koszul dual of an augmented associative algebra, we mean the
augmented associative coalgebra obtained as the bar construction or
a suitable derived tensor product (see Section
\ref{sec:koszul-complete-algebra}).
For $E_n$-algebras, we simply consider the $n$-fold iteration of this
construction.
See \ref{sec:koszul-duality-e_n} for a review.
(This, essentially well-known, construction is
much simpler than the more general version the author needed in
\cite{poincare}.
The present article is self-contained without the latter.)

Coalgebras are simply algebras in the opposite category, so we
obtain an augmented algebra from an augmented coalgebra by the same
construction.

In some cases, this correspondence between algebras
and coalgebras is an equivalence or close to it.
Results of this form include the \emph{iterated loop space theory} and
the \emph{Verdier duality}.
(See Lurie's book \cite[Section 5.3]{higher-alg} for the relation
between these.)

In other contexts, the correspondence is far from an equivalence.
In a reasonable stable infinity $1$-category with multilinear (and
suitably colimit preserving) symmetric monoidal structure, it instead
happens often, by the formal deformation theory, that the infinity
$1$-category of algebras compares better with the infinity $1$-category of
suitable class of infinitesimal stacks around a point, in a derived
version of suitable geometry.
(See for instance, Francis \cite{tangent}, Lurie \cite{formal}, Hirsch
\cite{hirsch} for some precise such results, even though the idea is
older, especially in the case of the commutative geometry.)

Nevertheless, the important instances of equivanlence between algebras
and coalgebras mentioned above may leave one curious about the
relation between algebras and coalgebras in contexts which are closer
to the latter as well.
In this work, we obtain, with the help of some additional structure
on the ambient symmetric monoidal category, simple classes of
$E_n$-algebras and coalgebras between which the Koszul duality gives
an equivalence of infinity $1$-categories.
See Theorem \ref{thm:equiv-intro} below.

We emphasize that, due to the formal deformation theory just
described, we cannot normally hope, in our context, the Koszul duality
to equate the \emph{whole} infinity $1$-category of algebras with the
coalgebras.
Our context is indeed very different from the contexts in
which one had this equivalence (or almost that).
However, our \emph{result} fits in an analogy with the results in such
contexts.

Our study towards this has also led to a construction of a new,
non-trivial version of the higher Morita category consisting of
certain $E_n$-\emph{co}algebras with bimodule structures for $n$ up to
a chosen value.
Theorem \ref{thm:morita-intro} below shows that the Koszul duality
relates this with the
usual higher Morita category \cite{tft} consisting of algebras.
The latter is interesting for abundance of topological field theories
in it, and the description of the field theories through the
topolocal chiral homology \cite{tft}.
Our theorem in fact leads to an analogous description of the
corresponding theories in the coalgebraic higher Morita category.

\subsubsection{}

The structure we consider on a symmetric monoidal category is a
filtration with respect to which the category becomes complete.
Let us set up our context.

Let $\cat{A}$ be a symmetric monoidal stable infinity $1$-category.
We assume that it has a filtration (Definition \ref{def:filtration},
note the conventions stated in Section \ref{sec:terminology-notation})
which is compatible with the symmetric monoidal structure in a
suitable way.

Primary examples are the category of \emph{filtered objects} in a
reasonable symmetric monoidal stable infinity $1$-category
(Section \ref{sec:examples-of-filtered-categories}), and a symmetric
monoidal stable infinity $1$-category with a compatible \emph{t-structure}
\cite{higher-alg} (satisfying a mild technical condition, see
Definition \ref{def:bounded-limit}, Remark \ref{rem:bounded-limit}).
Another family of examples is given by functor categories
admitting the \emph{Goodwillie calculus} \cite{goodwi}, where the
filtration is given by the degree of excisiveness
(Example \ref{ex:goodwillie-filtration}).

We further assume that $\cat{A}$ is complete with respect to the
filtration in a suitable sense.
The mentioned examples admit completion, and in these examples,
the category $\cat{A}$ we indeed work in is the category of
complete objects in any of the mentioned categories, with completed
symmetric monoidal structure.

These categories satisfy (in particular, Remark
\ref{rem:sound-example}) a few further technical assumptions we need,
which we shall not state here.
(The theorems we state in this introduction will be given references
to their precise formulation in the main body.
In order to understand the formulation there correctly, the reader
should note our conventions stated in Section
\ref{sec:terminology-notation}.)

In such a complete filtered infinity $1$-category $\cat{A}$, any algebra
comes with a natural filtration with respect to which it is complete.
In the mentioned examples, the towers associated to the filtration are
the canonical (or ``defining'') tower, the Postnikov tower, and the
Taylor tower, and the objects we deal with are the limits of the
towers.
We have established the Koszul duality for $E_n$-algebras in $\cat{A}$
which is \emph{positively} filtered.
See Definition \ref{def:e-n-copositive}, as well as comments right
after it on examples of positive augmented algebras.
The corresponding restriction on the filtration of coalgebras is given
by the condition we call \kore{copositivity}
(Definition \ref{def:e-n-copositive}).
Our first main theorem is as follows.

\begin{theorem}[Theorem \ref{thm:associative-koszul-duality}]
\label{thm:equiv-intro}
Let $\cat{A}$ be as above.
Then the constructions of Koszul duals give inverse equivalences
\[
\Alg_{E_n}(\cat{A})_\positive\xlongleftrightarrow{\equiv}\Coalg_{E_n}(\cat{A})_\positive
\]
between the infinity $1$-category of positive augmented $E_n$-algebras and
copositive augmented $E_n$-coalgebras in $\cat{A}$.
\end{theorem}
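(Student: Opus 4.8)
The plan is to build the equivalence inductively on $n$, with the case $n=1$ (associative algebras and coalgebras) carrying the main analytic content, and the passage $n-1 \rightsquigarrow n$ being essentially formal. For $n=1$, recall that the Koszul dual functors are the bar construction $B\colon \Alg_{E_1}(\cat A) \to \Coalg_{E_1}(\cat A)$ and the cobar construction $\Omega$ in the other direction, each being a relative tensor product ($B A = \unity \tensor_A \unity$ and dually for $\Omega$). One always has unit and counit maps $\id \to \Omega B$ and $B\Omega \to \id$ coming from the comonad/monad structure; the content is that these are equivalences after restricting to the positive and copositive subcategories. First I would set up the natural filtration an augmented algebra $A$ carries, identify its associated graded/tower, and check that $B$ sends a positively filtered algebra to a copositively filtered coalgebra and $\Omega$ does the reverse — this is a bookkeeping computation with the filtration on the two-sided bar complex, using compatibility of the filtration with $\tensor$.

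The heart of the argument is a completeness/convergence statement: for a positive augmented algebra $A$, the natural map $A \to \Omega B A$ becomes an equivalence because both sides are complete with respect to their filtrations and the map is an equivalence on each associated graded (or on each finite stage of the tower). So the key step is to compute $\gr(\Omega B A)$ and compare it with $\gr A$. Here I expect to reduce, by a filtered version of the standard argument, to the case where $A$ is the trivial square-zero extension $\unity \oplus M$ with $M$ in positive filtration, for which $B A$ is the trivial (cofree-ish) coalgebra on $M[1]$ and $\Omega B A \simeq \unity \oplus M$ — i.e.\ the associated graded statement is the classical fact that bar-cobar is the identity on a one-dimensional "linear" piece, now controlled degree-by-degree by positivity so that only finitely many bar-complex terms contribute in each filtration degree. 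The dual statement, $B\Omega C \simeq C$ for copositive $C$, is entirely symmetric (pass to $\cat A^\op$, which is again a complete filtered symmetric monoidal stable category, and note that positivity and copositivity swap).

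For the inductive step, I would use that an $E_n$-algebra in $\cat A$ is an $E_1$-algebra in $\Alg_{E_{n-1}}(\cat A)$ (Dunn additivity), that $\Alg_{E_{n-1}}(\cat A)$ inherits a complete filtered symmetric monoidal structure for which the positive objects are exactly $\Alg_{E_{n-1}}(\cat A)_\positive$, and that the $n$-fold Koszul dual is the $E_1$-Koszul dual applied internally to this category. Then the $n=1$ case, applied in $\Alg_{E_{n-1}}(\cat A)$, together with the inductive hypothesis identifying $E_{n-1}$-algebras with $E_{n-1}$-coalgebras, yields the $E_n$ equivalence; one must check that "$E_1$-coalgebra in $E_{n-1}$-coalgebras" matches $\Coalg_{E_n}$ with its copositivity condition, which is again Dunn additivity in $\cat A^\op$ combined with the definition of copositivity.

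The main obstacle will be the convergence argument in the $n=1$ case: making precise that the filtration on $\Omega B A$ (a totalization of a cosimplicial object whose terms are iterated tensor powers of the augmentation ideal) is complete and has the expected associated graded, so that the bar-cobar unit is an equivalence. This requires the positivity hypothesis to ensure that in each filtration degree only finitely many simplicial levels contribute, plus the completeness assumption on $\cat A$ to pass from the associated-graded equivalence back to an equivalence of objects; the interchange of the totalization with the filtration (a limit-of-limits vs.\ limit interchange, controlled by the boundedness hypothesis of Definition~\ref{def:bounded-limit}) is the delicate technical point. Everything else — functoriality, the triangle identities, and the inductive bootstrap — should be formal consequences of the monad/comonad adjunction between $B$ and $\Omega$ together with the already-established compatibility of Koszul duality with the filtrations.
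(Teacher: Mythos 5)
Your overall skeleton --- settle $n=1$ by a convergence argument controlled by positivity and completeness, then induct via Dunn additivity, tracking how the positivity degree of the augmentation ideal transforms under one application of the duality --- is exactly the paper's, and your inductive step is essentially the one carried out in the proof of Theorem \ref{thm:associative-koszul-duality}. The gap is in the core of the $n=1$ case. Your key step is the claimed reduction ``to the case where $A$ is the trivial square-zero extension $\unity\oplus M$,'' which is asserted rather than argued, and it is not clear it can be carried out: neither the associated graded of $A$ for the ambient filtration nor for the $I$-adic filtration is square-zero (both are graded algebras with generally nontrivial multiplication), so there is no evident d\'evissage of a general positive augmented algebra to the ``linear'' case. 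The paper takes a different, module-theoretic route that avoids computing any associated graded of $\Omega BA$: (i) a Nakayama-type statement (Corollary \ref{cor:josh-completeness}) that $-\tensor_A\unity$ reflects equivalences on bounded-below $A$-modules, proved from the tower $A/I^r$ and completeness (Lemma \ref{lem:completeness-of-module}); and (ii) a projection formula $K\tensor_A(L\cotensor_CX)\equivwith(K\tensor_AL)\cotensor_CX$ (Proposition \ref{prop:commuting-tensor-and-cotensor}), which rests on the interchange results for totalizations and realizations (Propositions \ref{prop:tatalization-preserved} and \ref{prop:realization-preserved}). Together these give $K\equivto\Dual_{A^!}\Dual_AK$ for every bounded-below module $K$ (Theorem \ref{thm:josh-complete-implies-koszul-complete}), hence $A\equivwith A^{!!}$, and in fact the stronger equivalence of bounded-below module categories that the Morita-level Theorem \ref{thm:koszul-in-morita} later needs. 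If you wish to keep an associated-graded argument you would have to replace the square-zero reduction by a genuine analysis of the word-length filtration of the bar--cobar composite, which is a separate and nontrivial piece of work, not a citation of ``the classical fact.''

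The second gap is your claim that $B\Omega C\equivwith C$ for copositive $C$ ``is entirely symmetric (pass to $\cat{A}^\op$).'' The standing hypotheses are not self-dual: compatibility of the monoidal structure with the filtration is a boundedness-\emph{below} condition, which in $\cat{A}^\op$ becomes boundedness above (Remark \ref{rem:break-of-duality}), and uniformly bounded sequential limits in $\cat{A}$ do not dualize to the corresponding condition in $\cat{A}^\op$. This asymmetry is precisely why copositivity carries the shift by a loop bound $\omega$ while positivity does not, why Proposition \ref{prop:tatalization-preserved} (totalizations) requires hypotheses that Proposition \ref{prop:realization-preserved} (realizations) does not, and why the paper devotes a separate, parallel-but-not-formally-dual section to the coalgebra side (Lemmas \ref{lem:cotensor-associative} through \ref{lem:koszul-complete-coalgebra}). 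So the coalgebra half of the equivalence cannot be obtained by formally applying the algebra half in the opposite category; it has to be proved on its own, even if the proofs run in parallel.
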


We have also shown that the Koszul duality further has a
\emph{Morita theoretic} functoriality.

To explain what this is, in \cite{tft}, Lurie has outlined a
generalization for $E_n$-algebras of the ``Morita'' category due to
B\'enabou \cite{benabou}.
By collecting suitable versions of bimodules, one obtains an infinity
($n+1$)-category $\Alg_n(\cat{A})$, in which
\begin{itemize}
\item an object is an $E_n$-algebra in $\cat{A}$,
\item a $1$-morphism is an $E_{n-1}$-algebra in $\cat{A}$ equipped
  with the structure of a suitable kind of bimodule,
\item a $2$-morphisms is an $E_{n-2}$-algebra in $\cat{A}$ equipped
  with the structure of a suitable kind of bimodule,
\end{itemize}
and so on, generalizing the $2$-category of associative algebras and
bimodules.

In order to make the construction of this work, one usually assumes
that the monoidal multiplication functors preserve geometric
realizations variablewise.
However, unless the monoidal multiplication also preserve
totalizations, one cannot have both algebraic and coalgebraic versions
of this in the same way.
We have shown that in the kind of complete filtered category we work
in, the construction works for both positive augmented algebras and
copositive augmented coalgebras at the same time, despite the
mentioned difficulties one would have if one were to include all
algebras and coalgebras.

Let us denote the infinity ($n+1$)-categories we obtain by
$\Alg^\positive_n(\cat{A})$ and $\Coalg^\positive_n(\cat{A})$
respectively.
We have shown the following, second main result of this article.

\begin{theorem}[Theorem \ref{thm:koszul-in-morita}]
\label{thm:morita-intro}
Let $\cat{A}$ be as above.
Then for every $n$, the construction of the Koszul dual define a
symmetric monoidal functor
\[
\blank^!\colon\Alg^\positive_n(\cat{A})\longto\Coalg^\positive_n(\cat{A}).
\]
It is an equivalence with inverse given by the Koszul duality
construction.
\end{theorem}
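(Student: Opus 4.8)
The plan is to upgrade the $1$-categorical equivalence of Theorem \ref{thm:equiv-intro} to a symmetric monoidal equivalence of the generalized Morita $(n+1)$-categories, by carrying the Koszul duality construction through the inductive definition of $\Alg_n$. Recall that $\Alg_n(\cat{A})$ is built from $\Alg_{n-1}$ by forming a suitable category of $E_1$-algebras and bimodules internal to $\Alg_{n-1}(\cat{A})$ (equivalently, via the $(\infty,n+1)$-categorical bar construction / iterated $\Bimod$), and that the coalgebraic version $\Coalg^\positive_n(\cat{A})$ is constructed dually in the same inductive manner; the content to be established earlier in the paper is that, restricted to the positive/copositive objects, both towers of constructions are well-defined in $\cat{A}$. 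First I would set up the base case $n=0$, where $\Alg^\positive_0(\cat{A})$ and $\Coalg^\positive_0(\cat{A})$ are just the (symmetric monoidal) $\infty$-categories of positive augmented objects, and the functor $\blank^!$ is the identity-like comparison — here there is nothing to prove beyond matching conventions. Then I would prove the inductive step: assuming $\blank^!\colon\Alg^\positive_{n-1}(\cat{A})\to\Coalg^\positive_{n-1}(\cat{A})$ is a symmetric monoidal equivalence, I want to produce the corresponding equivalence in degree $n$.

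The key steps are as follows. First, observe that an object of $\Alg^\positive_n(\cat{A})$ is an augmented $E_1$-algebra object in the symmetric monoidal $(\infty,n)$-category $\Alg^\positive_{n-1}(\cat{A})$ satisfying the positivity constraint, and likewise a copositive object of $\Coalg^\positive_n(\cat{A})$ is an augmented $E_1$-coalgebra object in $\Coalg^\positive_{n-1}(\cat{A})$. Second, apply the $n=1$ case of the Morita-theoretic Koszul duality \emph{internally} to the symmetric monoidal $(\infty,n)$-category $\Alg^\positive_{n-1}(\cat{A})$: one-dimensional Koszul duality (the bar/cobar adjunction at the level of algebras, bimodules, and bimodule maps) sends augmented $E_1$-algebras, bimodules over them, and the higher morphisms, to the corresponding coalgebraic data, and on the positive/copositive parts this is an equivalence by (the proof of) Theorem \ref{thm:equiv-intro} applied variablewise. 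Composing with the inductive equivalence $\blank^!$ in degree $n-1$ and using that $\Coalg^\positive_n(\cat A)$ is by construction the $E_1$-coalgebra/bimodule category internal to $\Coalg^\positive_{n-1}(\cat A)$, one gets the functor $\blank^!$ in degree $n$, together with an inverse. Third, symmetric monoidality: the $\boxtimes$-structure on $\Alg^\positive_n$ is induced levelwise from that on $\Alg^\positive_{n-1}$ together with the monoidal structure on internal $\Bimod$; since both the inductive $\blank^!$ and one-dimensional bar/cobar are symmetric monoidal (the bar construction is lax symmetric monoidal and becomes strong on augmented objects, a fact recorded earlier), the composite is symmetric monoidal, and an inverse symmetric monoidal functor is obtained the same way from cobar. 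Finally, fully-faithfulness at every morphism level reduces, after unwinding the iterated bimodule description, to the statement that bar/cobar is an equivalence on mapping objects, i.e. to Theorem \ref{thm:equiv-intro} applied to the relevant mapping $E_m$-(co)algebras; essential surjectivity follows since cobar lands in the positive part and bar in the copositive part, by the positivity/copositivity preservation established for the towers.

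The main obstacle I expect is the compatibility between the bar–cobar construction and the formation of (iterated) bimodule categories — concretely, showing that bar, applied to an $E_1$-algebra $A$ in $\cat A$ together with an $A$-bimodule $M$, produces the bar coalgebra $A^!$ together with an $A^!$-\emph{bi}comodule $M^!$ in a way that is functorial in the $(\infty,n)$-categorical sense and respects composition of bimodules (the relative tensor product on one side must go to the relative cotensor product on the other). This is exactly where completeness and positivity are used: the relevant two-sided bar constructions are geometric realizations whose totalization counterparts converge only because of the filtration, and one must check that the natural maps comparing $(M\tensor_A N)^!$ with $M^!\cotensor_{A^!} N^!$ are equivalences on the positive/copositive parts — presumably by a filtration-degree or connectivity estimate reducing to the associated graded, where the two sides agree for formal reasons. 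Once this coherence is in place for the Morita $2$-category ($n=1$), the higher cases follow by the induction sketched above, since no new analytic input is needed beyond what Theorem \ref{thm:equiv-intro} and its proof already supply.
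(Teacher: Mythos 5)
Your overall strategy---induct on $n$, reduce to the associative Koszul duality plus a module-level duality, and track symmetric monoidality through the construction---is broadly the shape of the paper's argument. But two things need correcting. First, the inductive framing is off: an object of $\Alg^\positive_n(\cat{A})$ is a positive augmented $E_n$-algebra in $\cat{A}$, not an ``augmented $E_1$-algebra object in $\Alg^\positive_{n-1}(\cat{A})$''; it is the \emph{morphism} categories that are of the form $\Alg^\positive_{n-1}$ of a bimodule category. Accordingly, the paper's induction runs over the generalized statement: for positive augmented $E_{n+1}$-algebras $A_0$, $A_1$, one constructs a functor $\Alg^\positive_n(\Bimod_{A_0\naga A_1})\to\Coalg^\positive_n(\Bimod_{A_0^!\naga A_1^!})$, and the inductive step for morphism categories invokes the same statement one level down with the $A_i$ replaced by objects $B_i$ of the source. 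Your version, which applies ``the $n=1$ case internally to $\Alg^\positive_{n-1}(\cat{A})$,'' does not parse against this structure.

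Second, and more seriously, the step you yourself flag as the main obstacle---that the bar construction carries an $A_0$--$A_1$-bimodule algebra to a bicomodule coalgebra over $A_0^!$, $A_1^!$, functorially and compatibly with relative tensor versus cotensor products---is the actual content of the proof, and your proposal does not supply it; the suggested reduction ``to the associated graded'' is not how it is (or can straightforwardly be) done. The paper's resolution is to factor $\blank^!$ through an intermediate category $\Coalg^\positive_n\big(\Bimod_{A_0^{(!,1)}\naga A_1^{(!,1)}}\big)$, where $A^{(!,1)}$ denotes the $E_n$-Koszul dual retaining its residual $E_1$-\emph{algebra} structure (so one is still taking bimodules over algebras, not comodules), and only afterwards to apply a second dualization $\Dual_{A_0^{(!,1)}\naga A_1^{(!,1)}}\colon K\mapsto\unity\tensor_{A_0^{(!,1)}}K\tensor_{A_1^{(!,1)}}\unity$, equipped with an op-lax $E_n$-monoidal structure, to land in bicomodules over $A_i^!$. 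The required comparisons, e.g.\ that $(K_{01}\tensor_{B_1}K_{12})^!$ agrees with $K_{01}^!\cotensor_{B_1^{(!,1)}}K_{12}^!$ and that $\Dual$ converts the relative tensor product into the cotensor product, are supplied by Lemma \ref{lem:cotensors-compatible}, Proposition \ref{prop:commuting-tensor-and-cotensor}, and Theorem \ref{thm:josh-complete-implies-koszul-complete}, all of which rest on the totalization/realization exchange results (Propositions \ref{prop:tatalization-preserved} and \ref{prop:realization-preserved}) under positivity and boundedness below. Relatedly, fully faithfulness is not obtained by applying Theorem \ref{thm:associative-koszul-duality} to mapping objects---morphisms in the Morita category are bimodules, not algebra maps---but by the module-level dualities of Theorem \ref{thm:josh-complete-implies-koszul-complete} and Lemma \ref{lem:koszul-complete-coalgebra}. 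Until the intermediate factorization (or an equivalent coherence mechanism) is in place, the proposal has a genuine gap exactly at the point it identifies.
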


This has an interesting consequence since any object of the source
category here is $n$-dualizable, just as in the usual non-augmented
case.
The associated $n$-dimensional topological quantum field theory can
then be sent over to a theory in $\Coalg^\positive_n(\cat{A})$.
The field theory in $\Alg^\positive_n(\cat{A})$ has a concrete
description due to Lurie \cite{tft}, in terms of the topological
chiral homology.
One obtains from this an analogously concrete construction of the
theory in $\Coalg^\positive_n(\cat{A})$, by means of the ``compactly
supported'' topological chiral homology.
See Section \ref{sec:morita-structure} and \cite{poincare} for the
details.

A similar result was earlier obtained by Francis \cite{francis}. 

The fact that Lurie's construction indeed gives a topological field
theory in $\Alg^\positive_n(\cat{A})$ (and $\Alg_n(\cat{A})$) can be
seen as a consequence of the gluing property of the topological chiral
homology.
As we explain in \cite{poincare}, the fact that the other
construction, with the compactly supported topological chiral
homology, indeed gives a topological field theory in
$\Coalg^\positive_n(\cat{A})$, can alternatively be
seen as a consequence of a Poincar\'e type duality theorem for the
topological chiral homology.

The Poincar\'e type theorem itself is independent of Theorem
\ref{thm:morita-intro}.
On the other hand, Theorem \ref{thm:morita-intro} required the
non-trivial work of constructing the coalgebraic version of the higher
Morita category,
which is irrelevant to the Poincar\'e type theorem itself.
A version of the Poincar\'e theorem which readily applies to the above
context is treated in \cite{poincare}.
Related results can be found in the work of
Francis \cite{francis}, and Ayala and Francis \cite{ayala-fran}.

\subsection*{Outline}
\textbf{Section \ref{sec:terminology-notation}} is for introducing
conventions which are used throughout the main body.

In \textbf{Sections \ref{sec:filtered}} and
\textbf{\ref{sec:monoidal-filtered}}, we establish basic notions and
facts on symmetric monoidal filtered stable categories.

In \textbf{Section \ref{sec:complete-poincare}}, we develop the theory
of Koszul duality for complete $E_n$-algebras.

\subsection*{Notes on the relation to other articles by the author}
This paper, together with \cite{poincare} and the present author's
paper
\begin{itemize}
\item[{[a]}] \emph{Descent properties of the
    topological chiral homology.}
  arXiv:1409.6944,
\end{itemize}
is based on his Ph.D.~thesis (accepted in April 2014).
The present article is logically independent of either of
\cite{poincare}, [a].

\subsection*{Acknowledgment}
This paper is based on part of my Ph.D.~thesis.
I am particularly grateful to my thesis advisor Kevin Costello for
his
extremely patient guidance and continuous encouragement and support.
Special thanks are due to John Francis for detailed comments and
suggestions on the drafts of my thesis, which were essential for 
many improvements of both the contents and exposition.
Many of those improvements were inherited by this paper.
I am also grateful to Josh Shadlen and Justin Thomas for interesting
conversations, which directly influenced some parts of the present
work.
I am grateful to Owen Gwilliam, Josh, and Yuan Shen for their
continuous encouragement.

\section{Terminology and notations}
\label{sec:terminology-notation}
\setcounter{subsubsection}{-1}
\setcounter{equation}{-1}

\subsubsection{}

By a \kore{$1$-category}, we always mean an \emph{infinity}
$1$-category.
We often call a $1$-category (namely an infinity $1$-category) simply
a \kore{category}.
A category with discrete sets of morphisms (namely, a ``category''
in the more traditional sense) will be called a \emph{discrete}
category.

In fact, all categorical and algebraic terms will be used in
\emph{infinity} ($1$-) categorical sense without further notice.
Namely, categorical terms are used in the sense enriched in the
\emph{infinity} $1$-category of spaces, or equivalently, of infinity
groupoids, and algebraic terms are used freely in the sense
generalized in accordance with the enriched categorical structures.

For example, for an integer $n\ge 1$, by an \emph{$n$-category}, we
mean an \emph{infinity} $n$-category.
We also consider multicategories.
By default, multimaps in our multicategories will form
a \emph{space} with all higher homotopies allowed.
Namely, our ``\emph{multicategories}'' are ``infinity operads'' in the
terminology of Lurie's book \cite{higher-alg}.

\begin{remark}
We usually treat a space relatively to the structure of the standard
(infinity) $1$-category of spaces.
Namely, a ``\emph{space}'' for us is usually no more than an object of
this category.
Without loss of information, we shall freely identify a space in this
sense with its fundamental infinity groupoid, and call it also a
``\emph{groupoid}''.
Exceptions in which the term ``space'' means not necessarily
this, include a ``Euclidean space'', the ``total space'' of a fibre
bundle, etc., in accordance with the common customs.
\end{remark}

\subsubsection{}

If $\cat{C}$ is a category and $x$ is an object of $\cat{C}$,
then we denote by $\cat{C}_{/x}$, the ``\emph{over}'' category, of
objects of
$\cat{C}$ lying over $x$, i.e., equipped with a map to $x$.
We denote the ``\emph{under}'' category for $x$, in other words,
$\bigl((\cat{C}^\op)_{/x}\bigr)^\op$, by $\cat{C}_{x/}$.

More generally, if a category $\cat{D}$ is equipped with a functor to
$\cat{C}$, then we define
$\cat{D}_{/x}:=\cat{D}\times_{\cat{C}}\cat{C}_{/x}$, and similarly for
$\cat{D}_{x/}$.
Note here that $\cat{C}_{/x}$ is mapping to $\cat{C}$ by the functor
which forgets the structure map to $x$.
Note that the notation is abusive in that the name of the functor
$\cat{D}\to\cat{C}$ is dropped from it.
In order to avoid this abuse from causing any confusion, we shall use
this notation only when the functor $\cat{D}\to\cat{C}$ that we are
considering is clear from the context.

\subsubsection{}
By the \kore{lax colimit} of a diagram in the category
$\Cat$ of categories (of a limited size), indexed by a category
$\cat{C}$, we mean the Grothendieck construction.
We choose the variance of the laxness so the lax colimit projects to
$\cat{C}$, to make it an op-fibration over $\cat{C}$, rather
than a fibration over $\cat{C}^\op$.
(In particular, if $\cat{C}=\cat{D}^\op$, so the functor is
contravariant on $\cat{D}$, then the familiar fibred category over
$\cat{D}$ is the \emph{op}-lax colimit over $\cat{C}$ for us.)
Of course, we can choose the variance for lax \emph{limits} compatibly
with this, so our lax colimit generalizes to that in any $2$-category.

\section{Filtered stable category}
\label{sec:filtered}
\setcounter{subsection}{-1}
\setcounter{equation}{-1}

\subsection{Introduction}

In this paper, we consider the Koszul duality in a symmetric monoidal
stable category $\cat{A}$, equipped with a ``\emph{filtration}'' with
respect to which $\cat{A}$ becomes \emph{complete}, or at least can
be completed.
The primary example will be given by the category of complete filtered
objects, which will be reviewed in
Section \ref{sec:examples-of-filtered-categories}.
In fact, the influence to the present work comes from the use of
complete filtered objects in a related context in Costello's
\cite{yangian} (see also the appendix of Costello--Gwilliam
\cite{cg}).
Filtration and completeness are also used in the work of Positselski
on the Koszul duality \cite{positsel}.

Our approach, despite its slight abstractness, has the advantage of
including a few more examples such as the filtration given by a
t-structure, and hopefully of clarifying some logic.
We shall develop these notions in this and the next sections, and then
develop the Koszul duality theory
in such a category, in Section \ref{sec:complete-poincare}.

\subsection{Localization of a stable category}
\setcounter{subsubsection}{-1}

\subsubsection{}

We review some facts we need.

\begin{definition}
Let $\cat{C}$ be a category.

A functor $\cat{C}\to\cat{D}$ is a \kore{left localization}
if it has a fully faithful functor as a right adjoint.

A full subcategory $\cat{D}$ of $\cat{C}$ is a \kore{left
  localization} of $\cat{C}$ if the inclusion functor
$\cat{D}\into\cat{C}$ has a left adjoint.
\end{definition}
\kore{Right} localization is defined similarly, so it is
just left localization in the opposite variance.

We consider the following situation.
Let $\cat{A}$ be a stable category, and let
$\cat{A}_\leftlocal\sub\cat{A}$ be a full subcategory which is a left
localization of $\cat{A}$.
Denote by $\blank_\leftlocal$ the localization functor
$\cat{A}\to\cat{A}_\leftlocal$.
By abuse of notation, we also denote by $\blank_\leftlocal$ the
composite
\[\begin{tikzcd}
\cat{A}\arrow{r}{\blank_\leftlocal}
&\cat{A}_\leftlocal\arrow[hook]{r}
&\cat{A}.
\end{tikzcd}\]

\begin{definition}\label{def:complementary-localization}
A right localization $\cat{A}_\rightlocal$ of $\cat{A}$ is
\kore{complementary} to the left localization $\cat{A}_\leftlocal$ of
$\cat{A}$ as above if for every $X\in\cat{A}_\rightlocal$ and
$Y\in\cat{A}_\leftlocal$, the space $\Map(X,Y)$ is contractible, and
the sequence
\[
\blank_\rightlocal\xrightarrow{\counit}\id\xrightarrow{\unit}\blank_\leftlocal\,\colon\:\cat{A}\longto\cat{A},
\]
where $\blank_\rightlocal$ is the right localization functor
considered as $\cat{A}\to\cat{A}$, and the maps are the counit and the
unit maps for the respective adjunctions, is a fibre sequence (by
the unique null homotopy of the composite $\unit\counit$).
\end{definition}

As a full subcategory of $\cat{A}$, $\cat{A}_\rightlocal$ consists of
objects $X\in\cat{A}$ for which the counit $\counit\colon
X_\rightlocal\to X$ is an equivalence, or equivalently,
$X_\leftlocal\equivwith\zero$.
It follows that given any left localization $\cat{A}_\leftlocal$ of
$\cat{A}$, if it has a complementary right localization, then
the right localization is characterized as the right localization
to the full subcategory of $\cat{A}$ consisting of objects
$X\in\cat{A}$ for which $X_\leftlocal\equivwith\zero$.

Given any right localization, its \kore{complementary} \emph{left}
localization is defined in the opposite way.
It is immediate that if a left localization has a complementary right
localization, then this left localization is left
complementary to its right complement.

\begin{lemma}\label{lem:pointedness}
Let $\cat{A}$ be a stable category, and let $\cat{A}_\leftlocal$,
$\cat{A}_\rightlocal$ be left and right localizations of $\cat{A}$
respectively which are complementary to each other.
Then $\cat{A}_\leftlocal$ is pointed, namely, have zero objects, and
dually for $\cat{A}_\rightlocal$.
\end{lemma}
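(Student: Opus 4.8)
The plan is to prove the stronger statement that the zero object $\zero$ of $\cat{A}$ already lies in the full subcategory $\cat{A}_\leftlocal$, and symmetrically in $\cat{A}_\rightlocal$. Granting this, the lemma follows at once: since $\cat{A}_\leftlocal$ is a \emph{full} subcategory of $\cat{A}$ containing $\zero$, for every $Y\in\cat{A}_\leftlocal$ the mapping spaces $\Map_{\cat{A}_\leftlocal}(\zero,Y)=\Map_{\cat{A}}(\zero,Y)$ and $\Map_{\cat{A}_\leftlocal}(Y,\zero)=\Map_{\cat{A}}(Y,\zero)$ are both contractible, because $\zero$ is simultaneously initial and terminal in the stable---hence pointed---category $\cat{A}$; so $\zero$ is a zero object of $\cat{A}_\leftlocal$, and likewise of $\cat{A}_\rightlocal$.

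To place $\zero$ in $\cat{A}_\leftlocal$ I would appeal to the standard fact that a left localization, being a reflective full subcategory, is closed under all limits that exist in the ambient category: the inclusion $\cat{A}_\leftlocal\into\cat{A}$ is a right adjoint and hence preserves limits, and a routine adjunction computation then shows that the limit computed in $\cat{A}$ of any diagram valued in $\cat{A}_\leftlocal$ again lies in $\cat{A}_\leftlocal$. Applying this to the empty diagram---whose limit in $\cat{A}$ is the terminal object, which in a stable category is $\zero$---gives $\zero\in\cat{A}_\leftlocal$. Dually, $\cat{A}_\rightlocal$ is a coreflective full subcategory, its inclusion $\cat{A}_\rightlocal\into\cat{A}$ is a left adjoint and hence preserves colimits, so $\cat{A}_\rightlocal$ is closed under colimits in $\cat{A}$; applied to the empty diagram this shows that $\cat{A}_\rightlocal$ contains the initial object of $\cat{A}$, again $\zero$. (One could also just pass to $\cat{A}^\op$, which is stable with the same zero object and in which $\cat{A}_\rightlocal$ appears as a left localization.)

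I do not expect to need the complementarity hypothesis relating $\cat{A}_\leftlocal$ and $\cat{A}_\rightlocal$: reflectivity (resp.\ coreflectivity) together with the pointedness of $\cat{A}$ already suffices. The only ingredient that is not purely formal is the closure of reflective subcategories under limits, and that is a well-known general fact; I therefore expect the argument to be very short and foresee no genuine obstacle.
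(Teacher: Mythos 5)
Your argument is correct, and like the paper's it reduces to showing that $\zero$ lies in the full subcategory, after which fullness immediately makes it a zero object there; but the mechanism you use to place $\zero$ in $\cat{A}_\leftlocal$ is different. The paper exploits the complementarity directly: since $\zero_\rightlocal\in\cat{A}_\rightlocal$, the defining fibre sequence forces $(\zero_\rightlocal)_\leftlocal\equivwith\zero$, and the left-hand side visibly lies in $\cat{A}_\leftlocal$ --- a one-line argument given the characterization of $\cat{A}_\rightlocal$ as the objects with vanishing left localization. You instead invoke the general fact that a reflective full subcategory is closed under all limits existing in the ambient category and apply it to the empty diagram (and dually for $\cat{A}_\rightlocal$ with colimits). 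That fact is standard and your use of it is sound; note only that the paper's own closure-under-limits statement (Lemma \ref{lem:left-localization-is-closed-under-limits}) appears \emph{after} this lemma and is itself proved using complementarity, so you are genuinely appealing to the external general result rather than to anything in the paper. What your route buys is a slightly stronger statement: reflectivity (resp.\ coreflectivity) plus pointedness of $\cat{A}$ already suffices, with no complementarity hypothesis; what the paper's route buys is brevity, since the needed characterization of $\cat{A}_\rightlocal$ has just been established from the fibre sequence.
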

\begin{remark}
All inclusion and localization functors then preserves the zero objects.
\end{remark}
\begin{proof}
$\zero_{\rightlocal\leftlocal}\equivwith\zero$ implies
$\zero\in\cat{A}_\leftlocal$, which is then a zero object of
$\cat{A}_\leftlocal$.
\end{proof}

\begin{proposition}\label{prop:complementary-localization}
A left localization
$\blank_\leftlocal\colon\cat{A}\to\cat{A}_\leftlocal$ has a
complementary right localization if and only if
\[
\bigl(\Fibre[\unit\colon\id\to\blank_\leftlocal]\bigr)_\leftlocal\:\equivwith\:\zero.
\]
\end{proposition}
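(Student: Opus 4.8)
The plan is to derive everything from the fibre sequence of endofunctors of $\cat{A}$
\[
F\longto\id\xrightarrow{\unit}\blank_\leftlocal,\qquad F:=\Fibre[\unit\colon\id\to\blank_\leftlocal],
\]
which exists because $\cat{A}$ is stable. The candidate complementary right localization will be the full subcategory $\cat{A}_\rightlocal\sub\cat{A}$ of objects $X$ with $X_\leftlocal\equivwith\zero$, with coreflector $F$; by the discussion following Definition \ref{def:complementary-localization} this is the only possibility. I will use that $\cat{A}_\leftlocal$ is pointed with zero object $\zero$ (a reflective subcategory is closed under limits, hence contains the terminal object $\zero$, which is also initial there), so that for $X\in\cat{A}$ the condition $X_\leftlocal\equivwith\zero$ is equivalent to $\Map(X,Z)\equivwith *$ for every $Z\in\cat{A}_\leftlocal$.

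For the ``only if'' direction, suppose a complementary right localization $\cat{A}_\rightlocal$ exists. By Definition \ref{def:complementary-localization} the sequence $\blank_\rightlocal\xrightarrow{\counit}\id\xrightarrow{\unit}\blank_\leftlocal$ is a fibre sequence, so $F\equivwith\blank_\rightlocal$ as endofunctors of $\cat{A}$. For any $X$, the object $\blank_\rightlocal X$ lies in $\cat{A}_\rightlocal$, and every object of $\cat{A}_\rightlocal$ is annihilated by $\blank_\leftlocal$ (again by the discussion after Definition \ref{def:complementary-localization}); hence $(FX)_\leftlocal\equivwith(\blank_\rightlocal X)_\leftlocal\equivwith\zero$, which is the asserted identity $\bigl(\Fibre[\unit]\bigr)_\leftlocal\equivwith\zero$.

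For the ``if'' direction, assume $\bigl(\Fibre[\unit]\bigr)_\leftlocal\equivwith\zero$; equivalently, $F$ factors through $\cat{A}_\rightlocal$. First I would show that the inclusion $\cat{A}_\rightlocal\into\cat{A}$ admits $F$ as a right adjoint, so that $\cat{A}_\rightlocal$ is a right localization with $\blank_\rightlocal\equivwith F$ as an endofunctor. For $X\in\cat{A}_\rightlocal$ and $Y\in\cat{A}$, applying $\Map(X,\blank)$ to the fibre sequence $FY\to Y\xrightarrow{\unit_Y}Y_\leftlocal$ gives a fibre sequence of spaces; since $Y_\leftlocal\in\cat{A}_\leftlocal$ and $X_\leftlocal\equivwith\zero$, the adjunction defining $\blank_\leftlocal$ yields $\Map(X,Y_\leftlocal)\equivwith\Map_{\cat{A}_\leftlocal}(\zero,Y_\leftlocal)\equivwith *$, so the fibre inclusion induces an equivalence $\Map(X,FY)\equivto\Map(X,Y)$, natural in $X$ and $Y$. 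This exhibits the adjunction, with counit the fibre inclusion $FY\to Y$.

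Finally I would check complementarity. Contractibility of $\Map(X,Y)$ for $X\in\cat{A}_\rightlocal$ and $Y\in\cat{A}_\leftlocal$ is exactly the computation just made; and since $\blank_\rightlocal\equivwith F$ with counit the fibre inclusion $F\to\id$, the sequence $\blank_\rightlocal\xrightarrow{\counit}\id\xrightarrow{\unit}\blank_\leftlocal$ is literally the defining fibre sequence of $F$, hence a fibre sequence. So $\cat{A}_\rightlocal$ is complementary to $\cat{A}_\leftlocal$. The one point requiring care is that the equivalence $\Map(X,FY)\equivto\Map(X,Y)$ really is induced by the fibre inclusion --- so that the right adjoint $F$ comes with the expected counit and the two fibre sequences are identified --- but this is automatic since $\Map(X,\blank)$ preserves the fibre sequence; the rest is formal from the fibre sequence for $\unit$ together with the adjunction for $\blank_\leftlocal$.
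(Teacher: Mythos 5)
Your proof is correct and follows essentially the same route as the paper's: both take $\cat{A}_\rightlocal$ to be the full subcategory of objects killed by $\blank_\leftlocal$ and exhibit $\Fibre[\unit]$ as right adjoint to its inclusion by applying $\Map(X,\blank)$ to the fibre sequence $FY\to Y\to Y_\leftlocal$ and using $\Map(X,Y_\leftlocal)\equivwith\Map(X_\leftlocal,Y_\leftlocal)\equivwith *$. The only difference is that you spell out the complementarity check and the identification of the counit with the fibre inclusion, which the paper leaves implicit.
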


\begin{example}\label{ex:complement-exact-localization}
This condition is satisfied if the left localization is exact in the
following sense.
\end{example}

\begin{definition}
A left localization of a stable category $\cat{A}$ is
\kore{exact} if the localization functor
$\blank_\leftlocal\colon\cat{A}\to\cat{A}_\leftlocal$ (and
equivalently, $\blank_\leftlocal\colon\cat{A}\to\cat{A}$) preserves
finite limits.
\end{definition}

A right localization is \kore{exact} if the localization functor is
exact.

\begin{proof}[Proof of
  Proposition~\ref{prop:complementary-localization}]
\textbf{Necessity} follows from the remark for
Definition \ref{def:complementary-localization}.

For \textbf{sufficiency}, define $\cat{A}_\rightlocal$ as the full
subcategory of $\cat{A}$ consisting of objects $X\in\cat{A}$ for which
$X_\leftlocal\equivwith\zero$.
Then the functor
$\blank_\rightlocal:=\Fibre[\unit\colon\id\to\blank_\leftlocal]\colon\cat{A}\to\cat{A}$
lands in $\cat{A}_\rightlocal$.
Denote the resulting functor $\cat{A}\to\cat{A}_\rightlocal$ also by
$\blank_\rightlocal$.

It will then follow that $\blank_\rightlocal$ is a right adjoint of
the inclusion $\cat{A}_\rightlocal\into\cat{A}$,
with counit the canonical map $\blank_\rightlocal\to\id$.
Indeed, the defining fibre sequence for $X_\rightlocal$ gives for any $Y$,
the fibre sequence
\[
\Map(Y,X_\rightlocal)\longrightarrow\Map(Y,X)\longrightarrow\Map(Y,X_\leftlocal),
\]
but since $X_\leftlocal\in\cat{A}_\leftlocal$, we have
$\Map(Y,X_\leftlocal)=\Map(Y_\leftlocal,X_\leftlocal)$.
Now, if $Y\in\cat{A}_\rightlocal$, then this space is contractible, so
we obtain that the map $\Map(Y,X_\rightlocal)\to\Map(Y,X)$ is an
equivalence, as was to be shown.

Thus we have obtained a right localization
$\blank_\rightlocal\colon\cat{A}\to\cat{A}_\rightlocal$, and this came
as complementary to the left localization we started with.
\end{proof}

In the case of exact localization, the localizations will further be
stable, as will be proved in
Proposition~\ref{prop:exact-localization-is-stable} below.

\subsubsection{}

\begin{lemma}\label{lem:local-equivalence}
Let $\cat{A}$ be a stable category with complementary left and right
localizations $\blank_\leftlocal\colon\cat{A}\to\cat{A}_\leftlocal$
and $\blank_\rightlocal\colon\cat{A}\to\cat{A}_\rightlocal$
respectively.
Then, for a cofibre sequence
\[
W\longto X\longto Y
\]
in $\cat{A}$, if $W$ belongs to the full subcategory
$\cat{A}_\rightlocal$ of $\cat{A}$, then the localized
map $X_\leftlocal\to Y_\leftlocal$ is an equivalence.
\end{lemma}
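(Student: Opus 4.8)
The plan is to carry out the argument inside the localization $\cat{A}_\leftlocal$, where the statement is immediate, and then to transport the conclusion back along the inclusion $\cat{A}_\leftlocal\into\cat{A}$.

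First I would translate the hypothesis: by the characterization of $\cat{A}_\rightlocal$ recorded just after Definition \ref{def:complementary-localization}, the assumption $W\in\cat{A}_\rightlocal$ means $W_\leftlocal\equivwith\zero$, where $\zero$ denotes the zero object of $\cat{A}_\leftlocal$ provided by Lemma \ref{lem:pointedness} (and the remark following it). Now $\blank_\leftlocal\colon\cat{A}\to\cat{A}_\leftlocal$ is a left adjoint, hence preserves finite colimits, so applying it to the pushout square exhibiting $Y$ as the cofibre of $W\to X$ yields a pushout square in $\cat{A}_\leftlocal$ with corners $W_\leftlocal$, $X_\leftlocal$, $\zero$, $Y_\leftlocal$, in which the map $X_\leftlocal\to Y_\leftlocal$ is the cobase change of $W_\leftlocal\to\zero$ along $W_\leftlocal\to X_\leftlocal$. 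Since $W_\leftlocal\equivwith\zero$, the map $W_\leftlocal\to\zero$ is an equivalence, and hence so is its cobase change $X_\leftlocal\to Y_\leftlocal$ in $\cat{A}_\leftlocal$. Finally, the inclusion $\cat{A}_\leftlocal\into\cat{A}$ is fully faithful, so it reflects equivalences, and therefore $X_\leftlocal\to Y_\leftlocal$ is an equivalence in $\cat{A}$, as asserted.

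The one point that deserves care — and the reason I would pass through $\cat{A}_\leftlocal$ rather than work directly in $\cat{A}$ — is that the composite endofunctor $\blank_\leftlocal\colon\cat{A}\to\cat{A}$ need not preserve cofibre sequences, because the fully faithful inclusion $\cat{A}_\leftlocal\into\cat{A}$, being a right adjoint, need not be right exact. So one cannot simply apply $\blank_\leftlocal$ to the given cofibre sequence in $\cat{A}$ and read off the conclusion; one has to keep the cofibre computation inside $\cat{A}_\leftlocal$, where $\blank_\leftlocal$ genuinely preserves the relevant pushout, and then invoke full faithfulness of the inclusion. Apart from this bookkeeping the argument is entirely formal, so I do not anticipate any real obstacle.
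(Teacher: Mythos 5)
Your proof is correct and takes essentially the same route as the paper's: apply the left adjoint $\blank_\leftlocal\colon\cat{A}\to\cat{A}_\leftlocal$ to the cofibre sequence, observe that the first term becomes $\zero$ in $\cat{A}_\leftlocal$, and conclude that $X_\leftlocal\to Y_\leftlocal$ is an equivalence; your caution about not applying the composite endofunctor $\blank_\leftlocal\colon\cat{A}\to\cat{A}$ directly is exactly the point the paper's phrasing also respects. (One cosmetic remark: in the last step you only need the inclusion $\cat{A}_\leftlocal\into\cat{A}$ to \emph{preserve} equivalences, which every functor does, so invoking full faithfulness there is unnecessary.)
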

\begin{proof}
$W$ belongs to $\cat{A}_\rightlocal$ if and only if
$W_\leftlocal\equivwith\zero$.

By applying the localization functor
$\blank_\leftlocal\colon\cat{A}\to\cat{A}_{\leftlocal}$ to the given
cofibre sequence, we obtain a cofibre sequence in
$\cat{A}_\leftlocal$.
If $W_\leftlocal\equivwith\zero$, then the map
$X_\leftlocal\to Y_\leftlocal$ in the sequence is an equivalence.
\end{proof}

\begin{corollary}\label{cor:closure-under-extension}
In the situation of Lemma \ref{lem:local-equivalence}, if $Y$ also
belongs to $\cat{A}_\rightlocal$, then $X$ belongs to
$\cat{A}_\rightlocal$.
\end{corollary}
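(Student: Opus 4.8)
The plan is to read off the conclusion directly from Lemma \ref{lem:local-equivalence} together with the characterization of $\cat{A}_\rightlocal$ recorded after Definition \ref{def:complementary-localization}. Recall from there that an object $Z\in\cat{A}$ lies in the full subcategory $\cat{A}_\rightlocal$ precisely when $Z_\leftlocal\equivwith\zero$. So the goal reduces to showing $X_\leftlocal\equivwith\zero$.

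First I would apply Lemma \ref{lem:local-equivalence} to the given cofibre sequence $W\longto X\longto Y$: since $W\in\cat{A}_\rightlocal$, the lemma yields that the localized map $X_\leftlocal\to Y_\leftlocal$ is an equivalence. Next, since by hypothesis $Y\in\cat{A}_\rightlocal$ as well, the characterization above gives $Y_\leftlocal\equivwith\zero$. Composing, $X_\leftlocal\equivwith Y_\leftlocal\equivwith\zero$, and therefore $X$ belongs to $\cat{A}_\rightlocal$, as desired.

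There is no real obstacle here; the only point worth being careful about is invoking the right form of the characterization of $\cat{A}_\rightlocal$ (objects killed by $\blank_\leftlocal$), which is exactly what makes both the hypothesis on $Y$ and the conclusion on $X$ usable. Everything else is a one-line application of the preceding lemma.
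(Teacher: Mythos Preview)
Your proof is correct and is precisely the argument the paper intends: the corollary is stated without proof because it follows immediately from Lemma~\ref{lem:local-equivalence} together with the characterization of $\cat{A}_\rightlocal$ as the objects killed by $\blank_\leftlocal$, exactly as you have written.
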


\begin{corollary}\label{cor:characterizing-localizations}
In the situation of Lemma \ref{lem:local-equivalence}, if
$Y\in\cat{A}_\leftlocal$, then the canonical map $W\to X_\rightlocal$
and $X_\leftlocal\to Y$ are equivalences, so the fibre sequence is
canonically equivalent to the canonical fibre sequence
\[
X_\rightlocal\longto X\longto X_\leftlocal.
\]
\end{corollary}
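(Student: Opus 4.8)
In the situation of Lemma \ref{lem:local-equivalence}, if $Y\in\cat{A}_\leftlocal$, then the canonical maps $W\to X_\rightlocal$ and $X_\leftlocal\to Y$ are equivalences, so the given fibre sequence $W\to X\to Y$ is canonically equivalent to the canonical fibre sequence $X_\rightlocal\to X\to X_\leftlocal$.

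The plan is to first produce the comparison maps, then check each is an equivalence by reducing to the two defining properties of complementary localizations: that $\Map(X_\rightlocal, -)$ kills $\cat{A}_\leftlocal$-objects, and that $\blank_\rightlocal\to\id\to\blank_\leftlocal$ is a fibre sequence. First I would apply $\blank_\leftlocal$ to the cofibre sequence $W\to X\to Y$; since localization is exact on a stable category it yields a cofibre sequence $W_\leftlocal\to X_\leftlocal\to Y_\leftlocal$. Because $W\in\cat{A}_\rightlocal$ we have $W_\leftlocal\equivwith\zero$ (the characterization recalled after Definition \ref{def:complementary-localization}), so $X_\leftlocal\to Y_\leftlocal$ is an equivalence; and because $Y\in\cat{A}_\leftlocal$ the unit $Y\to Y_\leftlocal$ is already an equivalence. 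Composing, the canonical map $X_\leftlocal\to Y$ (the one factoring $X\to Y$ through $X\to X_\leftlocal$, using $Y\in\cat{A}_\leftlocal$ and the universal property of $\blank_\leftlocal$) is an equivalence. That settles the second map.

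For the first, the cleanest route is to use the two fibre sequences together. We have the canonical fibre sequence $X_\rightlocal\to X\to X_\leftlocal$ (Definition \ref{def:complementary-localization}), and the given fibre sequence $W\to X\to Y$, which is a fibre sequence since $W\to X\to Y$ being a cofibre sequence in a stable category is the same as being a fibre sequence. The identity map on $X$ together with the equivalence $X_\leftlocal\equivto Y$ just constructed induces, by the universal property of the fibre, a canonical map $X_\rightlocal\to W$ fitting into a map of fibre sequences; equivalently one gets $W\to X_\rightlocal$ the other way by noting $W\to X$ composed with $X\to X_\leftlocal\equivwith Y$ is nullhomotopic (it is the composite in the cofibre sequence), hence factors through $X_\rightlocal=\Fibre[X\to X_\leftlocal]$. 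Either way one obtains a commuting ladder of fibre sequences with identity in the middle column and an equivalence in the right column ($X_\leftlocal\simeq Y$), so the long exact sequence / the fact that fibres of equivalent maps agree forces the left column $W\to X_\rightlocal$ to be an equivalence as well.

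The only mild subtlety — and the step I would be most careful about — is checking that the comparison map $W\to X_\rightlocal$ constructed from the factorization property agrees with "the canonical map" referred to in the statement, and that the resulting square genuinely commutes in the homotopy-coherent sense (not merely up to unspecified homotopy). This is handled by working with the two fibre sequences as morphisms in the stable category: the nullhomotopy of $W\to X\to X_\leftlocal$ is the one coming from the cofibre sequence $W\to X\to Y$ under the identification $Y\simeq X_\leftlocal$, so the induced map to the fibre is canonical, and the resulting map of fibre sequences is determined by its middle term $\id_X$. Once that bookkeeping is in place, the equivalence of the two fibre sequences is immediate from the above, and the two named maps $W\to X_\rightlocal$, $X_\leftlocal\to Y$ are exactly the two outer legs of that equivalence.
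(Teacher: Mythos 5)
Your proof is correct and follows essentially the same route as the paper: Lemma \ref{lem:local-equivalence} gives the equivalence $X_\leftlocal\to Y$, and comparing the two fibre sequences over $\id_X$ gives the equivalence $W\to X_\rightlocal$. The only divergence is at the coherence step, where the paper simply observes that the null-homotopy of the composite is \emph{unique} (since $\Map(W,Y)$ is contractible by complementarity, as $W\in\cat{A}_\rightlocal$ and $Y\in\cat{A}_\leftlocal$), which lets one skip the bookkeeping you do in your last paragraph about which null-homotopy induces the comparison map.
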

\begin{proof}
The equivalences of objects is immediate from
Lemma \ref{lem:local-equivalence}.
The fibre sequences will then be canonically the same since the
null-homotopy of the composite is unique.
\end{proof}

\subsubsection{}

In a situation where we have left and right localizations
complementary to each other, we shall be particularly interested in
how
the localizations interact with limits (and colimits) in our stable
category.
We have seen in Lemma \ref{lem:pointedness} that localizations contain
$\zero\in\cat{A}$.
More generally, we have the following.

\begin{lemma}\label{lem:left-localization-is-closed-under-limits}
If a left localization $\cat{A}_\leftlocal$ has a complementary right
localization, then $\cat{A}_\leftlocal$ is closed in $\cat{A}$ under
any limit which exists in $\cat{A}$.
\end{lemma}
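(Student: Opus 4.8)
The plan is to prove the stronger pointwise statement: if $D\colon I\to\cat{A}_\leftlocal$ is a diagram whose limit $X:=\lim_I D$ exists in $\cat{A}$, then $X$ already lies in $\cat{A}_\leftlocal$. By the remark following Definition~\ref{def:complementary-localization}, membership in $\cat{A}_\leftlocal$ is detected by the vanishing of the complementary right localization, so it suffices to show $X_\rightlocal\equivwith\zero$.

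First I would note that $X_\rightlocal$, being a value of the right localization functor, lies in $\cat{A}_\rightlocal$, while each $D_i$ lies in $\cat{A}_\leftlocal$. Using that $\Map_\cat{A}(X_\rightlocal,\blank)$ carries limits to limits, together with the adjunction between the inclusion $\cat{A}_\rightlocal\into\cat{A}$ and $\blank_\rightlocal$ (with counit $\blank_\rightlocal\to\id$), one computes
\[
\Map_{\cat{A}_\rightlocal}(X_\rightlocal,X_\rightlocal)\;\equivwith\;\Map_\cat{A}(X_\rightlocal,X)\;\equivwith\;\lim_I\Map_\cat{A}(X_\rightlocal,D_i)\;\equivwith\;\ast,
\]
the last step because each $\Map_\cat{A}(X_\rightlocal,D_i)$ is contractible (by the defining property of a complementary pair) and a limit of contractible spaces is contractible. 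Hence $\id_{X_\rightlocal}$ is null-homotopic; factoring it through the zero object exhibits $X_\rightlocal$ as a retract of $\zero$, so $X_\rightlocal\equivwith\zero$.

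Feeding this into the fibre sequence $\blank_\rightlocal\xrightarrow{\counit}\id\xrightarrow{\unit}\blank_\leftlocal$ evaluated at $X$ gives a fibre sequence $\zero\to X\to X_\leftlocal$, so the unit $X\to X_\leftlocal$ is an equivalence and $X\in\cat{A}_\leftlocal$. Since $X$ was the limit of $D$ formed in $\cat{A}$, this is precisely the assertion that $\cat{A}_\leftlocal$ is closed under that limit. The only point needing a little care is the legitimacy of the mapping-space computation — that the identification $\Map_\cat{A}(X_\rightlocal,X)\equivwith\Map_{\cat{A}_\rightlocal}(X_\rightlocal,X_\rightlocal)$ is the one induced by the counit, so that the resulting contractibility genuinely witnesses the nullity of $\id_{X_\rightlocal}$ — but this is routine, and I do not anticipate a real obstacle.
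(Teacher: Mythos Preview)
Your proof is correct, and both you and the paper aim for the same target: show $X_\rightlocal\equivwith\zero$ and conclude $X\in\cat{A}_\leftlocal$. The paper, however, reaches this in one stroke: since $\blank_\rightlocal\colon\cat{A}\to\cat{A}_\rightlocal$ is a \emph{right adjoint} (to the inclusion), it preserves any limit, so $X_\rightlocal\equivwith(\lim_I D)_\rightlocal\equivwith\lim_I(D_i)_\rightlocal\equivwith\lim_I\zero\equivwith\zero$. Your mapping-space computation is essentially an unwinding of this limit-preservation in the special case at hand. The paper's route is shorter and more conceptual; yours is more explicit and avoids quoting the general principle, but at the cost of the extra step verifying that contractibility of $\Map_{\cat{A}_\rightlocal}(X_\rightlocal,X_\rightlocal)$ forces $X_\rightlocal\equivwith\zero$.
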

\begin{proof}
This follows since $\cat{A}_\leftlocal$ is the full subcategory of
$\cat{A}$ consisting of $X\in\cat{A}$ for which
$X_\rightlocal\equivwith\zero$, and since the functor
$\blank_\rightlocal\colon\cat{A}\to\cat{A}_\rightlocal$ is a right
adjoint, and hence preserves any limit.
\end{proof}

Note also that the limit taken in $\cat{A}$ of a diagram lying in the
full subcategory $\cat{A}_\leftlocal$ (which in fact belongs to
$\cat{A}_\leftlocal$, according to the above) will be a limit in
$\cat{A}_\leftlocal$ of the diagram.
On the other hand, since the inclusion
$\cat{A}_\leftlocal\into\cat{A}$ preserves limits, if a limit of a
diagram $\cat{A}_\leftlocal$ exists in the category
$\cat{A}_\leftlocal$, then it also will be a limit in $\cat{A}$.

\begin{corollary}\label{cor:left-localization-is-closed-under-sum}
If a left localization $\cat{A}_\leftlocal$ has a complementary right
localization, then $\cat{A}_\leftlocal$ is closed in $\cat{A}$ under
the finite coproduct in $\cat{A}$.
\end{corollary}
\begin{proof}
Let $X$, $Y$ be object of $\cat{A}$ which belong to
$\cat{A}_\leftlocal$.
Then the coproduct $X\kocoprod Y$ in $\cat{A}$ is equivalent to the
product $X\times Y$ in $\cat{A}$, which belongs to
$\cat{A}_\leftlocal$ by
Lemma~\ref{lem:left-localization-is-closed-under-limits}.
\end{proof}

\subsubsection{}

In the next proposition, we assume given complementary left and right
localizations $\blank_\leftlocal\colon\cat{A}\to\cat{A}_\leftlocal$
and $\blank_\rightlocal\colon\cat{A}\to\cat{A}_\rightlocal$
respectively, of a stable category $\cat{A}$.

In this situation, we assume given classes of diagrams
$\cat{D}$, $\cat{D}_\leftlocal$, $\cat{D}_\rightlocal$,
in $\cat{A}$, in $\cat{A}_\leftlocal$, and in
$\cat{A}_\rightlocal$ respectively, and consider limits of diagrams
belonging to any of these classes.
For example, we may be considering all finite limits in $\cat{A}$,
$\cat{A}_\leftlocal$, or $\cat{A}_\rightlocal$.
Alternatively, we may be considering sequential limits.
We may also be considering looping of objects.

We require that all inclusion and localization functors between these
categories take a diagram in the specified class in the source to one
in the specified class in the target.
In fact, from this requirement, it is immediate that the class
$\cat{D}$ determines the other classes.
Namely, $\cat{D}_\leftlocal$ is the class of diagrams which belong
to $\cat{D}$ when considered as diagrams in $\cat{A}$, and similarly
for $\cat{D}_\rightlocal$.
One can start from any class of diagrams $\cat{D}$ in $\cat{A}$ which
is closed under application of endofunctors
$\blank_\leftlocal$ and $\blank_\rightlocal\colon\cat{A}\to\cat{A}$, to
have all three classes satisfying our requirements.

In this situation, if $\cat{A}$ has limits of all diagrams in the
class $\cat{D}$, then $\cat{A}_\rightlocal$ has limits of all diagrams
in the class $\cat{D}_\rightlocal$, and by
Lemma \ref{lem:left-localization-is-closed-under-limits},
$\cat{A}_\leftlocal$ has limits of all diagrams in the class
$\cat{D}_\leftlocal$.

\begin{proposition}\label{prop:limit-in-localization}
Let $\cat{A}$, $\cat{A}_\leftlocal$, $\cat{A}_\rightlocal$, and
classes of diagrams $\cat{D}$ in $\cat{A}$, $\cat{D}_\leftlocal$ in
$\cat{A}_\leftlocal$, $\cat{D}_\rightlocal$ in $\cat{A}_\rightlocal$ be
as above.
Assume that $\cat{A}$ has limits of all
diagrams in the class $\cat{D}$ (see above).

Then the following are equivalent.
\begin{enumerate}\setcounter{enumi}{-1}
\item\label{item:left-localization-preserving-limits}
  The left localization functor
  $\blank_\leftlocal\colon\cat{A}\to\cat{A}_\leftlocal$ takes limits
  of diagrams belonging to $\cat{D}$, to corresponding limits in
  $\cat{A}_\leftlocal$.
\item The functor $\blank_\leftlocal$ considered as
  $\cat{A}\to\cat{A}$, takes limits of diagrams belonging to
  $\cat{D}$, to corresponding limits in $\cat{A}$.
\item\label{item:localization-preserving-limits}
  The right localization functor
  $\blank_\rightlocal\colon\cat{A}\to\cat{A}$ takes limits of diagrams
  belonging to $\cat{D}$, to corresponding limits in $\cat{A}$.
\item\label{item:closed-under-limit}
  Given a diagram in $\cat{A}_\rightlocal$, belonging to
  $\cat{D}_\rightlocal$ (equivalently, a diagram in $\cat{A}$ which
  belongs to $\cat{D}$, and lands in $\cat{A}_\rightlocal$), its limit
  taken in $\cat{A}$, belongs to $\cat{A}_\rightlocal$.
\item\label{item:inclusion-preserving-limits} The inclusion
  $\cat{A}_\rightlocal\into\cat{A}$ takes limits of diagrams belonging
  to $\cat{D}_\rightlocal$, to corresponding limits in $\cat{A}$.
\end{enumerate}
\end{proposition}
\begin{proof}
It is relatively simple to see that the first three are equivalent to
each other.
It is also easy to see that
(\ref{item:localization-preserving-limits})\naraba
(\ref{item:closed-under-limit})\naraba
(\ref{item:inclusion-preserving-limits})\naraba
(\ref{item:localization-preserving-limits}).
\end{proof}

\begin{proposition}\label{prop:exact-localization-is-stable}
Let $\cat{A}$ be a stable category, and let
$\blank_\leftlocal\colon\cat{A}\to\cat{A}_\leftlocal$ be an exact
left localization of $\cat{A}$.
Then the category $\cat{A}_\leftlocal$ is stable, the inclusion
functor $\cat{A}_\leftlocal\into\cat{A}$ is also exact, and the
complementary right localization (see
Example~\ref{ex:complement-exact-localization}) is also exact.
\end{proposition}

\begin{proof}
We apply Proposition~\ref{prop:limit-in-localization} for the class
of all finite limits.
The assumption is the
condition~\eqref{item:left-localization-preserving-limits}.
The condition~\eqref{item:closed-under-limit} then states that the
right localization $\cat{A}_\rightlocal$ is closed as a full
subcategory of
$\cat{A}$ under the formation of finite limits in $\cat{A}$.

It follows from the opposite case of
Lemma~\ref{lem:left-localization-is-closed-under-limits} that
$\cat{A}_\rightlocal$ is also closed in $\cat{A}$ under the formation
of colimits.
It then follows that the pointed (by Lemma~\ref{lem:pointedness})
category $\cat{A}_\rightlocal$ is stable since Cartesian and
coCartesian squares coincide in $\cat{A}_\rightlocal$ since those
squares are Cartesian or coCartesian in $\cat{A}$.

It then follows from a result of
Lurie~\cite[Proposition~1.1.4.1]{higher-alg} that the right
localization functor
$\blank_\rightlocal\colon\cat{A}\to\cat{A}_\rightlocal$ is exact
since it is between stable categories and preserves all limits.

We finally obtain the result by changing the variance in the
discussions up to here.
\end{proof}

\subsection{Filtration of a stable category}
\label{sec:filtration-of-stable-category}
\setcounter{subsubsection}{-1}

\subsubsection{}

\begin{definition}\label{def:filtration}
A \kore{filtration} of a stable category $\cat{A}$ is a sequence of
full subcategories
\[
\cat{A}\:\includes\:\cdots\:\includes\:\cat{A}_{\ge r}\:\includes\:\cat{A}_{\ge r+1}\:\includes\:\cdots
\]
indexed by integers, each of which is the inclusion of a right
localization which has a complementary left localization, denoted by
$\blank^{<r}\colon\cat{A}\to\cat{A}^{<r}$.

A \kore{filtered} stable category is a stable category which is
equipped with a filtration.
\end{definition}

In particular, associated to a filtered stable category $\cat{A}$, we
have a sequence
\[
\cat{A}\longrightarrow\cdots\longrightarrow\cat{A}^{<r+1}\longrightarrow\cat{A}^{<r}\longrightarrow\cdots.
\]
of left localization functors.
We would like to think of this as the tower associated to the
filtration.

$\cat{A}_{\ge r}$ can be considered as the
\emph{pieces} for the filtration.
$\cat{A}_{\ge r}$ is the full subcategory of $\cat{A}$
formed by objects $X\in\cat{A}$ for which $X^{<r}\equivwith\zero$.
We denote the right localization by
$\blank_{\ge r}\colon\cat{A}\to\cat{A}_{\ge r}$.
Then the sequence $\blank_{\ge r}\to\id\to\blank^{<r}$ of
functors $\cat{A}\to\cat{A}$, equipped with the unique null homotopy
of the composite $\blank_{\ge r}\to\blank^{<r}$, is a fibre sequence.

An important example will be discussed in
Section \ref{sec:examples-of-filtered-categories}.
Here are a few examples.

\begin{example}\label{ex:t-structure}
If $\cat{A}$ is a stable category, then any t-structure
\cite{higher-alg} on $\cat{A}$ gives a filtration.
In fact, a t-structure can be characterized as a filtration
satisfying a simple condition.
See Example~\ref{ex:bounded-loop} and
Remark~\ref{rem:sound-example}.
\end{example}

\begin{example}\label{ex:goodwillie-filtration}
Let $\cat{A}$ be the functor category into a stable category, and
assume it admits some version of the Goodwillie calculus
\cite{goodwi}.
Then it has a filtration in which $\cat{A}^{<r}$ is the full
subcategory consisting of ($r-1$)-excisive functors.
The left localization $\cat{A}\to\cat{A}^{<r}$ is given by the
universal ($r-1$)-excisive approximation of functors, which is exact
as follows e.g., from the construction.
\end{example}

\begin{remark}\label{rem:dual-filtration}
The notion of a filtration on a stable category is self-dual in the
following sense.
Namely, if a stable category $\cat{A}$ is given a filtration, then
$\cat{B}:=\cat{A}^\op$ has a filtration given by $\cat{B}_{\ge
  r}:=(\cat{A}^{\le-r})^\op$, where $\cat{A}^{\le s}:=\cat{A}^{<s+1}$.
\end{remark}

Therefore, all notions and statements we formulate will have dual
versions, which we shall speak about freely without further notices.

\subsubsection{}

Let $r$, $s$ be integers such that $r\le s$.
Then $(X_{\ge r})^{<s}$ belongs to
\[
\cat{A}_{\ge r}^{<s}\::=\:\cat{A}_{\ge r}\intersect\cat{A}^{<s}
\]
since
\[
((X_{\ge r})^{<s})^{<r}=(X_{\ge r})^{<r}\equivwith\zero,
\]
and so does $(X^{<s})_{\ge r}$.

We would like to compare these objects.

We have a commutative diagram
\begin{equation}\label{eq:diamond-commutes}
\begin{tikzcd}
{}&X_{\ge r}\arrow{dl}\arrow{d}\arrow{dr}\\
(X_{\ge r})^{<s}\arrow{dr}&X\arrow{d}&(X^{<s})_{\ge r}\arrow{dl}\\
&X^{<s}\kokoni{,}
\end{tikzcd}
\end{equation}
so the universal property of the map $X_{\ge r}\to(X_{\ge r})^{<s}$
implies that there is a unique pair consisting of a map $(X_{\ge
  r})^{<s}\to(X^{<s})_{\ge r}$ and a homotopy making the upper
triangle of
\begin{equation}\label{eq:extension-lift}
\begin{tikzcd}
{}&X_{\ge r}\arrow{dl}\arrow{dr}\\
(X_{\ge r})^{<s}\arrow{dr}\arrow{rr}&&(X^{<s})_{\ge r}\arrow{dl}\\
&X^{<s}
\end{tikzcd}
\end{equation}
commute.

Moreover, again by the universal property of the map $X_{\ge
  r}\to(X_{\ge r})^{<s}$, there is a unique pair consisting of
\begin{itemize}
\item a homotopy filling the lower triangle, and
\item a higher homotopy between the homotopy filling the diamond in
  the diagram \eqref{eq:diamond-commutes}, and the homotopy obtained
  by pasting the homotopies in the diagram \eqref{eq:extension-lift}.
\end{itemize}

In other words, there is a unique quadruple consisting of
\begin{enumerate}
\setcounter{enumi}{-1}
\item\label{item:comparison-map}
  a map $(X_{\ge r})^{<s}\to(X^{<s})_{\ge r}$
\item\label{item:upper-homotopy}
  a homotopy filling the upper triangle of \eqref{eq:extension-lift}
\item\label{item:lower-homotopy}
  a homotopy filling the lower triangle of \eqref{eq:extension-lift}
\item\label{item:high-homotopy} a higher homotopy between the homotopy
  filling the diamond in the diagram \eqref{eq:diamond-commutes}, and
  the homotopy obtained by pasting the homotopies in the diagram
  \eqref{eq:extension-lift}.
\end{enumerate}

Moreover, by the universal property of the map $(X^{<s})_{\ge r}\to
X^{<s}$, the pair given by (\ref{item:comparison-map}) and
(\ref{item:lower-homotopy}) above, must be the unique pair of this
form.

It follows that for a map $(X_{\ge r})^{<s}\to(X^{<s})_{\ge r}$ the
following data (in particular, existence of the data) are equivalent
to each other.

\begin{itemize}
\item (\ref{item:upper-homotopy}) above
\item (\ref{item:lower-homotopy}) above
\item Extension to a quadruple above.
\end{itemize}

\begin{lemma}\label{lem:cutting-both-sides}
Let $r$, $s$ be integers such that $r\le s$.
Then a map $(X_{\ge r})^{<s}\to(X^{<s})_{\ge r}$ which can be equipped
with the equivalent data above, is an equivalence.
\end{lemma}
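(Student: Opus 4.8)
The plan is to recognize the map $f\colon(X_{\ge r})^{<s}\to(X^{<s})_{\ge r}$ as a comparison of two presentations of the left localization of $X_{\ge r}$ into $\cat{A}^{<s}$, and to conclude by essential uniqueness of localizations. Write $u'\colon X_{\ge r}\to(X_{\ge r})^{<s}$ for the unit, which by construction exhibits its target as $(X_{\ge r})^{<s}$, and let $u\colon X_{\ge r}\to(X^{<s})_{\ge r}$ be the canonical map of \eqref{eq:diamond-commutes}; by the upper homotopy of \eqref{eq:extension-lift} we have $u\equivwith fu'$. The key reduction is: once we know $\Fibre(u)\in\cat{A}_{\ge s}$, we may apply Corollary \ref{cor:characterizing-localizations} to the cofibre sequence $\Fibre(u)\to X_{\ge r}\xrightarrow{u}(X^{<s})_{\ge r}$ with respect to the complementary pair $(\cat{A}^{<s},\cat{A}_{\ge s})$ (note $(X^{<s})_{\ge r}\in\cat{A}^{<s}$); the corollary then identifies this sequence with the canonical one $(X_{\ge r})_{\ge s}\to X_{\ge r}\to(X_{\ge r})^{<s}$, producing an equivalence $(X_{\ge r})^{<s}\xrightarrow{\equivwith}(X^{<s})_{\ge r}$ characterized by compatibility with the maps out of $X_{\ge r}$. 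Since $u\equivwith fu'$, that equivalence is exactly $f$.

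It remains to compute $\Fibre(u)$. By the universal property characterizing $u$, the map $u$ is $\blank_{\ge r}$ applied to the unit $X\to X^{<s}$, so the square with top row $u$, bottom row $X\to X^{<s}$ and vertical maps the counits $X_{\ge r}\to X$ and $(X^{<s})_{\ge r}\to X^{<s}$ is the naturality square of the counit $\blank_{\ge r}\to\id$ at the morphism $X\to X^{<s}$. By the fibre sequence $\blank_{\ge r}\to\id\to\blank^{<r}$ of Definition \ref{def:filtration}, the fibres of the two vertical maps are $\Loop(X^{<r})$ and $\Loop\big((X^{<s})^{<r}\big)$, and the comparison between them is $\Loop\blank^{<r}$ applied to $X\to X^{<s}$. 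Now $\Fibre(X\to X^{<s})=X_{\ge s}\in\cat{A}_{\ge s}\sub\cat{A}_{\ge r}$, so $(X_{\ge s})^{<r}\equivwith\zero$; applying $\blank^{<r}$ — which preserves cofibre sequences, being a left adjoint — to the cofibre sequence $X_{\ge s}\to X\to X^{<s}$ shows that $X^{<r}\to(X^{<s})^{<r}$ is an equivalence. Hence the comparison of vertical fibres is an equivalence, so the square is cartesian (equivalently cocartesian, by stability), and therefore $\Fibre(u)\equivwith\Fibre(X\to X^{<s})=X_{\ge s}$, which lies in $\cat{A}_{\ge s}$ as required.

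The one non-formal step is this identification of $\Fibre(u)$ with $X_{\ge s}$, i.e.\ checking that the counit naturality square above is cartesian; the crucial input is that $\blank^{<r}$ inverts the unit $X\to X^{<s}$ when $r\le s$, which rests only on the nesting $\cat{A}_{\ge s}\sub\cat{A}_{\ge r}$ of the filtration and on $\blank^{<r}$ preserving cofibres. Everything else is bookkeeping with the adjunctions, the fibre sequence $\blank_{\ge r}\to\id\to\blank^{<r}$, and Corollary \ref{cor:characterizing-localizations}. (If one prefers to avoid invoking Corollary \ref{cor:characterizing-localizations}, the same conclusion follows from the fact that, $\Fibre(u)$ being in $\cat{A}_{\ge s}$, the map $u^\ast$ on $\Map(-,W)$ is an equivalence for every $W\in\cat{A}^{<s}$, and it factors through the equivalence $(u')^\ast$; taking $W=(X_{\ge r})^{<s}$ and $W=(X^{<s})_{\ge r}$ and using the Yoneda lemma inside $\cat{A}^{<s}$ then shows $f$ is an equivalence.)
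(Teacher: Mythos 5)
Your argument is correct, and it is essentially the mirror image of the paper's proof rather than a reproduction of it. The paper takes vertical cofibres of the map of fibre sequences from $X_{\ge s}\xrightarrow{=}X_{\ge s}\to\zero$ to $X_{\ge r}\to X\to X^{<r}$, obtaining a fibre sequence $(X_{\ge r})^{<s}\to X^{<s}\to X^{<r}$; it then identifies the second map with the unit $X^{<s}\to(X^{<s})^{<r}$ (being a map under $X$ to an object of $\cat{A}^{<r}$) and reads off the equivalence as a map \emph{over} $X^{<s}$, i.e.\ pinned down by the lower homotopy (\ref{item:lower-homotopy}). You instead compute the \emph{fibre} of $u\colon X_{\ge r}\to(X^{<s})_{\ge r}$, identify it with $X_{\ge s}$ via the cartesian naturality square of the counit, and read off the equivalence as a map \emph{under} $X_{\ge r}$ by quoting Corollary \ref{cor:characterizing-localizations}, i.e.\ pinned down by the upper homotopy (\ref{item:upper-homotopy}); the two are interchangeable precisely because the discussion preceding the lemma shows that these homotopies determine each other, which is also what lets you conclude that the equivalence produced by the corollary \emph{is} $f$ (it is worth saying explicitly that this last identification is a uniqueness statement). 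Your crucial computation --- that $\blank^{<r}$ inverts $X\to X^{<s}$ because it kills the fibre $X_{\ge s}$, by Lemma \ref{lem:local-equivalence} --- is exactly the content the paper extracts from its cofibre diagram, there phrased as the identification $X^{<r}\simeq(X^{<s})^{<r}$; so your route costs one extra rotation (the cartesian square and the resulting fibre sequence $X_{\ge s}\to X_{\ge r}\to(X^{<s})_{\ge r}$) but buys a cleaner endgame, since Corollary \ref{cor:characterizing-localizations} applies verbatim instead of the hand argument ``equivalent by a map over $X^{<s}$''. Both versions use only the nesting $\cat{A}_{\ge s}\sub\cat{A}_{\ge r}$, exactness of cofibres under $\blank^{<r}$, and the uniqueness clauses of the adjunctions, so neither is more general than the other.
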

\begin{proof}
By looking at the cofibre of the map (drawn vertically) of fibre
sequences
\[\begin{tikzcd}
X_{\ge s}\arrow{d}\arrow{r}{=}&X_{\ge
  s}\arrow{d}\arrow{r}&\zero\arrow{d}\\
X_{\ge r}\arrow{r}&X\arrow{r}&X^{<r}\kokoni{,}
\end{tikzcd}\]
we obtain a fibre sequence
\[
(X_{\ge r})^{<s}\longto X^{<s}\longto X^{<r}.
\]
The map $X^{<s}\to X^{<r}$ here is a map under $X$, so can be
identified with the canonical map $X^{<s}\to(X^{<s})^{<r}$.
Therefore, its fibre $(X_{\ge r})^{<s}$ is equivalent to
$(X^{<s})_{\ge r}$ by a map over $X^{<s}$.
\end{proof}

\begin{definition}
Let $r$, $s$ be integers.
Then we denote the canonically equivalent objects $(X_{\ge
  r})^{<s}=(X^{<s})_{\ge r}$ by $X^{<s}_{\ge r}$.
This belongs to $\cat{A}_{\ge r}^{<s}$.
\end{definition}

\subsection{Completion}
\setcounter{subsubsection}{-1}

\subsubsection{}
 
Let $\cat{A}$ be a filtered stable category.
Then define
\[
\cat{A}_{\ge\infty}\:
:=\:\lim_r\cat{A}_{\ge r}=\Intersect_r\cat{A}_{\ge r},
\]
the intersection taken in $\cat{A}$.
We would like to investigate the sequence.
\[
\cat{A}_{\ge\infty}\longto\cat{A}\longto\lim_r\cat{A}^{<r}
\]
obtained as the limit of the sequence
\[
\cat{A}_{\ge r}\longto\cat{A}\longto\cat{A}^{<r}.
\]

Let us denote by $\tau$ the functor $\cat{A}\to\lim_r\cat{A}^{<r}$
here.
If $\cat{A}$ is closed under the sequential limit, then this has a
right adjoint, which we shall denote by $\lim$.
For an object $X=(X_r)_r$ of $\lim_r\cat{A}^{<r}$, it is given by
\[
\lim X\:=\:\lim_rX_r,
\]
where the limit on the right hand side is taken in $\cat{A}$.

\begin{definition}
Let $\cat{A}$ be a filtered stable category which is closed under the
sequential limit.
Then we denote $\lim\tau X$ by $\complete{X}$.
We say that $X$ is \kore{complete} if the unit map $\eta\colon
X\to\lim\tau X=\complete{X}$ for the adjunction is an equivalence.

We denote by $\complete{\cat{A}}$ the full subcategory of $\cat{A}$
consisting of complete objects.
\end{definition}

\begin{example}\label{ex:bouded-above-are-complete}
For every $r$, $\cat{A}^{<r}\sub\complete{\cat{A}}$ in $\cat{A}$.
\end{example}

We compromise with the following definition, which may be more
restrictive than it should be.
\begin{definition}\label{def:completion}
Let $\cat{A}$ be a filtered stable category.
Then we say that $\complete{\cat{A}}$ is the \kore{completion} of
$\cat{A}$ if the following conditions are satisfied.
\begin{enumerate}\setcounter{enumi}{-1}
\item $\cat{A}$ is closed under the sequential limit, so we have
  $\complete{\cat{A}}$ defined.
\item\label{item:sequential-limit} The functor $\complete\blank\colon\cat{A}\to\cat{A}$ preserves
  sequential limits.
\item\label{item:idempotence} $\complete\blank$ lands in
  $\complete{\cat{A}}$.
\item\label{item:localization} The map
  $\eta\colon\id\to\complete\blank$ makes $\complete\blank$ a
  left localization for the full subcategory $\complete{\cat{A}}$.
\end{enumerate}
If $\cat{A}$ has $\complete{\cat{A}}$ as its completion in this sense,
then we call the localization functor the \kore{completion} functor.
In this case, we call $\eta$ the \kore{completion} map.

We say that $\cat{A}$ is \kore{complete} if it is closed under the
sequential limit, and $\complete{\cat{A}}$ is
the whole of $\cat{A}$, namely, if every object of $\cat{A}$ is
complete.
\end{definition}

\begin{remark}
The conditions (\ref{item:idempotence}) and (\ref{item:localization})
follows if $\tau\lim\tau\equivwith\tau$ by the canonical map(s).
This is also necessary since for every $r$,
Example \ref{ex:bouded-above-are-complete} will imply that the map
$\eta^{<r}\colon X^{<r}\to\complete{X}^{<r}$ is an equivalence for
every $X$.
\end{remark}

\subsubsection{}
For the rest of our discussion of completion, we assume that any
filtered stable category which we consider is closed under the
sequential limit.

\subsubsection{}

The following is a part of the motivation for
Definition \ref{def:completion}.
\begin{lemma}
If $\complete{\cat{A}}$ is the completion of $\cat{A}$, then the
sequential limits exists in $\complete{\cat{A}}$, and the completion
functor preserves sequential limits.
\end{lemma}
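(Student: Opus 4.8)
The plan is to deduce everything from condition~(\ref{item:sequential-limit}) of Definition~\ref{def:completion} together with the formal behaviour of the reflective subcategory inclusion $\complete{\cat{A}}\into\cat{A}$. Recall that $\cat{A}$, being a filtered stable category, has all sequential limits (Definition~\ref{def:filtration}), and that by condition~(\ref{item:localization}) of Definition~\ref{def:completion} the functor $\complete\blank\colon\cat{A}\to\complete{\cat{A}}$ is a left localization, so the inclusion $\complete{\cat{A}}\into\cat{A}$ is its fully faithful right adjoint. In particular this inclusion preserves all limits that exist in $\complete{\cat{A}}$, and, being fully faithful, a cone in $\complete{\cat{A}}$ is a limit cone there exactly when its image in $\cat{A}$ is a limit cone.

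First I would show that $\complete{\cat{A}}$ is closed in $\cat{A}$ under sequential limits. Let $(X_r)_r$ be a tower in $\complete{\cat{A}}$ and let $X:=\lim_r X_r$ be its limit computed in $\cat{A}$, which exists since $\cat{A}$ has sequential limits. The unit transformation $\eta\colon\id\to\complete\blank$ gives a map of towers $(X_r)_r\to(\complete{X_r})_r$, each component of which is an equivalence because $X_r$ is complete; hence $\lim_r\eta_{X_r}\colon X\to\lim_r\complete{X_r}$ is an equivalence. On the other hand, condition~(\ref{item:sequential-limit}) of Definition~\ref{def:completion} says that $\complete\blank$ carries the limit cone defining $X$ to a limit cone in $\cat{A}$, so the canonical comparison map $\complete X\to\lim_r\complete{X_r}$ is an equivalence. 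A chase of the naturality square for $\eta$ identifies $\eta_X$ with $\lim_r\eta_{X_r}$ under this comparison, so $\eta_X$ is an equivalence and $X$ is complete. Since $\complete{\cat{A}}$ is a full subcategory of $\cat{A}$, the object $X$ with its projections then satisfies the universal property of $\lim_r X_r$ in $\complete{\cat{A}}$ as well; thus sequential limits exist in $\complete{\cat{A}}$ and are computed as in $\cat{A}$.

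For the second assertion, let $(Z_r)_r$ be a tower in $\cat{A}$ with limit $Z:=\lim_r Z_r$ in $\cat{A}$. By condition~(\ref{item:sequential-limit}) of Definition~\ref{def:completion}, $\complete\blank$ sends the defining limit cone to a limit cone in $\cat{A}$, exhibiting $\complete Z$ as $\lim_r\complete{Z_r}$ computed in $\cat{A}$. Since each $\complete{Z_r}$ lies in $\complete{\cat{A}}$, the previous paragraph shows that this coincides with the limit of $(\complete{Z_r})_r$ computed in $\complete{\cat{A}}$. Hence $\complete\blank$ carries the limit cone of $(Z_r)_r$ to a limit cone in $\complete{\cat{A}}$, i.e.\ the completion functor preserves sequential limits.

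The only real point of care — the ``main obstacle'', though a mild one — is in the first step: one must check that the comparison equivalence $\complete X\equivwith\lim_r\complete{X_r}$ provided by condition~(\ref{item:sequential-limit}) is compatible with the unit maps, so that one genuinely concludes $\eta_X$ is an equivalence (i.e.\ $X$ is complete) rather than merely that $X$ and $\complete X$ are abstractly equivalent. Everything else is formal from the adjunction between $\complete\blank$ and the inclusion $\complete{\cat{A}}\into\cat{A}$.
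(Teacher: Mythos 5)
Your proof is correct. The paper states this lemma without proof (it is offered only as ``part of the motivation'' for Definition \ref{def:completion}), and your argument --- using condition (\ref{item:sequential-limit}) together with naturality of $\eta$ to show $\complete{\cat{A}}$ is closed under sequential limits computed in $\cat{A}$, then transferring limits along the fully faithful right adjoint --- is exactly the formal verification the author leaves to the reader, including the one genuine point of care you flag about identifying $\eta_X$ with $\lim_r\eta_{X_r}$ under the comparison equivalence.
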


The following gives a sufficient condition for $\complete{\cat{A}}$ to
be the completion of $\cat{A}$.
\begin{lemma}\label{triangles-for-completion}
If $\tau$ preserves sequential limits, then $\complete{\cat{A}}$ is
the completion of $\cat{A}$.
\end{lemma}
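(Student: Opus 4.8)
The plan is to verify the three conditions of Definition~\ref{def:completion} in turn, the first being essentially free and the last two being reduced, via the remark that follows that definition, to a single naturality statement. For condition~(\ref{item:sequential-limit}), observe that $\complete\blank$ is the composite $\cat{A}\xrightarrow{\tau}\lim_r\cat{A}^{<r}\xrightarrow{\lim}\cat{A}$; the functor $\lim$, being a right adjoint, preserves all limits, and $\tau$ preserves sequential limits by hypothesis, so $\complete\blank$ preserves sequential limits. For conditions~(\ref{item:idempotence}) and~(\ref{item:localization}), I would appeal to the remark after Definition~\ref{def:completion}: both hold as soon as the canonical comparison $\tau\eta\colon\tau\to\tau\lim\tau$ is an equivalence of functors $\cat{A}\to\lim_r\cat{A}^{<r}$. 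Thus the whole content of the lemma reduces to proving that $\tau\eta$ is an equivalence.

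Since equivalences in the limit category $\lim_r\cat{A}^{<r}$ are detected componentwise, and the $r$-th component of $\tau\eta_X$ is the map $\eta_X^{<r}\colon X^{<r}\to\complete{X}^{<r}$, I would prove that $\eta_X^{<r}$ is an equivalence for every object $X$ of $\cat{A}$ and every integer $r$. Here $\complete{X}=\lim\tau X=\lim_s X^{<s}$, the limit taken in $\cat{A}$ over the tower $\cdots\to X^{<s+1}\to X^{<s}\to\cdots$, which is a sequential diagram. Applying the hypothesis that $\tau$ preserves sequential limits to this tower, and then evaluating the resulting equivalence at the $r$-th component — using that the evaluation functor $\lim_r\cat{A}^{<r}\to\cat{A}^{<r}$ preserves limits, and that limits in $\cat{A}^{<r}$ coincide with limits in $\cat{A}$ by Lemma~\ref{lem:left-localization-is-closed-under-limits} (applicable because, by Definition~\ref{def:filtration}, the left localization $\blank^{<r}$ has the complementary right localization $\blank_{\ge r}$) — I would obtain $\complete{X}^{<r}\equivwith\lim_s(X^{<s})^{<r}$, with the limit computed in $\cat{A}$.

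It then remains to identify the tower $s\mapsto(X^{<s})^{<r}$. For $s\ge r$ one has $\cat{A}^{<r}\sub\cat{A}^{<s}$, and a short mapping-space computation shows that $\blank^{<r}$ restricts to a left localization $\cat{A}^{<s}\to\cat{A}^{<r}$; hence the composite localization $\cat{A}\to\cat{A}^{<s}\to\cat{A}^{<r}$ is canonically $\blank^{<r}$, which gives a canonical equivalence $(X^{<s})^{<r}\equivwith X^{<r}$ compatible with the tower maps, the latter becoming the identity of $X^{<r}$. So the subdiagram indexed by $s\ge r$ is cofinal and (essentially) constant, whence $\lim_s(X^{<s})^{<r}\equivwith X^{<r}$ with each projection $\complete{X}^{<r}\to X^{<r}$ (at level $s\ge r$) an equivalence. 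Finally, by naturality of $\eta^{<r}\colon\id\to\blank^{<r}$ applied to the structure map $X\to X^{<s}$, that projection is a retraction of $\eta_X^{<r}$; being an equivalence, it forces $\eta_X^{<r}$ to be an equivalence as well. This would complete the argument.

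I expect the one genuinely delicate point to be the middle step: translating the abstract hypothesis ``$\tau$ preserves sequential limits'' into the concrete conclusion that $\blank^{<r}$ carries the limit $\lim_s X^{<s}$ to the corresponding limit computed back in $\cat{A}$. This hinges on two facts I would want to state carefully — that limits in a limit of categories are computed componentwise, and that a reflective subcategory inherits its limits from the ambient category (the latter being exactly Lemma~\ref{lem:left-localization-is-closed-under-limits}) — together with the elementary observation that the restriction of $\blank^{<r}$ to $\cat{A}^{<s}$ is again a localization for $s\ge r$. Everything else is formal: the triangle-identity remark cited above, and the constancy of the subtower over $s\ge r$.
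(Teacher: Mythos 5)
Your argument is correct and shares the paper's skeleton: condition (0) of Definition \ref{def:completion} follows because $\complete\blank=\lim\circ\tau$ with $\lim$ a right adjoint, and conditions (1)--(2) reduce, via the remark after that definition, to showing that $\eta_X^{<r}$ is an equivalence for every $r$ and $X$. You diverge only in how that last point is established. The paper applies the hypothesis to the tower of fibres $(X_{\ge s})_s$: the fibre of $\eta_X$ is $\lim_sX_{\ge s}$, whose image under $\tau$ is $\lim_s\tau X_{\ge s}\equivwith\zero$ because each component tower $s\mapsto X_{\ge s}^{<r}$ is eventually $\zero$; Lemma \ref{lem:local-equivalence} then converts this vanishing into the statement about $\eta_X^{<r}$. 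You instead apply the hypothesis to the tower $(X^{<s})_s$ itself, identify $\complete{X}^{<r}$ with the limit of the eventually constant tower $s\mapsto(X^{<s})^{<r}$, and finish with a retraction argument. These are dual manipulations of the same fibre sequence and of comparable length; yours trades the appeal to Lemma \ref{lem:local-equivalence} for the cofinality and retraction bookkeeping. Both work.

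The point you yourself flag as delicate does need a sharper justification than the one you give. The blanket claim that limits in $\lim_r\cat{A}^{<r}$ are computed componentwise is false in general; the paper says so explicitly in the very next subsubsection (``limits in $\lim_r\cat{A}^{<r}$ is not always objectwise''), and that failure is the reason the uniform-boundedness condition is introduced there. What is true, and what both your proof and the paper's actually use, is that a family of componentwise limit cones which is carried one to the next by the transition functors $\cat{A}^{<r+1}\to\cat{A}^{<r}$ assembles to a limit cone in $\lim_r\cat{A}^{<r}$. In your situation the componentwise limits are $X^{<r}$ with $(X^{<r+1})^{<r}\equivwith X^{<r}$ (in the paper's they are all $\zero$), so this compatibility holds and your argument is sound; just replace the general claim by this statement.
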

\begin{proof}
The condition (\ref{item:sequential-limit}) of
Definition (\ref{def:completion}) is automatic.

To prove the other conditions, it suffices to prove that
$\tau\lim\tau\equivwith\tau$ by the canonical map(s).
Let $X$ be an object of $\cat{A}$.
Then it suffices to prove that for the unit map $\eta\colon
X\to\lim\tau X$, the map $\eta^{<r}$ is an equivalence for every $r$.
By Lemma \ref{lem:local-equivalence}, it suffices to prove that the
fibre $\lim_sX_{\ge s}$ of $\eta$ belongs to $\cat{A}_{\ge\infty}$.

We have $\tau\lim_sX_{\ge s}=\lim_s\tau X_{\ge s}$, so it suffices to
show that this limit is $\zero$.
However, the limit over $s$ of the $r$-th object of $\tau X_{\ge
  s}$ is $\lim_sX_{\ge s}^{<r}\equivwith\zero$ in $\cat{A}^{<r}$, and coincides
with the $r$-th object of $\zero\in\lim_r\cat{A}^{<r}$.
It follows that this $\zero$ is indeed the limit $\lim_s\tau X_{\ge
  s}$.
\end{proof}

\begin{lemma}\label{lem:completion-is-right-localization}
Let $\cat{A}$ be a filtered stable category.
If the functor $\complete\blank\colon\cat{A}\to\cat{A}$ preserves
sequential limits, then $\lim\tau\lim\equivwith\lim$ by the canonical
map(s).
In particular, if $\complete{\cat{A}}$ is the completion of $\cat{A}$,
then $\lim$ lands in $\complete{\cat{A}}$, and will make
$\complete{\cat{A}}$ a right localization of $\lim_r\cat{A}^{<r}$.
\end{lemma}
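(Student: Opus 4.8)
The statement to prove is Lemma~\ref{lem:completion-is-right-localization}: that if $\complete\blank$ preserves sequential limits, then $\lim\tau\lim\equivwith\lim$ canonically, and consequently (when $\complete{\cat A}$ is the completion) that $\lim$ factors through $\complete{\cat A}$ and exhibits it as a right localization of $\lim_r\cat A^{<r}$. The plan is to reduce the first assertion to a computation with the towers, exactly as in the proof of Lemma~\ref{triangles-for-completion}, and then to read off the localization statement formally from the adjunction $\tau\dashv\lim$ together with the already-established description of $\complete{\cat A}$.

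First I would unwind the definitions. For $X=(X_r)_r\in\lim_r\cat A^{<r}$ we have $\lim X=\lim_r X_r$ taken in $\cat A$, and $\complete{\lim X}=\lim\tau\lim X$, so the claim $\lim\tau\lim\equivwith\lim$ is precisely the assertion that $\lim X$ is complete for every such $X$, via the unit map $\eta\colon\lim X\to\complete{\lim X}$. By Lemma~\ref{lem:local-equivalence} (applied to the fibre sequence defining $\eta$, whose fibre is $\lim_s(\lim X)_{\ge s}$), it suffices to show that this fibre lies in $\cat A_{\ge\infty}=\lim_r\cat A_{\ge r}$, i.e. that $(\lim X)^{<r}_{\ge s}$-type pieces vanish appropriately; concretely, that $\lim_s\tau\bigl((\lim X)_{\ge s}\bigr)\equivwith\zero$ in $\lim_r\cat A^{<r}$. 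Here I would use the hypothesis that $\complete\blank$ preserves sequential limits, which is what lets one interchange the limit over $s$ with the truncations and with $\lim$: for each fixed $r$, the $r$-th term of $\tau\bigl((\lim X)_{\ge s}\bigr)$ is $\bigl((\lim X)_{\ge s}\bigr)^{<r}$, and as $s\to\infty$ this computes $\bigl(\lim_s(\lim X)_{\ge s}\bigr)^{<r}$; but $\lim_s(\lim X)_{\ge s}$ is the fibre of the completion map, which (again using that $\complete\blank$ preserves sequential limits, together with Example~\ref{ex:bouded-above-are-complete}) has trivial $r$-truncation. So the limit is $\zero$, exactly paralleling the end of the proof of Lemma~\ref{triangles-for-completion}.

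Granting $\lim\tau\lim\equivwith\lim$, the second assertion is formal. The composite $\lim\tau\lim$ is, up to the canonical equivalence just proved, $\lim$ itself; but $\lim\tau=\complete\blank$ by definition, so $\complete{\lim X}\equivwith\lim X$, i.e. $\lim$ lands in $\complete{\cat A}$. When $\complete{\cat A}$ is the completion of $\cat A$, the inclusion $\complete{\cat A}\into\cat A$ is a right localization with reflector $\complete\blank$; composing the adjunction $\tau\dashv\lim$ (between $\cat A$ and $\lim_r\cat A^{<r}$) with this reflection, and using that $\tau$ restricted to $\complete{\cat A}$ is still left adjoint to the corendered $\lim$, one gets that $\lim\colon\lim_r\cat A^{<r}\to\complete{\cat A}$ is right adjoint to the restriction of $\tau$. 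To see it is a right localization one must check the restricted $\tau$ is fully faithful on $\complete{\cat A}$, equivalently that the counit $\tau\lim\to\id$ on $\lim_r\cat A^{<r}$ becomes an equivalence after applying $\lim$ — but $\lim(\tau\lim\to\id)$ is exactly the canonical map $\lim\tau\lim\to\lim$, which we proved is an equivalence. Hence $\complete{\cat A}$ is a right localization of $\lim_r\cat A^{<r}$.

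**The main obstacle.** The genuinely delicate point is the interchange of limits in the middle step: justifying that $\lim_s$ of the tower $\tau\bigl((\lim X)_{\ge s}\bigr)$ may be computed termwise in each $\cat A^{<r}$ and then identified with the truncation of $\lim_s(\lim X)_{\ge s}$. This is where the hypothesis on $\complete\blank$ does real work, and one has to be careful that all the relevant limits (over $s$, over $r$, the internal limit defining $\lim$, and the various truncations) commute — the bookkeeping is essentially the same as in Lemma~\ref{triangles-for-completion} but threaded through the extra outer $\lim$, so I would organize it by first establishing $\tau\lim_s Y_{\ge s}=\lim_s\tau Y_{\ge s}$ for a general complete-friendly $Y$ and only then specializing $Y=\lim X$.
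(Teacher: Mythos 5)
Your reduction in the second paragraph does not prove the first (and main) assertion. The goal is to show that $\lim X$ is complete, i.e.\ that the unit $\eta\colon\lim X\to\complete{\lim X}$ is an equivalence, which means its fibre $\lim_s(\lim X)_{\ge s}$ must \emph{vanish}. What you propose to show, via Lemma \ref{lem:local-equivalence}, is only that this fibre lies in $\cat{A}_{\ge\infty}$. That yields only that $\eta^{<r}$ is an equivalence for every $r$, i.e.\ that $\tau\eta$ (hence $\complete{\eta}=\lim\tau\eta$) is an equivalence --- which is the content of Lemma \ref{triangles-for-completion}, namely $\tau\lim\tau\equivwith\tau$ --- and not that $\eta$ itself is an equivalence. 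An object whose completion-fibre lies in $\cat{A}_{\ge\infty}$ need not be complete unless $\cat{A}_{\ge\infty}\equivwith\zero$, which is not assumed; in fact, once $\complete{\cat{A}}$ is the completion, \emph{every} object of $\cat{A}$ has this property (this is exactly how the paper shows $\cat{A}_{\ge\infty}$ is the complementary right localization), so your criterion carries no information about $\lim X$ in particular. A secondary gap is the interchange you yourself flag as the main obstacle: the hypothesis is that $\complete{\blank}=\lim\tau$ preserves sequential limits, and this does not by itself let you commute $\lim_s$ past the individual truncations $\blank^{<r}$ or past $\tau$; the paper is explicit that limits in $\lim_r\cat{A}^{<r}$ need not be objectwise, and it deduces ``$\tau$ preserves sequential limits'' from the present hypothesis only after knowing that $\tau$ is a localization (Lemma \ref{lem:tau-localizes}), which requires the stronger boundedness assumption.

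The intended argument is a one-line computation that applies the hypothesis exactly where it is stated. For $X=(X_r)_r$, the object $\lim X=\lim_rX_r$ is a sequential limit in $\cat{A}$, so the hypothesis gives $\complete{\lim_rX_r}\equivwith\lim_r\complete{X_r}$; each $X_r$ lies in $\cat{A}^{<r}$ and is therefore already complete by Example \ref{ex:bouded-above-are-complete}, so this limit is $\lim_rX_r$ again, compatibly with the unit map. This shows directly that $\lim$ lands in $\complete{\cat{A}}$ and that $\lim\tau\lim\equivwith\lim$. Your final paragraph deducing the right-localization statement from this is essentially fine (and can be shortened: fully faithfulness of the restricted $\tau$ is just the statement that the unit is an equivalence on complete objects, which holds by definition of completeness).
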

\begin{proof}
Let $X=(X_r)_r$ be an object of $\lim_r\cat{A}^{<r}$.
Then
\[
\lim\tau\lim X=\operatorname*{\complete\lim}_rX_r
=\lim_r\complete{X}_r=\lim_rX_r=\lim X.
\]
\end{proof}

\subsubsection{}

It would be natural to ask whether completion has a complementary
right localization.
Let us first give a characterization of objects with vanishing
completion.
\begin{lemma}\label{lem:identifying-complement}
Let $\cat{A}$ be a filtered stable category with $\complete{\cat{A}}$
its completion.
Then the completion of an object $X$ of $\cat{A}$ vanishes if and only
if $X$ belongs to $\cat{A}_{\ge\infty}$.
\end{lemma}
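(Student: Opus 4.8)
The plan is to prove both implications by working with the tower and using the characterizations already established. Recall $\cat{A}_{\ge\infty}=\lim_r\cat{A}_{\ge r}=\Intersect_r\cat{A}_{\ge r}$, so $X\in\cat{A}_{\ge\infty}$ exactly when $X^{<r}\equivwith\zero$ for every $r$, i.e.\ when $\tau X\equivwith\zero$ in $\lim_r\cat{A}^{<r}$. Since $\complete{X}=\lim\tau X$ and $\lim$ is a right adjoint (hence preserves the zero object, which $\lim_r\cat{A}^{<r}$ has as each $\cat{A}^{<r}$ does by Lemma \ref{lem:pointedness} applied to the $r$-th localization), the direction $X\in\cat{A}_{\ge\infty}\:\Rightarrow\:\complete{X}\equivwith\zero$ is immediate: $\tau X\equivwith\zero$ forces $\complete{X}=\lim\tau X\equivwith\zero$.

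For the converse, suppose $\complete{X}\equivwith\zero$. First I would reduce to showing $\tau X\equivwith\zero$, i.e.\ $X^{<r}\equivwith\zero$ for all $r$, which is precisely the assertion $X\in\cat{A}_{\ge\infty}$. Now recall from the Remark following Definition \ref{def:completion} that the conditions defining a completion give, for every $r$, that $\eta^{<r}\colon X^{<r}\to\complete{X}^{<r}$ is an equivalence (this is exactly the content that $\tau\lim\tau\equivwith\tau$). Hence $X^{<r}\equivwith\complete{X}^{<r}$. But $\complete{X}\equivwith\zero$, and since $\blank^{<r}\colon\cat{A}\to\cat{A}^{<r}$ is a localization functor it preserves the zero object (Lemma \ref{lem:pointedness} and its Remark), so $\complete{X}^{<r}\equivwith\zero$. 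Therefore $X^{<r}\equivwith\zero$ for every $r$, which says precisely that $X\in\cat{A}_{\ge\infty}$.

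The only subtlety — and the step I would be most careful about — is justifying the equivalence $\eta^{<r}\simeq\complete{X}^{<r}$ cleanly from the definition of completion; this is where one must invoke that $\complete{\cat{A}}$ \emph{is} the completion in the sense of Definition \ref{def:completion}, not merely that $\complete\blank$ is defined, so that the Remark's identity $\tau\lim\tau\equivwith\tau$ is available. Given that, the argument closes: the fibre of $\eta\colon X\to\complete{X}$ is $\lim_sX_{\ge s}\in\cat{A}_{\ge\infty}$ (as in the proof of Lemma \ref{triangles-for-completion}), and when $\complete{X}\equivwith\zero$ this fibre is just $X$ up to the shift, so $X$ itself lies in $\cat{A}_{\ge\infty}$ — consistent with the direct computation above and providing a cross-check.
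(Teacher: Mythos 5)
Your proof is correct. Both directions work: the forward implication is immediate since $X\in\cat{A}_{\ge\infty}$ means $\tau X\equivwith\zero$ and $\complete{X}=\lim\tau X$; the converse correctly uses the identity $\tau\lim\tau\equivwith\tau$ (equivalently, that $\eta^{<r}\colon X^{<r}\to\complete{X}^{<r}$ is an equivalence, as recorded in the Remark after Definition \ref{def:completion}) together with the fact that $\blank^{<r}$ preserves zero objects. The paper's own proof rests on the very same identity but packages it differently: it invokes Lemma \ref{lem:completion-is-right-localization} to identify $\complete{\cat{A}}$ with a right localization of $\lim_r\cat{A}^{<r}$, observes that under this identification $\tau X$ corresponds to $\complete{X}$, and concludes that $\tau X\equivwith\zero$ iff $\complete{X}\equivwith\zero$ all at once; you instead check the vanishing componentwise in each $\cat{A}^{<r}$. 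Your version is slightly more elementary in that it never needs the localization structure on the limit category, only the levelwise equivalences $\eta^{<r}$. One caution about your final ``cross-check'' paragraph: the assertion that $\lim_sX_{\ge s}$ belongs to $\cat{A}_{\ge\infty}$ is, in the paper's logical order, a consequence of Lemma \ref{lem:vanishing-completion}, whose proof cites the present lemma (the argument in Lemma \ref{triangles-for-completion} establishes it only under the extra hypothesis that $\tau$ preserves sequential limits); so that remark should not be leaned on as an independent argument, though as you present it the main proof does not depend on it.
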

\begin{proof}
$X$ belongs to $\cat{A}_{\ge\infty}$ if and only if $\tau
X\equivwith\zero$.
The result then follows from
Lemma \ref{lem:completion-is-right-localization}.
Indeed, $\tau X$ is contained in the full subcategory
of $\lim_r\cat{A}^{<r}$ which by
Lemma~\ref{lem:completion-is-right-localization}, is a right
localization, and is identified with $\complete{\cat{A}}$.
Therefore, $\tau X\equivwith\zero$ in $\lim_r\cat{A}^{<r}$ if and only
if it is so in this full subcategory of $\lim_r\cat{A}^{<r}$.
However, the object of $\complete{\cat{A}}$ corresponding to $\tau X$
under the identification by
Lemma~\ref{lem:completion-is-right-localization}, is
$\complete{X}\in\complete{\cat{A}}$.
\end{proof}

\begin{lemma}\label{lem:vanishing-completion}
Let $\cat{A}$ be a filtered stable category with $\complete{\cat{A}}$
its completion.
Suppose given an inverse system
\[
\cdots\longfrom X_i\longfrom X_{i+1}\longfrom\cdots
\]
in $\cat{A}$, and suppose there is a sequence
$(r_i)_i$ of integers, tending to $\infty$ as $i\to\infty$,
such that $X_i$ belongs to $\cat{A}_{\ge r_i}$ for every $i$.

Then $\lim_iX_i$ belongs to $\cat{A}_{\ge\infty}$.
\end{lemma}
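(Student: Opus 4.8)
The plan is to reduce, via Lemma~\ref{lem:identifying-complement}, to the vanishing of the completion of $\lim_iX_i$, and then to evaluate that completion by interchanging two nested sequential limits. So the first step is: by Lemma~\ref{lem:identifying-complement}, it suffices to prove $\complete{\lim_iX_i}\equivwith\zero$.

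For the second step, note that since $\complete{\cat{A}}$ is the completion of $\cat{A}$, the functor $\complete\blank\colon\cat{A}\to\cat{A}$ preserves sequential limits by condition~(\ref{item:sequential-limit}) of Definition~\ref{def:completion}, so $\complete{\lim_iX_i}\equivwith\lim_i\complete{X_i}$. Unwinding the definition, $\complete{X_i}=\lim\tau X_i=\lim_r(X_i)^{<r}$, and therefore
\[
\complete{\lim_iX_i}\equivwith\lim_i\lim_r(X_i)^{<r}\equivwith\lim_r\lim_i(X_i)^{<r},
\]
where the last equivalence is the standard interchange of iterated limits. Now fix an integer $r$. Since $(r_i)_i$ tends to $\infty$, there is an $N$ with $r_i\ge r$ for all $i\ge N$, and then $X_i\in\cat{A}_{\ge r_i}\sub\cat{A}_{\ge r}$, so that $(X_i)^{<r}\equivwith\zero$. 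Since the tail $\{\,i\ge N\,\}$ of the tower is cofinal, it follows that $\lim_i(X_i)^{<r}\equivwith\lim_{i\ge N}(X_i)^{<r}\equivwith\zero$. Hence $\complete{\lim_iX_i}\equivwith\lim_r\zero\equivwith\zero$, which completes the proof by the first step.

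The argument is essentially routine, and I do not anticipate a genuine obstacle. The two points deserving brief care are the interchange of the two sequential limits, which holds in any category since both orders compute the limit over the product diagram shape, and the replacement of the tower by a cofinal tail; all the substance is carried by Lemma~\ref{lem:identifying-complement} together with the facts that the completion functor preserves sequential limits and is computed as $\lim\tau$. It is worth noting that this route avoids having to know that each $\cat{A}_{\ge r}$ is closed under sequential limits, which is not among our hypotheses.
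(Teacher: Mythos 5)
Your proof is correct and is essentially identical to the paper's: both reduce via Lemma \ref{lem:identifying-complement} to showing the completion of $\lim_iX_i$ vanishes, then use condition (\ref{item:sequential-limit}) of Definition \ref{def:completion} to write $\complete{\lim_iX_i}\equivwith\lim_i\complete{X}_i\equivwith\lim_r\lim_iX_i^{<r}\equivwith\lim_r\zero\equivwith\zero$. You have simply spelled out the limit interchange and the cofinality step that the paper leaves implicit.
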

\begin{proof}
From the previous lemma, it suffices to prove that its completion
vanishes.
However,
\[
\operatorname*{\complete\lim}_iX_i=\lim_i\complete{X}_i=\lim_r\lim_iX_i^{<r}\equivwith\lim_r\zero=\zero.
\]
\end{proof}

\begin{proposition}\label{prop:complement-of-completion}
Let $\cat{A}$ be a filtered stable category with $\complete{\cat{A}}$
its completion.
Then the full subcategory $\cat{A}_{\ge\infty}$ of $\cat{A}$ is a
right localization complementary to the left localization
$\complete{\cat{A}}$.
\end{proposition}
\begin{proof}
It suffices to show that completion has a complementary right
localization, since the right localization will then be identified
with $\cat{A}_{\ge\infty}$ by Lemma \ref{lem:identifying-complement}.
Existence of the complement follows from
Proposition \ref{prop:complementary-localization} and
Lemma \ref{lem:vanishing-completion} since the fibre of the completion
map is $\lim_rX_{\ge r}$.
\end{proof}

\begin{corollary}\label{cor:complete-limit}
Let $\cat{A}$ be a filtered stable category with $\complete{\cat{A}}$
its completion.
Then a limit of complete objects is complete.
In particular, a limit of bounded above objects is complete.
\end{corollary}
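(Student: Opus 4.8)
The plan is to deduce the statement immediately from the Proposition just proved together with Lemma \ref{lem:left-localization-is-closed-under-limits}. By that Proposition, the full subcategory $\cat{A}_{\ge\infty}$ is a right localization of $\cat{A}$ complementary to the left localization $\complete{\cat{A}}$; in particular $\complete{\cat{A}}$ is a left localization which admits a complementary right localization. Lemma \ref{lem:left-localization-is-closed-under-limits} then says precisely that $\complete{\cat{A}}$ is closed in $\cat{A}$ under any limit which exists in $\cat{A}$. Since an object being complete means exactly that it lies in the full subcategory $\complete{\cat{A}}$, a limit formed in $\cat{A}$ of a diagram of complete objects is again complete. This is the first assertion.

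For the second assertion I would invoke Example \ref{ex:bouded-above-are-complete}, according to which $\cat{A}^{<r}\subseteq\complete{\cat{A}}$ for every $r$, so that every bounded above object is complete; the claim then follows from the first assertion. I would also note that the relevant limits do exist: for instance the sequential limit $\lim_r X^{<r}$ which computes $\complete X$ exists because $\cat{A}$, being a filtered stable category, has all sequential limits by definition.

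I do not anticipate a genuine obstacle. The only point requiring care is purely a matter of reading: ``a limit of complete objects'' is to be understood as the limit computed in $\cat{A}$, and the content of the corollary is that this object is complete — there is no existence claim beyond what the ambient category already supplies. All the substance has already been established, namely that completion has a complementary right localization (which rests on Proposition \ref{prop:complementary-localization} and Lemma \ref{lem:vanishing-completion}) and that left localizations with a complementary right localization are closed under limits (Lemma \ref{lem:left-localization-is-closed-under-limits}).
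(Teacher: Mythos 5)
Your proposal is correct and is essentially the paper's own argument: the preceding Proposition makes $\complete{\cat{A}}$ a left localization with complementary right localization $\cat{A}_{\ge\infty}$, and Lemma \ref{lem:left-localization-is-closed-under-limits} then gives closure under limits, with Example \ref{ex:bouded-above-are-complete} handling the bounded above case. No gaps.
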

\begin{proof}
This follows from Proposition~\ref{prop:complement-of-completion} and
Lemma \ref{lem:left-localization-is-closed-under-limits}.
\end{proof}

\begin{corollary}\label{cor:equivalence-of-limits-after-completion}
Let $\cat{A}$ be a filtered stable category with $\complete{\cat{A}}$
its completion.

Suppose given a map of inverse systems
\[\begin{tikzcd}
\cdots&X_i\arrow{l}\arrow{d}[swap]{f_i}&X_{i+1}\arrow{l}\arrow{d}{f_{i+1}}&\cdots\arrow{l}\\
\cdots&Y_i\arrow{l}&Y_{i+1}\arrow{l}&\cdots\arrow{l}
\end{tikzcd}\]
in $\cat{A}$, and suppose there is a sequence
$(r_i)_i$ of integers, tending to $\infty$ as $i\to\infty$,
such that the fibre of $f_i$ belongs to $\cat{A}_{\ge r_i}$ for every
$i$.

Then the map $\lim_if_i\colon\lim_iX_i\to\lim_iY_i$ is an equivalence
after completion.
\end{corollary}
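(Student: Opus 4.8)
The plan is to deduce this from Lemma \ref{lem:vanishing-completion} by passing to the fibres of the vertical maps. First I would form, for each $i$, the fibre sequence
\[
F_i\longto X_i\longto Y_i
\]
where $F_i:=\Fibre[f_i]$, and observe that the connecting maps $F_{i+1}\to F_i$ assemble these into an inverse system, so that taking the limit over $i$ (which exists since $\cat{A}$ has sequential limits, and which commutes with finite limits in the stable category $\cat{A}$) yields a fibre sequence
\[
\lim_iF_i\longto\lim_iX_i\longto\lim_iY_i.
\]
By hypothesis $F_i\in\cat{A}_{\ge r_i}$ with $r_i\to\infty$, so Lemma \ref{lem:vanishing-completion} applies to the system $(F_i)_i$ and gives $\lim_iF_i\in\cat{A}_{\ge\infty}$.

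Next I would apply the completion functor $\complete\blank\colon\cat{A}\to\cat{A}$ to this fibre sequence. Since $\complete{\cat{A}}$ is the completion of $\cat{A}$, the functor $\complete\blank$ is (up to the localization) exact enough for our purpose: more directly, being a left localization functor onto $\complete{\cat{A}}$, and $\complete{\cat{A}}$ being a stable category in which $\complete\blank$ preserves the relevant finite limits — or simply by noting that $\complete\blank$ is computed as $\lim\tau$ with $\tau$ a right adjoint composed with a left adjoint and all the functors involved exact — we get that
\[
\complete{\lim_iF_i}\longto\complete{\lim_iX_i}\longto\complete{\lim_iY_i}
\]
is again a fibre sequence. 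But $\lim_iF_i\in\cat{A}_{\ge\infty}$, so by Lemma \ref{lem:identifying-complement} its completion vanishes, whence the map $\complete{\lim_iX_i}\to\complete{\lim_iY_i}$ is an equivalence. This is exactly the assertion that $\lim_if_i$ is an equivalence after completion.

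The main obstacle I anticipate is the exactness of the completion functor used in the second step: one must be slightly careful that applying $\complete\blank$ to a fibre sequence in $\cat{A}$ yields a fibre sequence in $\cat{A}$. This follows because $\complete\blank=\lim\circ\tau$, where $\tau\colon\cat{A}\to\lim_r\cat{A}^{<r}$ is a composite of exact left-localization functors (so preserves finite limits, these being computed levelwise in $\lim_r\cat{A}^{<r}$), and $\lim$ is a right adjoint (so preserves finite limits as well). Alternatively one can avoid this altogether by arguing directly with the fibre sequence $\lim_iF_i\to\lim_iX_i\to\lim_iY_i$ already established in the first step: since $\lim_iF_i\in\cat{A}_{\ge\infty}$, Corollary \ref{cor:characterizing-localizations} — applied to the complementary localizations $\complete\blank$ and its complement, whose objects are exactly $\cat{A}_{\ge\infty}$ by Lemma \ref{lem:identifying-complement} — identifies the map $\lim_iX_i\to\lim_iY_i$ after completion with an equivalence. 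Either route closes the argument; the first is cleaner to write, so that is the one I would present.
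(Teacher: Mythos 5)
Your proposal is correct and is essentially the paper's own argument: the fibre of $\lim_if_i$ is $\lim_i\Fibre(f_i)$, which lies in $\cat{A}_{\ge\infty}$ by Lemma \ref{lem:vanishing-completion}, and Lemma \ref{lem:local-equivalence} (your second route) then finishes. One caution about the route you say you would actually present: the localizations $\blank^{<r}$ are not assumed exact in this setting, so your justification that $\tau$ preserves finite limits is unsupported; the clean fix is to note that in the stable category $\cat{A}$ the fibre sequence is also a cofibre sequence, which the left localization $\complete{\blank}$ preserves automatically --- which is exactly what Lemma \ref{lem:local-equivalence} packages, so your alternative route is the one to keep.
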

\begin{proof}
This follows from Lemma~\ref{lem:vanishing-completion},
Proposition~\ref{prop:complement-of-completion} and Lemma
\ref{lem:local-equivalence}.
\end{proof}

\subsubsection{}

In practice, it may not be clear when $\tau$ preserves
sequential limits, since limits in $\lim_r\cat{A}^{<r}$ is not always
objectwise.
The following condition will lead to the same conclusions on the
completion, but involves only the sequential limits in $\cat{A}$.
\begin{definition}\label{def:bounded-limit}
Let $\cat{A}$ be a filtered stable category which is closed under the
sequential limit.
Then we say that \kore{sequential limits are uniformly bounded} in
$\cat{A}$ if there exists an integer $d$ such that for every
integer $r$, and for every inverse sequence in the full subcategory
$\cat{A}_{\ge r}$ of $\cat{A}$, the limit of the sequence taken in
$\cat{A}$, belongs to $\cat{A}_{\ge r+d}$.
We refer to such $d$ as a \kore{uniform lower bound} for sequential
limits in $\cat{A}$.
\end{definition}

\begin{remark}\label{rem:bounded-limit}
$\cat{A}$ is assumed to have finite limits and sequential
limits, so it has countable products at least, and if sequential
limits are uniformly bounded, then so are countable products in the
similar sense.
In the case where the filtration is given by a t-structure, if
countable products in $\cat{A}$ are uniformly bounded below by $b$,
then the familiar computation of a sequential limit in terms of
countable products by Milnor shows that sequential limits will be
bounded by $b-1$.

In the case of Goodwillie's filtration
(Example \ref{ex:goodwillie-filtration}), sequential limits are
bounded below by $0$ assuming that the object-wise sequential limits
exist.
\end{remark}

However, it turns out that in order to prove that $\complete{\cat{A}}$
is the completion of $\cat{A}$ in this case, one necessarily proves
that the functor $\tau$ preserves limits as well.
Namely, we have the following two lemmas.

\begin{lemma}\label{lem:tau-localizes}
Let $\cat{A}$ be a filtered stable category with uniformly bounded
sequential limits.
Then $\tau$ is a left localization.
In other words, the functor $\lim\colon\lim_r\cat{A}^{<r}\to\cat{A}$
lands in $\complete{\cat{A}}$, and induces an equivalence
$\lim_r\cat{A}^{<r}\equivto\complete{\cat{A}}$.
\end{lemma}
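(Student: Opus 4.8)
The plan is to prove the statement by showing that the right adjoint $\lim\colon\lim_r\cat{A}^{<r}\to\cat{A}$ of $\tau$ is fully faithful, equivalently that the counit $\epsilon\colon\tau\lim\to\id$ of the adjunction is an equivalence. This yields everything at once: it makes $\tau$ a left localization by definition; it shows $\lim$ lands in $\complete{\cat{A}}$, since for $X\in\lim_r\cat{A}^{<r}$ we get $\complete{\lim X}=\lim\tau\lim X\equivwith\lim X$ from $\tau\lim X\equivwith X$; and the corestriction $\lim_r\cat{A}^{<r}\to\complete{\cat{A}}$ is then fully faithful and also essentially surjective, because every complete object $Y$ satisfies $Y\equivwith\complete{Y}=\lim\tau Y$, hence is an equivalence, with inverse the corestriction of $\tau$.

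To check the counit, fix $X=(X_r)_r$ in $\lim_r\cat{A}^{<r}$, so that $\lim X=\lim_sX_s$ computed in $\cat{A}$ and $\tau\lim X=\bigl((\lim_sX_s)^{<r}\bigr)_r$. Since a map in $\lim_r\cat{A}^{<r}$ is an equivalence as soon as its image in each $\cat{A}^{<r}$ is, it suffices to fix $r$ and show that the canonical map $(\lim_sX_s)^{<r}\to X_r$ (the component of $\epsilon_X$, coming from the projection $\lim_sX_s\to X_r$, which factors through the localization since $X_r\in\cat{A}^{<r}$) is an equivalence. By cofinality of $\{s\ge r\}$ we may replace $\lim_sX_s$ by $\lim_{s\ge r}X_s$. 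For each $s\ge r$ we have the fibre sequence $(X_s)_{\ge r}\to X_s\to X_s^{<r}$, and the datum of $X$ provides a compatible identification $X_s^{<r}\equivwith X_r$. Taking the limit over $s\ge r$, and using that finite limits commute with sequential limits, we obtain a fibre sequence
\[
\lim_{s\ge r}(X_s)_{\ge r}\longto\lim_sX_s\longto X_r
\]
in $\cat{A}$, in which the last term is $X_r$ because the system $(X_s^{<r})_{s\ge r}$ is the constant system on $X_r$.

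Now the one place the hypothesis is used: each $(X_s)_{\ge r}$ lies in $\cat{A}_{\ge r}$, so by uniform boundedness of sequential limits $\lim_{s\ge r}(X_s)_{\ge r}$ lies in $\cat{A}_{\ge r+d}\subseteq\cat{A}_{\ge r}$ (taking $d\ge 0$). Applying Lemma \ref{lem:local-equivalence} to the above as a cofibre sequence, the map $(\lim_sX_s)^{<r}\to(X_r)^{<r}$ is an equivalence; since $X_r\in\cat{A}^{<r}$, the target is $X_r$, and this is exactly the component of $\epsilon_X$ we wanted, completing the argument.

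The main obstacle is precisely the step just highlighted: one must check that forming the sequential limit of the family of fibre sequences produces a genuine fibre sequence whose fibre term remains in $\cat{A}_{\ge r}$. This combines the commutation of sequential limits with finite limits, the identification of the quotient system $(X_s^{<r})_{s\ge r}$ with the constant system on $X_r$, and — crucially — the uniform-boundedness hypothesis, since $\cat{A}_{\ge r}$ is not a priori closed under sequential limits taken in $\cat{A}$ (being a right localization, it is only guaranteed closed under colimits). Everything else is formal manipulation of the adjunction $\tau\dashv\lim$ and the fibre sequences $\blank_{\ge r}\to\id\to\blank^{<r}$.
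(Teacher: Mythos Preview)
Your overall strategy matches the paper's: show the counit $\varepsilon\colon\tau\lim\to\id$ is an equivalence componentwise by producing, for each $r$, a fibre sequence whose fibre lies in $\cat{A}_{\ge r}$ and whose cofibre is $X_r$. However, there is a genuine gap in the step where you invoke the hypothesis. You assert that $\lim_{s\ge r}(X_s)_{\ge r}$ lies in $\cat{A}_{\ge r+d}\subseteq\cat{A}_{\ge r}$ ``(taking $d\ge 0$)''. But Definition~\ref{def:bounded-limit} only guarantees \emph{some} finite integer $d$, and in the central examples one has $d<0$ (e.g.\ for a t-structure, Remark~\ref{rem:bounded-limit} gives $d=b-1$, typically $-1$). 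When $d<0$ the inclusion reverses: $\cat{A}_{\ge r+d}\supsetneq\cat{A}_{\ge r}$, so your fibre is only known to lie in $\cat{A}_{\ge r+d}$, and Lemma~\ref{lem:local-equivalence} does not yield an equivalence after applying $\blank^{<r}$.

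The fix, which is precisely what the paper does, is to shift the truncation level by $-d$. Use instead the fibre sequence
\[
\lim_s(X_s)_{\ge r-d}\longto\lim_sX_s\longto\lim_s(X_s)^{<r-d}=X_{r-d},
\]
where the last identification is your own observation that the tower $(X_s^{<r-d})_s$ is eventually constant at $X_{r-d}$. Now the fibre lies in $\cat{A}_{\ge(r-d)+d}=\cat{A}_{\ge r}$ by the hypothesis, so Lemma~\ref{lem:local-equivalence} gives that $(\lim_sX_s)^{<r}\to(X_{r-d})^{<r}=X_r$ is an equivalence, which is the component of $\varepsilon_X$ you want. Everything else in your write-up is correct and essentially identical to the paper's argument.
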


\begin{lemma}
In the case $\tau$ is a left localization functor,
$\tau$ preserves sequential limits if and only if
$\complete\blank\colon\cat{A}\to\cat{A}$ preserves sequential limits.
\end{lemma}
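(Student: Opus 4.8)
The plan is to prove the equivalence by comparing the two conditions through the characterization of completeness in terms of the tower $(X^{<r})_r$, together with the hypothesis that $\tau$ is a left localization (Lemma \ref{lem:tau-localizes}).

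First I would recall that when $\tau\colon\cat{A}\to\lim_r\cat{A}^{<r}$ is a left localization with right adjoint $\lim$, the full subcategory $\complete{\cat{A}}$ is identified with $\lim_r\cat{A}^{<r}$ via $\lim$, and the completion functor $\complete\blank\equivwith\lim\tau$ is, up to this identification, simply $\tau$ followed by the inclusion. Hence $\complete\blank$ preserving sequential limits is equivalent to $\lim\circ\tau$ preserving them. Since $\lim$ is a right adjoint it always preserves limits, and since it is moreover fully faithful (being the inclusion of a localization), $\lim\circ\tau$ preserves a given sequential limit if and only if $\tau$ does. That gives the ``only if'' direction for free, and one half of ``if'': if $\tau$ preserves sequential limits then so does $\lim\circ\tau\equivwith\complete\blank$.

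For the remaining direction, suppose $\complete\blank$ preserves sequential limits; I want to deduce the same for $\tau$. Given an inverse sequence $(X_i)_i$ in $\cat{A}$ with limit $X=\lim_iX_i$ taken in $\cat{A}$, I must show $\tau X\equivwith\lim_i\tau X_i$ in $\lim_r\cat{A}^{<r}$. Applying the inclusion $\lim$ (which preserves and reflects limits by full faithfulness), this is equivalent to showing $\complete{X}\equivwith\lim\bigl(\lim_i\tau X_i\bigr)$. Now $\lim\bigl(\lim_i\tau X_i\bigr)\equivwith\lim_i\lim\tau X_i=\lim_i\complete{X_i}$ since $\lim$ is a right adjoint, and $\lim_i\complete{X_i}\equivwith\complete{\lim_iX_i}=\complete{X}$ precisely by the hypothesis that $\complete\blank$ preserves the sequential limit $\lim_iX_i$. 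Chaining these equivalences and using that $\lim$ reflects equivalences yields $\tau X\equivwith\lim_i\tau X_i$, as desired.

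The main point to be careful about is the bookkeeping around the identification $\complete{\cat{A}}\equivwith\lim_r\cat{A}^{<r}$ supplied by Lemma \ref{lem:tau-localizes}: one must check that under it the functor $\lim$ really is (equivalent to) the inclusion $\complete{\cat{A}}\into\cat{A}$ composed with nothing extra, so that ``$\lim$ is fully faithful'' and ``$\lim$ reflects limits'' may be invoked, and that $\complete\blank=\lim\circ\tau$ holds on the nose as functors $\cat{A}\to\cat{A}$ rather than merely pointwise. Once that is in place, the argument is a formal manipulation of adjunctions and has no further obstacle; the uniform boundedness hypothesis enters only through its role in establishing Lemma \ref{lem:tau-localizes} and is not needed again here.
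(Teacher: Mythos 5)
Your proof is correct and follows essentially the same route as the paper's: the paper's entire argument is that, under the equivalence $\lim\colon\lim_r\cat{A}^{<r}\equivto\complete{\cat{A}}$ supplied by Lemma \ref{lem:tau-localizes}, the functor $\tau$ is identified with $\complete{\blank}$, and you have simply unwound this identification into explicit adjunction bookkeeping (right adjoints preserve limits, fully faithful right adjoints reflect them). The extra care you take about $\complete{\blank}=\lim\circ\tau$ and about $\lim$ being the inclusion is exactly what the paper's one-line proof leaves implicit.
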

\begin{proof}[Proof assuming Lemma~\ref{lem:tau-localizes}]
Through the identification of $\lim_r\cat{A}^{<r}$ with
$\complete{\cat{A}}$ by the equivalence  $\lim$, $\tau$ gets
identified with $\complete\blank\colon\cat{A}\to\complete{\cat{A}}$.
\end{proof}

\begin{proof}[Proof of Lemma~\ref{lem:tau-localizes}]
It suffices to prove that the counit $\varepsilon\colon\tau\lim\to\id$
of the adjunction is an equivalence.

Let $X=(X_r)_r$ be an object of $\lim_r\cat{A}^{<r}$.
Then the counit for the adjunction is given by
\[
(\lim_sX_s)^{<r}\longrightarrow X_r^{<r}=X_r
\]
for each $r$.

Let $d\le 0$ be a uniform lower bound for sequential limits.
We can apply
Lemma \ref{lem:local-equivalence} to the fibre sequence
\[
\lim_s(X_s)_{\ge r-d}\longto\lim_sX_s\longto\lim_s(X_s)^{<r-d},
\]
where the fibre belongs to $\cat{A}_{\ge r}$, and the cofibre is
$X_{r-d}$.
We get that that the induced map $(\lim X)^{<r}\to
X_{r-d}^{<r}=X_r$ is an equivalence.
\end{proof}

\begin{lemma}\label{lem:preservation-of-bounded-sequential-limits}
Let $\cat{A}$ be a filtered stable category with uniformly bounded
sequential limits.
Then $\complete\blank\colon\cat{A}\to\cat{A}$ preserves sequential
limits.
\end{lemma}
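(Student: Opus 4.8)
The plan is to recognise the comparison map $\complete{\lim_iY_i}\to\lim_i\complete{Y_i}$ as the limit over $r$ of maps whose fibres sit in filtration degree tending to $\infty$, so that after passing to the limit the fibre is both complete and in $\cat{A}_{\ge\infty}$, hence $\zero$. Fix an inverse sequence $(Y_i)_i$ and set $Y:=\lim_iY_i$. Using $\complete{Z}=\lim_rZ^{<r}$ (limits in $\cat{A}$) and interchanging limits, write $\complete{Y}=\lim_rY^{<r}$ and $\lim_i\complete{Y_i}\equivwith\lim_r\lim_iY_i^{<r}$. For each $r$, the object $\lim_iY_i^{<r}$ lies in $\cat{A}^{<r}$ because $\cat{A}^{<r}$ is closed under limits in $\cat{A}$ (Lemma \ref{lem:left-localization-is-closed-under-limits}), so the canonical map $Y\to\lim_iY_i^{<r}$ factors uniquely through the unit $\unit\colon Y\to Y^{<r}$ by a map $\phi_r\colon Y^{<r}\to\lim_iY_i^{<r}$. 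These $\phi_r$ form a map of towers over $r$, and a routine check with the universal properties involved identifies its limit $\psi:=\lim_r\phi_r$ with the canonical comparison map $\complete{Y}\to\lim_i\complete{Y_i}$. It therefore suffices to show that $\psi$ is an equivalence.

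First I would bound $\Fibre[\phi_r]$. Applying the fibre sequence $\Fibre[f]\to\Fibre[g\circ f]\to\Fibre[g]$ to $Y\xrightarrow{\unit}Y^{<r}\xrightarrow{\phi_r}\lim_iY_i^{<r}$ (using $\phi_r\circ\unit=$ the canonical map, by construction) identifies $\Fibre[\phi_r]$ with the cofibre of the canonical map $Y_{\ge r}\to\lim_i(Y_i)_{\ge r}$. Now $Y_{\ge r}\in\cat{A}_{\ge r}$, while by uniform boundedness, applied to the tower $\bigl((Y_i)_{\ge r}\bigr)_i$ in $\cat{A}_{\ge r}$, we get $\lim_i(Y_i)_{\ge r}\in\cat{A}_{\ge r+d}$ for a fixed integer $d$, which we may take $\le0$; since $\cat{A}_{\ge r+d}$ is closed under colimits (dual of Lemma \ref{lem:left-localization-is-closed-under-limits}), it follows that $\Fibre[\phi_r]\in\cat{A}_{\ge r+d}$.

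Hence $\Fibre[\psi]=\lim_r\Fibre[\phi_r]$, and for any $s$, choosing $N\ge s-2d$ and restricting to the coinitial subtower $r\ge N$, all terms $\Fibre[\phi_r]$ lie in $\cat{A}_{\ge N+d}$, so uniform boundedness gives $\Fibre[\psi]\in\cat{A}_{\ge N+2d}\sub\cat{A}_{\ge s}$. As $s$ is arbitrary, $\Fibre[\psi]\in\cat{A}_{\ge\infty}$, whence $\complete{(\Fibre[\psi])}=\lim_r\Fibre[\psi]^{<r}=\zero$. On the other hand, by Lemma \ref{lem:tau-localizes} the functor $\lim$ identifies $\lim_r\cat{A}^{<r}$ with $\complete{\cat{A}}$, so $\complete{\cat{A}}$ is a reflective subcategory of $\cat{A}$ and is therefore closed under limits in $\cat{A}$; in particular $\complete{Y}$ and $\lim_i\complete{Y_i}$ are complete, and so then is their fibre $\Fibre[\psi]$. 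A complete object whose completion vanishes is $\zero$, so $\Fibre[\psi]\equivwith\zero$ and $\psi$ is an equivalence, as desired.

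The conceptual heart is the observation that the error term $\Fibre[\phi_r]$, being concentrated in filtration degree $\ge r+d$, disappears in the complete limit. The main obstacle is bookkeeping rather than ideas: one must carefully identify $\psi$ with the canonical comparison map, and keep the two uses of uniform boundedness (once for $\lim_i(Y_i)_{\ge r}$, once for $\lim_r\Fibre[\phi_r]$) and the attendant shift in the index $r$ straight. No further structural input is needed beyond Lemma \ref{lem:tau-localizes}, which supplies the reflectivity of $\complete{\cat{A}}$ that we are not yet entitled to take for granted at this point.
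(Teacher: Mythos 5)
Your proof is correct. It rests on the same key estimate as the paper's — uniform boundedness applied to the tower $\bigl((Y_i)_{\ge r}\bigr)_i$, so that $\lim_iY_i$ and $\lim_iY_i^{<r}$ differ only by something in $\cat{A}_{\ge r+d}$ — but the two arguments are organized differently. The paper uses that estimate once, via Lemma \ref{lem:local-equivalence}, to identify $(\lim_iY_i)^{<r}$ with $(\lim_iY_i^{<r-d})^{<r}$ outright, and then concludes by a diagonal reindexing and interchange of the limits over $r$, $s$ and $i$, needing nothing beyond closure of $\cat{A}^{<s}$ under limits. You instead assemble the comparison map as a limit of maps $\phi_r\colon Y^{<r}\to\lim_iY_i^{<r}$, bound each $\Fibre[\phi_r]$ as a cofibre of objects of $\cat{A}_{\ge r+d}$, and apply uniform boundedness a second time to the tower of fibres to land in $\cat{A}_{\ge\infty}$, finishing by the observation that a \emph{complete} object of $\cat{A}_{\ge\infty}$ is $\zero$. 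This costs one extra input — Lemma \ref{lem:tau-localizes}, which supplies reflectivity of $\complete{\cat{A}}$ and hence its closure under limits, so that $\Fibre[\psi]$ is complete — and you correctly note that this is available and non-circular at this point in the paper. What it buys is a more transparent error-term analysis that avoids the somewhat terse cofinality manipulation in the paper's proof, at the price of the second application of the uniform bound and the attendant $2d$ shift.
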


\begin{proof}
Let
\[
\cdots\longfrom X_i\longfrom X_{i+1}\longfrom\cdots
\]
be a sequence in $\cat{A}$.
Then
\[
(\lim_iX_i)^{<r}=(\lim_iX_i^{<r-d})^{<r}.
\]
The limit of this as $r\to\infty$ can then be computed as
$\lim_s\lim_r(\lim_iX_i^{<s})^{<r}$, but $\lim_iX_i^{<s}$ belongs to
$\cat{A}^{<s}$ by
Lemma~\ref{lem:left-localization-is-closed-under-limits}, so
\[
\lim_r(\lim_iX_i^{<s})^{<r}=\lim_iX_i^{<s}.
\]
Now $\lim_s\lim_iX_i^{<s}=\lim_i\complete{X}_i$, so we have proved
that $\operatorname*{\complete\lim}_iX_i=\lim_i\complete{X}_i$ as
desired.
\end{proof}

We have proved the following.
\begin{proposition}\label{prop:complete-with-bounded-sequential-limits}
Let $\cat{A}$ be a filtered stable category with uniformly bounded
sequential limits.
Then $\complete{\cat{A}}$ is the completion of $\cat{A}$, with
complementary right localization $\cat{A}_{\ge\infty}$ as a full
subcategory of $\cat{A}$.
\end{proposition}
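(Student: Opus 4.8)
The plan is to assemble Proposition \ref{prop:complete-with-bounded-sequential-limits} from the three lemmas immediately preceding it, together with the earlier results on complementary localizations. The statement has two parts: first, that $\complete{\cat{A}}$ is the completion of $\cat{A}$ (i.e.\ satisfies the three conditions of Definition \ref{def:completion}), and second, that $\cat{A}_{\ge\infty}$ is the complementary right localization. I would handle them in that order.

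For the first part, recall that Definition \ref{def:completion} demands (\ref{item:sequential-limit}) $\complete\blank$ preserves sequential limits, (\ref{item:idempotence}) $\complete\blank$ lands in $\complete{\cat{A}}$, and (\ref{item:localization}) $\eta\colon\id\to\complete\blank$ exhibits $\complete\blank$ as a left localization onto $\complete{\cat{A}}$. Condition (\ref{item:sequential-limit}) is precisely Lemma \ref{lem:preservation-of-bounded-sequential-limits}. For (\ref{item:idempotence}) and (\ref{item:localization}), I would invoke Lemma \ref{lem:tau-localizes}, which (using uniform boundedness) shows $\tau$ is a left localization with $\lim$ inducing an equivalence $\lim_r\cat{A}^{<r}\equivto\complete{\cat{A}}$; hence the composite $\lim\tau=\complete\blank$ is a left localization onto its essential image $\complete{\cat{A}}$, with unit $\eta$, which gives both remaining conditions at once. (Alternatively, one can note that Lemma \ref{lem:preservation-of-bounded-sequential-limits} verifies the hypothesis of Lemma \ref{triangles-for-completion}'s conclusion indirectly, but going through Lemma \ref{lem:tau-localizes} is cleaner since it is stated under exactly our hypothesis.)

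For the second part, once $\complete{\cat{A}}$ is known to be the completion of $\cat{A}$, the Proposition in the Completion subsection already identifies $\cat{A}_{\ge\infty}$ as the complementary right localization of the left localization $\complete{\cat{A}}$: its proof uses Proposition \ref{prop:complementary-localization} and Lemma \ref{lem:vanishing-completion}, observing that the fibre of the completion map is $\lim_r X_{\ge r}$, which lies in $\cat{A}_{\ge\infty}$ by Lemma \ref{lem:vanishing-completion} applied to the inverse system $(X_{\ge r})_r$ (here $r_i = i \to \infty$). So this half is simply a citation of already-established results, now that the hypothesis ``$\complete{\cat{A}}$ is the completion of $\cat{A}$'' has been secured.

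The only point requiring any care — and the place I would expect the main obstacle — is making sure the logical dependency among the three lemmas is respected: Lemma \ref{lem:preservation-of-bounded-sequential-limits} is proved directly from uniform boundedness, whereas the intermediate Lemma (that $\tau$ preserves sequential limits iff $\complete\blank$ does) is proved \emph{assuming} Lemma \ref{lem:tau-localizes}. Since all three have uniform boundedness as a standing hypothesis here, there is no circularity: Lemma \ref{lem:tau-localizes} gives that $\tau$ is a left localization, Lemma \ref{lem:preservation-of-bounded-sequential-limits} gives that $\complete\blank$ preserves sequential limits, and then the intermediate Lemma upgrades this to $\tau$ preserving sequential limits, so $\complete{\cat{A}}$ genuinely satisfies Definition \ref{def:completion}. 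Thus the proof is essentially: ``By Lemmas \ref{lem:tau-localizes} and \ref{lem:preservation-of-bounded-sequential-limits}, $\complete{\cat{A}}$ is the completion of $\cat{A}$; the statement about $\cat{A}_{\ge\infty}$ then follows from the Proposition above and Lemma \ref{lem:vanishing-completion}.''
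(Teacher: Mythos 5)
Your proposal is correct and follows essentially the same route as the paper, which states the Proposition immediately after Lemmas \ref{lem:tau-localizes} and \ref{lem:preservation-of-bounded-sequential-limits} with the remark ``We have proved the following,'' i.e.\ the intended proof is exactly the assembly of those lemmas (together with the intermediate lemma and the unnumbered Proposition identifying $\cat{A}_{\ge\infty}$ as the complementary right localization) that you describe. Your attention to the non-circularity of the dependencies among the three lemmas matches the paper's intended logical order.
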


\begin{corollary}
If sequential limits are uniformly bounded in $\cat{A}$, then
$\cat{A}$ is complete if and only if $\cat{A}_{\ge\infty}\equivwith\zero$.
\end{corollary}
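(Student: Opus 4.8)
The plan is to read this off from Proposition~\ref{prop:complete-with-bounded-sequential-limits}: under the standing hypothesis that proposition already identifies $\complete{\cat{A}}$ as the completion of $\cat{A}$, with the complementary right localization being $\cat{A}_{\ge\infty}$ as a full subcategory of $\cat{A}$. By definition, ``$\cat{A}$ is complete'' means $\complete{\cat{A}}=\cat{A}$, so the statement becomes the formal fact that, for a left localization admitting a complementary right localization, the left localization exhausts $\cat{A}$ exactly when the complementary right localization contains only zero objects.

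First I would treat the forward direction: if $\cat{A}$ is complete and $X\in\cat{A}_{\ge\infty}$, then $\complete{X}\equivwith\zero$ by Lemma~\ref{lem:identifying-complement}, while completeness gives $X\equivwith\complete{X}$, whence $X\equivwith\zero$; so $\cat{A}_{\ge\infty}\equivwith\zero$. For the converse, recall that the fibre of the completion map $\eta\colon X\to\complete{X}$ is $\lim_rX_{\ge r}$, which lies in $\cat{A}_{\ge\infty}$ by Lemma~\ref{lem:vanishing-completion} (this is exactly what was used in showing that $\cat{A}_{\ge\infty}$ complements $\complete{\cat{A}}$). If $\cat{A}_{\ge\infty}\equivwith\zero$ this fibre vanishes, so, by stability of $\cat{A}$, the map $\eta$ is an equivalence for every $X$, and hence every object of $\cat{A}$ is complete.

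I do not anticipate a real obstacle here; the only points needing care are not conflating the two meanings of ``complete'' (an individual object being complete versus the whole category being complete), and correctly identifying the fibre of $\eta$ so that stability turns its vanishing into an equivalence.
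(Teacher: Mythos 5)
Your proof is correct and is exactly the intended argument: the paper gives no proof, leaving the corollary as an immediate consequence of Proposition \ref{prop:complete-with-bounded-sequential-limits}, and your two directions (completeness forces objects of $\cat{A}_{\ge\infty}$ to vanish via Lemma \ref{lem:identifying-complement}; vanishing of $\cat{A}_{\ge\infty}$ kills the fibre $\lim_rX_{\ge r}$ of $\eta$, so stability makes $\eta$ an equivalence) are precisely how one reads it off.
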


\subsection{The completion as a complete category}
\setcounter{subsubsection}{-1}

When $\complete{\cat{A}}$ is the completion of a filtered stable
category $\cat{A}$, then it will be useful if the completion is itself
a complete filtered stable category.
We would like to first consider a sufficient condition for the
completion to be a \emph{stable} category.
We have found a sufficient condition for a general localization in
Proposition~\ref{prop:exact-localization-is-stable}.

\begin{definition}
Let $\cat{A}$ be a filtered stable category with $\complete{\cat{A}}$
its completion.
Then we say that the completion is \kore{exact} if
$\complete{\cat{A}}$ is an exact left localization of $\cat{A}$.
\end{definition}

We shall look for a sufficient condition for the completion to be
exact.

\begin{definition}
Let $\cat{A}$ be a filtered stable category.
An integer $\omega$
is said to be a \kore{uniform lower bound for loops} in $\cat{A}$ if
for every integer $r$, and for every object of the full subcategory
$\cat{A}_{\ge r}$ of $\cat{A}$, its loop in $\cat{A}$ belongs to
$\cat{A}_{\ge r+\omega}$.
We say that \kore{loops are uniformly bounded} in
$\cat{A}$ if loops in $\cat{A}$ have a uniform lower bound.
\end{definition}

\begin{remark}\label{rem:on-bounded-loops}
Loops are uniformly bounded if the action of the
category of (finite) spectra on $\cat{A}$ by tensoring, is compatible
with the filtrations (on the category of spectra and on $\cat{A}$) in
a way similar to (or slightly more general than) the way we consider
in Definition \ref{def:compatible-pairing}.
In this case, the suspension functor raises the filtration, as we
shall consider in Definition \ref{def:translated-filtration}.
\end{remark}

\begin{example}\label{ex:bounded-loop}
$\omega$ can be taken as $-1$ if the filtration is a t-structure on
$\cat{A}$.

$\omega$ can be taken as $0$ for Goodwillie's filtration.
In fact, all localizations are exact in this filtration.
\end{example}

\begin{remark}\label{rem:lower-bound-is-negative}
An integer $\omega\gneq 0$ cannot be a uniform lower bound for loops
unless $\cat{A}_{\ge r}$ for all $r$ are the same subcategory of
$\cat{A}$.
Indeed, $\Loop^{-1}=\susp$ maps $\cat{A}_{\ge r}$ into $\cat{A}_{\ge
  r}$ by Lemma~\ref{lem:left-localization-is-closed-under-limits}.
\end{remark}

\begin{lemma}\label{lem:finite-limit-bounded}
Let $\cat{A}$ be a filtered stable category.
If loops are uniformly bounded in $\cat{A}$, then for any finite
category $K$, limits of $K$-shaped diagrams are uniformly bounded in
$\cat{A}$.
\end{lemma}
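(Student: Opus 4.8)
The plan is to deduce the statement from the way finite limits are built out of two elementary operations---finite products and pullbacks---together with the fibre version of the bound on loops.

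First I would fix a uniform bound $\omega$ for loops, which we may as well take to satisfy $\omega\le 0$, and recall from Remark \ref{rem:on-bounded-loops} that $\omega$ is then simultaneously a uniform bound for fibres: the fibre in $\cat{A}$ of any map between two objects of $\cat{A}_{\ge r}$ lies in $\cat{A}_{\ge r+\omega}$. I would also record that $\cat{A}_{\ge r}$ contains the zero object of $\cat{A}$ (Lemma \ref{lem:pointedness}) and is closed in $\cat{A}$ under finite colimits, by the dual of Lemma \ref{lem:left-localization-is-closed-under-limits}; since finite products agree with finite coproducts in the stable category $\cat{A}$, this means $\cat{A}_{\ge r}$ is also closed under finite products. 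Consequently the terminal object and finite products do not disturb the filtration, whereas a pullback $X\times_Z Y$ formed from objects $X,Y,Z\in\cat{A}_{\ge r}$ is the fibre of a map $X\oplus Y\to Z$ and therefore lands in $\cat{A}_{\ge r+\omega}$.

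Next, for a finite category $K$ I would invoke the standard fact that the limit of a $K$-shaped diagram in a category with finite limits may be computed by a finite string of operations---each one a finite product or a pullback---of length $N=N(K)$ depending on $K$ alone (the usual cell-by-cell generation of finite limits by pullbacks over the terminal object); and $\cat{A}$, being stable, admits all such limits. Running an induction along such a presentation of $\lim_K$, and using $\omega\le 0$ so that $\cat{A}_{\ge r}\subseteq\cat{A}_{\ge r+j\omega}$ for all $j\ge 0$---which lets one place all the inputs of a given step inside a common subcategory $\cat{A}_{\ge r+j\omega}$---each product step preserves the guaranteed filtration index while each pullback step adds $\omega$ to it. After the $N$ steps one concludes that $\lim_K F$ belongs to $\cat{A}_{\ge r+N\omega}$ whenever the diagram $F$ is valued in $\cat{A}_{\ge r}$, so that $d:=N\omega$ serves as a uniform bound for limits of $K$-shaped diagrams.

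The step I expect to be the main (though not severe) obstacle is the bookkeeping: pinning down a presentation of $\lim_K$ by finitely many products and pullbacks whose length is controlled by $K$, and then carrying the induction so that at each stage the objects fed into the next operation genuinely lie in the subcategory claimed---this is exactly where $\omega\le 0$ enters and where one must keep the direction of the filtration inequalities straight. Beyond the fibre bound of Remark \ref{rem:on-bounded-loops} and the colimit-closure of $\cat{A}_{\ge r}$, no further input is required.
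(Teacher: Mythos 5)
Your proof is correct and follows essentially the same route as the paper, which simply observes that a uniform bound for fibres bounds fibre products and then invokes the decomposition of finite limits into pullbacks and the terminal object from the proof of Corollary 4.4.2.4 of \cite{topos}. Your only (harmless) refinement is handling finite products via closure of $\cat{A}_{\ge r}$ under finite coproducts in the stable category rather than as iterated pullbacks over the terminal object, which slightly improves the numerical bound but changes nothing essential.
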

\begin{proof}
By Corollary \ref{cor:closure-under-extension}, $\omega$ is a uniform
lower
bound for loops if and only if it is a uniform lower bound for
\emph{fibres} in the similar sense.
Indeed, we may assume $\omega\ge 0$ by
Remark~\ref{rem:lower-bound-is-negative}, and if $W\to X\to Y$ is a
fibre sequence in $\cat{A}$, then there is a fibre sequence $\Loop
Y\to W\to X$.

It follows again from Corollary \ref{cor:closure-under-extension},
that the uniform lower bound of fibres more generally bounds fibre
products.

The result now follows from
Corollary~\ref{cor:left-localization-is-closed-under-sum} (applied in
the opposite category) and the arguments of the proof of
Corollary~4.4.2.4 of \cite{topos}.
\end{proof}

\begin{lemma}\label{lem:exact-completion-from-bounded-loop}
Let $\cat{A}$ be a filtered stable category with $\complete{\cat{A}}$
its completion.
If loops are uniformly bounded in $\cat{A}$, then the completion is
exact.
\end{lemma}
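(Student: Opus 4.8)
The plan is to deduce exactness of the completion from exactness of the inclusion of its complement $\cat{A}_{\ge\infty}$, rather than analyzing $\complete{\cat{A}}$ directly. Recall that $\cat{A}_{\ge\infty}$ is the right localization of $\cat{A}$ complementary to $\complete{\cat{A}}$, by the proposition above (which applies since $\complete{\cat{A}}$ is assumed to be the completion of $\cat{A}$). By Lemma \ref{lem:stable-exact-fibration} applied to this complementary pair, it suffices to show that $\cat{A}_{\ge\infty}$ is a stable exact right localization of $\cat{A}$; and for that, by the standard fact recorded in the lemma following Definition \ref{def:stable-exact-localization} (read in the opposite category), it is enough to show that $\cat{A}_{\ge\infty}$ is closed in $\cat{A}$ under finite limits and under finite colimits.

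Closure under finite colimits is immediate and does not use the hypothesis on loops: $\cat{A}_{\ge\infty}$ has the complementary left localization $\complete{\cat{A}}$, so the dual of Lemma \ref{lem:left-localization-is-closed-under-limits} shows it is closed under every colimit that exists in $\cat{A}$. (Equivalently, each $\cat{A}_{\ge r}=\{X : X^{<r}\equivwith\zero\}$ is closed under colimits because $\blank^{<r}$ is a left adjoint, and $\cat{A}_{\ge\infty}=\Intersect_r\cat{A}_{\ge r}$.)

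For closure under finite limits I would bring in the hypothesis through the preceding lemma: since loops are uniformly bounded, for each finite category $K$ there is an integer $d$ such that the limit in $\cat{A}$ of any $K$-shaped diagram valued in $\cat{A}_{\ge r}$ lies in $\cat{A}_{\ge r+d}$, for every $r$. Now let $D\colon K\to\cat{A}_{\ge\infty}$ be a finite diagram. For every integer $r$, $D$ is in particular valued in $\cat{A}_{\ge r}$, so its limit $L$, taken in $\cat{A}$ (which has all finite limits, being stable), lies in $\cat{A}_{\ge r+d}$. Since $d$ is fixed while $r$ ranges over all integers, $L\in\Intersect_r\cat{A}_{\ge r+d}=\cat{A}_{\ge\infty}$. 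Hence $\cat{A}_{\ge\infty}$ is closed under finite limits in $\cat{A}$, and combined with the previous paragraph and the reductions above, this gives that $\complete\blank\colon\cat{A}\to\cat{A}$ is exact.

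The only point requiring real care is the last step: it is the \emph{uniformity} of the bound $d$ over all $r$, for a fixed diagram shape, that renders the shift by $d$ harmless after intersecting over $r$ — a non-uniform bound would break the argument, which is exactly why the hypothesis is invoked through the preceding lemma. Everything else is assembling results already established, so I anticipate no further obstacle.
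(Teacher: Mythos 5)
Your third paragraph --- closure of $\cat{A}_{\ge\infty}$ under finite limits via the uniform bound $d$ for $K$-shaped limits and the intersection over all $r$ --- is exactly the paper's argument, and that closure is in fact all the statement needs. The gap is in your opening reduction. You reduce the claim to ``$\cat{A}_{\ge\infty}$ is a stable exact right localization'' and assert that, by the lemma following Definition \ref{def:stable-exact-localization} read in the opposite category, closure of $\cat{A}_{\ge\infty}$ under finite limits and finite colimits suffices for that. It does not: that lemma only yields that $\cat{A}_{\ge\infty}$ is a stable subcategory whose \emph{inclusion} is exact. By the paper's definition (see the corollary immediately after that lemma), stable exactness additionally requires the localization functor $\blank_{\ge\infty}\colon\cat{A}\to\cat{A}_{\ge\infty}$ to be exact, which by Proposition \ref{prop:limit-in-localization} applied to finite colimits is equivalent to $\complete{\cat{A}}$ being closed under finite colimits in $\cat{A}$ --- a condition on the \emph{other} half of the complementary pair, which the paper derives only from the separate hypothesis that \emph{suspensions} are uniformly bounded (the next lemma in the text). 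Under ``loops uniformly bounded'' alone your intermediate claim is not established, so the chain of reductions as written does not reach the conclusion.

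The repair is immediate and makes the detour unnecessary: Proposition \ref{prop:limit-in-localization}, applied to finite limits, says directly that exactness of the left localization functor $\complete{\blank}$ is equivalent to closure of the complementary right localization $\cat{A}_{\ge\infty}$ under finite limits taken in $\cat{A}$. The closure you prove in your third paragraph therefore already gives the lemma; this is precisely the paper's route. Drop the appeal to Lemma \ref{lem:stable-exact-fibration} and to stable exactness; your closure-under-finite-colimits paragraph, while correct, is not needed here.
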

\begin{proof}
By Propositions~\ref{prop:complement-of-completion} and
\ref{prop:limit-in-localization}, it suffices to prove that the full
subcategory $\cat{A}_{\ge\infty}$ of $\cat{A}$ is closed
under finite limits in $\cat{A}$.

Let $K$ be a finite category, and let $X$ be a $K$-shaped diagram in
the full subcategory $\cat{A}_{\ge\infty}$ of $\cat{A}$.
Then we would like to prove that $\lim_KX$ belongs to
$\cat{A}_{\ge\infty}$.
However, for every $r$, $\lim_KX$ does belongs to $\cat{A}_{\ge r}$
since $X$ is in particular a diagram in $\cat{A}_{\ge r-k}$ for a
uniform bound $k$ of $K$-shaped limits, which exists by
Lemma~\ref{lem:finite-limit-bounded}.
\end{proof}

\begin{definition}
Let $\cat{A}$, $\cat{B}$ be filtered stable categories, and let
$F\colon\cat{A}\to\cat{B}$ be an exact functor.
Then we say that an integer $b$ is a \kore{lower bound} of
$F$ if for every $r$, $F$ takes the full subcategory $\cat{A}_{\ge r}$
of the source to the full subcategory $\cat{B}_{\ge r+b}$ of the
target.

We say that $F$ is \kore{bounded below} if it has a lower bound.

\kore{Upper} bound/boundedness of $F$ is defined as the lower
bound/boundedness of $F\colon\cat{A}^\op\to\cat{A}^\op$ with respect
to the dual filtration on $\cat{A}^\op$
(Remark~\ref{rem:dual-filtration}).
\end{definition}

Thus loops are uniformly bounded in $\cat{A}$ if the functor
$\Loop\colon\cat{A}\to\cat{A}$ is bounded below.
$\Loop$ also has $0$ as an upper bound by
Lemma~\ref{lem:left-localization-is-closed-under-limits}.

We obtain from the following, that a uniform lower bound for loops
also gives an upper bound of the suspension functor.

\begin{lemma}\label{lem:adjoint-preserving-localization}
Let $\cat{A}$, $\cat{B}$ be filtered stable categories, and let
$F\colon\cat{A}\to\cat{B}$ be a functor which has a right adjoint
$G$.
Then an integer $b$ is a lower bound of $F$ if and only if $-b$ is an
upper bound of $G$.
\end{lemma}
\begin{proof}
For an integer $b$, the composite
\[\begin{tikzcd}[column sep=large]
    \cat{A}_{\ge r}\arrow[r, hook]&\cat{A}\arrow[r, "F"]
    &\cat{B}\arrow[r, "\blank^{<r+b}"]&\cat{B}^{<r+b}
  \end{tikzcd}\]
is null if and only if the composite of the right adjoints
\[\begin{tikzcd}[column sep=large]
    \cat{A}_{\ge r}&\cat{A}\arrow[l, "\blank_{\ge r}"']
    &\cat{B}\arrow[l, "G"']&\cat{B}^{<r+b}\arrow[l, hook']
  \end{tikzcd}\]
is null, since either adjoint of a null functor is null.
\end{proof}

We obtain the following as a by-product.

\begin{proof}[Alternative proof of
  Lemma~\ref{lem:exact-completion-from-bounded-loop}]
By Proposition~\ref{prop:limit-in-localization} and (the dual case
of) Proposition~\ref{prop:exact-localization-is-stable}, it suffices
to prove that the full subcategory $\complete{\cat{A}}$ of $\cat{A}$
is closed under finite colimits in $\cat{A}$.

Let $K$ be a finite category, and let $X$ be a diagram in the full
subcategory $\complete{\cat{A}}$ of $\cat{A}$.
Then we would like to prove that $\colim_KX$ is complete.

However, $\colim_KX=\lim_r\colim_KX^{<r}$, and $\colim_KX^{<r}$
belongs to $\cat{A}^{<r+k}$ for a uniform bound $k$ of $K$-shaped
colimits, which exists by
Lemma~\ref{lem:adjoint-preserving-localization} and
Lemma~\ref{lem:finite-limit-bounded}, applied in the opposite
category.
The result follows from Corollary \ref{cor:complete-limit}.
\end{proof}

\begin{proposition}\label{prop:filtration-of-completion}
Let $\cat{A}$ be a filtered stable category with $\complete{\cat{A}}$
its completion.
If the completion is exact, then the canonical tower
\[
\complete{\cat{A}}\longto\cdots\longto\cat{A}^{<r}\longto\cat{A}^{<r-1}\longto\cdots
\]
makes $\complete{\cat{A}}$ into a complete filtered stable category.
\end{proposition}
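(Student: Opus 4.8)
The plan is to transport the filtration on $\cat{A}$ to $\complete{\cat{A}}$ along the completion functor $\complete\blank$ — which, since the completion is stable exact, is an exact left localization onto a stable category — and then to observe that completeness of $\complete{\cat{A}}$ with respect to the resulting filtration is almost a tautology. As $\complete{\cat{A}}$ is stable and, by the lemma preceding Lemma~\ref{triangles-for-completion}, has all sequential limits, it suffices to equip it with a filtration in the sense of Definition~\ref{def:filtration} whose tower is the displayed one, and then to check completeness.

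For each $r$ I would take the left localization $\blank^{<r}$ of $\complete{\cat{A}}$ to be the restriction of $\blank^{<r}\colon\cat{A}\to\cat{A}^{<r}$. This makes sense because $\cat{A}^{<r}\sub\complete{\cat{A}}$ (Example~\ref{ex:bouded-above-are-complete}) and the localization map $X\to X^{<r}$ lies in $\complete{\cat{A}}$ for $X\in\complete{\cat{A}}$; and it is still a left localization, i.e.\ left adjoint to $\cat{A}^{<r}\into\complete{\cat{A}}$, because $\Map_{\complete{\cat{A}}}(X,Y)=\Map_{\cat{A}}(X,Y)=\Map_{\cat{A}}(X^{<r},Y)$ for $Y\in\cat{A}^{<r}$, $\complete{\cat{A}}$ being a full subcategory. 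To obtain the complementary right localization I would invoke Proposition~\ref{prop:complementary-localization}: since the inclusion $\complete{\cat{A}}\into\cat{A}$ is exact and $\complete{\cat{A}}$ is closed under finite limits in $\cat{A}$ (Corollary~\ref{cor:complete-limit}), the fibre of $\unit\colon X\to X^{<r}$ computed in $\complete{\cat{A}}$ is the object $X_{\ge r}$ of $\cat{A}$, which is complete — being a finite limit of the complete objects $X$ and $X^{<r}$ — and lies in $\cat{A}_{\ge r}$, hence satisfies $(X_{\ge r})^{<r}\equivwith\zero$. Therefore $\blank^{<r}$ on $\complete{\cat{A}}$ has a complementary right localization onto $\complete{\cat{A}}_{\ge r}:=\cat{A}_{\ge r}\intersect\complete{\cat{A}}$, and the inclusions $\complete{\cat{A}}_{\ge r+1}\sub\complete{\cat{A}}_{\ge r}$ are inherited from $\cat{A}$; these data constitute a filtration on $\complete{\cat{A}}$ with the asserted tower.

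Finally, to see that $\complete{\cat{A}}$ is complete for this filtration, the point is that the inclusion $\complete{\cat{A}}\into\cat{A}$, being a right adjoint, preserves limits, while $\complete{\cat{A}}$ is closed under sequential limits in $\cat{A}$ by Corollary~\ref{cor:complete-limit}; so for $X\in\complete{\cat{A}}$ the completion of $X$ formed inside $\complete{\cat{A}}$, namely $\lim_rX^{<r}$, agrees with the completion $\complete{X}$ of $X$ in $\cat{A}$, and the unit map for the new adjunction is the original completion map $\eta\colon X\to\complete{X}$, which is an equivalence because $X$ is complete. Hence every object of $\complete{\cat{A}}$ is complete.

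The one place demanding care — the main obstacle in bookkeeping rather than in depth — is the recurring claim that fibres and sequential limits computed internally to $\complete{\cat{A}}$ agree with those computed in $\cat{A}$, which is exactly where stable exactness of the completion (closure under finite limits, exactness of the inclusion) and Corollary~\ref{cor:complete-limit} are used; getting this right is what makes Proposition~\ref{prop:complementary-localization} applicable verbatim and the two notions of completion literally the same object.
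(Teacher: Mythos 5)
Your proposal is correct and follows essentially the same route as the paper's own (much terser) proof: restrict the localizations $\blank^{<r}$ to $\complete{\cat{A}}$, identify the complementary right localizations with $\cat{A}_{\ge r}\intersect\complete{\cat{A}}$, and observe that completeness is inherited because sequential limits and the towers $X^{<r}$ agree in $\complete{\cat{A}}$ and in $\cat{A}$. The details you supply — fullness giving the adjunction, Corollary \ref{cor:complete-limit} ensuring the fibre $X_{\ge r}$ stays in $\complete{\cat{A}}$ so that Proposition \ref{prop:complementary-localization} applies, and closure under sequential limits identifying the two completions — are exactly the points the paper leaves implicit.
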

\begin{proof}
As we have remarked in Example \ref{ex:bouded-above-are-complete}, for
every $r$, $\cat{A}^{<r}\sub\complete{\cat{A}}$ as full subcategories
of $\cat{A}$.
It follows that the restriction to $\complete{\cat{A}}$ of the
localization functor $\cat{A}\to\cat{A}^{<r}$ is a left localization.
A complementary right localization to this is given by
$\cat{A}_{\ge r}\intersect\complete{\cat{A}}$.

In order to verify that $\complete{\cat{A}}$ is complete with respect
to this filtration, let $X$ be an object of $\complete{\cat{A}}$.
Then in $\cat{A}$, we have that the canonical map
\begin{equation}\label{eq:completion-map-of-complete-object}
X\longto\lim_rX^{<r}
\end{equation}
is an equivalence, where the limit is taken in $\cat{A}$.
It follows that this limit, since it consequently belongs to
$\complete{\cat{A}}$, is also a limit in $\complete{\cat{A}}$ of the
same sequence.
(Alternatively, one could apply
Proposition~\ref{prop:complement-of-completion} and
Lemma~\ref{lem:left-localization-is-closed-under-limits}.)
Therefore, the equivalence
(\ref{eq:completion-map-of-complete-object}) shows that $X$ is
complete with respect to our filtration of $\complete{\cat{A}}$, and
this verifies the completeness of $\complete{\cat{A}}$
(Definition~\ref{def:completion}).
\end{proof}

\begin{lemma}\label{lem:bound-of-limits-in-completion}
Let $\cat{A}$ be a filtered stable category with $\complete{\cat{A}}$
its exact completion.
Then any class of limits which exist in $\cat{A}$ (and therefore also
in $\complete{\cat{A}}$ by
Lemma \ref{lem:left-localization-is-closed-under-limits}) and are
uniformly bounded, have the same uniform lower bound in
$\complete{\cat{A}}$.
\end{lemma}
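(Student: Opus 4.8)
The plan is to reduce the statement to two facts already in hand: that limits in $\complete{\cat{A}}$ are computed as limits in $\cat{A}$, and that the filtration pieces of $\complete{\cat{A}}$ are literally the intersections with $\complete{\cat{A}}$ of the filtration pieces of $\cat{A}$. Once both are in place, the bound transfers with essentially no work.

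First I would recall that, since $\complete{\cat{A}}\subset\cat{A}$ is the inclusion of a left localization (the completion), Lemma \ref{lem:left-localization-is-closed-under-limits} says that $\complete{\cat{A}}$ is closed in $\cat{A}$ under any limit which exists in $\cat{A}$; hence for a diagram lying in $\complete{\cat{A}}$, its limit taken in $\cat{A}$ already lies in $\complete{\cat{A}}$ and is a limit there. This is exactly the parenthetical in the statement, and it lets me freely identify ``the limit in $\complete{\cat{A}}$'' with ``the limit in $\cat{A}$'' for the diagrams in question. Next I would invoke the description of the filtration of $\complete{\cat{A}}$ obtained in the proof of Proposition \ref{prop:filtration-of-completion}: the $r$-th piece $(\complete{\cat{A}})_{\ge r}$ is precisely $\cat{A}_{\ge r}\intersect\complete{\cat{A}}$, viewed as a full subcategory of $\cat{A}$.

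Then the argument is immediate. Let $d$ be a uniform bound in $\cat{A}$ for the given class of limits, i.e.\ for every $r$ the limit of any diagram in the class lying in $\cat{A}_{\ge r}$ lies in $\cat{A}_{\ge r+d}$. Given a diagram $X$ in the class lying in $(\complete{\cat{A}})_{\ge r}=\cat{A}_{\ge r}\intersect\complete{\cat{A}}$, it is in particular a diagram in $\cat{A}_{\ge r}$, so its limit lies in $\cat{A}_{\ge r+d}$; and it is a diagram in $\complete{\cat{A}}$, so (by Lemma \ref{lem:left-localization-is-closed-under-limits} again, or by Corollary \ref{cor:complete-limit}) its limit is complete. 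Therefore the limit lies in $\cat{A}_{\ge r+d}\intersect\complete{\cat{A}}=(\complete{\cat{A}})_{\ge r+d}$, which shows that $d$ is a uniform bound for the class in $\complete{\cat{A}}$.

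The only point needing care — really bookkeeping rather than a genuine obstacle — is ensuring that the two filtrations are compatible in this strong, ``intersection'' sense, so that membership in a filtration piece passes between $\cat{A}$ and $\complete{\cat{A}}$ with no localization correction. That is precisely what stable exactness of the completion buys us (it is what makes Proposition \ref{prop:filtration-of-completion} go through), and it is part of the hypothesis, so nothing further is required there.
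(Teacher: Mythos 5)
Your proof is correct and follows essentially the same route as the paper's: identify limits in $\complete{\cat{A}}$ with limits taken in $\cat{A}$ via Lemma \ref{lem:left-localization-is-closed-under-limits}, and use that $\complete{\cat{A}}_{\ge r}=\cat{A}_{\ge r}\intersect\complete{\cat{A}}$ to transfer the bound. The paper's proof is just a terser statement of exactly these two points.
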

\begin{proof}
Lemma \ref{lem:left-localization-is-closed-under-limits} in fact
states that $\complete{\cat{A}}$ is closed under the limits which
exists in $\cat{A}$.
The result follows since the full subcategory $\complete{\cat{A}}_{\ge
  r}$ in the filtration of $\complete{\cat{A}}$ is just $\cat{A}_{\ge
  r}\intersect\complete{\cat{A}}$ as a full subcategory of $\cat{A}$.
\end{proof}

\subsection{Totalization}\label{sec:totalization}
\setcounter{subsubsection}{-1}

\subsubsection{}
 
In this section, we shall prove a technical result which will be very
useful for our study of the Koszul duality.

Let $\Simpface$ denote the subcategory of the
category $\Simp$ of combinatorial simplices, where only face maps
(maps \emph{strictly} preserving the order of vertices) are included.
A covariant functor $X^\dot\colon\Simpface\to\cat{A}$ is a
cosimplicial object `without degeneracies' of $\cat{A}$.
Its \emph{totalization} $\Tot X^\dot$ is by definition, the limit over
$\Simpface$ of the diagram $X^\dot$.

\begin{proposition}\label{prop:tatalization-preserved}
Let $\cat{A}$, $\cat{B}$ be filtered stable categories which have
sequential limits, and let
$F\colon\cat{A}\to\cat{B}$ be an exact functor which is bounded below.
Assume that loops and sequential limits are uniformly bounded in
$\cat{A}$, and $\complete{\cat{B}}$ is the completion of $\cat{B}$.

Let $X^\dot\colon\Simpface\to\cat{A}$ be such that there exists a
sequence $r=(r_n)_n$ of integers, tending to $\infty$ as $n\to\infty$,
such that for a uniform lower bound $\omega$ for loops, and for every
$n$, $X^n$ belongs to $\cat{A}_{\ge-\omega n+r_n}$.
Then the canonical map
\[
F(\Tot X^\dot)\longto\Tot FX^\dot
\]
is an equivalence after completion.
\end{proposition}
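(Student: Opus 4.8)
The plan is to reduce the totalization (a limit over the infinite category $\Simpface$) to a sequential limit of finite partial totalizations, so that the hypotheses on preservation of finite limits and sequential limits by $F$ and by completion can be brought to bear. Concretely, for each $N$ let $\tot_{\le N}X^\dot$ denote the limit over the full subcategory $\Simpface_{\le N}\sub\Simpface$ spanned by $[0],\dots,[N]$, so that $\tot X^\dot=\lim_N\tot_{\le N}X^\dot$ and, since each $\Simpface_{\le N}$ is a finite category, $F$ (being exact, hence finite-limit preserving) satisfies $F(\tot_{\le N}X^\dot)\equivwith\tot_{\le N}FX^\dot$. Thus the canonical map $F(\tot X^\dot)\to\tot FX^\dot$ is the map $\lim_N F(\tot_{\le N}X^\dot)\to\lim_N\tot_{\le N}FX^\dot$ induced by a map of inverse sequences which is an equivalence in each degree once we commute $F$ past the \emph{finite} limit; the only discrepancy is that the left-hand side is $F\bigl(\lim_N\tot_{\le N}X^\dot\bigr)$ rather than $\lim_N F(\tot_{\le N}X^\dot)$. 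So the statement to prove is exactly that $F$ commutes with this particular sequential limit \emph{after completion}.

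**Next I would** estimate the connectivity of the fibres of the tower $(\tot_{\le N}X^\dot)_N$. The fibre of $\tot_{\le N}X^\dot\to\tot_{\le N-1}X^\dot$ is built, by the standard cof/fibre sequence analysis of skeletal filtrations of cosimplicial objects, from $\Loop^N$ (up to a shift — an $N$-fold loop, since the $N$-th latching/matching contributes in cohomological degree $N$) of (a retract of) $X^N$. Using the hypothesis that $X^N\in\cat{A}_{\ge-\omega N+r_n}$ together with $\omega$ being a uniform bound for loops, $\Loop^N X^N$ lands in $\cat{A}_{\ge N\omega+(-\omega N+r_N)}=\cat{A}_{\ge r_N}$; by Remark \ref{rem:on-bounded-loops} and Corollary \ref{cor:closure-under-extension} the fibre of $\tot_{\le N}X^\dot\to\tot_{\le N-1}X^\dot$ therefore belongs to $\cat{A}_{\ge r_N}$ as well, and $r_N\to\infty$. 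Hence by Lemma \ref{lem:vanishing-completion} (and its relative version, Corollary \ref{cor:equivalence-of-limits-after-completion}) the tower $(\tot_{\le N}X^\dot)_N$ is, after completion, controlled: in particular $\tot_{\le N}X^\dot\to\tot_{\le N-1}X^\dot$ has fibre in $\cat{A}_{\ge r_N}$ with $r_N\to\infty$. The same estimate applies to $FX^\dot$: since $F$ is exact and bounded below by some $b$, the fibre of $\tot_{\le N}FX^\dot\to\tot_{\le N-1}FX^\dot$ is $F$ of the corresponding fibre for $X^\dot$, hence lies in $\cat{B}_{\ge r_N+b}$, again with $r_N+b\to\infty$.

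**The key step**, then, is: $F(\tot X^\dot)\to\tot_{\le N}FX^\dot$ factors through $F(\tot_{\le N}X^\dot)\equivwith\tot_{\le N}FX^\dot$, and I claim $F(\tot X^\dot)\to F(\tot_{\le N}X^\dot)$ and $\tot FX^\dot\to\tot_{\le N}FX^\dot$ both have fibres of connectivity tending to $\infty$ with $N$. For the second this is immediate from the fibre estimate just made together with Remark \ref{rem:on-bounded-loops}/Corollary \ref{cor:closure-under-extension}, which show the fibre of $\tot FX^\dot\to\tot_{\le N}FX^\dot$ (an iterated/limiting extension of the fibres in degrees $>N$) lies in $\cat{B}_{\ge\inf_{m>N}(r_m+b)}$. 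For the first: the fibre of $\tot X^\dot\to\tot_{\le N}X^\dot$ lies in $\cat{A}_{\ge\inf_{m>N}r_m-d'}$ for a uniform bound $d'$ coming from the sequential-limit bound (here I use that sequential limits are uniformly bounded in $\cat{A}$ to pass from the termwise estimate on the tower fibres to an estimate on the limiting fibre, applying Lemma \ref{lem:vanishing-completion}-type reasoning, or more directly Corollary \ref{cor:equivalence-of-limits-after-completion}); applying the lower bound $b$ of $F$, the fibre of $F(\tot X^\dot)\to F(\tot_{\le N}X^\dot)$ lies in $\cat{B}_{\ge\inf_{m>N}r_m-d'+b}$. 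Both of these lower bounds tend to $\infty$, so by Lemma \ref{lem:vanishing-completion} the map $F(\tot X^\dot)\to\tot FX^\dot$ is a limit of equivalences-up-to-highly-connected-fibre, hence an equivalence after completion by Corollary \ref{cor:equivalence-of-limits-after-completion} (together with Lemma \ref{lem:local-equivalence}).

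**The main obstacle** I anticipate is bookkeeping the connectivity shift in the skeletal filtration of the (degeneracy-free) cosimplicial object correctly — verifying that the $N$-th stage genuinely contributes an $N$-fold loop of $X^N$ and not some other shift, and that the passage ``fibre of an infinite inverse limit from termwise fibre estimates'' is legitimate given only a \emph{uniform} bound on sequential limits rather than exactness. Both are handled by the results already in this section (Corollary \ref{cor:equivalence-of-limits-after-completion}, the uniform-bound hypotheses, Remark \ref{rem:on-bounded-loops}), but assembling the inequalities so that every relevant lower bound provably diverges is where the care is needed; nothing deeper than the lemmas already proved should be required.
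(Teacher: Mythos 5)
Your proof is correct and follows essentially the same route as the paper's: filter $\Simpface$ by the finite subcategories $\Simpface^{\le N}$, identify the fibre of $\sk_N\tot\to\sk_{N-1}\tot$ with $\Loop^NX^N\in\cat{A}_{\ge r_N}$, use exactness of $F$ to commute it past the finite partial totalizations, and conclude via the uniform bounds and Corollary \ref{cor:equivalence-of-limits-after-completion}. The only cosmetic difference is that the paper estimates the single composite fibre $\Fibre[F(\tot X^\dot)\to\sk_N\tot FX^\dot]$ directly rather than bounding the two legs separately.
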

\begin{proof}
According to the sequence of full subcategories
\[
\Simpface\:\includes\:\cdots\:\includes\:\Simpface^{\le
  n}\:\includes\:\Simpface^{\le n-1}\:\includes\:\cdots,
\]
where objects of $\Simpface^{\le n}$ are simplices of dimension at
most $n$, we have the sequence
\[
\Tot X^\dot\longto\cdots\longto\sk_n\Tot X^\dot\longto\sk_{n-1}\Tot
X^\dot\longto\cdots
\]
such that $\Tot X^\dot=\lim_n\sk_n\Tot X^\dot$, where ``$\sk_n\Tot$''
is a single symbol representing the operation of taking the limit over
$\Simpface^{\le n}$.

It is standard that the fibre of the map $\sk_n\Tot
X^\dot\to\sk_{n-1}\Tot X^\dot$ is equivalent to $\Loop^nX^n$.
It follows from our assumption that this belongs to $\cat{A}_{\ge
  r_n}$.
It follows that the fibre of the map $\Tot X^\dot\to\sk_n\Tot X^\dot$
belongs to $\cat{A}_{\ge r_n+d}$ for $d$ a uniform lower bound for
sequential limits.

It follows that the fibre of the map $F(\Tot X^\dot)\to\sk_n\Tot
FX^\dot$ belongs to $\cat{B}_{\ge r_n+d+b}$ for a bound $b$ of
$F$.
By taking the limit over $n$, we obtain the result from
Corollary \ref{cor:equivalence-of-limits-after-completion}.
\end{proof}

\subsubsection{}

Similarly, we can consider simplicial objects `without degeneracies'
and their geometric realizations.

\begin{proposition}\label{prop:realization-preserved}
Let $\cat{A}$, $\cat{B}$ be filtered stable categories, and let
$F\colon\cat{A}\to\cat{B}$ be an exact functor which is bounded below.
Assume that $\complete{\cat{B}}$ is the completion of $\cat{B}$.
Let $X_\dot\colon\Simpface^\op\to\cat{A}$ be such that there exists a
sequence $r=(r_n)_n$ of integers, tending to $\infty$ as $n\to\infty$,
such that for every $n$, $X^n$ belongs to $\cat{A}_{\ge r_n}$.
Then the canonical map
\[
|FX_\dot|\longto F|X_\dot|
\]
is an equivalence after completion.
\end{proposition}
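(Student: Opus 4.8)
The plan is to run the argument of Proposition~\ref{prop:tatalization-preserved} in the dual direction. The point is that, unlike limits, sequential colimits and suspensions are \emph{automatically} compatible with a bounded-below filtration, so that no uniform bound on suspensions or on colimits is needed, which is why the hypotheses here are weaker than there.

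First I would replace the given sequence $(r_n)_n$ by $r_n':=\min_{m\ge n}r_m$; this is non-decreasing, still tends to $\infty$, and still satisfies $X^n\in\cat{A}_{\ge r_n'}$, so we may assume $(r_n)_n$ is non-decreasing. Along the exhaustion of $\Simpface^\op$ by the subcategories $(\Simpface^{\le n})^\op$ of simplices of dimension $\le n$ we get the skeletal filtration
\[
\zero\longto\cdots\longto\sk_{n-1}|X^\dot|\longto\sk_n|X^\dot|\longto\cdots,\qquad|X^\dot|=\colim_n\sk_n|X^\dot|,
\]
where, dually to the standard fact invoked in Proposition~\ref{prop:tatalization-preserved} (contract $\Delta^n$ and collapse $\partial\Delta^n\equivwith S^{n-1}$), the cofibre of $\sk_{n-1}|X^\dot|\to\sk_n|X^\dot|$ is $\susp^nX^n$. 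Since each $\cat{A}_{\ge r}$ is a right localization with a complementary left localization (Definition~\ref{def:filtration}), it is closed in $\cat{A}$ under every colimit existing in $\cat{A}$ (the dual of Lemma~\ref{lem:left-localization-is-closed-under-limits}), in particular under suspension and under sequential colimits, and it is closed under extensions (Corollary~\ref{cor:closure-under-extension}). Hence $\susp^nX^n\in\cat{A}_{\ge r_n}$; writing the cofibre of $\sk_n|X^\dot|\to|X^\dot|$ as $\colim_{m\ge n}$ of the cofibres of $\sk_n|X^\dot|\to\sk_m|X^\dot|$, each of which is a finite iterated extension of the $\susp^jX^j$ with $n<j\le m$, all lying in $\cat{A}_{\ge r_{n+1}}$ by monotonicity, I conclude that the cofibre of $\sk_n|X^\dot|\to|X^\dot|$ lies in $\cat{A}_{\ge r_{n+1}}$.

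Now I would apply $F$. As $F$ is exact it commutes with $\susp$ and with finite colimits, so $\sk_n|FX^\dot|\equivwith F\sk_n|X^\dot|$ and $|FX^\dot|=\colim_nF\sk_n|X^\dot|$; rerunning the previous paragraph for $FX^\dot$ and using that $F$ is bounded below, say by $b$ (so $FX^j\in\cat{B}_{\ge r_j+b}$), the cofibre of $\sk_n|FX^\dot|\to|FX^\dot|$ lies in $\cat{B}_{\ge r_{n+1}+b}$. On the other hand the cofibre of the single map $\sk_n|FX^\dot|=F\sk_n|X^\dot|\to F|X^\dot|$ is $F$ applied to the cofibre of $\sk_n|X^\dot|\to|X^\dot|$ (exactness preserves the cofibre of a map), hence also lies in $\cat{B}_{\ge r_{n+1}+b}$. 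The comparison map $|FX^\dot|\to F|X^\dot|$ lies under $\sk_n|FX^\dot|$, so there is a cofibre sequence relating the cofibre of $\sk_n|FX^\dot|\to|FX^\dot|$, the cofibre of $\sk_n|FX^\dot|\to F|X^\dot|$, and the cofibre of $|FX^\dot|\to F|X^\dot|$; since $\cat{B}_{\ge r_{n+1}+b}$ is closed under cofibres, the last of these lies in $\cat{B}_{\ge r_{n+1}+b}$, for every $n$. As $r_{n+1}+b\to\infty$, the cofibre of $|FX^\dot|\to F|X^\dot|$ lies in $\Intersect_r\cat{B}_{\ge r}=\cat{B}_{\ge\infty}$; hence so does the fibre of that map ($\cat{B}_{\ge\infty}$ being closed under loops), and Lemma~\ref{lem:local-equivalence}, applied to the completion localization — whose complementary right localization is $\cat{B}_{\ge\infty}$, since $\complete{\cat{B}}$ is the completion of $\cat{B}$ — gives that $|FX^\dot|\to F|X^\dot|$ is an equivalence after completion. (Alternatively one may cite Lemma~\ref{lem:identifying-complement} and Corollary~\ref{cor:equivalence-of-limits-after-completion} at this last step.)

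The step requiring the most care is the passage through $F$: since $|X^\dot|$ is an \emph{infinite} colimit, $F$ need not preserve it, so one cannot simply identify $|FX^\dot|$ with $F|X^\dot|$; the argument instead shows that the two cofibres attached to $\sk_n|FX^\dot|$ — one mapping to $|FX^\dot|$, the other to $F|X^\dot|$ — land in the \emph{same} bounded-below subcategory $\cat{B}_{\ge r_{n+1}+b}$, which pins down the cofibre of the comparison map there as well. Apart from this, the only real content is index bookkeeping (the reduction to monotone $(r_n)_n$, the single shift by $b$ from $F$, and the fact that the suspensions and sequential colimits incur \emph{no} further shift, in contrast to the loops and sequential limits of Proposition~\ref{prop:tatalization-preserved}); everything else is formal from closure of the $\cat{A}_{\ge r}$ and $\cat{B}_{\ge r}$ under colimits and extensions.
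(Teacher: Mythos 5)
Your argument is exactly the one the paper intends: the paper's own proof consists of the single observation that each $\cat{B}_{\ge r}$ is closed under all colimits (the dual of Lemma \ref{lem:left-localization-is-closed-under-limits}), and your skeletal filtration, the identification $\sk_n|FX^\dot|\equivwith F\sk_n|X^\dot|$ via exactness of $F$, and the octahedral comparison of the two cofibres attached to $\sk_n|FX^\dot|$ are the right way to flesh that observation out. The monotonicity reduction and the single shift by $b$ are also correct.

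The one step I would push back on is the parenthetical ``$\cat{B}_{\ge\infty}$ being closed under loops''. You need it because Lemma \ref{lem:local-equivalence} takes as input a cofibre sequence whose \emph{first} term (the fibre of the map being localized) is deep, whereas your skeletal argument naturally controls the \emph{cofibre} of the comparison map; converting one into the other applies $\Loop$. Nothing in the stated hypotheses gives closure of $\cat{B}_{\ge\infty}=\Intersect_r\cat{B}_{\ge r}$ under $\Loop$: each $\cat{B}_{\ge r}$ is closed under colimits and under extensions, but not under loops (that is precisely what the ``uniformly bounded loops'' condition is for), and the intersection inherits only those closure properties. The same difficulty reappears if you instead argue that $\complete{\Cofib(f)}\equivwith\zero$ forces $\complete{f}$ to be an equivalence: that needs the completion to be an exact localization, which the paper only establishes under a loop bound. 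The claim does hold as soon as loops are uniformly bounded in $\cat{B}$ --- if $\omega$ is a uniform bound and $C\in\cat{B}_{\ge r}$ for every $r$, then $\Loop C\in\cat{B}_{\ge r+\omega}$ for every $r$, hence $\Loop C\in\cat{B}_{\ge\infty}$ --- and that hypothesis is available in every application of this proposition in the paper; but it is not among the hypotheses as stated (in contrast to Proposition \ref{prop:tatalization-preserved}, which does assume bounded loops). So either import such a hypothesis on $\cat{B}$ or supply a different justification for this last step; note that the paper's one-sentence proof is silent on the point, so this is a gap you share with the source rather than one you introduced.
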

\begin{proof}
The proof of this is simpler.
One simply notes that the full subcategory $\cat{A}_{\ge r}$ of
$\cat{A}$ is closed under any colimit by
Lemma~\ref{lem:left-localization-is-closed-under-limits}, and
similarly in $\cat{B}$.
It follows that the fibre of the map in question belongs to
$\cat{B}_{\ge \infty}$, and we conclude by applying
Proposition~\ref{prop:complement-of-completion} and
Lemma~\ref{lem:local-equivalence}.
\end{proof}

\section{Monoidal filtered stable category}
\label{sec:monoidal-filtered}
\setcounter{subsection}{-1}
\setcounter{equation}{-1}
\subsection{Pairing in filtered stable categories}
\setcounter{subsubsection}{-1}

\begin{definition}
Let $\cat{A}$, $\cat{B}$, $\cat{C}$ be stable categories.
Then, a \kore{pairing} from $\cat{A}$, $\cat{B}$ to $\cat{C}$ is a
functor
\[
\pairing{\;{,}\;}\colon\cat{A}\times\cat{B}\longto\cat{C}
\]
which is exact in each variable.
\end{definition}

\begin{definition}\label{def:compatible-pairing}
Let $\cat{A}$, $\cat{B}$, $\cat{C}$ be filtered stable categories.
Then, a pairing
$\pairing{\;{,}\;}\colon\cat{A}\times\cat{B}\to\cat{C}$ is
\kore{compatible} with the filtrations if for any $r$, $s$, it takes
$\cat{A}_{\ge r}\times\cat{B}_{\ge s}$ to $\cat{C}_{\ge r+s}$.
\end{definition}

\begin{example}
Let $\cat{S}$ denote the stable category of finite spectra, filtered
by connectivity.
Let $\cat{A}$ be another stable category with a t-structure.
Then the pairing $\cat{S}\times\cat{A}\to\cat{A}$ given by tensoring
is compatible with the filtrations.
\end{example}

\begin{remark}\label{rem:pairing-bounded-below}
More generally, we may say that the pairing is \kore{bounded below} if
there is a finite integer $d$ such that the pairing takes
$\cat{A}_{\ge r}\times\cat{B}_{\ge s}$ to $\cat{C}_{\ge r+s+d}$.
In this case, if $d$ can be taken only as a negative number, then the
pairing is not compatible with the filtrations in the above sense.
However, this can be corrected by reindexing the filtrations in any
suitable way.
\end{remark}

It might seem natural to further require the pairing to preserve
sequential limits (variable-wise).
However, this condition is too strong to require in practice.
We shall see that the compatibility defined above ensures that certain
sequential limits are preserved (up to completion).
This will turn out to be useful for applications.

\begin{definition}
Let $\cat{A}$ be a filtered stable category.
Then an object $X$ of $\cat{A}$ is said to be \kore{bounded below} in
the filtration if there exists an integer $r$ such that
$X\in\cat{A}_{\ge r}$ in $\cat{A}$.
\end{definition}

Let $\cat{A}$, $\cat{B}$, $\cat{C}$ be filtered stable categories, and
consider a pairing
$\pairing{\;{,}\;}\colon\cat{A}\times\cat{B}\to\cat{C}$,
compatible with the filtrations.
The following is an immediate consequence of
Lemma \ref{lem:vanishing-completion}.
\begin{lemma}\label{lem:vanishing-pairing}
Assume that $\complete{\cat{C}}$ is the completion of $\cat{C}$.

Suppose given an inverse system
\[
\cdots\longfrom X_i\longfrom X_{i+1}\longfrom\cdots
\]
in $\cat{A}$, and suppose there is a sequence
$(r_i)_i$ of integers, tending to $\infty$ as $i\to\infty$,
such that for every $i$, $X_i$ belongs to $\cat{A}_{\ge r_i}$.
Then for every \textbf{bounded below} $Y\in\cat{B}$,
$\lim_i\pairing{X_i,Y}$ belongs to $\cat{C}_{\ge\infty}$.
\end{lemma}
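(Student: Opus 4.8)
The plan is to reduce directly to Lemma \ref{lem:vanishing-completion}, applied in the category $\cat{C}$. Since $Y$ is bounded below, fix an integer $s$ with $Y\in\cat{B}_{\ge s}$. The functor $\pairing{\blank,Y}\colon\cat{A}\to\cat{C}$ is exact (the pairing is exact in each variable), so it is in particular a functor, and applying it to the given tower $\cdots\from X_i\from X_{i+1}\from\cdots$ produces an inverse system $\bigl(\pairing{X_i,Y}\bigr)_i$ in $\cat{C}$.

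Next I would invoke compatibility of the pairing with the filtrations (Definition \ref{def:compatible-pairing}): since $X_i\in\cat{A}_{\ge r_i}$ and $Y\in\cat{B}_{\ge s}$, we obtain $\pairing{X_i,Y}\in\cat{C}_{\ge r_i+s}$ for every $i$. Put $r_i':=r_i+s$. Because $(r_i)_i$ tends to $\infty$ as $i\to\infty$, so does $(r_i')_i$. Hence the inverse system $\bigl(\pairing{X_i,Y}\bigr)_i$ in $\cat{C}$, together with the sequence $(r_i')_i$, satisfies exactly the hypotheses of Lemma \ref{lem:vanishing-completion}, and $\complete{\cat{C}}$ is the completion of $\cat{C}$ by assumption. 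That lemma then yields $\lim_i\pairing{X_i,Y}\in\cat{C}_{\ge\infty}$, which is the claim.

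There is essentially no obstacle: the only things to check are that $\pairing{\blank,Y}$ carries the inverse system in $\cat{A}$ to one in $\cat{C}$ (automatic from functoriality) and that the shift $(r_i+s)_i$ still diverges (clear). The role of the boundedness-below hypothesis on $Y$ is precisely what lets the compatibility of the pairing place each $\pairing{X_i,Y}$ in some $\cat{C}_{\ge N}$ with $N\to\infty$; without it the argument would fail at this step.
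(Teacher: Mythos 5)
Your argument is correct and is exactly the reduction the paper intends: it states this lemma as "an immediate consequence of Lemma \ref{lem:vanishing-completion}," and your use of the compatibility of the pairing to place $\pairing{X_i,Y}$ in $\cat{C}_{\ge r_i+s}$ before invoking that lemma in $\cat{C}$ is the intended route.
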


\begin{proposition}\label{prop:continuity-of-pairing}
Let $\pairing{\;{,}\;}\colon\cat{A}\times\cat{B}\to\cat{C}$ be a
pairing on filtered stable categories, compatible with the
filtrations.
Assume that $\complete{\cat{C}}$ is the completion of $\cat{C}$.
Assume also
that sequential limits exist and are uniformly bounded in $\cat{A}$.

Suppose given an inverse system
\[
\cdots\longfrom X_i\longfrom X_{i+1}\longfrom\cdots
\]
in $\cat{A}$, and suppose there is a sequence
$(r_i)_i$ of integers, tending to $\infty$ as $i\to\infty$, such that
for every $i$, the fibre of the map $X_{i+1}\to X_i$ belongs to
$\cat{A}_{\ge r_i}$.
Then for every $Y\in\cat{B}$, if $Y$ is bounded below, then the
induced map
\[
\pairing{\lim_iX_i,Y}\longto\lim_i\pairing{X_i,Y}
\]
is an equivalence after completion.
\end{proposition}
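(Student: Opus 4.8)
The plan is to analyse the fibres of the canonical maps $\lim_i X_i\to X_j$ and then invoke the vanishing results already established. Write $L:=\lim_i X_i$. After replacing each $r_i$ by $\min\{r_k : k\ge i\}$ (which still tends to $\infty$, and only weakens the hypothesis, since enlarging $i$ enlarges $\cat{A}_{\ge r_i}$), we may assume $(r_i)_i$ is non-decreasing. Then for $i\ge j$ the fibre $\Fibre(X_i\to X_j)$ is an iterated extension of the fibres $\Fibre(X_{k+1}\to X_k)$ with $j\le k<i$, each of which lies in $\cat{A}_{\ge r_k}\subseteq\cat{A}_{\ge r_j}$, so repeated use of Corollary \ref{cor:closure-under-extension} puts $\Fibre(X_i\to X_j)$ in $\cat{A}_{\ge r_j}$. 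Passing to the subsystem $(X_i)_{i\ge j}$, which has the same limit $L$, and using that $\lim$ is exact, the limit over $i\ge j$ of the fibre sequences $\Fibre(X_i\to X_j)\to X_i\to X_j$ is a fibre sequence $\lim_{i\ge j}\Fibre(X_i\to X_j)\to L\to X_j$. Hence $\Fibre(L\to X_j)\equivwith\lim_{i\ge j}\Fibre(X_i\to X_j)$ is a sequential limit of objects of $\cat{A}_{\ge r_j}$, so by the hypothesis that sequential limits are uniformly bounded in $\cat{A}$ it belongs to $\cat{A}_{\ge r_j+d}$, where $d$ is such a uniform bound.

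Next I would apply the pairing. Since $\pairing{\;,\!\;}$ is exact in its first variable, applying $\pairing{\;,Y}$ to the fibre sequence $\Fibre(L\to X_j)\to L\to X_j$ gives a fibre sequence $\pairing{\Fibre(L\to X_j),Y}\to\pairing{L,Y}\to\pairing{X_j,Y}$ in $\cat{C}$, naturally in $j$. Since taking fibres commutes with limits, the fibre of the canonical map $\pairing{L,Y}\to\lim_j\pairing{X_j,Y}$ is identified with $\lim_j\pairing{\Fibre(L\to X_j),Y}$.

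Finally I would conclude using the vanishing lemmas. The inverse system $\big(\Fibre(L\to X_j)\big)_j$ has its $j$-th term in $\cat{A}_{\ge r_j+d}$ with $r_j+d\to\infty$, and $Y\in\cat{B}$ is bounded below by hypothesis, so Lemma \ref{lem:vanishing-pairing} gives $\lim_j\pairing{\Fibre(L\to X_j),Y}\in\cat{C}_{\ge\infty}$. Since $\cat{C}_{\ge\infty}$ is the right localization of $\cat{C}$ complementary to the completion, Lemma \ref{lem:local-equivalence} applied to the cofibre sequence $\lim_j\pairing{\Fibre(L\to X_j),Y}\to\pairing{L,Y}\to\lim_j\pairing{X_j,Y}$ shows that $\pairing{L,Y}\to\lim_j\pairing{X_j,Y}$ becomes an equivalence after completion, as desired. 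The only delicate point is the identification $\Fibre(L\to X_j)\equivwith\lim_{i\ge j}\Fibre(X_i\to X_j)$ together with the filtration estimate on it surviving the passage to this limit — and the uniform boundedness of sequential limits in $\cat{A}$ is precisely what makes that estimate survive; the remainder is a formal application of the completion machinery, so I do not anticipate a serious obstacle.
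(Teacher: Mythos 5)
Your proof is correct and follows essentially the same route as the paper: the paper's proof is a one-line citation of Corollary \ref{cor:equivalence-of-limits-after-completion}, applied to the map from the constant system at $\pairing{\lim_iX_i,Y}$ to the system $\pairing{X_i,Y}$, and your argument simply unpacks that application (the estimate $\Fibre(\lim_iX_i\to X_j)\in\cat{A}_{\ge r_j+d}$ via uniform boundedness of sequential limits, followed by Lemma \ref{lem:vanishing-pairing} and Lemma \ref{lem:local-equivalence}). The details you supply, including the reduction to non-decreasing $(r_i)_i$ and the closure-under-extension step, are exactly what is needed to make the cited corollary apply.
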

\begin{proof}
This follows from
Corollary \ref{cor:equivalence-of-limits-after-completion}.
\end{proof}

\subsection{Monoidal structure on a filtered category}
\label{sec:monoidal-filtration}
\setcounter{subsubsection}{-1}

\subsubsection{}
 
By a \kore{monoidal structure} on a stable category $\cat{A}$, we mean
a monoidal structure on the underlying category of $\cat{A}$ whose
multiplication operations are exact in each variable.

\begin{definition}\label{def:filtered-tensor}
Let $\cat{A}$ be a filtered stable category, and let $\tensor$ be a
monoidal structure on the stable category (underlying) $\cat{A}$.
We say that the monoidal structure is \kore{compatible} with the
filtration on $\cat{A}$ if for every finite totally ordered set $I$,
and every sequence $r=(r_i)_{i\in I}$ of integers, the functor
$\Tensor_I\colon\cat{A}^I\to\cat{A}$ takes the full subcategory
$\prod_{i\in I}\cat{A}_{\ge r_i}$ of the source, to
the full subcategory $\cat{A}_{\ge\sum_Ir}$ of the target.

We call a filtered stable category $\cat{A}$ equipped with a
compatible monoidal structure a \kore{monoidal} filtered stable
category.
If the monoidal structure is symmetric, then it will just be a
\kore{symmetric} monoidal filtered category.
\end{definition}

In other words, a symmetric monoidal filtered stable category is just
a commutative monoid object of a suitable multicategory of filtered
stable categories.

\begin{example}
In the case where $\cat{A}$ is a functor category with Goodwillie's
filtration, if the target category of the functors is a symmetric
monoidal stable
category, then the pointwise symmetric monoidal structure \emph{on
  $\cat{A}^\op$} is compatible with the filtration.
Note Remark~\ref{rem:dual-filtration}.
\end{example}

\begin{remark}\label{rem:monoidal-structure-bounded-below}
More generally, we may say that the monoidal structure is
\kore{bounded below} if the unit $\unity$ and the pairing
$\cat{A}^2\xrightarrow{\tensor}\cat{A}$ are bounded below.
All the results we consider in the following on monoidal filtered
categories will be valid for filtered stable categories with a bounded
below monoidal structure, after making suitable (and straightforward)
modifications.

We shall only state the results for monoidal filtered categories in
our sense, in order to keep the exposition simple.
\end{remark}

\begin{remark}\label{rem:break-of-duality}
Even though both filtration and monoidal structure are self-dual
notion on a stable category, the boundedness below of the monoidal
structure is not self-dual.
Namely, boundedness below in $\cat{A}^\op$ means boundedness
\emph{above} in $\cat{A}$.
Instead, Lemma~\ref{lem:adjoint-preserving-localization} implies that
the internal hom functor would have suitable boundedness above on a
symmetric monoidal stable category.
\end{remark}

\begin{corollary}\label{cor:continuity-of-monoidal-product}
Let $\cat{A}$ be a monoidal filtered stable category with uniformly
bounded sequential limits.

Suppose given an inverse system
\[
\cdots\longfrom X_i\longfrom X_{i+1}\longfrom\cdots
\]
in $\cat{A}$, and suppose there is a sequence
$(r_i)_i$ of integers, tending to $\infty$ as $i\to\infty$, such that
for every $i$, the fibre of the map $X_{i+1}\to X_i$ belongs to
$\cat{A}_{\ge r_i}$.
Then for every $Y\in\cat{A}$, if $Y$ is bounded below, then the
induced map
\[
(\lim_iX_i)\tensor Y\longto\lim_i(X_i\tensor Y)
\]
is an equivalence after completion.
\end{corollary}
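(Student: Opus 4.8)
The plan is to deduce this immediately from Proposition \ref{prop:continuity-of-pairing}, taking the pairing there to be the binary monoidal multiplication. First I would observe that the functor $\tensor\colon\cat{A}\times\cat{A}\to\cat{A}$ is a pairing of stable categories in the sense defined above, since by our convention a monoidal structure on a stable category has multiplication operations exact in each variable. Moreover, applying Definition \ref{def:filtered-tensor} to the two-element totally ordered set $I=\{0,1\}$ shows that this pairing is compatible with the filtration: it carries $\cat{A}_{\ge r}\times\cat{A}_{\ge s}$ into $\cat{A}_{\ge r+s}$.

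Next I would check the two remaining hypotheses of Proposition \ref{prop:continuity-of-pairing}. Sequential limits are uniformly bounded in $\cat{A}$ by assumption, which is one of those hypotheses verbatim. For the other, since sequential limits are uniformly bounded, Proposition \ref{prop:complete-with-bounded-sequential-limits} gives that $\complete{\cat{A}}$ is the completion of $\cat{A}$; taking $\cat{B}=\cat{C}=\cat{A}$ in Proposition \ref{prop:continuity-of-pairing}, this supplies the needed completeness of the target of the pairing.

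With all hypotheses in place, Proposition \ref{prop:continuity-of-pairing}, applied to the given inverse system $(X_i)_i$ — whose transition maps have fibres in $\cat{A}_{\ge r_i}$ with $r_i\to\infty$ — and to the bounded below object $Y$, yields exactly that the canonical map $(\lim_iX_i)\tensor Y\to\lim_i(X_i\tensor Y)$ is an equivalence after completion.

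There is essentially no obstacle here; the only point requiring care is bookkeeping of conventions, namely that the hypotheses on the fibres of $X_{i+1}\to X_i$ and on the boundedness below of $Y$ transport directly, and that the pairing fed into Proposition \ref{prop:continuity-of-pairing} is the binary multiplication rather than a higher-arity tensor. One could alternatively rerun the underlying argument straight from Corollary \ref{cor:equivalence-of-limits-after-completion}, as in the proof of Proposition \ref{prop:continuity-of-pairing}, but citing that proposition is cleaner.
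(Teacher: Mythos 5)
Your proposal is correct and matches the paper's (implicit) proof: the corollary is stated without proof precisely because it is the specialization of Proposition \ref{prop:continuity-of-pairing} to the binary monoidal multiplication, with compatibility supplied by Definition \ref{def:filtered-tensor} and the completeness hypothesis supplied by Proposition \ref{prop:complete-with-bounded-sequential-limits}. Your bookkeeping of the remaining hypotheses is exactly what is needed.
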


\subsubsection{}

\begin{definition}\label{def:completable}
Let $\cat{A}$ be a monoidal filtered stable category with
$\complete{\cat{A}}$ completing the filtration.
Then we say that the monoidal structure is \kore{completable} if there
is a monoidal structure on $\complete{\cat{A}}$ such that the
completion functor $\cat{A}\to\complete{\cat{A}}$ is monoidal.
\end{definition}
\begin{remark}\label{rem:unique-completion-of-tensor}
Together with a monoidal structure of the completion functor, the
monoidal structure on $\complete{\cat{A}}$ will be uniquely
determined.
\end{remark}

\begin{lemma}\label{lem:completable-monoidal-structure}
Let $\cat{A}$ be a filtered stable category with $\complete{\cat{A}}$
being its exact completion.
Then a monoidal structure of $\cat{A}$ is completable if and only if
for every integer $n\ge 0$, the monoidal product $\Tensor_{i=0}^nX_i$
for a sequence $X_i$, $0\le i\le n$, of objects of $\cat{A}$
necessarily belongs to the full subcategory $\cat{A}_{\ge\infty}$ of
$\cat{A}$ whenever $X_i\in\cat{A}_{\ge\infty}$ for some $i$.
\end{lemma}
\begin{proof}
Necessity is obvious. 

For sufficiency, consider for every integer $n\ge 0$, the objects of
$\cat{A}^n$ which are required in the stated condition, to be sent
by the monoidal product functor
$\tensor\colon\cat{A}^n\to\cat{A}$, to the full subcategory
$\cat{A}_{\ge\infty}$.
For example, there is no such object in the case $n=0$.
Denote by $\cat{J}_n$ the full subcategory of $\cat{A}^n$ consisting
of all these objects.
Namely, a sequence of objects of $\cat{A}$ belongs to $\cat{J}_n$ if
and only if at least one of the entries of the sequence comes from
$\cat{A}_{\ge\infty}$.

Then we have for every $n$, that the postcomposition functor
$\Fun^\exact(\cat{A}^n,\cat{A})\to\Fun^\exact(\cat{A}^n,\complete{\cat{A}})$
with the localization functor (where $\Fun^\exact$ indicates the
functors which are exact in each variable separately) sends the full
subcategory of the source consisting of those functors which send
$\cat{J}_n\sub\cat{A}^n$ to the full subcategory $\cat{A}_{\ge\infty}$
of $\cat{A}$, over to the full subcategory
$\Fun^\exact(\complete{\cat{A}}^n,\complete{\cat{A}})$ of the target
$\Fun^\exact(\cat{A}^n,\complete{\cat{A}})$, embedded by the functor
of precomposition again with the localization functors
$\cat{A}\to\complete{\cat{A}}$.

Sufficiency of the condition is an immediate consequence.
\end{proof}

From the proof, the monoidal operations on $\complete{\cat{A}}$ in the
completable case can be written as the composites
\[\begin{tikzcd}
\complete{\cat{A}}^n\arrow[hook]{r}&\cat{A}^n\arrow{r}{\tensor}&\cat{A}\arrow{r}{\complete{\blank}}&\complete{\cat{A}}.
\end{tikzcd}\]

\begin{proposition}\label{prop:completed-monoidal-filtration}
Let $\cat{A}$ be a monoidal filtered stable category with
$\complete{\cat{A}}$ its exact completion.
If the monoidal structure on $\cat{A}$ is completable, then
$\complete{\cat{A}}$ with the induced structures is a monoidal
(complete) filtered stable category.
\end{proposition}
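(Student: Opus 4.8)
The plan is to verify directly that $\complete{\cat{A}}$, equipped with the filtration provided by Proposition \ref{prop:filtration-of-completion} and the monoidal structure provided by Lemma \ref{lem:completable-monoidal-structure}, satisfies Definition \ref{def:filtered-tensor}. Under those identifications one has $\complete{\cat{A}}^{<r}=\cat{A}^{<r}$ and $\complete{\cat{A}}_{\ge r}=\cat{A}_{\ge r}\intersect\complete{\cat{A}}$ as full subcategories of $\cat{A}$, and (from the proof of Lemma \ref{lem:completable-monoidal-structure}) for each finite totally ordered set $I$ the $I$-ary multiplication of $\complete{\cat{A}}$ is the composite
\[
\complete{\cat{A}}^I\includesin\cat{A}^I\xrightarrow{\Tensor_I}\cat{A}\xrightarrow{\complete{\blank}}\complete{\cat{A}}.
\]
So exactly two things need checking: that each such multiplication functor is exact in each variable, and that it is compatible with the filtration.

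Exactness in each variable is formal. The inclusion $\complete{\cat{A}}\includesin\cat{A}$ and the localization functor $\complete{\blank}\colon\cat{A}\to\complete{\cat{A}}$ are both exact because the completion is stable exact (Definition \ref{def:stable-exact-localization}), while $\Tensor_I$ is exact in each variable because $\cat{A}$ is a monoidal filtered stable category; pre- and post-composing a functor exact in each variable with exact functors again gives a functor exact in each variable.

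For compatibility, fix a finite totally ordered set $I$ and integers $(r_i)_{i\in I}$, put $s=\sum_{i\in I}r_i$, and take $X_i\in\complete{\cat{A}}_{\ge r_i}$ for each $i$. Then each $X_i$ lies in $\cat{A}_{\ge r_i}$, so $\Tensor_IX_i\in\cat{A}_{\ge s}$ by compatibility of the monoidal structure of $\cat{A}$ with its own filtration; and $\complete{\Tensor_IX_i}$ lies in $\complete{\cat{A}}$ by construction. It therefore suffices to know that the completion functor carries $\cat{A}_{\ge s}$ into itself, since then $\complete{\Tensor_IX_i}\in\cat{A}_{\ge s}\intersect\complete{\cat{A}}=\complete{\cat{A}}_{\ge s}$. (The case $I=\kara$ is the statement that the unit $\complete{\unity}$ of $\complete{\cat{A}}$ lies in $\complete{\cat{A}}_{\ge0}$.) This last claim is the only step with genuine content, and it is where one uses that $\complete{\cat{A}}$ really is the completion: for any $Z\in\cat{A}$, the fibre of the completion map $Z\to\complete{Z}$ is $\lim_rZ_{\ge r}$, which lies in the complementary right localization $\cat{A}_{\ge\infty}\sub\cat{A}_{\ge s}$, so Lemma \ref{lem:local-equivalence} applied to the (co)fibre sequence $\lim_rZ_{\ge r}\to Z\to\complete{Z}$ gives $Z^{<s}\equivwith\complete{Z}^{<s}$; hence if $Z^{<s}\equivwith\zero$ then $\complete{Z}^{<s}\equivwith\zero$, i.e.\ $\complete{Z}\in\cat{A}_{\ge s}$. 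Since Proposition \ref{prop:filtration-of-completion} already records that this filtration makes $\complete{\cat{A}}$ a complete filtered stable category, these two verifications complete the argument.
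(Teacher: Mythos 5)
Your proof is correct and follows essentially the same route as the paper: exactness in each variable from the composite description of the completed tensor product together with stable exactness, and compatibility with the filtration from the identification $\complete{\cat{A}}_{\ge r}=\cat{A}_{\ge r}\intersect\complete{\cat{A}}$ plus the fact that the completion functor carries $\cat{A}_{\ge s}$ into $\complete{\cat{A}}_{\ge s}$. The only difference is that the paper asserts this last fact without argument, whereas you supply the (correct) justification via the fibre sequence $\lim_rZ_{\ge r}\to Z\to\complete{Z}$ and Lemma \ref{lem:local-equivalence}.
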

\begin{proof}
The variable-wise exactness of the induced monoidal structure on
$\complete{\cat{A}}$ follows from the description of the monoidal
product functor after the proof of
Lemma \ref{lem:completable-monoidal-structure} (see
Remark \ref{rem:unique-completion-of-tensor}).

We further need to prove that this monoidal structure is compatible
with the induced filtration on $\complete{\cat{A}}$ (see
Proposition \ref{prop:filtration-of-completion}).
This follows since $\complete{\cat{A}}_{\ge r}\sub\cat{A}_{\ge r}$ as
full subcategories of $\cat{A}$, and the completion functor
$\cat{A}\to\complete{\cat{A}}$ takes the full subcategory
$\cat{A}_{\ge r}$ of the source, to the full subcategory
$\complete{\cat{A}}_{\ge r}$ of the target.
\end{proof}

\begin{lemma}
Let $\cat{A}$ be as in
Proposition \ref{prop:completed-monoidal-filtration}.
If the monoidal multiplication functor on $\cat{A}$ preserves variable-wise,
a certain class of colimits (specified as in
Proposition \ref{prop:limit-in-localization}, and assumed to exist),
then so does the completed monoidal operation on $\complete{\cat{A}}$
\textbf{if} for every $r$, the full subcategory $\cat{A}^{<r}$ of
$\cat{A}$ are closed under the class of colimits taken in $\cat{A}$.
\end{lemma}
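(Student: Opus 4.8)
The plan is to reduce the statement about $\complete{\cat{A}}$ to the corresponding statement in $\cat{A}$, using the explicit description of the completed monoidal product. Recall from the proof of Lemma~\ref{lem:completable-monoidal-structure} that the $n$-ary operation of the completed monoidal structure is the composite
\[
\complete{\cat{A}}^n\xrightarrow{\text{incl}}\cat{A}^n\xrightarrow{\tensor}\cat{A}\xrightarrow{\complete{\blank}}\complete{\cat{A}}.
\]
So the first step is to fix a diagram in $\complete{\cat{A}}$ indexed by the shape $K$ of the class of colimits in question, in one variable, with the other variables held at fixed objects of $\complete{\cat{A}}$, and to compute the colimit over $K$ of the completed product.

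First I would observe that, by Proposition~\ref{prop:filtration-of-completion}, a colimit of a $K$-shaped diagram in $\complete{\cat{A}}$ is computed as $\lim_r$ of the colimits of the $r$-truncations, i.e.\ $\colim_K Z=\lim_r\colim_K Z^{<r}$, exactly as in the proof of the lemma that $\blank_{\ge\infty}$ is exact. Next I would use the hypothesis that each $\cat{A}^{<r}$ is closed in $\cat{A}$ under $K$-shaped colimits: this means $\colim_K Z^{<r}$, computed in $\cat{A}$, already lies in $\cat{A}^{<r}$, hence is the colimit in $\cat{A}^{<r}$ and also the colimit in $\complete{\cat{A}}$ (since $\cat{A}^{<r}\sub\complete{\cat{A}}$). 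Now apply the $n$-ary completed product: because each factor of the product functor on $\cat{A}$ preserves the class of $K$-shaped colimits by hypothesis, and the colimit in the chosen variable is being taken inside $\cat{A}^{<r}$, the product commutes with it at each truncation level. Finally, the outer completion functor $\complete\blank$ is a left adjoint (being a left localization, by Definition~\ref{def:completion}(\ref{item:localization})), hence preserves all colimits; combining these three facts shows the completed product commutes with $\colim_K$ in that variable.

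The step I expect to be the main obstacle is bookkeeping the interaction between the ``$\lim_r$'' defining a colimit in $\complete{\cat{A}}$ and the truncation functors $\blank^{<r}$: one must check carefully that $(\colim_K Z)^{<r}\equivwith\colim_K(Z^{<r})$, which is where the hypothesis on closedness of $\cat{A}^{<r}$ under the colimits is really used, and that this identification is compatible with applying $\tensor$ and then $\complete\blank$. Once these identifications are in place the argument is formal: at each level $r$ the product preserves the colimit in $\cat{A}^{<r}$, the inclusion $\cat{A}^{<r}\into\complete{\cat{A}}$ preserves it, and taking $\lim_r$ and applying $\complete\blank$ (a left adjoint) preserves it. Since the monoidal multiplication is variable-wise exact on $\complete{\cat{A}}$ by Proposition~\ref{prop:completed-monoidal-filtration}, it suffices to treat one variable at a time, which is what the above does. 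I would conclude by remarking that the same reasoning, applied variable by variable, gives the statement for the full $n$-ary operation.
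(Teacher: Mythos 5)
There is a genuine gap at the step where you apply the monoidal product. You correctly compute that the colimit of a $K$-shaped diagram $Z$ in $\complete{\cat{A}}$ is $\lim_r\colim_KZ^{<r}$ (equivalently $\complete{\colim_K^{\cat{A}}Z}$), and you correctly locate where the hypothesis on $\cat{A}^{<r}$ enters in identifying $(\colim_KZ)^{<r}$ with $\colim_K(Z^{<r})$. But you have now expressed the object to be tensored as a \emph{sequential limit} over $r$, and the monoidal multiplication of a filtered stable category does \emph{not} commute with sequential limits --- this failure is the central technical difficulty of the whole paper (see the discussion before Lemma \ref{lem:vanishing-pairing}, and Proposition \ref{prop:continuity-of-pairing} and Corollary \ref{cor:continuity-of-monoidal-product}, which only recover such commutation up to completion and under boundedness hypotheses). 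Your phrase ``the product commutes with it at each truncation level'' handles the colimit at each fixed $r$, but never addresses moving $-\tensor Y$ past the outer $\lim_r$; ``combining these three facts'' therefore does not close the argument. (The gap is repairable without leaving your framework --- e.g.\ by observing that $\lim_r\colim_KZ^{<r}=\complete{\colim_K^{\cat{A}}Z}$ and that $\complete{\complete{W}\tensor Y}\equivwith\complete{W\tensor Y}$ by completability, since the fibre of $W\to\complete{W}$ lies in $\cat{A}_{\ge\infty}$ --- but some such step must be supplied.)

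The paper's proof avoids introducing the sequential limit altogether. Since the completed product is $\complete{\blank}\circ\tensor\circ(\mathrm{incl})^n$, and $\complete{\blank}$ preserves all colimits (being a left adjoint) while $\tensor$ preserves the given class by hypothesis, the only issue is whether the inclusion $\complete{\cat{A}}\into\cat{A}$ preserves these colimits, i.e.\ whether the colimit computed in $\cat{A}$ of a diagram in $\complete{\cat{A}}$ is already complete. By (the dual of) Proposition \ref{prop:limit-in-localization} this is equivalent to $\blank_{\ge\infty}\colon\cat{A}\to\cat{A}_{\ge\infty}$ preserving the colimits; since $\cat{A}_{\ge\infty}=\lim_r\cat{A}_{\ge r}$ with object-wise colimits, this reduces to each $\blank_{\ge r}$ preserving them, which by Proposition \ref{prop:limit-in-localization} again is exactly the stated hypothesis that each $\cat{A}^{<r}$ is closed under the colimits. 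Once the colimit in $\complete{\cat{A}}$ is known to coincide with the colimit in $\cat{A}$, the entire computation takes place in $\cat{A}$, where your remaining two facts apply verbatim. I recommend restructuring your argument along these lines rather than through the $\lim_r$ description of the colimit.
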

\begin{proof}
In view of the description of the monoidal multiplication functor
after the
proof of Lemma \ref{lem:completable-monoidal-structure}, it suffices
to prove under our assumption, that the inclusion functor
$\complete{\cat{A}}$ preserves the class colimits in question.
By Proposition \ref{prop:limit-in-localization}, this condition is
equivalent to that the localization functor
$\blank_{\ge\infty}\colon\cat{A}\to\cat{A}_{\ge\infty}$ preserves the
class of colimits in question.

Recall that $\cat{A}_{\ge\infty}=\lim_r\cat{A}_{\ge r}$.
Since this limit is along colimit preserving functors, colimits in
$\lim_r\cat{A}_{\ge r}$ is object-wise.
Therefore, it suffices to show for every $r$, that $\blank_{\ge
  r}\colon\cat{A}\to\cat{A}_{\ge r}$ preserves colimits.

We conclude by invoking Proposition \ref{prop:limit-in-localization}
again.
\end{proof}

\subsection{The filtered category of filtered objects}
\label{sec:examples-of-filtered-categories}
\setcounter{subsubsection}{-1}

\subsubsection{}
 
In this section, we shall give a simple example of a filtered stable
category, for which limits of any kind are uniformly bounded by $0$.
We also show how this filtered stable category may have a completable
symmetric monoidal structure.

\subsubsection{}

Let us denote by $\Stable$ the following symmetric
($2$-)multicategory.
Its object is a stable category.
Given a family $\cat{A}=(\cat{A}_s)_{s\in S}$ of stable categories
indexed by a finite set $S$, and a stable category $\cat{B}$, we
define a multimap $\cat{A}\to\cat{B}$ to be a functor
$\prod_S\cat{A}\to\cat{B}$ which is exact in each variable.
(Note that the condition is vacuous when there is no variable, i.e.,
when $S$ is empty and the product is one point.)

Let $\Z$ be the category
\[
\cdots\longfrom n\longfrom n+1\longfrom\cdots
\]
defined by the poset of integers.
This is a commutative monoid (in the category of poset), so the
functor category $\Fun(\Z,\Stable)$ is a symmetric multicategory.

Let $\cat{B}$ be an object of $\Fun(\Z,\Stable)$, and let $\cat{A}$
be the category of lax morphisms $*\to\cat{B}$ in $\Fun(\Z,\Cat)$.
To be more precise about the variance in the definition of a lax
functor here, we consider the functor
$y\colon\Z\to\Cat$, $n\mapsto\Z_{/n}$, and define $\cat{A}$
to be the category of (genuine, rather than lax) morphisms
$y\to\cat{B}$.

\begin{definition}
In the case where the sequence $\cat{B}$ is constant at a stable
category $\cat{C}$, we call an object of $\cat{A}$ a \kore{filtered
  object} of $\cat{C}$, so $\cat{A}$ will be the \emph{category of
  filtered objects} of $\cat{C}$.
\end{definition}

$\cat{A}$ filtered as follows.

$\cat{B}$ is a sequence of stable categories
\[
\cdots\overset{L}{\longfrom}\cat{B}_n\overset{L}{\longfrom}\cat{B}_{n+1}\overset{L}{\longfrom}\cdots,
\]
and $\cat{A}$ is the category of sequences which we shall typically
express as
\[
\cdots\longfrom F^nX\longfrom F^{n+1}X\longfrom\cdots,
\]
where $F^nX\in\cat{B}_n$, and the arrow $F^nX\from F^{n+1}X$ is meant
to be a map $F^nX\from LF^{n+1}X$ in $\cat{B}_n$.

We let $\cat{A}_{\ge r}$ be the category of sequences
\[
F^rX\longfrom F^{r+1}X\longfrom\cdots,
\]
and $\blank_{\ge r}\colon\cat{A}\to\cat{A}_{\ge r}$ to be the functor
which forgets objects $F^nX$ for $n<r$.
$\blank_{\ge r}$ is a right localization which has a complementary
left localization, which we denote by
$\blank^{<r}\colon\cat{A}\to\cat{A}^{<r}$.

\begin{lemma}\label{lem:zero-bounds-all-limits}
Let $I$ be a small category.
If for every $n$, the category $\cat{B}_n$ has the limit of every
$I$-shaped diagram, then $\cat{A}$ has all $I$-shaped limits, and
these limits are uniformly bounded by $0$.
\end{lemma}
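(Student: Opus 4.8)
The plan is to show that $\cat{A}$ has $I$-shaped limits computed ``level-wise'' --- that is, $F_n$ of a limit is the limit in $\cat{B}_n$ of the $F_n$'s --- thereby extending to arbitrary $I$ the object-wise description of sequential limits already noted above; the bound $0$ then drops out for free. Recall that an object of $\cat{A}$ is a sequence $\cdots\from F_nX\from F_{n+1}X\from\cdots$ with $F_nX\in\cat{B}_n$, equipped with its structure maps $LF_{n+1}X\to F_nX$ in $\cat{B}_n$, and a morphism is a coherently compatible family of morphisms of the $F_nX$; equivalently, $\cat{A}$ is the category of sections of the coCartesian fibration over $\Z$ classified by $\cat{B}\colon\Z\to\Cat$, $n\mapsto\cat{B}_n$, i.e.\ the lax limit of this diagram.

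First I would construct the limit of a diagram $X\colon I\to\cat{A}$, $i\mapsto X_i$, by setting $F_nY:=\lim_{i\in I}F_nX_i$, the limit taken in $\cat{B}_n$, which exists by hypothesis. Its structure map is the composite
\[
L(F_{n+1}Y)=L\bigl(\lim_iF_{n+1}X_i\bigr)\longto\lim_iL(F_{n+1}X_i)\longto\lim_iF_nX_i=F_nY,
\]
where the first arrow is the canonical comparison map (available for any functor, with no exactness needed) and the second is the limit of the structure maps of the $X_i$. The point is that one does \emph{not} need the first arrow to be an equivalence, because a section of the fibration carries only a structure \emph{map}, not an equivalence; this is why exactness of $L$ is irrelevant. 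Assembling this data coherently over all $n$ produces an object $Y\in\cat{A}$ together with a cone $Y\to X_i$.

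Next I would verify that $Y$ is the limit, i.e.\ that the natural map $\Map_{\cat{A}}(Z,Y)\to\lim_i\Map_{\cat{A}}(Z,X_i)$ is an equivalence for every $Z\in\cat{A}$. One expresses $\Map_{\cat{A}}(Z,W)$ as a limit over an explicit diagram built from mapping spaces $\Map_{\cat{B}_n}(F_nZ,F_nW)$ and $\Map_{\cat{B}_n}(LF_{n+1}Z,F_nW)$ (encoding compatibility with the structure maps); since each corepresentable functor $\Map_{\cat{B}_n}(c,-)$ sends $F_nY=\lim_iF_nX_i$ to $\lim_i\Map_{\cat{B}_n}(c,F_nX_i)$, commuting the two limits gives the claim. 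Alternatively one simply invokes the standard fact that the category of sections of a coCartesian fibration has, and its evaluation functors preserve, all limits present in the fibres, the transition functors playing no role. In either case $F_n(\lim_iX_i)\equivwith\lim_iF_nX_i$ in $\cat{B}_n$.

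Finally, for the bound: as a full subcategory of $\cat{A}$, $\cat{A}_{\ge r}$ consists of the objects $X$ with $X^{<r}\equivwith\zero$, equivalently with $F_nX\equivwith\zero$ for all $n<r$. Hence if $X\colon I\to\cat{A}$ takes values in $\cat{A}_{\ge r}$, then $F_n(\lim_iX_i)=\lim_iF_nX_i\equivwith\lim_i\zero\equivwith\zero$ for every $n<r$, so $\lim_iX_i$ again lies in $\cat{A}_{\ge r}$; thus $I$-shaped limits are uniformly bounded by $0$ (indeed they carry $\cat{A}_{\ge r}$ into itself, with no shift at all). The one place that needs genuine care is the level-wise formula for limits: since $\cat{A}$ is only a \emph{lax} (section-category) limit of $\cat{B}$, the transition functors $L$ need not preserve $I$-shaped limits, and one has to check --- precisely through the comparison map used above --- that this does no harm. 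Everything downstream of that is formal.
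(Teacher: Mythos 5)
Your computation of $I$-shaped limits in $\cat{A}$ is correct, and it is exactly the ``object-wise'' observation that the paper's own (one-line) proof rests on: the level-wise limit, with structure maps obtained from the comparison maps $L(\lim_iF_{n+1}X_i)\to\lim_iL(F_{n+1}X_i)$, satisfies the universal property, and no limit-preservation of $L$ is needed for that. The error is in your last step, where you identify the full subcategory $\cat{A}_{\ge r}\sub\cat{A}$ with the sequences satisfying $F_nX\equivwith\zero$ for $n<r$. That is not what $\cat{A}_{\ge r}$ is. Since $\blank_{\ge r}\colon\cat{A}\to\cat{A}_{\ge r}$ is the restriction functor and is a \emph{right} localization, the embedding $\cat{A}_{\ge r}\into\cat{A}$ is its fully faithful \emph{left} adjoint, i.e.\ left Kan extension, which fills in $F_nX:=L^{r-n}F_rX$ for $n<r$; hence $X$ lies in $\cat{A}_{\ge r}$ exactly when the structure maps $LF_{n+1}X\to F_nX$ are equivalences for all $n<r$. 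This is the description the paper itself uses in the proof of Proposition \ref{prop:filtered-objects-monoidal} (``the map $F_iX\from L^{r-i}F_rX$ is an equivalence for $i\le r$''). Your condition describes instead the image of the \emph{right} Kan extension; note that an object with $F_nX\equivwith\zero$ for $n<r$ but $F_rX\not\equivwith\zero$ has $F_nX^{<r}\equivwith\Cofib(L^{r-n}F_rX\to\zero)$ nonzero, so it does not even lie in $\cat{A}_{\ge r}$.

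With the correct membership condition, your final step is no longer free: for the level-wise limit $Y$ one must check that the composite $L(\lim_iF_{n+1}X_i)\to\lim_iLF_{n+1}X_i\equivto\lim_iF_nX_i$ is an equivalence for $n<r$, and the first arrow is precisely the comparison map you (correctly) noted need not be invertible. So the place ``that needs genuine care'' is not the level-wise formula for limits, which is unconditionally fine, but the closure of $\cat{A}_{\ge r}$ under level-wise limits, and that closure genuinely depends on $L$: it holds when $\cat{B}$ is constant (the category of filtered objects of a fixed $\cat{C}$, where $L=\id$), and for finite $I$ in general since $L$ is exact, but for a general sequence $\cat{B}$ and a general small $I$ one needs $L$ to preserve $I$-shaped limits --- the same kind of hypothesis the paper imposes in the companion lemma on colimits. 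You should redo the last paragraph with the correct characterization of $\cat{A}_{\ge r}$ and either add that hypothesis or restrict to the constant case.
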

\begin{proof}
This is obvious, since $I$-shaped limits in $\cat{A}$ will be given
object-wise.
\end{proof}

\begin{remark}\label{rem:stable-exact-completion-of-filtered-objects}
Note that this for finite limits implies that completion of $\cat{A}$
would be exact by Lemma~\ref{lem:exact-completion-from-bounded-loop}.
$\cat{A}$ will have all sequential limits (which are given
object-wise) if each $\cat{B}_n$ has all sequential limits.
\end{remark}

\subsubsection{}

Now suppose $\cat{B}\in\Fun(\Z,\Stable)$ is a symmetric monoid
object.
Concretely, this means there are given functors
$\tensor\colon\cat{B}_i\times\cat{B}_j\to\cat{B}_{i+j}$ (in a way
symmetric in $i$ and $j$) etc.~which is exact in each variable, and a
unit object $\unity\in\cat{B}_0$, with data of compatibility with
$L$'s in the sequence $\cat{B}$.

Moreover, assume the following.
\begin{itemize}
\item For every $n$, $\cat{B}_n$ has all small colimits.
\item For every $n$, $L\colon\cat{B}_n\to\cat{B}_{n-1}$ preserves
  colimits.
\item The monoidal multiplication functors preserve colimits variable-wise.
\end{itemize}

In this case, $\cat{A}$ inherits a symmetric monoidal structure.
Namely, if $X=(F^nX)_n$ and $Y=(F^nY)_n$ are objects of $\cat{A}$,
then we have $X\tensor Y$ defined by
\[
F^n(X\tensor Y)=\colim_{i+j\ge n}L^{i+j-n}(F^iX\tensor F^jY),
\]
the colimit taken in $\cat{B}_n$.
This monoidal multiplication preserves colimits variablewise.

\begin{proposition}\label{prop:filtered-objects-monoidal}
The symmetric monoidal structure on $\cat{A}$ is
compatible with the filtration on $\cat{A}$.
\end{proposition}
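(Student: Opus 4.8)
The plan is to avoid computing the colimit in the formula for $F_n(X\tensor Y)$ directly, and instead exploit two soft facts: the monoidal product of $\cat A$ preserves colimits in each variable (as noted just before the statement), and each $\cat A_{\ge r}$ is a right localization of $\cat A$, hence --- dually to Lemma~\ref{lem:left-localization-is-closed-under-limits} --- closed in $\cat A$ under all colimits.

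First I would reduce to the binary case. Since the monoidal structure is assembled from its binary multiplication and its unit by associativity and symmetry, it suffices to show (i) $\unity\in\cat A_{\ge 0}$, and (ii) $X\in\cat A_{\ge r}$, $Y\in\cat A_{\ge s}$ imply $X\tensor Y\in\cat A_{\ge r+s}$; the statement for a general finite totally ordered $I$ then follows by induction on $\cardinality I$ (using that $\Tensor_I$ preserves colimits in each variable, which is again formal from the binary case).

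For (ii) the key is a computation on ``elementary'' filtered objects. For $m\in\Z$ and $c\in\cat B_m$ let $\langle c\rangle_m\in\cat A$ be the filtered object with $F_k\langle c\rangle_m=L^{m-k}c$ for $k\le m$ and $F_k\langle c\rangle_m=\zero$ for $k>m$; equivalently $\langle\blank\rangle_m\colon\cat B_m\to\cat A$ is the left adjoint of the evaluation $X\mapsto F_mX$, i.e.\ left Kan extension along $\{m\}\into\Z$. Two observations: first, every structure map of $\langle c\rangle_m$ in degree $<m$ is an equivalence, so $\langle c\rangle_m\in\cat A_{\ge m}$ (in particular $\langle c\rangle_i\in\cat A_{\ge r}$, $\langle d\rangle_j\in\cat A_{\ge s}$ whenever $i\ge r$, $j\ge s$); second, $\langle c\rangle_i\tensor\langle d\rangle_j\simeq\langle c\tensor d\rangle_{i+j}$. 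The latter one checks from the defining colimit: feeding the two objects into the formula for $F_n(\langle c\rangle_i\tensor\langle d\rangle_j)$, each nonzero term equals $L^{i+j-n}(c\tensor d)$ and the indexing poset (a truncated rectangle) has contractible nerve, so the colimit is $L^{i+j-n}(c\tensor d)$ for $n\le i+j$ and $\zero$ otherwise --- exactly $\langle c\tensor d\rangle_{i+j}$; alternatively, the family $\langle\blank\rangle_\bullet$ is monoidal for the Day-type convolution. Now write $X$ and $Y$ as colimits of elementary objects: in $\cat A_{\ge r}\simeq\Fun(\Z_{\ge r},-)$ the co-Yoneda presentation expresses $X$ as a colimit of objects $\langle c\rangle_i$ with $i\ge r$, and likewise $Y$ with $j\ge s$. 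By variablewise preservation of colimits, $X\tensor Y$ is then a colimit of objects $\langle c\rangle_i\tensor\langle d\rangle_j\simeq\langle c\tensor d\rangle_{i+j}$ with $i\ge r$, $j\ge s$, each lying in $\cat A_{\ge i+j}\subseteq\cat A_{\ge r+s}$; since $\cat A_{\ge r+s}$ is closed under colimits, $X\tensor Y\in\cat A_{\ge r+s}$. For (i), $\unity\simeq\langle\unity_{\cat B_0}\rangle_0\in\cat A_{\ge 0}$ by the first observation.

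The step I expect to require the most care is the pair of facts about the $\langle c\rangle_m$: the identity $\langle c\rangle_i\tensor\langle d\rangle_j\simeq\langle c\tensor d\rangle_{i+j}$, which needs the explicit monoidal-product formula together with a small cofinality/contractibility check on its indexing poset, and the verification that $X\in\cat A_{\ge r}$ genuinely is a colimit of $\langle c\rangle_i$'s with $i\ge r$ (rather than merely a colimit in $\cat A$ of such objects with $i$ unrestricted, which would not suffice). Once these are in place the rest of the argument is purely formal.
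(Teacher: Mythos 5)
Your argument is correct, but it is organized quite differently from the paper's. The paper works directly with the defining colimit $F_n(X\tensor Y)=\colim_{i+j\ge n}L^{i+j-n}(F_iX\tensor F_jY)$ for general $X\in\cat{A}_{\ge r}$, $Y\in\cat{A}_{\ge s}$: the hypotheses make the maps $F_iX\tensor F_jY\from F_{\max\{i,r\}}X\tensor F_{\max\{j,s\}}Y$ equivalences, so the whole diagram over $\{i+j\ge n\}$ is the left Kan extension of its restriction to $\{i\ge r,\,j\ge s\}$, and the two colimits agree; for $n\le r+s$ the restricted colimit is then manifestly insensitive to $n$ in the required way. You instead prove the statement only for the free objects $\langle c\rangle_i$ and bootstrap by density, variablewise cocontinuity of $\tensor$, and closure of $\cat{A}_{\ge r+s}$ under colimits (the dual of Lemma \ref{lem:left-localization-is-closed-under-limits}). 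Your route is more modular and makes the Day-convolution nature of the construction explicit --- and, unlike the paper's proof, it also handles the empty tensor product $\unity\simeq\langle\unity\rangle_0\in\cat{A}_{\ge 0}$, which Definition \ref{def:filtered-tensor} does require --- at the cost of needing the density statement that every object of $\cat{A}_{\ge r}$ is canonically a colimit of objects $\langle c\rangle_i$ with $i\ge r$, transported along the colimit-preserving inclusion $\cat{A}_{\ge r}\into\cat{A}$.

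One caution on the step you yourself flag: in computing $F_n(\langle c\rangle_i\tensor\langle d\rangle_j)$, the sub-poset of indices carrying nonzero terms is \emph{not} cofinal in $\{(k,l):k+l\ge n\}$ --- an index with $k>i$ but $k+l\ge n$ has empty comma category into it --- so a pure cofinality argument does not close this step. What does work is exactly the paper's device applied to this special case: the full diagram is the left Kan extension of its restriction to the truncated rectangle (the relevant comma categories are either empty, matching the zero values, or rectangles with an initial vertex, matching the constant value $L^{i+j-n}(c\tensor d)$), so the two colimits agree. With that substitution, or with your alternative appeal to monoidality of $\langle\blank\rangle_\bullet$ for the Day convolution, the proof is complete.
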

\begin{proof}
Let $r$, $s$ be integers, and let $X\in\cat{A}_{\ge r}$ and
$Y\in\cat{A}_{\ge s}$.
Then we would then like to prove $X\tensor Y\in\cat{A}_{\ge r+s}$.

In terms of the sequence defining $X$ and $Y$, the given conditions
are that the map $F^iX\from L^{r-i}F^rX$ is an equivalence for $i\le
r$, and similarly for $Y$.
Under these assumptions, we need to prove that the map $F^n(X\tensor
Y)\from L^{r+s-n}F^{r+s}(X\tensor Y)$ is an equivalence for $n\le r+s$.

By definition, $F^n(X\tensor Y)$ was the colimit over $i,\,j$ such
that $i+j\ge n$, of $L^{i+j-n}(F^iX\tensor F^jY)$.
It suffices to prove that, for $n\le r+s$, this colimit is the same as
the colimit of $L^{i+j-n}(F^iX\tensor F^jY)$ over $i,\,j$ such that
$i\ge r$ and $j\ge s$.
However, the given assumptions imply that the diagram over $i,\,j$
such that $i+j\ge n$, is the left Kan extension of its restriction to
$i,\,j$ such that $i\ge r$ and $j\ge s$, since the assumptions imply
that the map $F^iX\tensor F^jY\from F^{\max\{i,r\}}X\tensor
F^{\max\{j,s\}}Y$ (in $\cat{B}_{i+j}$; we have omitted $L$ from the
notation) will be an equivalence for all $i,\,j$.
The result follows.
\end{proof}

\begin{proposition}\label{prop:complete-objects-monoidal}
If each $\cat{B}_n$ is closed under the sequential limit, so
$\complete{\cat{A}}$ is the exact completion of $\cat{A}$, then the
monoidal structure on $\cat{A}$ is completable.
\end{proposition}
\begin{proof}
We want to show that if $X\in\cat{A}_{\ge\infty}$ and $Y\in\cat{A}$,
then $X\tensor Y\in\cat{A}_{\ge\infty}$.

The given condition is the same as that the map $F^nX\from F^{n+1}X$
is an equivalence for every $n$.
We then want to prove that the map
\[
F^n(X\tensor Y)=\colim_{i+j\ge n}F^iX\tensor
F^jY\longfrom\colim_{i+j\ge n+1}F^iX\tensor F^jY=F^{n+1}(X\tensor Y)
\]
is an equivalence for every $n$.

However, an inverse to this map can be constructed as the colimit
\[
\colim_{i+j\ge n}F^iX\tensor F^jY\longto\colim_{i+j\ge n}F^{i+1}X\tensor F^jY
\]
of the maps induced from the inverses $F^iX\to F^{i+1}X$ to the given
equivalences.
\end{proof}

\subsection{Modules over an algebra in filtered stable category}
\setcounter{subsubsection}{-1}

\subsubsection{}
 
Further examples of filtered stable categories will be found by
considering modules over an algebra in a filtered stable category.
We shall investigate them further.

\subsubsection{}

Let us start from the situation of general localization of a stable
category.

Thus, let $\cat{A}$ be a stable category, and let a monoidal
structure $\tensor$ on $\cat{A}$ be given.
Recall that we assume by convention that the monoidal multiplication
is exact in each variable.

Let us further assume given a left localization
$\blank_\leftlocal\colon\cat{A}\to\cat{A}_\leftlocal$ of $\cat{A}$
with a complementary right localization
$\blank_\rightlocal\colon\cat{A}\to\cat{A}_\rightlocal$.

We assume given an associative algebra $A$ in $\cat{A}$, and
would like to have a corresponding localization of the category
$\Mod_A$ of (say, right) $A$-modules, in a natural way.
A sufficient condition so one can do this is that the functor
$-\tensor A\colon\cat{A}\to\cat{A}$ take $\cat{A}_\rightlocal$ to
$\cat{A}_\rightlocal$.
(There is no difference if $\cat{A}$ is not assumed to be monoidal,
but it is given an action by any monad, in place of an action of an
algebra object in $\cat{A}$.
For our applications, we do not need to use this language.)

Indeed, if $A$ satisfies this condition, then for any object $X$ of
$\cat{A}_\rightlocal$ and $Y$ of $\cat{A}$, and for any integer $n\ge
0$, we have that the induced map
\[
\Map(X\tensor A^{\tensor n},Y_\rightlocal)\longto\Map(X\tensor
A^{\tensor n},Y)
\]
is an equivalence.
It follows that the category
$\Mod_{A,\rightlocal}:=\Mod_A(\cat{A}_\rightlocal)$ of $A$-modules in
$\cat{A}_\rightlocal$, is a full subcategory of
$\Mod_A(\cat{A})=\Mod_A$ by the functor induced from
$\cat{A}_\rightlocal\into\cat{A}$, and is a right localization of
$\Mod_A$.
It further follows that the square
\[\begin{tikzcd}
\Mod_{A,\rightlocal}\arrow[hook]{r}\arrow{d}
&\Mod_A\arrow{d}\\
\cat{A}_\rightlocal\arrow[hook]{r}
&\cat{A}\kokoni{,}
\end{tikzcd}\]
where the vertical arrows are the forgetful functors, is Cartesian,
and the localization functor $\Mod_A\to\Mod_{A,\rightlocal}$ is the
functor induced from the localization functor
$\blank_\rightlocal\colon\cat{A}\to\cat{A}_\rightlocal$, and its lax
linearity over the action of $A$.
In particular, the localization functor lifts $\blank_\rightlocal$
canonically.

It follows that the complementary left localization
$\Mod_{A,\leftlocal}$ of $\Mod_A$ is given by the Cartesian square
\[\begin{tikzcd}
\Mod_{A,\leftlocal}\arrow[hook]{r}\arrow{d}
&\Mod_A\arrow{d}\\
\cat{A}_\leftlocal\arrow[hook]{r}
&\cat{A}\kokoni{.}
\end{tikzcd}\]
As a full subcategory of $\Mod_A$, this can be also expressed as
$\Mod_A(\cat{A}_\leftlocal)$, modules with respect to the op-lax
action of powers of $A$ on $\cat{A}_\leftlocal$ by $X\mapsto
X\tensor_\leftlocal A^{\tensor n}:=(X\tensor A^{\tensor
  n})_\leftlocal$.

\begin{remark}
The action of powers of $A$ on $\cat{A}_\leftlocal$ is in fact
genuinely associative.
To see this, it suffices to show that for any object $X$ of $\cat{A}$,
the map $X\tensor_\leftlocal A^{\tensor n}\to
X_\leftlocal\tensor_\leftlocal A^{\tensor n}$ is an equivalence.
This follows from the cofibre sequence
\[
X_\rightlocal\tensor A^{\tensor n}\longto X\tensor A^{\tensor
  n}\longto X_\leftlocal\tensor A^{\tensor n}
\]
and Lemma \ref{lem:local-equivalence}.
\end{remark}

The left localization $\Mod_A\to\Mod_{A,\leftlocal}$ lifts
$\blank_\leftlocal\colon\cat{A}\to\cat{A}_\leftlocal$ canonically,
since the left localization functor is the cofibre of the right
localization map, and the forgetful functor $\Mod_A\to\cat{A}$
preserves cofibre sequences.

In terms of objects, if $K$ is an $A$-module in $\cat{A}$, then
$K_\leftlocal$ has a canonical structure of an $A$-module.
This comes from the canonical structure of an $A$-module on
$K_\rightlocal$, and the canonical structure of an $A$-module map on
the right localization map $K_\rightlocal\to K$ (which \emph{together}
exist uniquely).

\begin{remark}
In particular, $A_\rightlocal$ is an $A$-bimodule, and hence
$A_\leftlocal$ becomes an $A$-algebra.
However, the $A$-module $K_\leftlocal$ does not in general come from
an $A_\leftlocal$-module.
\end{remark}

\subsubsection{}

Let us now consider a filtered stable category $\cat{A}$ with
compatible monoidal structure, and let $A$ be an associative
algebra in $\cat{A}$.
A sufficient condition so the constructions above can be applied to
this context is that the underlying object of $A$ belong to
$\cat{A}_{\ge 0}$.

Thus, let $A$ be in fact, an associative algebra in $\cat{A}_{\ge 0}$.
Then we have a filtration on $\Mod_A$, where $\Mod_{A,\ge
  r}=\Mod_A(\cat{A}_{\ge r})\sub\Mod_A$, and the localization functor
$\Mod_A\to\Mod_{A,\ge r}$ is induced from $\blank_{\ge
  r}\colon\cat{A}\to\cat{A}_{\ge r}$.

The complementary left localization also lifts that on $\cat{A}$, and
the square
\[\begin{tikzcd}
\Mod_A^{<r}\arrow[hook]{r}\arrow{d}
&\Mod_A\arrow{d}\\
\cat{A}^{<r}\arrow[hook]{r}
&\cat{A}
\end{tikzcd}\]
is Cartesian for every $r$.
As a full subcategory of $\Mod_A$, this can be also expressed as
$\Mod_A(\cat{A}^{<r})$, modules with respect to the action of $A$ on
$\cat{A}^{<r}$ by $X\mapsto(X\tensor A)^{<r}$.

The localization functor $\Mod_A\to\Mod_A^{<r}$ lifts
$\blank^{<r}\colon\cat{A}\to\cat{A}^{<r}$.

\begin{remark}
As noted in the previous remark, the $A$-module $K^{<r}$ does not in
general come from an $A^{<r}$-module.
However, it is always true that $\cat{A}_{\ge
  0}^{<r}:=\cat{A}^{<r}\intersect\cat{A}_{\ge 0}$ comes with a canonical
monoidal structure, together with a canonical
monoidal structure on the functor $\cat{A}_{\ge 0}\to\cat{A}_{\ge
  0}^{<r}$.
Note that the algebra $A^{<r}$ is obtained in $\cat{A}_{\ge 0}^{<r}$
using this.
If $A$-module $K$ is in $\cat{A}_{\ge 0}$, then $K^{<r}$ can be
obtained as an $A^{<r}$-module in $\cat{A}_{\ge 0}^{<r}$ from which the
structure of an $A$-module on $K^{<r}$ gets recovered.
\end{remark}

\subsubsection{}

If further, $\complete{\cat{A}}$ is the completion of $\cat{A}$, then
$\complete\Mod_A$ is the completion of $\Mod_A$, and this completion
lifts the completion of $\cat{A}$.
As a full subcategory of $\Mod_A$,
$\complete\Mod_A=\Mod_A\times_\cat{A}\complete{\cat{A}}$.

\begin{corollary}
$\Mod_A$ is complete if $\cat{A}$ is complete.
\end{corollary}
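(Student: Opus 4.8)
The plan is to deduce this immediately from the description of $\complete\Mod_A$ just obtained. Recall that when $\complete{\cat{A}}$ is the completion of $\cat{A}$, the completion of $\Mod_A$ is $\complete\Mod_A$, and as a full subcategory of $\Mod_A$ it is $\Mod_A\times_\cat{A}\complete{\cat{A}}$; equivalently, an $A$-module is complete precisely when its underlying object of $\cat{A}$ is complete (since the completion of $\Mod_A$ lifts that of $\cat{A}$ along the forgetful functor). So the first step is simply to observe that if $\cat{A}$ is complete, then $\complete{\cat{A}}=\cat{A}$, and in particular $\cat{A}$ is trivially its own completion, so the cited description of $\complete\Mod_A$ applies in this situation.

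The second and final step is then the identification $\complete\Mod_A=\Mod_A\times_\cat{A}\complete{\cat{A}}=\Mod_A\times_\cat{A}\cat{A}=\Mod_A$, which says that every $A$-module is complete, i.e., that $\Mod_A$ is complete. Since the argument is this short, there is no genuine obstacle; the only point worth keeping in mind is that the fibre-product description of $\complete\Mod_A$ was established under the standing hypothesis that $\complete{\cat{A}}$ is the completion of $\cat{A}$, and one should note that completeness of $\cat{A}$ does make $\cat{A}$ its own completion, so that hypothesis is in force here.
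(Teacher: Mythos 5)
Your proposal is correct and is exactly the argument the paper intends (the corollary is left without proof precisely because it follows immediately from the identification $\complete\Mod_A=\Mod_A\times_\cat{A}\complete{\cat{A}}$ stated just before it). Your added remark that completeness of $\cat{A}$ forces $\complete{\cat{A}}=\cat{A}$ to be the completion of $\cat{A}$, so that the standing hypothesis is satisfied, is a reasonable point of care and is easily verified since $\complete{\blank}$ is then equivalent to the identity functor.
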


The full subcategory $\cat{A}_{\ge\infty}$ of $\cat{A}$ is preserved
by the action of $A$, so
the general argument can be applied to completion as well.
In particular, $\complete\Mod_A$ can be identified with
$\Mod_A(\complete{\cat{A}})$, where $A$ acts on $\complete{\cat{A}}$
by $X\mapsto X\complete\tensor A:=\complete{X\tensor A}$.
The inclusion $\complete\Mod_A\into\Mod_A$ then gets identified with
the functor induced from the lax $A$-linear functor
$\complete{\cat{A}}\into\cat{A}$.

If further, the monoidal structure on $\cat{A}$ is completable, then
the action of $A$ on $\complete{\cat{A}}$ is through the action of the
algebra $\complete{A}$ in $\complete{\cat{A}}$ (indeed we shall have
$X\tensor A\equivto X\tensor\complete{A}$) on
$\complete{\cat{A}}$, and the completion functor
\[
\Mod_A(\cat{A})\longto\complete\Mod_A(\cat{A})\equivwith\Mod_A(\complete{\cat{A}})=\Mod_{\complete{A}}(\complete{\cat{A}})
\]
is just the functor induced from the \emph{monoidal} functor
$\complete\blank\colon\cat{A}\to\complete{\cat{A}}$.

\subsubsection{}

Let $\cat{A}$ be a monoidal filtered stable
category, and let $A$ be an associative algebra in $\cat{A}_{\ge 0}$.
Then we consider the pairing
\[
-\tensor_A-\colon\Mod_A\times{}_A\Mod\longto\cat{A},
\]
where ${}_A\Mod$ denotes the filtered stable category of left
$A$-modules.
This pairing is compatible with the filtrations.

\begin{corollary}\label{cor:continuity-of-tesor-product}
Let $\cat{A}$ be a monoidal filtered stable category with
uniformly bounded sequential limits.

Let $A$ be an associative algebra in $\cat{A}_{\ge 0}$.
Suppose given an inverse system
\[
\cdots\longfrom K_i\longfrom K_{i+1}\longfrom\cdots
\]
in $\Mod_A$, and suppose there is a sequence
$(r_i)_i$ of integers, tending to $\infty$ as $i\to\infty$,
such that for every $i$, the fibre of the map $K_{i+1}\to K_i$ belongs
to $\Mod_{A,\ge r_i}$ (namely, its underlying object belongs to
$\cat{A}_{\ge r_i}$).
Then for every left $A$-module $L$, if (the underlying object
of) $L$ is bounded below, then the induced map
\[
(\lim_iK_i)\tensor_AL\longto\lim_i(K_i\tensor_AL)
\]
is an equivalence after completion.
\end{corollary}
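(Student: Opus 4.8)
The plan is to obtain this as an immediate instance of Proposition \ref{prop:continuity-of-pairing}, applied to the pairing
\[
-\tensor_A-\colon\Mod_A\times{}_A\Mod\longto\cat{A}
\]
noted just above to be compatible with the filtrations. Concretely, I would take the ``$\cat{A}$'' of that proposition to be $\Mod_A$, its ``$\cat{B}$'' to be ${}_A\Mod$, its ``$\cat{C}$'' to be $\cat{A}$, its inverse system to be $(K_i)_i$, and its bounded below object $Y$ to be $L$.

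To invoke the proposition I must verify its standing hypotheses in this situation. First, that $\complete{\cat{C}}=\complete{\cat{A}}$ is the completion of $\cat{C}=\cat{A}$: this holds by Proposition \ref{prop:complete-with-bounded-sequential-limits}, since sequential limits are uniformly bounded in $\cat{A}$ by assumption. Second, that sequential limits are uniformly bounded in the source category $\Mod_A$: here I would use that the filtration on $\Mod_A$ is given by $\Mod_{A,\ge r}=\Mod_A(\cat{A}_{\ge r})$ as a full subcategory of $\Mod_A$, and that the forgetful functor $\Mod_A\to\cat{A}$ creates sequential limits; hence any uniform bound $d$ for sequential limits in $\cat{A}$ is one in $\Mod_A$ as well. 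Third, that the pairing is compatible with the filtrations, which is exactly the statement recorded immediately before the corollary (and which itself follows from the compatibility of $\tensor$ with the filtration on $\cat{A}$ together with the description of $\Mod_{A,\ge r}$).

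With these in place, the hypothesis on the inverse system translates verbatim: the fibre of $K_{i+1}\to K_i$ lies in $\Mod_{A,\ge r_i}$ precisely when its underlying object lies in $\cat{A}_{\ge r_i}$, which is what Proposition \ref{prop:continuity-of-pairing} requires. Its conclusion then yields that
\[
\pairing{\lim_iK_i,\,L}\longto\lim_i\pairing{K_i,\,L}
\]
is an equivalence after completion, and this is the asserted map $(\lim_iK_i)\tensor_AL\to\lim_i(K_i\tensor_AL)$, where I note that $\lim_iK_i$ formed in $\Mod_A$ agrees with the limit formed in $\cat{A}$ by the previous point. Since the whole argument is this bookkeeping, I do not expect a serious obstacle; the only point deserving a line of care is the second hypothesis above, i.e.\ confirming that the uniform bound for sequential limits passes from $\cat{A}$ to $\Mod_A$.
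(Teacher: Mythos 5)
Your proof is correct and takes essentially the same approach as the paper, which states this corollary without further argument precisely because it is the instance of Proposition \ref{prop:continuity-of-pairing} for the pairing $-\tensor_A-\colon\Mod_A\times{}_A\Mod\to\cat{A}$ asserted just beforehand to be compatible with the filtrations. Your verification of the hypotheses — in particular that the uniform bound for sequential limits passes from $\cat{A}$ to $\Mod_A$ because the forgetful functor creates limits and the filtration on $\Mod_A$ is detected on underlying objects — is exactly the bookkeeping the paper leaves implicit.
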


\subsubsection{}

We discuss a few simple consequences.
(More consequences will be discussed in the next section.)

Firstly, associativity of tensor product holds for bounded below
modules over \emph{positive} augmented algebras to be defined as
follows.

\begin{definition}
Let $\cat{A}$ be a monoidal filtered stable category.
We say that an augmented algebra $A$ in $\cat{A}$ is \kore{positive}
if the augmentation ideal $I$ of $A$ belongs to $\cat{A}_{\ge 1}$.
\end{definition}

\begin{lemma}\label{lem:associativity-restricted}
Let $\cat{A}$ be a monoidal complete filtered stable category.
Let $A_i$, $i=0,1,2,3$, be positive augmented algebras in $\cat{A}$,
and let $K_{i,i+1}$ be a left $A_i$- right $A_{i+1}$-bimodule for
$i=0,1,2$, whose underlying object is bounded below.

Then the resulting map
\[
K_{01}\tensor_{A_1}K_{12}\tensor_{A_2}K_{23}\longto(K_{01}\tensor_{A_1}K_{12})\tensor_{A_2}K_{23}
\]
is an equivalence, where the source denotes the realization of the
\textbf{bi}simplicial bar construction (each simplicial index coming
from the actions of eachof the algebras $A_1$, $A_2$).
\end{lemma}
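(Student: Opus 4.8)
The plan is to peel the factor $A_2^{\tensor q}\tensor K_{23}$ off the outer relative tensor product, reduce to a single statement asserting that an exact functor commutes with the geometric realization computing a relative tensor product over a \emph{positive} augmented algebra, and then to invoke Proposition~\ref{prop:realization-preserved} together with completeness of $\cat A$. Concretely, both sides of the asserted equivalence are assembled from the same bisimplicial object $\mathrm{Bar}_{p,q}:=K_{01}\tensor A_1^{\tensor p}\tensor K_{12}\tensor A_2^{\tensor q}\tensor K_{23}$: the source is its realization over $\Simpface^\op\cross\Simpface^\op$, while the target is the realization over $q$ of $F_q(K_{01}\tensor_{A_1}K_{12})$, where $F_q:=\blank\tensor A_2^{\tensor q}\tensor K_{23}$ and $K_{01}\tensor_{A_1}K_{12}=|\mathrm{Bar}_\dot(K_{01},A_1,K_{12})|$. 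Since colimits commute with colimits, the source equals the realization over $q$ of $|F_q\,\mathrm{Bar}_\dot(K_{01},A_1,K_{12})|$, and the comparison map of the lemma is the realization over $q$ of the canonical maps
\[
|F_q\,\mathrm{Bar}_\dot(K_{01},A_1,K_{12})|\longto F_q|\mathrm{Bar}_\dot(K_{01},A_1,K_{12})|;
\]
so it suffices to prove each of these maps is an equivalence.

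Next I would isolate the following general claim: if $F$ is an exact, bounded below functor, $A$ a positive augmented algebra, and $M$, $N$ right and left $A$-modules whose underlying objects are bounded below, then the canonical map $|F\,\mathrm{Bar}_\dot(M,A,N)|\to F|\mathrm{Bar}_\dot(M,A,N)|$ is an equivalence. Each $F_q$ meets these hypotheses: it is exact, and since $A_2$ is a positive augmented algebra its underlying object lies in $\cat A_{\ge0}$, whence $A_2^{\tensor q}\tensor K_{23}$ is bounded below (as $K_{23}$ is) and $F_q$ is bounded below; moreover $K_{01}$ and $K_{12}$ are bounded below by hypothesis. So the general claim gives the lemma.

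To prove the general claim, the key point is that, although the unnormalized bar construction has every term in one and the same $\cat A_{\ge a+b}$ and is therefore out of reach of Proposition~\ref{prop:realization-preserved}, the relative tensor product $M\tensor_A N$ is equally computed by the \emph{reduced} bar construction, the realization of the semisimplicial object $n\mapsto M\tensor\overline A^{\tensor n}\tensor N$ with $\overline A:=\Cofib[\unity\to A]$; and, $F$ being exact and hence commuting with the finite colimits that cut out the nondegenerate chains, $|F\,\mathrm{Bar}_\dot(M,A,N)|$ is likewise the realization of $n\mapsto F(M\tensor\overline A^{\tensor n}\tensor N)$. Because $A$ is augmented, the unit $\unity\to A$ is split by the augmentation, so $\overline A\equivwith I$, the augmentation ideal; because $A$ is positive, $I\in\cat A_{\ge1}$. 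Hence, if $M\in\cat A_{\ge a}$ and $N\in\cat A_{\ge b}$, compatibility of the monoidal structure with the filtration places the $n$-th term $M\tensor I^{\tensor n}\tensor N$ in $\cat A_{\ge a+b+n}$, and $a+b+n\to\infty$ as $n\to\infty$. Proposition~\ref{prop:realization-preserved} then shows that the comparison map is an equivalence after completion, and since $\cat A$ is complete it is an equivalence; realizing over $q$ and assembling the pieces completes the proof.

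The step I expect to be the main obstacle — the only genuine input beyond routine bookkeeping such as the commutation of colimits, the agreement of the bar construction with its reduced version, and the tracking of the bimodule structures — is this passage to the reduced bar construction: recognizing that positivity of the augmentation is precisely what forces the nondegenerate $n$-chains $M\tensor I^{\tensor n}\tensor N$ to climb the filtration linearly in $n$, which is exactly the hypothesis under which Proposition~\ref{prop:realization-preserved} applies, together with checking that the auxiliary functors $F_q$ can be fed to that proposition uniformly in $q$.
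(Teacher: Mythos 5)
Your proposal is correct and follows essentially the same route as the paper's proof: reduce to a levelwise comparison in the outer bar direction, observe that the functors $\blank\tensor A_2^{\tensor q}\tensor K_{23}$ are exact and bounded below, replace the inner bar construction by its degeneracy-free version built from the augmentation ideal $I_1\in\cat{A}_{\ge 1}$ so that the terms climb the filtration, and conclude by Proposition~\ref{prop:realization-preserved} together with completeness of $\cat{A}$. The only cosmetic difference is that the paper also strips the degeneracies from the outer bar construction (using $I_2$ in place of $A_2$), which changes nothing of substance.
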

\begin{proof}
Denote the augmentation ideal of $A_i$ by $I_i$.
We express the tensor product $K_{01}\tensor_{A_1}K_{12}$ etc.~as the
geometric realization of the simplicial bar construction
$B_\dot(K_{01},I_1,K_{12})$ etc.~\emph{without} degeneracies (in the
sense that it is a diagram over $\Simpface$), associated to the
actions of the non-unital algebra $I_1$ etc.
See Section \ref{sec:totalization}.
It is easy to check that the usual bar construction, with
degeneracies, associated to the unital algebra $A_1$ etc., is the left
Kan extension of the version here, so the geometric realizations are
equivalent.

The target then can be written as
$|B_\dot(K_{01}\tensor_{A_1}K_{12},I_2,K_{23})|$.

For every $n$, the functor $-\tensor I_2^{\tensor n}\tensor K_{23}$ is
bounded below, so Proposition \ref{prop:realization-preserved} implies
that
\[
B_\dot(K_{01}\tensor_{A_1}K_{12},I_2,K_{23})=|B_\dot(B_*(K_{01},I_1,K_{12}),I_2,K_{23})|,
\]
where the realization is in the variable $*$.

However, the realization of this is nothing but the source.
\end{proof}

Let $\cat{A}$ be a monoidal complete filtered stable category, and let
$A$ be a positive augmented associative algebra in $\cat{A}$.
Let $\varepsilon\colon A\to\unity$ be the augmentation map, and
$I:=\Fibre(\varepsilon)$ be the augmentation ideal of $A$.

Let us define the \emph{powers} of $I$ by $I^r:=I^{\tensor_Ar}$.
Note that multiplication of $A$ gives an $A$-bimodule map $I^r\to I^s$
whenever $r\ge s$.
Denote the cofibre of this map by $I^s/I^r$.
When $s=0$, this, $A/I^r$, is an $A$-algebra.

\begin{lemma}\label{lem:completeness-of-module}
Let $\cat{A}$ be a monoidal filtered stable category with
$\complete{\cat{A}}$ completing it, and let $A$ be a positive
augmented associative algebra in $\cat{A}$.

Let $K$ be a right $A$-module which is bounded below.
Then the map $K\to\lim_rK\tensor_AA/I^r$ is an equivalence after
completion.
\end{lemma}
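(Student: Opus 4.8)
The plan is to identify the fibre of the map $K\to\lim_rK\tensor_AA/I^r$ and to show that the completion functor sends it to $\zero$. Write $I$ for the augmentation ideal of $A$, and fix an integer $m$ with $K\in\cat{A}_{\ge m}$, which exists since $K$ is bounded below. Multiplication of $A$ gives an $A$-bimodule cofibre sequence $I^r\to A\to A/I^r$, and as $r$ varies these assemble, via the maps $I^{r+1}\to I^r$ induced by $I\to A$, into a tower with constant middle term $A$. Tensoring over $A$ with $K$, which preserves cofibre sequences in each variable, produces a tower of cofibre sequences
\[
K\tensor_AI^r\longto K\longto K\tensor_AA/I^r,
\]
whose middle term is constantly $K\equivwith K\tensor_AA$. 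Since $\lim_r$ preserves fibre sequences, applying it yields a cofibre sequence
\[
\lim_r(K\tensor_AI^r)\longto K\longto\lim_r(K\tensor_AA/I^r),
\]
in which the rightmost map is exactly the one in the statement. By Lemma \ref{lem:local-equivalence}, applied to the completion left localization of $\cat{A}$, whose complementary right localization is $\cat{A}_{\ge\infty}$, this map is an equivalence after completion provided $\lim_r(K\tensor_AI^r)$ lies in $\cat{A}_{\ge\infty}$.

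To obtain this, I would prove the filtration bound $K\tensor_AI^r\in\cat{A}_{\ge m+r}$. First note that the underlying object of $A$ lies in $\cat{A}_{\ge0}$: indeed $\unity\in\cat{A}_{\ge0}$ by compatibility of the monoidal structure with the filtration, $I\in\cat{A}_{\ge1}$ by positivity, and $\cat{A}_{\ge0}$ is closed under extensions (Corollary \ref{cor:closure-under-extension}), so the cofibre sequence $I\to A\to\unity$ gives $A\in\cat{A}_{\ge0}$. Now $K\tensor_AI^r$, i.e.\ the $A$-relative tensor product of $K$ with $r$ copies of $I$, is computed as an iterated geometric realization of a bar diagram whose terms have the shape
\[
K\tensor A^{\tensor n_1}\tensor I\tensor A^{\tensor n_2}\tensor I\tensor\cdots\tensor A^{\tensor n_r}\tensor I,
\]
with $r$ tensor factors equal to $I$. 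By compatibility of the monoidal structure with the filtration (Definition \ref{def:filtered-tensor}), each such term lies in $\cat{A}_{\ge m+r}$: the factor $K$ contributes $m$, each factor $I$ contributes $1$, and each block $A^{\tensor n_i}$ contributes $0$. Since $\cat{A}_{\ge m+r}$, being a right localization of $\cat{A}$ with a complementary left localization, is closed under colimits in $\cat{A}$ (the dual of Lemma \ref{lem:left-localization-is-closed-under-limits}), the realization stays in $\cat{A}_{\ge m+r}$ as well.

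As $m+r\to\infty$ when $r\to\infty$, Lemma \ref{lem:vanishing-completion} then gives $\lim_r(K\tensor_AI^r)\in\cat{A}_{\ge\infty}$, supplying the hypothesis needed to finish the first paragraph. The one step that requires genuine care is the filtration estimate of the middle paragraph, namely that positivity of $A$ together with $A\in\cat{A}_{\ge0}$ forces each relative tensor with $I$ to raise the filtration by one, so that the connectivity of the tower $(K\tensor_AI^r)_r$ tends to infinity; once that is established, everything else is a formal consequence of the localization lemmas already in place.
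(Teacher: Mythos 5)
Your proof is correct and follows essentially the same route as the paper, which writes $K=K\tensor_AA$ and applies Corollary \ref{cor:equivalence-of-limits-after-completion} to the tower $K\tensor_AA/I^r$ using that the fibres $K\tensor_AI^r$ lie in $\cat{A}_{\ge m+r}$. The only difference is that you unwind that corollary into Lemmas \ref{lem:local-equivalence} and \ref{lem:vanishing-completion} and spell out, via the bar construction, the filtration compatibility of $-\tensor_A-$ that the paper asserts without proof.
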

\begin{proof}
Since the fibre of the map $A\to A/I^r$ (namely $I^r$) belongs
to $\Mod_{A,\ge r}$, the result follows from
Corollary~\ref{cor:equivalence-of-limits-after-completion}.
(Write $K$ as $K\tensor_AA$.)
\end{proof}

\begin{corollary}\label{cor:josh-completeness}
Let $A$ be a positive augmented associative algebra in a
monoidal complete filtered stable category $\cat{A}$.
Then, the functor $-\tensor_A\unity\colon\Mod_{A,\ge r}\to\cat{A}_{\ge
  r}$ reflects equivalences.
\end{corollary}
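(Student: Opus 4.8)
The plan is to reduce, by passing to the cofibre, to the statement that a right $A$-module $C$ whose underlying object is bounded below and which satisfies $C\tensor_A\unity\equivwith\zero$ must itself be $\equivwith\zero$; and then to deduce from $C\tensor_A\unity\equivwith\zero$ that $C$ lies in $\cat{A}_{\ge r}$ for every $r$, so that completeness of $\cat{A}$ forces $C\equivwith\zero$.

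For the reduction: let $f$ be a map in $\Mod_{A,\ge r}$ with $f\tensor_A\unity$ an equivalence, and set $C:=\Cofib(f)\in\Mod_A$. Since $\cat{A}_{\ge r}$ is a right localization of $\cat{A}$, it is closed under colimits, so the underlying object of $C$ lies in $\cat{A}_{\ge r}$; and since $\blank\tensor_A\unity\colon\Mod_A\to\cat{A}$ is exact, $C\tensor_A\unity\equivwith\Cofib(f\tensor_A\unity)\equivwith\zero$. As $\cat{A}$ is stable, it suffices to show $C\equivwith\zero$.

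For the main step, write $I^k:=I^{\tensor_Ak}$ (so $I^0=A$) and $M_k:=C\tensor_AI^k$, so $M_0=C$. By associativity of the relative tensor product for bounded-below bimodules over positive augmented algebras (Lemma \ref{lem:associativity-restricted}) one has $M_{k+1}\equivwith M_k\tensor_AI$, the equivalence carrying the canonical map $M_{k+1}\to M_k$ to $M_k\tensor_A(I\into A)$. I claim $M_k\tensor_A\unity\equivwith\zero$ for all $k$: it holds for $k=0$ by the reduction above, and if it holds for $k$, then applying the exact functor $M_k\tensor_A\blank$ to the fibre sequence $I\to A\to\unity$ of left $A$-modules gives a cofibre sequence $M_{k+1}\to M_k\to M_k\tensor_A\unity\equivwith\zero$, so that $M_{k+1}\to M_k$ is an equivalence, and then applying $\blank\tensor_A\unity$ to this equivalence gives $M_{k+1}\tensor_A\unity\equivwith M_k\tensor_A\unity\equivwith\zero$. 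In particular $C\equivwith M_0\equivwith M_1\equivwith\cdots$.

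To finish, note that since $A$ is positive we have $I\in\cat{A}_{\ge1}$, hence, by compatibility of the monoidal structure with the filtration applied to the pairing $\Mod_A\times{}_A\Mod\to\cat{A}$, the underlying object of $M_k=C\tensor_AI^k$ lies in $\cat{A}_{\ge r+k}$. Combining with $C\equivwith M_k$ for all $k$, we get $C\in\Intersect_k\cat{A}_{\ge k}=\cat{A}_{\ge\infty}$; since $\cat{A}$ is complete, Lemma \ref{lem:identifying-complement} gives $\complete{C}\equivwith\zero$ while completeness gives $C\equivwith\complete{C}$, so $C\equivwith\zero$, as desired. The only non-formal ingredient is the associativity input, Lemma \ref{lem:associativity-restricted}; the point I expect to need a little care is the identification of the canonical map $M_{k+1}\to M_k$ with the map induced by $I\into A$, which is what makes the cofibre-sequence argument go through cleanly.
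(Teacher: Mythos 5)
Your proof is correct, but it takes a genuinely different route from the paper's for the main step. The paper first establishes an $I$-adic completeness statement (Lemma \ref{lem:completeness-of-module}): for bounded below $K$ the map $K\to\lim_s K\tensor_A A/I^s$ is an equivalence after completion, hence honestly an equivalence here, and then kills each quotient $K\tensor_A A/I^s$ by induction using the cofibre sequences $I^s/I^{s+1}\to A/I^{s+1}\to A/I^s$ together with the identification $K\tensor_A(I^s/I^{s+1})\equivwith(K\tensor_A\unity)\tensor_A I^s\equivwith\zero$. You instead run the induction on the other side of the same fibre sequence (in the form $I\to A\to\unity$ tensored against $M_k$), concluding $K\equivwith K\tensor_A I^k\in\cat{A}_{\ge r+k}$ for every $k$, hence $K\in\cat{A}_{\ge\infty}$, which vanishes by completeness. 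Both arguments are driven by the same two inputs --- the depth estimate $I^k\in\cat{A}_{\ge k}$ coming from positivity and compatibility of the monoidal structure with the filtration, and completeness of $\cat{A}$ --- but your organization bypasses Lemma \ref{lem:completeness-of-module} and the inverse-limit comparison (Corollary \ref{cor:equivalence-of-limits-after-completion}) entirely, replacing them by the elementary observation that an object of $\Intersect_k\cat{A}_{\ge k}$ has vanishing completion and so is zero in a complete category; the price is the identification $(K\tensor_A I^k)\tensor_A I\equivwith K\tensor_A I^{k+1}$, which is the same appeal to Lemma \ref{lem:associativity-restricted} that the paper's route also needs. As you note, you do not actually need the canonical map $M_{k+1}\to M_k$ to match the map induced by $I\into A$: membership in $\cat{A}_{\ge r+k}$ is invariant under any equivalence, so the existence of some equivalence $K\equivwith M_k$ suffices. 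Both proofs are sound; yours is arguably more self-contained.
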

\begin{proof}
Suppose an $A$-module $K$ in $\cat{A}_{\ge r}$ satisfies
$K\tensor_A\unity\equivwith\zero$.
We want to show that $K\equivwith\zero$.

In order to do this, it suffices, from the previous lemma, to
prove $K\tensor_A(I^s/I^{s+1})\equivwith\zero$ for all $s\ge 0$.
However, $I^s/I^{s+1}\equivwith\unity\tensor_AI^s$ as a left
$A$-module.
\end{proof}

\section{Koszul duality for complete algebras}
\label{sec:complete-poincare}
\setcounter{subsection}{-1}
\setcounter{equation}{-1}

\subsection{Introduction}

In this section, we shall obtain our main results on the Koszul
duality using the basic results developed in the previous two
sections.

\subsection{Koszul completeness of a positive algebra}
\label{sec:koszul-complete-algebra}
\setcounter{subsubsection}{-1}

\subsubsection{}
 
The Koszul duality we consider will be between augmented algebras and
coalgebras.
We first need to consider the condition on an augmented coalgebra,
corresponding to the positivity of an algebra.

\begin{definition}
Let $\cat{A}$ be a monoidal filtered stable category with uniformly
bounded loops and sequential limits.
An augmented associative coalgebra $C$ in $\cat{A}$
is said to be \kore{copositive} (with respect to the filtration)
if the augmentation ideal $J$ belongs to $\cat{A}_{\ge 1-\omega}$ for
a uniform bound $\omega$ for loops in $\cat{A}$.
\end{definition}

\begin{example}
If the filtration is a t-structure, then copositivity means that
$\Omega J$ belongs to $\cat{A}_{\ge 1}$.
\end{example}

Let us now consider the Koszul duality.
For an augmented coalgebra $C$, recall that its \emph{Koszul dual} is
an augmented associative algebra $C^!$ described as follows.

First of all, its underlying object is $\unity\cotensor_C\unity$,
where $\unity$ is given the structure of a $C$-module coming through
the augmentation map $\varepsilon\colon\unity\to C$ from the module
structure of $\unity$ over the unit coalgebra, and $\cotensor_C$
denotes the cotensor product operation over $C$.

In other words, it is an object representing the presheaf
$\cat{A}^\op\to\Space$, $X\mapsto\Map_{\Mod_C}(X\tensor\unity,
\unity)$, where $X\tensor\unity$($=X$) is made into a
$C$-module by the action of $C$ on the factor $\unity$.
The structure of an associative algebra of $C^!$ results from this,
and we take as the augmentation the map $\eta^!\colon
C^!\to\unity^!=\unity$ for the unit $\eta\colon C\to\unity$.

From this description, $C^!$ represents the presheaf on the category
of augmented associative algebras which maps an object $A$ to the
space of $A$-module structures on the $C$-module $\unity$, lifting the
$A$-module structure on the underlying object $\unity$ given by the
augmentation map of $A$.
In particular, $\Map_{\Alg_*}(A,C^!)=\Map_{\Coalg_*}(A^!,C)$, where
$A^!=\unity\tensor_A\unity$ is the augmeted associative coalgebra
Koszul dual to $A$.
The  subscripts $*$ here indicates that the categories are those of
\emph{augmented} algebras and coalgebras.
(For example, the map
$A^!\xrightarrow{\eta}\unity\xrightarrow{\varepsilon}C$
corresponds to the map
$A\xrightarrow{\varepsilon}\unity\xrightarrow{\eta}C^!$.)

Let $\cat{A}$ be a monoidal filtered stable category with uniformly
bounded loops and sequential limits.
Then the following lemma implies for a copositive augmented
associative coalgebra $C$ in $\cat{A}$, that its Koszul dual algebra
is positive.
We formulate the lemma more general than needed here, so it can be
iterated for a later use.

\begin{lemma}\label{lem:positive-koszul-dual}
Let $\cat{A}$ be a monoidal filtered stable category with uniformly
bounded loops and sequential limits.
Let $C$ be an augmented associative coalgebra in $\cat{A}$, and let
$K_i$, $i=0,1$, be a right and a left $C$-modules respectively,
equipped with maps $\eta_i\colon K_i\to\unity$ of $C$-modules.
Assume that, for an integer $r\ge 1$ and a uniform bound $\omega$ for
loops in $\cat{A}$, the augmentation ideal of $C$ belongs to the full
subcategory $\cat{A}_{\ge r-\omega}$ of $\cat{A}$, and $\Fibre\eta_i$
belongs to $\cat{A}_{\ge r}$ for $i=0,1$.

Let $\eta\colon C\to\unity$ be the unit map of $C$.
Then the fibre of the map
\[
K_0\cotensor_CK_1\xlongrightarrow{\eta_0\tensor_\eta\eta_1}\unity\cotensor_\unity\unity=\unity
\]
belongs to the full subcategory $\cat{A}_{\ge r}$ of $\cat{A}$.
\end{lemma}
\begin{proof}
Let $J$ be the augmentation ideal of $C$, so $J\in\cat{A}_{\ge
  r-\omega}$.
Since we have Corollary~\ref{cor:closure-under-extension}, it
suffices to prove that the fibre of each of the following obvious
maps belongs to $\cat{A}_{\ge r}$:
\begin{align*}
  K_0\cotensor_CK_1=\Tot B^\dot(K_0,J,K_1)
  &\longto\sk_{-d}\Tot B^\dot(K_0,J,K_1)\\
  &\longto\sk_0\Tot B^\dot(K_0,J,K_1)=K_0\tensor K_1\\
  &\xlongrightarrow{\eta_0\tensor\eta_1}\unity\tensor\unity=\unity,
\end{align*}
where $d\le 0$ is a uniform lower bound for sequential limits in
$\cat{A}$.
We shall prove
\begin{equation}\label{eq:fibre-over-skeleton}
  \Fibre[\Tot B^\dot(K_0,J,K_1)\to\sk_{-d}\Tot B^\dot(K_0,J,K_1)]
  \in\cat{A}_{\ge r}. 
\end{equation}
The rest is either similar or simpler.

In order to prove \eqref{eq:fibre-over-skeleton}, by the
definition~\ref{def:bounded-limit} of a uniform lower bound for
sequential limits, it suffices to prove that the fibre of the map
\[
  \sk_n\Tot B^\dot(K_0,J,K_1)\longto\sk_{-d}\Tot B^\dot(K_0,J,K_1)
\]
belongs to $\cat{A}_{\ge r-d}$ for all $n\ge-d+1$.
However, this follows from
Corollary~\ref{cor:closure-under-extension}, since for every
$k\ge-d+1$, the fibre $\Loop^kB^k(K_0,J,K_1)=\Loop^kK_0\tensor
J^{\tensor k}\tensor K_1$ of the map $\sk_k\Tot
B^\dot(K_0,J,K_1)\to\sk_{k-1}\Tot B^\dot(K_0,J,K_1)$ belongs to
$\cat{A}_{\ge kr}\sub\cat{A}_{\ge r-d}$.
\end{proof}

\begin{definition}\label{def:translated-filtration}
Let $\cat{A}$ be a filtered stable category.
Then we say that looping \kore{translates the filtration} of
$\cat{A}$, or is \kore{translational} in $\cat{A}$, if there is a
uniform lower bound $\omega$ for loops in $\cat{A}$ for which
$-\omega$ is a lower bound of the functor
$\susp=\Loop^{-1}\colon\cat{A}\to\cat{A}$.
Equivalently (by Lemma~\ref{lem:adjoint-preserving-localization}),
$\omega$ which is also an upper bound of the functor $\Loop$.
\end{definition}

\begin{remark}\label{rem:sound-example}
This happens if the tensoring of the finite spectra on $\cat{A}$ is
compatible with the filtrations.
See Remark \ref{rem:on-bounded-loops}.

Examples include the category of filtered objects
(Section \ref{sec:examples-of-filtered-categories}), a stable category
with a t-structure (Example \ref{ex:t-structure}), and a functor
category with Goodwillie's filtration
(Example \ref{ex:goodwillie-filtration}).
\end{remark}

\begin{remark}
In general, if looping translates the filtration of $\cat{A}$, and if
there exists an integer $r$ for which $\cat{A}_{\ge r+1}$ is a
\emph{proper} subcategory of $\cat{A}_{\ge r}$, and equivalently,
$\cat{A}^{<r}$ is a proper subcategory of $\cat{A}^{<r+1}$, then a
lower bound $\omega$ of $\Loop\colon\cat{A}\to\cat{A}$ for which
$-\omega$ is an upper bound of $\susp\colon\cat{A}\to\cat{A}$, must
be the greatest lower bound of $\Loop$, as well as the least upper
bound of $\Loop$ by duality.
\end{remark}

The following proposition might be clarifying.

\begin{proposition}
Let $\cat{A}$ be a filtered stable category.
Then an integer $\omega$ is a lower and an upper bound of the functor
$\Loop\colon\cat{A}\to\cat{A}$ if and only if for every integer $r$
and every object $X\in\cat{A}$, we have an equivalence
$(\Loop X)^{<r+\omega}\equivwith\Loop(X^{<r})$ in $\cat{A}$.
\end{proposition}
\begin{proof}
If $\omega$ is a lower and upper bound of $\Loop$, then, since in the
cofibre sequence
\[
  \Loop(X_{\ge r})\longto\Loop X\longto\Loop(X^{<r}),
\]
the fibre and the cofibre will respectively be in $\cat{A}_{\ge
  r+\omega}$ and be in $\cat{A}^{<r+\omega}$, we have that the map
$\Loop X\to\Loop(X^{<r})$ induces an equivalence $(\Loop
X)^{<r+\omega}\equivto\Loop(X^{<r})$ by
Corollary~\ref{cor:characterizing-localizations}.

Conversely, suppose we have equivalences $(\Loop
X)^{<r+\omega}\equivwith\Loop(X^{<r})$.
Then for $X\in\cat{A}$ belonging to $\cat{A}^{<r}$, this implies that
$\Loop X=\Loop(X^{<r})$ belongs to $\cat{A}^{<r+\omega}$, so $\Loop$
takes the full subcategory $\cat{A}^{<r}$ of $\cat{A}$ to the full
subcategory $\cat{A}^{<r+\omega}$.
For $X$ belonging to $\cat{A}_{\ge r}$, we obtain $(\Loop
X)^{<r+\omega}\equivwith\Loop(X^{<r})\equivwith\zero$, so $\Loop$
takes the full subcategory $\cat{A}_{\ge r}$ to $\cat{A}_{\ge
  r+\omega}$.
\end{proof}

\begin{lemma}\label{lem:copositive-koszul-dual}
Let $\cat{A}$ be a monoidal soundly filtered stable category.
If $A$ is a positive augmented associative algebra in $\cat{A}$, then
its Koszul dual coalgebra is copositive.
\end{lemma}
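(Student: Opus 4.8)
The plan is to carry out the argument of Lemma~\ref{lem:positive-koszul-dual} in the opposite variance, with totalizations and fibres replaced by geometric realizations and cofibres. Recall that the Koszul dual coalgebra of $A$ is $A^!=\unity\tensor_A\unity$, which one presents as the geometric realization $|B_\dot(\unity,I,\unity)|$ of the bar construction without degeneracies (a diagram over $\Simpface^\op$), where $I=\Fibre[A\to\unity]$ is the augmentation ideal of $A$ and $B_n(\unity,I,\unity)=I^{\tensor n}$. Under this presentation the coaugmentation $\unity\to A^!$ is the skeletal inclusion $\sk_0|B_\dot|\to|B_\dot|$ arising from the identification $\sk_0|B_\dot|=B_0=\unity$, so the augmentation ideal of $A^!$ is
\[
J\;=\;\Cofib[\unity\to A^!]\;=\;\colim_n\Cofib\bigl[\sk_0|B_\dot|\to\sk_n|B_\dot|\bigr].
\]

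First I would use the standard computation that the cofibre of $\sk_{n-1}|B_\dot|\to\sk_n|B_\dot|$ is $\susp^nB_n=\susp^n(I^{\tensor n})$ --- the colimit analogue of the identification of successive fibres used (via Proposition~\ref{prop:tatalization-preserved}) in the proof of Lemma~\ref{lem:positive-koszul-dual}; it takes this clean form precisely because we work with the non-degenerate, $\Simpface^\op$-indexed, bar construction, so there are no latching objects. Positivity of $A$ gives $I\in\cat{A}_{\ge 1}$, hence $I^{\tensor n}\in\cat{A}_{\ge n}$ by compatibility of the monoidal structure with the filtration, and soundness --- with $\omega$ the distinguished uniform bound for loops of Definition~\ref{def:suspension-raising} --- gives $\susp^n(I^{\tensor n})\in\cat{A}_{\ge n-n\omega}=\cat{A}_{\ge n(1-\omega)}$. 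Since $\omega\le 0$, for each $n\ge 1$ this subcategory is contained in $\cat{A}_{\ge 1-\omega}$. As $\cat{A}_{\ge 1-\omega}$ is closed under extensions (Corollary~\ref{cor:closure-under-extension}), every $\Cofib[\sk_0|B_\dot|\to\sk_n|B_\dot|]$ lies in $\cat{A}_{\ge 1-\omega}$; and since $\cat{A}_{\ge 1-\omega}$ is a right localization, it is closed under the colimits that exist in $\cat{A}$ (the dual of Lemma~\ref{lem:left-localization-is-closed-under-limits}, exactly as invoked in the proof of Proposition~\ref{prop:realization-preserved}). Hence $J\in\cat{A}_{\ge 1-\omega}$, i.e.\ $A^!$ is copositive.

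The bookkeeping is routine; the one point that needs care is the identification of the coaugmentation of $A^!$ with the $0$-skeleton inclusion and the resulting computation of the successive cofibres of the skeletal filtration --- that is, making sure one uses the non-degenerate bar construction so that these cofibres are honestly $\susp^n(I^{\tensor n})$ rather than $\susp^n$ of a latching quotient. I would also remark that, in contrast with Lemma~\ref{lem:positive-koszul-dual}, no uniform bound on sequential limits enters: the construction here is a colimit, and the role played there by the bound $d$ on sequential limits is played here --- more cheaply --- by the automatic closure of $\cat{A}_{\ge 1-\omega}$ under colimits, while the genuine hypothesis used on the suspension is precisely soundness.
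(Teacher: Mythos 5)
Your proof is correct and is precisely the argument the paper intends: its own proof of Lemma~\ref{lem:copositive-koszul-dual} reads only ``Similar to the proof of Lemma~\ref{lem:positive-koszul-dual}, but is simpler,'' and your dualization --- skeleta of the realization of the non-degenerate bar construction, successive cofibres $\susp^n(I^{\tensor n})$, extension closure, and closure of the right localization $\cat{A}_{\ge 1-\omega}$ under colimits playing the role that the sequential-limit bound $d$ plays in the cosimplicial case --- is exactly that, including the correct diagnosis of why this direction is simpler. The one assertion you make without justification is ``$\omega\le 0$''; what is actually needed is only $n(1-\omega)\ge 1-\omega$ for $n\ge 1$, i.e.\ $\omega\le 1$, and while this is not formally derived from Definition~\ref{def:suspension-raising}, it holds for the distinguished bound in all of the paper's examples and matches the paper's own silent convention (compare ``Let $d\le 0$ be a uniform bound'' in the proof of Lemma~\ref{lem:positive-koszul-dual}).
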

\begin{proof}
Similar to the proof of Lemma \ref{lem:positive-koszul-dual}, but is
simpler.
\end{proof}

\begin{proposition}\label{prop:commuting-tensor-and-cotensor}
Let $\cat{A}$ be a monoidal complete filtered stable category with
uniformly bounded loops and sequential limits.

Let $A$ be a positive augmented associative algebra, and $C$ a
copositive augmented associative coalgebra, both in $\cat{A}$.
Let $K$ be a right $A$-module, $L$ an $A$--$C$-bimodule, and let $X$
be a left $C$-module, all bounded below.

Then the canonical map
\[
K\tensor_A(L\cotensor_CX)\longto(K\tensor_AL)\cotensor_CX
\]
is an equivalence (where the left $A$-module structure of
$L\cotensor_CX$ and the right $C$-module structure of $K\tensor_AL$
are induced from the $A$--$C$-bimodule structure of $L$).
\end{proposition}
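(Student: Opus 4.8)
The plan is to imitate the proof of Lemma~\ref{lem:associativity-restricted}: express both sides through reduced bar and cobar constructions without degeneracies and then chain the two continuity results of Section~\ref{sec:totalization}, using that $\cat{A}$ is complete so that ``equivalence after completion'' means equivalence. Write $I:=\Fibre(\aug\colon A\to\unity)$ for the augmentation ideal of $A$ and $J:=\Fibre(\unity\to C)$ for the augmentation coideal of $C$, so $I\in\cat{A}_{\ge1}$ by positivity of $A$ and $J\in\cat{A}_{\ge1-\omega}$ by copositivity of $C$, for a chosen uniform bound $\omega$ for loops in $\cat{A}$. For a left $A$-module $M$ and a right $C$-comodule $N$, use the presentations
\[
K\tensor_AM\equivwith|B_\dot(K,I,M)|,\qquad N\cotensor_CX\equivwith\tot B^\dot(N,J,X),
\]
with $B_n(K,I,M)=K\tensor I^{\tensor n}\tensor M$ a diagram over $\Simpface^\op$ and $B^n(N,J,X)=N\tensor J^{\tensor n}\tensor X$ a diagram over $\Simpface$, exactly as in Lemmas~\ref{lem:associativity-restricted} and~\ref{lem:positive-koszul-dual}. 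Since $K$, $L$, $X$ are bounded below, compatibility of $\tensor$ with the filtration gives constants with $B_n(K,I,L)\in\cat{A}_{\ge n+c}$ (from $I^{\tensor n}\in\cat{A}_{\ge n}$) and $B^n(L,J,X)\in\cat{A}_{\ge-\omega n+(n+c')}$ (from $J^{\tensor n}\in\cat{A}_{\ge n(1-\omega)}$), so both exponents tend to $\infty$.

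The key step is that $K\tensor_A\blank\colon{}_A\Mod\to\cat{A}$ is an exact functor which is bounded below: exactness is the variablewise exactness of the relative tensor product, and since the filtration on ${}_A\Mod$ is pulled back from $\cat{A}$ and $I$ is positive, any lower bound of $K$ is a lower bound of this functor. Loops and sequential limits are uniformly bounded in ${}_A\Mod$ with the same bounds as in $\cat{A}$; so applying Proposition~\ref{prop:tatalization-preserved} to $K\tensor_A\blank$ and the cosimplicial object $B^\dot(L,J,X)$ in ${}_A\Mod$ --- whose connectivity estimate above is precisely the hypothesis there --- gives
\[
K\tensor_A(L\cotensor_CX)\equivwith\tot_n\bigl(K\tensor_AB^n(L,J,X)\bigr)=\tot_n\bigl(K\tensor_A(L\tensor J^{\tensor n}\tensor X)\bigr).
\]
Now for each fixed $n$ the functor $\blank\tensor J^{\tensor n}\tensor X\colon\cat{A}\to\cat{A}$ is exact and bounded below, and $B_\dot(K,I,L)$ meets the hypothesis of Proposition~\ref{prop:realization-preserved} by the estimate above, so $K\tensor_A(L\tensor J^{\tensor n}\tensor X)\equivwith(K\tensor_AL)\tensor J^{\tensor n}\tensor X=B^n(K\tensor_AL,J,X)$. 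Taking $\tot_n$ identifies $\tot_nB^n(K\tensor_AL,J,X)\equivwith(K\tensor_AL)\cotensor_CX$, whence $K\tensor_A(L\cotensor_CX)\equivwith(K\tensor_AL)\cotensor_CX$.

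It remains to check that the composite of these equivalences is the canonical comparison map of the statement; this is a formal naturality check, since the comparison maps provided by Propositions~\ref{prop:tatalization-preserved} and~\ref{prop:realization-preserved} are the structural ones and a map into $(K\tensor_AL)\cotensor_CX=\tot_nB^n(K\tensor_AL,J,X)$ is determined by its composites with the projections to the cobar terms, to which both sides restrict to the evident maps. I expect the only delicate points to be bookkeeping the connectivity estimates --- in particular recognizing copositivity as exactly the hypothesis making $B^\dot(L,J,X)$ satisfy the $(-\omega n+r_n)$-condition of Proposition~\ref{prop:tatalization-preserved} --- and confirming that $K\tensor_A\blank$ is exact as a functor of filtered stable categories. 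If one wishes to avoid the latter, one can instead form the bi-indexed object $K\tensor I^{\tensor p}\tensor L\tensor J^{\tensor q}\tensor X$, push the outer plain tensors past the $p$-realization and the $q$-totalization separately via Propositions~\ref{prop:realization-preserved} and~\ref{prop:tatalization-preserved}, and then verify that geometric realization commutes with the coskeletal totalization tower by noting that its fibres become highly connected and invoking Corollary~\ref{cor:equivalence-of-limits-after-completion}.
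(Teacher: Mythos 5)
Your proof is correct and follows essentially the same route as the paper's: present $L\cotensor_CX$ as $\tot B^\dot(L,J,X)$, push the bounded-below functor $K\tensor_A\blank$ inside the totalization via Proposition~\ref{prop:tatalization-preserved}, then commute each $\blank\tensor J^{\tensor n}\tensor X$ past the bar realization via Proposition~\ref{prop:realization-preserved}, and totalize. The only difference is that you spell out the connectivity estimates (positivity of $I$ and copositivity of $J$ feeding the hypotheses of the two propositions), which the paper leaves implicit.
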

\begin{proof}
Write
\[
L\cotensor_CX=\Tot B^\dot(L,J,X).
\]
Since the functor $K\tensor_A-$ is bounded below, we obtain from
Proposition \ref{prop:tatalization-preserved} that this functor sends
this cotensor product to $\Tot K\tensor_AB^\dot(L,J,X)$.

Since $J$ and $X$ are bounded below, it follows from Proposition
\ref{prop:realization-preserved} that
\[
K\tensor_AB^\dot(L,J,X)=B^\dot(K\tensor_AL,J,X).
\]
Therefore, we get the result by totalizing this.
\end{proof}

\subsubsection{}

Let $A$ be an augmented associative algebra.
Then, for a right $A$-module $K$, we define a right $A^!$-module
$\Dual_AK$ as $K\tensor_A\unity$.
Dually, if $C$ is an augmented associative coalgebra, then for a right
$C$-module $L$, we have a right $C^!$-module
$\Dual_CL=L\cotensor_C\unity$.

If $K$ is a \emph{left} $A$-module, then we simply define a left
$A^!$-module $\Dual_AK$ by $\unity\tensor_AK$, and similarly for left
$C$-modules.

The following is a special case of
Proposition \ref{prop:commuting-tensor-and-cotensor}.
\begin{corollary}
Let $\cat{A}$ be a monoidal complete soundly filtered stable
category with uniformly bounded sequential limits.
Let $A$ be an augmented associative algebra in $\cat{A}$, and assume
it is positive.
Let $K$ be a right $A$-module, $L$ a left $A^!$-module, and assume
both of these are bounded below.

Then the canonical map
\[
K\tensor_A\Dual_{A^!}L\longto\Dual_AK\cotensor_{A^!}L
\]
is an equivalence.
\end{corollary}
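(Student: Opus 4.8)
The plan is to recognize the asserted map as the instance of Proposition \ref{prop:commuting-tensor-and-cotensor} obtained by taking the bimodule there to be the unit object $\unity$. First I would set $C:=A^!$. Since $\cat{A}$ is soundly filtered and $A$ is positive, Lemma \ref{lem:copositive-koszul-dual} shows that $C$ is a copositive augmented associative coalgebra in $\cat{A}$; moreover soundness (Definition \ref{def:suspension-raising}) supplies a uniform bound for loops, so together with the standing hypotheses on $\cat{A}$ (complete, monoidal, uniformly bounded sequential limits) all the requirements on the ambient category in Proposition \ref{prop:commuting-tensor-and-cotensor} are met.

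Next I would assemble the input data. The unit $\unity$ is an $A$--$C$-bimodule: it carries the left $A$-module structure through the augmentation $\aug\colon A\to\unity$ and the (trivial) right $A^!$-comodule structure through the coaugmentation $\unity\to A^!$; it is bounded below, since compatibility of the monoidal structure forces $\unity\in\cat{A}_{\ge 0}$. Applying Proposition \ref{prop:commuting-tensor-and-cotensor} with the right $A$-module taken to be our $K$, the $A$--$C$-bimodule taken to be $\unity$, and the left $C$-module taken to be our $L$ (all bounded below, by hypothesis for $K$ and $L$ and by the above for $\unity$), we conclude that the canonical map
\[
K\tensor_A(\unity\cotensor_{A^!}L)\longto(K\tensor_A\unity)\cotensor_{A^!}L
\]
is an equivalence.

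Finally I would match this with the statement: by definition $\unity\cotensor_{A^!}L=\Dual_{A^!}L$ and $K\tensor_A\unity=\Dual_AK$. The one point requiring care is that the module structures used here coincide with those implicit in the corollary: the left $A$-module structure on $\Dual_{A^!}L$ used to form $K\tensor_A\Dual_{A^!}L$ should be the one induced, as in the conclusion of Proposition \ref{prop:commuting-tensor-and-cotensor}, from the trivial left $A$-module structure on $\unity$ (equivalently, the restriction along the canonical algebra map $A\to A^{!!}$ of the $A^{!!}$-module structure on $\unity\cotensor_{A^!}L$), and likewise the right $A^!$-comodule structure on $\Dual_AK$ should be the one coming from $A^!=\unity\tensor_A\unity$ coacting on the factor $\unity$. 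These identifications are straightforward from the definitions of the $\Dual$-functors; with them in hand, the displayed equivalence is exactly the map in the statement. The genuinely non-formal input is the copositivity of $A^!$, i.e.\ Lemma \ref{lem:copositive-koszul-dual}, which is where soundness of the filtration is essential; the rest is bookkeeping of the Koszul-dual module structures.
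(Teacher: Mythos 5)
Your proof is correct and is exactly the paper's argument: the paper introduces the corollary as the special case of Proposition \ref{prop:commuting-tensor-and-cotensor} with the $A$--$A^!$-bimodule taken to be $\unity$, and its entire proof consists of the observation that $A^!$ is copositive by Lemma \ref{lem:copositive-koszul-dual}. Your additional bookkeeping (boundedness of $\unity$, matching the module structures with the definitions of $\Dual_A$ and $\Dual_{A^!}$) is left implicit in the paper but is the same reasoning.
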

\begin{proof}
The coalgebra $A^!$ is copositive by
Lemma \ref{lem:copositive-koszul-dual}.
\end{proof}

\begin{corollary}\label{cor:twice-dual-of-comodule}
In the situation of the previous corollary, the canonical map
$\Dual_A\Dual_{A^!}L\to L$ is an equivalence.
\end{corollary}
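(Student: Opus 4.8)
The plan is to obtain this as the special case $K=\unity$ of the previous corollary, after unwinding the two sides of the equivalence it provides. Take $K=\unity$ with the right $A$-module structure induced by the augmentation $\varepsilon\colon A\to\unity$; its underlying object is $\unity\in\cat{A}_{\ge 0}$, hence bounded below, so together with the standing hypothesis that $L$ is bounded below the previous corollary applies and yields an equivalence $\unity\tensor_A\Dual_{A^!}L\equivto\Dual_A\unity\cotensor_{A^!}L$. On the left, since $\Dual_A$ on a \emph{left} $A$-module $M$ is by definition $\unity\tensor_AM$, the source is precisely $\Dual_A\Dual_{A^!}L$. On the right, since $\Dual_A$ on a \emph{right} $A$-module $K$ is $K\tensor_A\unity$, we have $\Dual_A\unity=\unity\tensor_A\unity=A^!$, this being the very definition of the Koszul dual coalgebra $A^!$ (and $\unity\tensor_A\unity$ carries the canonical $A^!$-bicomodule structure that identifies it with $A^!$); thus the target is $A^!\cotensor_{A^!}L$.

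It then remains to identify $A^!\cotensor_{A^!}L$ canonically with $L$. This is the ``counit'' identity for the cotensor product over a coalgebra: writing $A^!\cotensor_{A^!}L=\tot B^\dot(A^!,J,L)$ with $J$ the augmentation ideal of $A^!$, the coaugmentation together with the counit of $A^!$ split this cosimplicial object, so its totalization is canonically $L$; equivalently, $A^!$ is cofree as a right $A^!$-comodule, and cotensoring a cofree comodule recovers the coefficient comodule. Composing the two equivalences produces $\Dual_A\Dual_{A^!}L\equivto A^!\cotensor_{A^!}L\equivto L$.

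The only genuine obstacle is to check that this composite equivalence coincides with the \emph{canonical} map asserted in the statement. Both maps are assembled from the unit and counit data of $\blank\tensor_A\unity$ and $\unity\cotensor_{A^!}\blank$ together with the universal properties of the Koszul duality constructions recorded in Section \ref{sec:koszul-complete-algebra} (in the form $\Map_{\Alg_*}(A,C^!)=\Map_{\Coalg_*}(A^!,C)$), so matching them is a diagram chase using those universal properties; it suffices to carry it out on underlying objects of $\cat{A}$, since the forgetful functor $\Mod_{A^!}\to\cat{A}$ is conservative, and once the identification is made the previous corollary shows the map in question is an equivalence. Equivalently, one may argue directly from Proposition \ref{prop:commuting-tensor-and-cotensor}, taking both $K$ and the $A$--$A^!$-bimodule in its statement to be $\unity$ and taking its left comodule to be $L$, using the copositivity of $A^!$ from Lemma \ref{lem:copositive-koszul-dual}; this reproduces the same chain of equivalences.
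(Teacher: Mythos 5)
Your proof is correct and is essentially the paper's own argument: the paper's entire proof reads ``Apply the previous corollary to $K=\unity$,'' which is exactly your specialization, with the identifications $\unity\tensor_A\Dual_{A^!}L=\Dual_A\Dual_{A^!}L$, $\Dual_A\unity=\unity\tensor_A\unity=A^!$, and $A^!\cotensor_{A^!}L\equivwith L$ left implicit there. Your additional care in checking that the resulting composite agrees with the canonical map is reasonable diligence, but it is not something the paper spells out.
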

\begin{proof}
Apply the previous corollary to $K=\unity$.
\end{proof}

\begin{theorem}\label{thm:josh-complete-implies-koszul-complete}
Let $\cat{A}$ be a monoidal complete soundly filtered stable
category with uniformly bounded sequential limits.
Let $A$ be a positive augmented associative algebra in $\cat{A}$, and
$K$ be a right $A$-module which is bounded below.
Then the canonical map $K\to\Dual_{A^!}\Dual_AK$ is an equivalence (of
$A$-modules).
In particular, the canonical map $A\to A^{!!}$ (of augmented
associative algebras) is an equivalence.
\end{theorem}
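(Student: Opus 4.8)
The plan is to reduce the statement, for a general bounded-below right $A$-module $K$, to a statement about the map obtained after applying $\Dual_A=\blank\tensor_A\unity$, which reflects equivalences among bounded-below modules by Corollary \ref{cor:josh-completeness}, and then to identify the resulting map as one leg of a triangle identity whose other leg is an equivalence by (the left--right opposite of) Corollary \ref{cor:twice-dual-of-comodule}. Going directly --- e.g.\ by factoring the canonical map $\eta_K\colon K\to\Dual_{A^!}\Dual_AK$ as $K=K\tensor_AA\to K\tensor_AA^{!!}\equivwith\Dual_{A^!}\Dual_AK$ via Proposition \ref{prop:commuting-tensor-and-cotensor} --- would reduce the problem to $A\equivwith A^{!!}$, i.e.\ would be circular; the point of passing through $\Dual_A$ is precisely to land on the comodule side, where Corollary \ref{cor:twice-dual-of-comodule} is available and is not circular.

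First I would check that both $K$ and $\Dual_{A^!}\Dual_AK$ are bounded below, so that Corollary \ref{cor:josh-completeness} applies. For $K\in\cat{A}_{\ge k}$, writing $\Dual_AK=K\tensor_A\unity$ as the geometric realization over $\Simpface^\op$ of the non-unital bar construction with $n$-th term $K\tensor I^{\tensor n}$ ($I$ the augmentation ideal of $A$, in $\cat{A}_{\ge 1}$ by positivity), each term lies in $\cat{A}_{\ge k+n}\sub\cat{A}_{\ge k}$, so the realization lies in $\cat{A}_{\ge k}$ by Lemma \ref{lem:left-localization-is-closed-under-limits}. Then $A^!$ is copositive by Lemma \ref{lem:copositive-koszul-dual}, and $\Dual_{A^!}\Dual_AK=(\Dual_AK)\cotensor_{A^!}\unity=\tot B^\dot(\Dual_AK,J,\unity)$ with $J$ the augmentation ideal of $A^!$; arguing as in the proof of Lemma \ref{lem:positive-koszul-dual} (soundness together with $J\in\cat{A}_{\ge 1-\omega}$ makes the fibres $\Loop^nB^n=\Loop^n(\Dual_AK\tensor J^{\tensor n})$ of the skeletal tower arbitrarily highly connected, and the uniform bound for sequential limits controls the totalization), one gets that $\Dual_{A^!}\Dual_AK$ is bounded below.

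Granting this, Corollary \ref{cor:josh-completeness} reduces the theorem to showing that $\Dual_A(\eta_K)\colon\Dual_AK\to\Dual_A\Dual_{A^!}\Dual_AK$ is an equivalence. Now $\Dual_AK$ is a bounded-below right $A^!$-comodule, and by Corollary \ref{cor:twice-dual-of-comodule} --- applied to $L:=\Dual_AK$ in its left--right opposite form, obtained by passing to the positive augmented algebra $A^\op$ (note $(A^!)^\op=(A^\op)^!$) --- the canonical counit $\counit_L\colon\Dual_A\Dual_{A^!}L\to L$ is an equivalence. The functors $\Dual_A$ (a ``tensor'') and $\Dual_{A^!}$ (a ``cotensor'') form, on bounded-below modules and comodules, a bar--cobar adjunction $\Dual_A\dashv\Dual_{A^!}$ whose unit is the family of canonical maps $\eta$ from the theorem and whose counit is $\counit$; the triangle identity then reads $\counit_{\Dual_AK}\circ\Dual_A(\eta_K)=\id_{\Dual_AK}$. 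Since $\counit_{\Dual_AK}$ is an equivalence, so is $\Dual_A(\eta_K)$, and hence $\eta_K$ is an equivalence. Finally, the last sentence of the theorem is the case $K=A$ (bounded below since $A\in\cat{A}_{\ge 0}$ by positivity): under $\Dual_AA=A\tensor_A\unity\equivwith\unity$ with its coaugmented $A^!$-comodule structure one has $\Dual_{A^!}\Dual_AA=\unity\cotensor_{A^!}\unity=A^{!!}$, and $\eta_A$ is the canonical map $A\to A^{!!}$ (both correspond to $\id_{A^!}$ under $\Map_{\Alg_*}(A,A^{!!})=\Map_{\Coalg_*}(A^!,A^!)$).

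The main obstacle is the triangle identity just invoked: one must set up the bar--cobar adjunction $\Dual_A\dashv\Dual_{A^!}$ at the level of bounded-below $A$-modules and $A^!$-comodules carefully enough that its unit and counit are literally the canonical maps occurring in the theorem and in Corollary \ref{cor:twice-dual-of-comodule}. Concretely this means fixing the $A$-$A^!$-bimodule $\unity$ (with $A$ acting through its augmentation and $A^!$ coacting through its coaugmentation) and the mutually dual (co)bar models used above, so that the composite $\Dual_AK\xrightarrow{\Dual_A(\eta_K)}\Dual_A\Dual_{A^!}\Dual_AK\xrightarrow{\counit_{\Dual_AK}}\Dual_AK$ really is the identity; once that bookkeeping is done the rest is formal, and the connectivity estimate of the second paragraph is a routine repetition of the computation in the proof of Lemma \ref{lem:positive-koszul-dual}.
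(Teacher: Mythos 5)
Your proposal is correct and follows essentially the same route as the paper's (very compressed) proof: reduce via Corollary \ref{cor:josh-completeness} to checking the map after applying $\Dual_A$, then conclude from Corollary \ref{cor:twice-dual-of-comodule} applied to $\Dual_AK$. The extra details you supply --- the boundedness below of $\Dual_{A^!}\Dual_AK$ needed to invoke Corollary \ref{cor:josh-completeness}, and the triangle identity identifying $\Dual_A(\eta_K)$ with a one-sided inverse of the counit --- are exactly the bookkeeping the paper leaves implicit.
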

\begin{proof}
By Corollary \ref{cor:josh-completeness}, it suffices to prove that
the map is an equivalence after we apply the functor
$-\tensor_A\unity$ to it.
However, this follows by applying
Corollary \ref{cor:twice-dual-of-comodule} to the (right) $A^!$-module
$\Dual_AK$.
\end{proof}

We remark that the proof in fact proves the following.

\begin{lemma}\label{lem:josh-complete-implies-koszul-complete-refine}
Let $\cat{A}$ be a monoidal complete filtered stable
category with uniformly bounded loops and sequential limits.
If $A$ is a positive augmented associative algebra in $\cat{A}$ such
that the augmented associative coalgebra $A^!$ is copositive, then
the conclusion of
Theorem~\ref{thm:josh-complete-implies-koszul-complete} holds.
\end{lemma}

\subsection{Koszul completeness of a coalgebra}
\setcounter{subsubsection}{-1}

In order to complete our study of the Koszul duality for associative
algebras, we shall establish the results similar to those established
for positive augmented algebras, for copositive coalgebras.

Let us start with the following situation.
Namely, let $C_i$, $i=0,1,2$, be coalgebras in $\cat{A}$,
and let $K_{i,i+1}$ for $i=0,1$ be a left $C_i$- right
$C_{i+1}$-bimodule.
Then we would like $K_{01}\cotensor_{C_1}K_{12}$ to be a
$C_0$--$C_2$-bimodule in a natural way.

We have this in the following case.
Namely, assume $\cat{A}$ to be a monoidal complete filtered
stable category with uniformly bounded loops and sequential limits.
Moreover, assume that $C_1$ is a copositive augmented coalgebra, and
$K_{i,i+1}$ are bounded below.
Then for any bounded below object $L$, the canonical map
\[
(K_{01}\cotensor_{C_1}K_{12})\tensor L\longto
K_{01}\cotensor_{C_1}(K_{12}\tensor L)
\]
is an equivalence by Proposition \ref{prop:tatalization-preserved}.

It follows that if $C_0$ and $C_2$ are bounded below, then the
bimodule structures of $K_{i,i+1}$, $i=0,1$ induce a structure of a
$C_0$--$C_2$-bimodule on the cotensor product.
In fact, the resulting bimodule has the universal property to be
expected of the cotensor product.

\begin{lemma}\label{lem:cotensor-associative}
Let $\cat{A}$ be a monoidal complete filtered
stable category with uniformly bounded loops and sequential limits.
Let $C_i$, $i=0,1,2,3$, be copositive augmented coalgebras in
$\cat{A}$, and let $K_{i,i+1}$ be a left $C_i$- right
$C_{i+1}$-bimodule for $i=0,1,2$, whose underlying object is bounded
below.

Then the resulting map
\[
(K_{01}\cotensor_{C_1}K_{12})\cotensor_{C_2}K_{23}\longto
K_{01}\cotensor_{C_1}K_{12}\cotensor_{C_2}K_{23}
\]
is an equivalence of $C_0$--$C_3$-modules, where the target denotes
the totalization of the \textbf{bi}cosimplicial bar construction
(dual to the corresponding construction in
Lemma~\ref{lem:associativity-restricted}).
\end{lemma}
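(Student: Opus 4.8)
The plan is to prove the statement by the same method used for the associative (algebra) version, Lemma \ref{lem:associativity-restricted}, but dualized throughout, using the totalization results of Section \ref{sec:totalization} in place of the geometric realization results. First I would express both cotensor products as totalizations of cosimplicial bar constructions without degeneracies: write $K_{01}\cotensor_{C_1}K_{12}=\tot B^\dot(K_{01},J_1,K_{12})$ and $K_{12}\cotensor_{C_2}K_{23}=\tot B^\dot(K_{12},J_2,K_{23})$, where $J_i=\Fibre(\aug\colon C_i\to\unity)$ is the augmentation coideal of $C_i$, and the bar constructions are diagrams over $\Simpface$ associated to the actions of the non-counital coalgebras $J_i$. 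As in the proof of Lemma \ref{lem:associativity-restricted}, one checks that the usual cosimplicial bar construction with degeneracies (associated to the counital coalgebra $C_i$) is the \emph{right} Kan extension of the version without degeneracies, so the totalizations agree; this is the dual of the left-Kan-extension remark there.

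The target $K_{01}\cotensor_{C_1}K_{12}\cotensor_{C_2}K_{23}$ is by definition the totalization of the bicosimplicial bar construction $B^{\dot,\dot}(K_{01},J_1,K_{12},J_2,K_{23})$, which I would compute as the iterated totalization $\tot_\bullet\tot_* B^\bullet(K_{01},J_1,B^*(K_{12},J_2,K_{23}))$. The key step is then to commute the functor $K_{01}\cotensor_{C_1}(-)$ past the inner totalization: I want
\[
B^\bullet\bigl(K_{01},J_1,\tot_* B^*(K_{12},J_2,K_{23})\bigr)\;\equivwith\;\tot_* B^\bullet\bigl(K_{01},J_1,B^*(K_{12},J_2,K_{23})\bigr),
\]
levelwise in $\bullet$. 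At cosimplicial level $n$ the left-hand object is $K_{01}\tensor J_1^{\tensor n}\tensor(\tot_* B^*(K_{12},J_2,K_{23}))$, so this is precisely an instance of Proposition \ref{prop:tatalization-preserved}: the functor $K_{01}\tensor J_1^{\tensor n}\tensor(-)$ is exact and bounded below (it is bounded below by $n(1-\omega)$ plus the lower bounds of $K_{01}$ and $K_{12}$, using that $J_1\in\cat{A}_{\ge 1-\omega}$ by copositivity), and the cosimplicial object $B^*(K_{12},J_2,K_{23})$ satisfies the required connectivity hypothesis because $\Loop^m B^m(K_{12},J_2,K_{23})=\Loop^m(K_{12}\tensor J_2^{\tensor m}\tensor K_{23})$ lies in $\cat{A}_{\ge r_m}$ for a sequence $r_m\to\infty$, again by copositivity of $C_2$ and uniform boundedness of loops. (Since $\cat{A}$ is complete, "equivalence after completion" in that proposition is an honest equivalence.) Having this levelwise equivalence of cosimplicial objects, I would totalize in $\bullet$ to identify $(K_{01}\cotensor_{C_1}K_{12})\cotensor_{C_2}K_{23}$ — which is $\tot_\bullet B^\bullet(K_{01},J_1,\tot_* B^*(K_{12},J_2,K_{23}))$ — with the iterated totalization defining the target, and a Fubini argument for totalizations (limits commute with limits) gives the result. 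Finally, everything in sight is compatible with the $C_0$- and $C_3$-module structures, so the comparison is an equivalence of $C_0$--$C_3$-bimodules.

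The main obstacle I anticipate is the bookkeeping around the connectivity estimates in the double-indexed situation: one has to produce a single sequence tending to $\infty$ that simultaneously controls both bar directions after each application of Proposition \ref{prop:tatalization-preserved}, keeping careful track of how the uniform loop bound $\omega$ enters (the relevant objects $\Loop^n B^n$ sit in $\cat{A}_{\ge -\omega n + n(1-\omega)+\text{const}}$ or similar, and one needs the resulting indices to still tend to $\infty$). The rest — the Kan extension identification, the Fubini step for totalizations, and checking module-structure compatibility — is routine and parallels Lemma \ref{lem:associativity-restricted} and Proposition \ref{prop:commuting-tensor-and-cotensor} directly.
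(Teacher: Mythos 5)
Your method is the right one and is essentially what the paper intends: the paper's own proof consists of the remark that one dualizes the proof of Lemma \ref{lem:associativity-restricted}, replacing Proposition \ref{prop:realization-preserved} by Proposition \ref{prop:tatalization-preserved}. Your supporting points --- the right-Kan-extension identification of the two cosimplicial bar constructions, the connectivity estimate coming from $J_i\in\cat{A}_{\ge 1-\omega}$ (so that $\Loop^nB^n$ lands in $\cat{A}_{\ge -\omega n+n+\mathrm{const}}$, giving a sequence tending to $\infty$), and the observation that completeness of $\cat{A}$ upgrades ``equivalence after completion'' to an honest equivalence --- are exactly the ingredients needed, and the bookkeeping you worry about in your last paragraph works out just as you describe.

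There is, however, a bracketing slip to fix. The iterated totalization you actually analyze, $\tot_\bullet B^\bullet\bigl(K_{01},J_1,\tot_*B^*(K_{12},J_2,K_{23})\bigr)$, is $K_{01}\cotensor_{C_1}(K_{12}\cotensor_{C_2}K_{23})$, whereas the source of the map in the lemma is the other bracketing, $(K_{01}\cotensor_{C_1}K_{12})\cotensor_{C_2}K_{23}=\tot_*B^*\bigl(\tot_\bullet B^\bullet(K_{01},J_1,K_{12}),J_2,K_{23}\bigr)$. As written your argument proves the (equally true) mirror-image statement. The repair is immediate by symmetry: at each level $m$ of the outer ($*$-)direction apply Proposition \ref{prop:tatalization-preserved} to the exact, bounded below functor $-\tensor J_2^{\tensor m}\tensor K_{23}$ and to the cosimplicial object $B^\bullet(K_{01},J_1,K_{12})$, whose $n$-th term $K_{01}\tensor J_1^{\tensor n}\tensor K_{12}$ lies in $\cat{A}_{\ge-\omega n+n+c}$ by copositivity of $C_1$; then totalize in $*$ and conclude by Fubini for limits. (A small further typo: the lower bound of the functor $K_{01}\tensor J_1^{\tensor n}\tensor(-)$ involves only $K_{01}$ and $J_1$, not $K_{12}$.)
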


The proof is similar to the proof of Lemma
\ref{lem:associativity-restricted}.
One uses Proposition \ref{prop:tatalization-preserved} instead of
Proposition \ref{prop:realization-preserved}.

The proof of the following lemma is similar to the proof of
Lemma~\ref{lem:positive-koszul-dual}, but is simpler.

\begin{lemma}\label{lem:cotensor-bounded-below}
Let $\cat{A}$ be a monoidal filtered stable category with uniformly
bounded loops and sequential limits.
Let $C$ be a copositive augmented coalgebra, $K$ a right
$C$-module, and $L$ a left $C$-module, all in $\cat{A}$.
If for integers $r$ and $s$, (the underlying object of) $K$ belongs to
$\cat{A}_{\ge r}$, and $L$ belongs to $\cat{A}_{\ge s}$, then
$K\cotensor_CL$ belongs to $\cat{A}_{\ge r+s}$.
\end{lemma}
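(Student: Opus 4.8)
The statement to prove is Lemma~\ref{lem:cotensor-bounded-below}: if $C$ is a copositive augmented coalgebra with augmentation ideal $J$, $K$ a right $C$-module in $\cat{A}_{\ge r}$, and $L$ a left $C$-module in $\cat{A}_{\ge s}$, then $K\cotensor_C L$ belongs to $\cat{A}_{\ge r+s}$. The plan is to write the cotensor product as a totalization of the cosimplicial bar construction without degeneracies, $K\cotensor_C L = \tot B^\dot(K,J,L)$, exactly as in the proofs of Lemma~\ref{lem:positive-koszul-dual} and Lemma~\ref{lem:copositive-koszul-dual}, and then estimate the filtration degree of this totalization using the skeletal filtration and the uniform bounds on loops and sequential limits.

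First I would fix a uniform bound $\omega$ for loops in $\cat{A}$ (so that $\Loop$ sends $\cat{A}_{\ge t}$ into $\cat{A}_{\ge t+\omega}$), and a uniform bound $d\le 0$ for sequential limits. Copositivity of $C$ means $J\in\cat{A}_{\ge 1-\omega}$. By compatibility of the monoidal structure with the filtration, the $n$-th term $B^n(K,J,L) = K\tensor J^{\tensor n}\tensor L$ belongs to $\cat{A}_{\ge r + n(1-\omega) + s}$. The standard fact (used already in Proposition~\ref{prop:tatalization-preserved} and Lemma~\ref{lem:positive-koszul-dual}) is that the fibre of $\sk_n\tot B^\dot \to \sk_{n-1}\tot B^\dot$ is $\Loop^n B^n(K,J,L)$, which therefore lies in $\cat{A}_{\ge r+s + n(1-\omega) + n\omega} = \cat{A}_{\ge r+s+n}$. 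In particular, for $n\ge 1$ each such fibre lies in $\cat{A}_{\ge r+s+1}\subset\cat{A}_{\ge r+s}$, and $\sk_0\tot B^\dot = B^0 = K\tensor L \in \cat{A}_{\ge r+s}$; by Corollary~\ref{cor:closure-under-extension} (closure of $\cat{A}_{\ge r+s}$ under extensions), every $\sk_n\tot B^\dot$ lies in $\cat{A}_{\ge r+s}$.

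It remains to pass to the limit $\tot B^\dot = \lim_n \sk_n\tot B^\dot$. The fibres of the transition maps $\sk_n\tot B^\dot\to\sk_{n-1}\tot B^\dot$ lie in $\cat{A}_{\ge r+s+n}$, whose lower bounds tend to $\infty$; equivalently, the fibre of $\tot B^\dot\to\sk_n\tot B^\dot$ lies in $\cat{A}_{\ge r+s+n+1+d}$ by the uniform bound on sequential limits, hence in $\cat{A}_{\ge\infty}$ the limit of these fibres. Actually the cleaner route: the tower $(\sk_n\tot B^\dot)_n$ lies entirely in the full subcategory $\cat{A}_{\ge r+s}$, which by Lemma~\ref{lem:left-localization-is-closed-under-limits} is closed under all limits that exist in $\cat{A}$ (it is the left complement of $\blank_{\ge r+s}$... more precisely $\cat{A}_{\ge r+s}$ is a right localization; but one can instead argue directly via the uniform bound $d$: the fibre of the map $\tot B^\dot\to \sk_0\tot B^\dot$ is $\lim_{n\ge 1}\Fibre[\sk_n\tot B^\dot\to\sk_0\tot B^\dot]$, and each such fibre is built from $\Loop^m B^m$, $1\le m\le n$, so lies in $\cat{A}_{\ge r+s+1}$; the limit then lies in $\cat{A}_{\ge r+s+1+d}$, and in any case the cofibre sequence with $\sk_0\tot B^\dot = K\tensor L\in\cat{A}_{\ge r+s}$ together with Corollary~\ref{cor:closure-under-extension} forces $\tot B^\dot\in\cat{A}_{\ge r+s}$). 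This finishes the argument.

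The main obstacle I anticipate is purely bookkeeping rather than conceptual: one must be careful that the relevant full subcategories $\cat{A}_{\ge t}$ are closed under the sequential \emph{limit} defining the totalization — this is where the hypothesis that sequential limits are uniformly bounded enters — and that the skeletal fibre identification $\Fibre[\sk_n\tot\to\sk_{n-1}\tot]\equivwith\Loop^n B^n$ is applied with the correct shift. Since $d\le 0$, the limit could a priori drop the filtration degree by $|d|$, but because every term $\sk_n\tot B^\dot$ already sits in $\cat{A}_{\ge r+s}$ and this subcategory is closed under the limit in question (being a right-localization subcategory, hence closed under all limits by the dual of Lemma~\ref{lem:left-localization-is-closed-under-limits}), no degree is lost. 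I would present the argument in the streamlined form: show $\sk_n\tot B^\dot\in\cat{A}_{\ge r+s}$ for all $n$ by induction using Corollary~\ref{cor:closure-under-extension}, then conclude $\tot B^\dot\in\cat{A}_{\ge r+s}$ since $\cat{A}_{\ge r+s}$ is closed under sequential limits in $\cat{A}$.
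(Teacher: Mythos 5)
Your reduction to the skeletal tower of $\tot B^\dot(K,J,L)$, the identification of the layers as $\Loop^nB^n\in\cat{A}_{\ge r+s+n}$, and the conclusion that every $\sk_n\tot B^\dot$ lies in $\cat{A}_{\ge r+s}$ all match the paper's proof. The problem is in the last step, passing to the limit of the tower, where both of your proposed finishes fail. The ``cleaner route'' rests on the claim that $\cat{A}_{\ge r+s}$, being a right localization, is closed under sequential limits by the dual of Lemma \ref{lem:left-localization-is-closed-under-limits}. But the dual of that lemma says a right localization with a complementary left localization is closed under \emph{colimits}, not limits. Right localizations $\cat{A}_{\ge t}$ are in general \emph{not} closed under sequential limits --- the failure is exactly what the uniform bound $d$ of Definition \ref{def:bounded-limit} quantifies, and $d$ is allowed to be negative (e.g.\ $d=-1$ for a t-structure, per Remark \ref{rem:bounded-limit}). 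If right localizations were closed under limits, that definition would be vacuous and the hypothesis ``uniformly bounded sequential limits'' would never be needed.

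Your fallback direct argument has a quantitative gap for the same reason: truncating the tower at $\sk_0$, the fibre of $\tot B^\dot\to\sk_0\tot B^\dot$ is a limit of objects of $\cat{A}_{\ge r+s+1}$ and hence lands only in $\cat{A}_{\ge r+s+1+d}$, which for $d<-1$ is strictly weaker than $\cat{A}_{\ge r+s}$; the concluding ``in any case \dots forces $\tot B^\dot\in\cat{A}_{\ge r+s}$'' does not follow from Corollary \ref{cor:closure-under-extension}, since an extension of an object of $\cat{A}_{\ge r+s+1+d}$ by one of $\cat{A}_{\ge r+s}$ is only in $\cat{A}_{\ge r+s+1+d}$. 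The repair is exactly what the paper does: truncate at $\sk_{-d}$ instead of $\sk_0$. For $n\ge -d$ the fibre of $\sk_n\tot B^\dot\to\sk_{-d}\tot B^\dot$ is built from layers $\Loop^mB^m$ with $m\ge -d+1$, hence lies in $\cat{A}_{\ge r+s-d+1}$; the limit over $n$ then lies in $\cat{A}_{\ge r+s+1}\sub\cat{A}_{\ge r+s}$, and the extension against $\sk_{-d}\tot B^\dot\in\cat{A}_{\ge r+s}$ gives the claim. (Your own earlier observation that the fibre of $\tot B^\dot\to\sk_n\tot B^\dot$ lies in $\cat{A}_{\ge r+s+n+1+d}$ would also close the argument if you simply took $n=-d$ there, rather than letting $n\to\infty$.)
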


Let $\Mod_{C,>-\infty}$ denote the category of bounded below right
$C$-modules.
\begin{lemma}\label{lem:josh-completeness-over-coalgebra}
Let $\cat{A}$ be a monoidal complete filtered
stable category with uniformly bounded loops and sequential limits.
Let $C$ be a copositive augmented coalgebra in $\cat{A}$.
Then the functor
\[
-\cotensor_C\unity\colon\Mod_{C,>-\infty}\to\cat{A}
\]
reflects equivalences.
\end{lemma}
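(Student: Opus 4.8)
The plan is to follow the proof of the algebraic counterpart, Corollary \ref{cor:josh-completeness}, but in a streamlined form: instead of building a comodule analogue of the adic tower of Lemma \ref{lem:completeness-of-module}, I will use completeness of $\cat{A}$ together with a short induction on the filtration degree. Since $-\cotensor_C\unity$ is an exact functor between stable categories --- it is the totalization $\tot B^\dot(-,I,\unity)$ of a diagram whose terms are exact in the first variable, and totalization, being a limit, preserves fibre sequences --- it reflects equivalences if and only if it sends no nonzero object to $\zero$. So it suffices to prove that a bounded below right $C$-comodule $K$ with $K\cotensor_C\unity\equivwith\zero$ is itself zero.

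First I would extract the fibre sequence that will drive the induction. Let $I$ be the augmentation ideal of $C$, the cofibre of the augmentation map $\varepsilon\colon\unity\to C$; since $\varepsilon$ is a map of left $C$-comodules ($\unity$ carrying the comodule structure through $\varepsilon$, as in the description of the Koszul dual) and the forgetful functor $\Mod_C\to\cat{A}$ is exact, $I$ is a left $C$-comodule and $\unity\to C\to I$ is a cofibre sequence of left $C$-comodules. Applying the exact functor $K\cotensor_C-$ and using the standard identification $K\cotensor_CC\equivwith K$ gives a cofibre sequence
\[
K\cotensor_C\unity\longto K\longto K\cotensor_CI,
\]
whose first term vanishes by hypothesis; hence the canonical map $K\to K\cotensor_CI$ is an equivalence.

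Now the induction. Copositivity of $C$ gives $I\in\cat{A}_{\ge 1-\omega}$ for a uniform bound $\omega$ for loops; since any uniform loop bound may be lowered, we may take $\omega\le 0$, so that $I\in\cat{A}_{\ge 1}$. Pick $r$ with $K\in\cat{A}_{\ge r}$ ($K$ being bounded below). Then Lemma \ref{lem:cotensor-bounded-below} gives $K\cotensor_CI\in\cat{A}_{\ge r+1}$, so $K\in\cat{A}_{\ge r+1}$ by the equivalence above. Iterating, $K\in\cat{A}_{\ge r+n}$ for every $n\ge 0$, hence $K\in\cat{A}_{\ge\infty}$. Since $\cat{A}$ is complete with uniformly bounded sequential limits, $\cat{A}_{\ge\infty}\equivwith\zero$ (the corollary to Proposition \ref{prop:complete-with-bounded-sequential-limits}), so $K\equivwith\zero$, as wanted.

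The main thing to be careful about is the choice of loop bound in the last step: one needs $\omega\le 0$ so that $I$ actually lies in $\cat{A}_{\ge 1}$ and each iteration strictly increases the filtration degree --- this is harmless since decreasing $\omega$ keeps it a uniform bound for loops. Everything else --- exactness of $K\cotensor_C-$, the fact that $\unity\to C\to I$ is a cofibre sequence of $C$-comodules, and the identification $K\cotensor_CC\equivwith K$ --- is routine once one uses that the forgetful functor from comodules is exact and that totalizations commute with finite limits.
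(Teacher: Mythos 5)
Your proof is correct, and it takes a genuinely leaner route than the paper's. The paper's proof transports the argument of Corollary \ref{cor:josh-completeness} to the coalgebra side: one forms the co-adic tower built from the cotensor powers $J^{\cotensor_Cs}$ of the augmentation ideal, proves the analogue of Lemma \ref{lem:completeness-of-module} (convergence of that tower, which is where Lemma \ref{lem:cotensor-bounded-below} and completeness enter), and then kills each associated graded piece using $K\cotensor_C\unity\equivwith\zero$ together with associativity of the cotensor product. You bypass the tower entirely: the single cofibre sequence $K\cotensor_C\unity\to K\to K\cotensor_CJ$ (valid since $K\cotensor_C-$ is exact, being a totalization of termwise exact functors, and $K\cotensor_CC\equivwith K$) converts the vanishing hypothesis into $K\equivwith K\cotensor_CJ$, and Lemma \ref{lem:cotensor-bounded-below} then bootstraps the filtration degree of $K$ to infinity, so $K\in\cat{A}_{\ge\infty}\equivwith\zero$ by completeness. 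This avoids constructing the cotensor powers and avoids any appeal to Lemma \ref{lem:cotensor-associative}; both arguments ultimately rest on the same two inputs (Lemma \ref{lem:cotensor-bounded-below} and completeness), but yours needs less scaffolding.

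One justification is stated backwards, though the fact you need is the right one. Copositivity says $J\in\cat{A}_{\ge 1-\omega}$ for a uniform bound $\omega$ for loops; \emph{lowering} $\omega$ makes $1-\omega$ larger and hence makes this membership a \emph{stronger} condition, so you cannot ``choose $\omega\le 0$'' after the fact --- if copositivity were only witnessed by some $\omega>0$, you would not obtain $J\in\cat{A}_{\ge 1}$. What you actually need is that the bound $\omega$ appearing in the definition satisfies $\omega\le 0$, equivalently $J\in\cat{A}_{\ge 1}$. This holds in all of the paper's examples ($\omega=-1$ for t-structures, $\omega=0$ for Goodwillie's filtration and for filtered objects), and the paper's own proof requires exactly the same inequality, since it needs $J^{\cotensor_Cs}\in\cat{A}_{\ge s(1-\omega)}$ with $s(1-\omega)\to\infty$. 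So replace the ``we may lower $\omega$'' sentence by the observation that $1-\omega\ge 1$ for the uniform bound the definition refers to, and the proof stands as written.
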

\begin{proof}
We would like to apply the arguments of the proof of
Corollary \ref{cor:josh-completeness}.
We simply need to establish the counterpart of
Lemma~\ref{lem:completeness-of-module}.
This follows from Lemma \ref{lem:cotensor-bounded-below}.
\end{proof}

\begin{lemma}\label{lem:koszul-complete-coalgebra}
Let $C$ be as in Lemma \ref{lem:josh-completeness-over-coalgebra}, and
let $K$ be a right $C$-module which is bounded below.

Then the canonical map $\Dual_{C^!}\Dual_CK\to K$ is an equivalence
(of $C$-modules).
In particular, the canonical map $C^{!!}\to C$ (of augmented
associative coalgebras) is an equivalence.
\end{lemma}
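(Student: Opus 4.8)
The plan is to imitate, on the coalgebra side, the proof of Theorem~\ref{thm:josh-complete-implies-koszul-complete}, substituting for each ingredient used there its coalgebraic counterpart developed in this subsection. To begin with, I would observe that by Lemma~\ref{lem:positive-koszul-dual} the algebra $C^!$ is a \emph{positive} augmented associative algebra, so that Proposition~\ref{prop:commuting-tensor-and-cotensor} becomes available for the positive algebra $C^!$ together with the copositive coalgebra $C$.

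From that proposition I would first extract the analogue of Corollary~\ref{cor:twice-dual-of-comodule}: for every bounded below right $C^!$-module $M$, the canonical map
\[
M\longto\Dual_C\Dual_{C^!}M
\]
is an equivalence. Indeed, applying Proposition~\ref{prop:commuting-tensor-and-cotensor} with $C^!$ in the role of the positive algebra, $M$ in the role of the right module, $\unity$ (the $C^!$--$C$-bimodule underlying the Koszul construction) in the role of the bimodule, and $\unity$ (as a left $C$-comodule) in the role of the left comodule — all bounded below — one gets an equivalence
\[
M\tensor_{C^!}(\unity\cotensor_C\unity)\longto(M\tensor_{C^!}\unity)\cotensor_C\unity.
\]
Here $\unity\cotensor_C\unity=C^!$ as a left $C^!$-module, so the source is $M\tensor_{C^!}C^!=M$, while the target is $\Dual_C\Dual_{C^!}M$ by definition of the duality functors.

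With this in hand I would argue as follows. The module $K$ is bounded below by hypothesis; $\Dual_CK=K\cotensor_C\unity$ is bounded below by Lemma~\ref{lem:cotensor-bounded-below}; and $\Dual_{C^!}\Dual_CK=(\Dual_CK)\tensor_{C^!}\unity$ is bounded below because $\cat{A}_{\ge r}$ is closed under colimits (Lemma~\ref{lem:left-localization-is-closed-under-limits}) and, $C^!$ being positive, every term of the bar construction computing this tensor product lies in $\cat{A}_{\ge r}$ once $\Dual_CK\in\cat{A}_{\ge r}$. By Lemma~\ref{lem:josh-completeness-over-coalgebra}, the functor $\Dual_C=-\cotensor_C\unity$ reflects equivalences on bounded below $C$-modules, so it suffices to show that $\Dual_C$ applied to the canonical map $\Dual_{C^!}\Dual_CK\to K$, namely
\[
\Dual_C\Dual_{C^!}\Dual_CK\longto\Dual_CK,
\]
is an equivalence. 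Specializing the previous paragraph to $M:=\Dual_CK$, the canonical map $\Dual_CK\to\Dual_C\Dual_{C^!}\Dual_CK$ is an equivalence; since the composite $\Dual_CK\to\Dual_C\Dual_{C^!}\Dual_CK\to\Dual_CK$ is the identity — which one recognizes as a triangle identity for the canonical unit and counit maps, exactly as in the proof of Theorem~\ref{thm:josh-complete-implies-koszul-complete} — the displayed map is an equivalence as well. This proves the first assertion; the ``in particular'' then follows by taking $K=C$, the cofree right $C$-comodule (bounded below since $C$ is copositive), for which $\Dual_CC=\unity$ and hence $\Dual_{C^!}\Dual_CC=\unity\tensor_{C^!}\unity=C^{!!}$.

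The step I expect to require the most care is not any estimate — the boundedness and limit/colimit-preservation inputs are already packaged in Lemmas~\ref{lem:cotensor-bounded-below}, \ref{lem:josh-completeness-over-coalgebra} and Proposition~\ref{prop:commuting-tensor-and-cotensor} — but rather the coherence bookkeeping: that $\unity\cotensor_C\unity$ is genuinely the regular left $C^!$-module $C^!$, that the objects in play carry the asserted $C$-comodule and $C^!$-module structures (so that $\Dual_C$, $\Dual_{C^!}$ really compose), and that the two ``canonical'' maps are the unit and counit making $\Dual_{C^!}\dashv\Dual_C$, so that the triangle identity may be invoked. As elsewhere in the paper, I would settle this by appealing to the universal properties defining $C^!$ and the duality functors rather than by manipulating explicit (co)bar constructions.
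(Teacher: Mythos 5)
Your proof is correct and follows essentially the same route as the paper's (very terse) argument: the paper likewise notes that $C^!$ is positive so that Proposition \ref{prop:commuting-tensor-and-cotensor} applies, and then mirrors the proof of Theorem \ref{thm:josh-complete-implies-koszul-complete}, with Lemma \ref{lem:josh-completeness-over-coalgebra} replacing Corollary \ref{cor:josh-completeness} and the dual of Corollary \ref{cor:twice-dual-of-comodule} supplying the key equivalence. Your additional boundedness checks and the triangle-identity bookkeeping are exactly the details the paper leaves implicit.
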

\begin{proof}
Similar to the proof of
Theorem \ref{thm:josh-complete-implies-koszul-complete}.
Note that the assumptions imply that $C^!$ is positive, so
Proposition \ref{prop:commuting-tensor-and-cotensor} can be applied.
\end{proof}

\begin{theorem}\label{thm:koszul-equivalence-of-modules}
Let $\cat{A}$ be a monoidal complete filtered
stable category with uniformly bounded loops and sequential limits.
Let $C$ be a copositive augmented coalgebra in $\cat{A}$.
Then the functor
\[
\Dual_C\colon\Mod_{C,>-\infty}\longto\Mod_{C^!,>-\infty}
\]
is an equivalence with inverse $\Dual_{C^!}$.
\end{theorem}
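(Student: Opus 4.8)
The plan is to build mutually inverse equivalences between the bounded-below module categories by assembling the results of this subsection with Corollary~\ref{cor:josh-completeness}. First I would check that both functors are well defined. For $\Dual_C$: if the right $C$-module $K$ lies in $\cat{A}_{\ge r}$, then $\Dual_CK=K\cotensor_C\unity$ lies in $\cat{A}_{\ge r}$ by Lemma~\ref{lem:cotensor-bounded-below} (using $\unity\in\cat{A}_{\ge 0}$), and it carries its natural right $C^!$-module structure, so $\Dual_C$ lands in $\Mod_{C^!,>-\infty}$. For $\Dual_{C^!}$: by Lemma~\ref{lem:positive-koszul-dual} the algebra $C^!$ is positive, so if the right $C^!$-module $M$ lies in $\cat{A}_{\ge r}$, then writing $\Dual_{C^!}M=M\tensor_{C^!}\unity$ as the geometric realization of the non-unital bar construction $B_\dot(M,I,\unity)$, with $I\in\cat{A}_{\ge 1}$ the augmentation ideal, each term lies in $\cat{A}_{\ge r}$ and $\cat{A}_{\ge r}$ is closed under colimits (Lemma~\ref{lem:left-localization-is-closed-under-limits}), whence $\Dual_{C^!}M\in\cat{A}_{\ge r}$; moreover $\Dual_{C^!}M$ carries a natural $(C^!)^!=C^{!!}$-module structure, and the equivalence $C^{!!}\equivwith C$ of augmented coalgebras (Lemma~\ref{lem:koszul-complete-coalgebra}) lets us view it as a $C$-module, so $\Dual_{C^!}$ lands in $\Mod_{C,>-\infty}$.

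For the composite $\Dual_{C^!}\circ\Dual_C$, Lemma~\ref{lem:koszul-complete-coalgebra} states exactly that the canonical map $\Dual_{C^!}\Dual_CK\to K$ is an equivalence of $C$-modules for every bounded-below $K$; as this map is natural in $K$, it gives a natural equivalence $\Dual_{C^!}\circ\Dual_C\equivwith\id$ on $\Mod_{C,>-\infty}$.

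For the composite $\Dual_C\circ\Dual_{C^!}$, I would use that $C^!$ is positive, so Corollary~\ref{cor:josh-completeness} applies: the functor $\blank\tensor_{C^!}\unity=\Dual_{C^!}$ reflects equivalences on $\Mod_{C^!,>-\infty}$ (apply the corollary for each $r$ and take the union over $r$). Hence, to see that the canonical map $M\to\Dual_C\Dual_{C^!}M$ is an equivalence, it suffices to check this after applying $\Dual_{C^!}$; applying $\Dual_{C^!}$ to it produces, after the identifications above, the map $\Dual_{C^!}M\to\Dual_{C^!}\Dual_C(\Dual_{C^!}M)$, which is inverse to the Lemma~\ref{lem:koszul-complete-coalgebra} equivalence for the bounded-below $C$-module $\Dual_{C^!}M$, hence an equivalence. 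Together with the previous paragraph, the two natural equivalences show that $\Dual_C$ is an equivalence with inverse $\Dual_{C^!}$.

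The main obstacle is the coherence buried in the last paragraph: one must know that $\Dual_{C^!}$ applied to the map $M\to\Dual_C\Dual_{C^!}M$ is genuinely (coherently) inverse to the Lemma~\ref{lem:koszul-complete-coalgebra} equivalence, i.e.\ a triangle-identity for the pair $(\Dual_{C^!},\Dual_C)$. The cleanest way to discharge this is to first exhibit $\Dual_{C^!}\dashv\Dual_C$ as an adjunction between the bounded-below module categories, with $\blank\cotensor_C\unity$ right adjoint to $\blank\tensor_{C^!}\unity$ on the relevant modules (using Proposition~\ref{prop:commuting-tensor-and-cotensor} and the mapping-space description of $\Dual$ from the definition of the Koszul dual). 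Then the two maps above are literally the counit and unit of this adjunction, the counit is an equivalence by Lemma~\ref{lem:koszul-complete-coalgebra}, the unit becomes an equivalence after applying the equivalence-reflecting functor $\Dual_{C^!}$, and an adjunction with these properties is automatically an adjoint equivalence.
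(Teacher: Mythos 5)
Your proof is correct and follows essentially the route the paper intends: the paper states this theorem without a separate proof, as the immediate combination of Lemma \ref{lem:koszul-complete-coalgebra} (giving $\Dual_{C^!}\Dual_C\equivwith\id$) with Theorem \ref{thm:josh-complete-implies-koszul-complete} applied to the positive algebra $C^!$ (giving $\Dual_C\Dual_{C^!}\equivwith\id$), and your argument via Corollary \ref{cor:josh-completeness} is exactly the proof of that theorem inlined for $A=C^!$. Your version has the minor advantage of not importing the soundness hypothesis appearing in Theorem \ref{thm:josh-complete-implies-koszul-complete} (copositivity of $C^{!!}\equivwith C$ is already given here), and your closing remark about exhibiting $\Dual_{C^!}\dashv\Dual_C$ as an adjunction makes explicit a coherence the paper leaves implicit.
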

\begin{proof}
This follows from Lemmata~\ref{lem:koszul-complete-coalgebra} and
\ref{lem:josh-complete-implies-koszul-complete-refine}.
\end{proof}

From Lemma \ref{lem:koszul-complete-coalgebra} and Theorem
\ref{thm:josh-complete-implies-koszul-complete}, we also obtain
immediately the case $n=1$ of Theorem
\ref{thm:associative-koszul-duality} below.

\subsection{Koszul duality for $E_n$-algebras}
\label{sec:koszul-duality-e_n}
\setcounter{subsubsection}{-1}

In this section, we would like to prove our first main theorem, which
extracts an equivalence of categories from the Koszul duality.
In this section, we assume that $\cat{A}$ is a monoidal complete
soundly filtered stable category with uniformly bounded sequential
limits.

We define the Koszul duality functor for $E_n$-algebras inductively as
the composite
\[
\Alg_{E_1*}(\Alg_{E_{n-1}*})\longto\Coalg_{E_1*}(\Alg_{E_{n-1}*})\longto\Coalg_{E_1*}(\Coalg_{E_{n-1}*}),
\]
where the first map is the associative Koszul duality construction,
and the next map is induced from the inductively defined
$E_{n-1}$-Koszul duality functor, which is canonically op-lax
symmetric monoidal by induction.

We would like to analyse this for a suitable restricted classes of
algebras and coalgebras (considered as algebras in the opposite
category).
The restriction will be given by some positivity conditions as below.

\begin{definition}\label{def:e-n-copositive}
Let $\cat{A}$ be a symmetric monoidal filtered stable category.

An augmented $E_n$-algebra $A$ is said to be \kore{positive} if its
augmentation ideal belongs to $\cat{A}_{\ge 1}$.

An augmented $E_n$-coalgebra $C$ in $\cat{A}$ is said to be
\kore{copositive} if there is a uniform lower bound $\omega$ for
loops in
$\cat{A}$ such that the augmentation ideal of $C$ belongs to
$\cat{A}_{\ge 1-n\omega}$.
\end{definition}

In addition to easily found examples of positive $E_n$-algebras in
particular filtered categories of the kinds named in Introduction
(Section \ref{sec:introduction}), there is also a manner in which a
positive $E_n$-algebra and more generally, a (locally constant)
factorization algebra, arises from \emph{any} augmented ($E_n$- or
factorization) algebra in a reasonable (non-filtered) symmetric
monoidal stable category.
Namely, such an algebra can be naturally filtered by certain `powers'
of its augmentation ideal, to give rise to a positive augmented
algebras in the category of filtered objects described in Section
\ref{sec:examples-of-filtered-categories}.
We refer the reader to \cite{poincare} for the details.

\begin{lemma}\label{lem:monoidality-of-koszul-duality}
Let $A$, $B$ be positive augmented associative algebras in a
symmetric monoidal complete filtered stable category $\cat{A}$.
Then the canonical map $(A\tensor B)^!\to A^!\tensor B^!$ is an
equivalence.
\end{lemma}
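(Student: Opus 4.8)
The plan is to realize the Koszul dual $A^!=\unity\tensor_A\unity$ via the bar construction without degeneracies, $A^!=|B^\dot(\unity,I_A,\unity)|$ where $I_A$ is the augmentation ideal of $A$, and similarly for $B$ and for $A\tensor B$ (whose augmentation ideal $I_{A\tensor B}$ fits in a cofibre sequence with $I_A\tensor\unity$ and $\unity\tensor I_B$, and which as an \emph{associative} algebra is $A\tensor B$ with the componentwise multiplication since $A$, $B$ are associative — not $E_2$). First I would observe that the monoidal multiplication $\tensor\colon\cat{A}\times\cat{A}\to\cat{A}$ preserves geometric realizations variablewise (the multiplication is exact in each variable, and geometric realizations of simplicial-without-degeneracies objects are built from finite colimits and sequential colimits, which in a stable category an exact functor preserves — alternatively invoke Proposition \ref{prop:realization-preserved}, noting that an exact functor out of $\cat{A}$ with the identity filtration on the target, or better, use that $\cat{A}_{\ge r}$ is closed under colimits by Lemma \ref{lem:left-localization-is-closed-under-limits}, so no completion is needed). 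Consequently $A^!\tensor B^!=|B^\dot(\unity,I_A,\unity)|\tensor|B^\dot(\unity,I_B,\unity)|$ is the realization of the \emph{bi}simplicial object $B^\dot(\unity,I_A,\unity)\tensor B^\dot(\unity,I_B,\unity)$, whose $(p,q)$-term is $I_A^{\tensor p}\tensor I_B^{\tensor q}$.

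Next I would compare this with $(A\tensor B)^!=|B^\dot(\unity,I_{A\tensor B},\unity)|$, whose $m$-term is $I_{A\tensor B}^{\tensor m}$. The key point is that the canonical map is induced by the algebra map $A\tensor B\to A$ and $A\tensor B\to B$ (equivalently the lax-monoidal structure of Koszul duality), and on the level of bar constructions this is the diagonal-type map from the bisimplicial object to the simplicial object obtained by restricting along $\Simpface\to\Simpface\times\Simpface$. To see it is an equivalence after realization, I would filter by powers of the augmentation ideals: the augmentation ideal $I_{(A\tensor B)^!}$ of $(A\tensor B)^!$, and that of $A^!\tensor B^!$, both carry the skeletal filtration, and the comparison map respects it. On associated graded pieces one reduces, using the cofibre sequence $I_A\tensor\unity\to I_{A\tensor B}\to\unity\tensor I_B$ (more precisely a finite filtration of $I_{A\tensor B}^{\tensor m}$ whose graded pieces are the $I_A^{\tensor p}\tensor I_B^{\tensor q}$ with $p+q=m$ running over shuffles), to the statement that the Eilenberg--Zilber-type comparison of the bar constructions is a graded-level equivalence; this is a purely combinatorial identity of simplicial objects (the tensor of two bar constructions is the bar construction of the tensor, by the shuffle decomposition) together with the fact, supplied by positivity and Lemma \ref{lem:left-localization-is-closed-under-limits} (closure of $\cat{A}_{\ge r}$ under colimits) plus Corollary \ref{cor:josh-completeness}, that these filtrations are complete so a graded-level equivalence is an equivalence.

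An alternative, cleaner route that avoids the shuffle bookkeeping is the representability argument: by the universal property recalled before Lemma \ref{lem:positive-koszul-dual}, for any augmented associative algebra $R$ one has $\Map_{\Alg_*}(R,(A\tensor B)^!)=\Map_{\Coalg_*}(R^!,A\tensor B)$, and since $A\tensor B$ with componentwise comultiplication... — wait, $A\tensor B$ here is an algebra, so I instead use $\Map_{\Alg_*}(R,(A\tensor B)^!)=\Map_{\Coalg_*}(R^!,(A\tensor B)^{!!})$ and, via Theorem \ref{thm:josh-complete-implies-koszul-complete}, $(A\tensor B)^{!!}\equivwith A\tensor B$, reducing to an identification $\Map_{\Coalg_*}(R^!, A\tensor B)\equivwith\Map_{\Alg_*}(R, A^!)\times_{\Map(R,\unity)}\Map_{\Alg_*}(R, B^!)=\Map_{\Alg_*}(R, A^!\tensor B^!)$; here the middle equivalence uses that a coalgebra map to $A\tensor B$ with the tensor-coalgebra structure is a pair of compatible coalgebra maps to $A$ and to $B$ — but this requires $A\tensor B$ to be a \emph{coalgebra}, which it is only after dualizing, so in fact the honest statement is $\Map_{\Coalg_*}(D, A^!\tensor B^!)=\Map_{\Coalg_*}(D,A^!)\times_{\ast}\Map_{\Coalg_*}(D,B^!)$ and one applies it with $D=R^!$ after noting $(A\tensor B)^!$ and $A^!\tensor B^!$ corepresent the same functor on augmented associative algebras. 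I expect the main obstacle to be \textbf{making precise that the lax-monoidal comparison map is the skeletal/shuffle comparison of bar constructions}, i.e.\ identifying the \emph{a priori} abstract map with the concrete one on which the filtration argument can be run — once that identification is in hand, the rest is the completeness of the skeletal filtration, which is exactly Corollary \ref{cor:josh-completeness} together with positivity (so that the $n$-th skeleton of $I_{A^!\tensor B^!}$ and of $I_{(A\tensor B)^!}$ differs from the next by something in $\cat{A}_{\ge n+1}$, whence the towers converge and a map inducing equivalences on graded pieces is an equivalence).
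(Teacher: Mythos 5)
There is a genuine gap in the central mechanism of your first route. You propose to compare $(A\tensor B)^!=|B^\dot(\unity,I_{A\tensor B},\unity)|$ with the double realization of $B^\dot(\unity,I_A,\unity)\tensor B^\dot(\unity,I_B,\unity)$ by means of the skeletal filtrations, claiming that the comparison map induces equivalences on associated graded pieces because $I_{A\tensor B}^{\tensor m}$ has a finite filtration with graded pieces $I_A^{\tensor p}\tensor I_B^{\tensor q}$, $p+q=m$. This is false: since $A\to\unity$ and $B\to\unity$ are split, $I_{A\tensor B}\simeq I_A\oplus I_B\oplus(I_A\tensor I_B)$ (already your cofibre sequence $I_A\tensor\unity\to I_{A\tensor B}\to\unity\tensor I_B$ has the wrong first term), so $I_{A\tensor B}^{\tensor m}$ contains cross terms $I_A^{\tensor p}\tensor I_B^{\tensor q}$ with $p+q$ anywhere between $m$ and $2m$. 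The two skeletal filtrations therefore do \emph{not} have matching graded pieces, and the Eilenberg--Zilber-type comparison is only an equivalence after realization (it is a homotopy equivalence of totalizations, never a levelwise or graded-level one), so the ``equivalence on graded pieces plus completeness of the tower'' argument cannot be run as stated. Your fallback representability route is also not a proof: you correctly notice that the adjunction $\Map_{\Alg_*}(R,C^!)=\Map_{\Coalg_*}(R^!,C)$ does not typecheck with $C=A\tensor B$, and the repair you sketch requires that the tensor product of the (non-cocommutative) coalgebras $A^!$ and $B^!$ corepresent the fibre product $\Map_{\Coalg_*}(D,A^!)\times_{*}\Map_{\Coalg_*}(D,B^!)$, which is a coproduct-type universal property that the tensor product of associative coalgebras does not have. (A smaller slip: an exact functor preserves the finite colimits $\sk_n$ but not the sequential colimit over $n$, so your first justification that $\tensor$ preserves realizations is wrong; your fallback to Proposition \ref{prop:realization-preserved}, using positivity and completeness of $\cat{A}$, is the correct one and is what the paper uses.)

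The paper's own proof is a one-line reduction to Lemma \ref{lem:associativity-restricted}: one identifies both $(A\tensor B)^!=\unity\tensor_{A\tensor B}\unity$ and $A^!\tensor B^!$ with the iterated relative tensor product $\unity\tensor_A\unity\tensor_B\unity$, i.e.\ with the realization of the bisimplicial bar construction whose $(p,q)$-term is $I_A^{\tensor p}\tensor I_B^{\tensor q}$; the associativity lemma (whose proof already packages the use of Proposition \ref{prop:realization-preserved} and positivity) lets one realize in either order, and realizing the $B$-direction first against the bounded-below object $A^!$ produces $A^!\tensor B^!$. If you want to repair your argument, replace the graded-pieces comparison by this associativity/cofinality argument, or at least by the observation that the diagonal comparison is an equivalence of realizations for global reasons rather than a filtered equivalence of skeletal towers.
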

\begin{proof}
This follows from Lemma \ref{lem:associativity-restricted}.
\end{proof}

In other words, the functor $A\mapsto A^!$ is symmetric monoidal, so
in particular, if $A$ is a positive augemented $E_{n+1}$-algebra,
then the Koszul dual $\unity\tensor_A\unity$ of its underlying
associative algebra becomes an $E_n$-algebra in the category of
augmeted associative coalgebras.
Moreover, by Proposition \ref{prop:realization-preserved}, this
$E_n$-algebra is equivalent to the tensor product
$\unity\tensor_A\unity$ \emph{taken in the category of
  $E_n$-algebras}.

By Lemma \ref{lem:cotensor-associative}, similar results
hold for copositive $E_n$-coalgebras as well.
It follows that Lemmas similar to Lemma \ref{lem:positive-koszul-dual}
and \ref{lem:copositive-koszul-dual} holds for $E_n$-algebras.

We shall now state our first main theorem.
Let $\Alg_{E_n}(\cat{A})_\positive$ denote the category of
positive augmented $E_n$-algebras in $\cat{A}$, and similarly,
$\Coalg_\positive$ for copositive coalgebras.
\begin{theorem}\label{thm:associative-koszul-duality}
Let $\cat{A}$ be a monoidal complete soundly filtered stable
category with uniformly bounded sequential limits.
Then the constructions of Koszul duals give inverse equivalences
\[
\Alg_{E_n}(\cat{A})_\positive\xlongleftrightarrow{\equiv}\Coalg_{E_n}(\cat{A})_\positive.
\]
\end{theorem}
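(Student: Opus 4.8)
The plan is to induct on $n$, the case $n=1$ being already established: it is the combination of Theorem \ref{thm:josh-complete-implies-koszul-complete}, Lemma \ref{lem:koszul-complete-coalgebra}, and Theorem \ref{thm:koszul-equivalence-of-modules}, as remarked just above. For the inductive step I would work with the very factorization by which the $E_n$-Koszul duality functor is defined:
\[
\Alg_{E_n}(\cat{A})_\positive\;\xrightarrow{\;\beta\;}\;\Coalg_{E_1*}\bigl(\Alg_{E_{n-1}*}(\cat{A})\bigr)\;\xrightarrow{\;\Coalg_{E_1*}(\cat{K}_{n-1})\;}\;\Coalg_{E_n}(\cat{A}),
\]
where $\beta$ is the associative (outer-$E_1$) bar construction $A\mapsto\unity\tensor_A\unity$, still carrying its inner $E_{n-1}$-algebra structure together with a compatible outer $E_1$-coalgebra structure, and $\cat{K}_{n-1}$ is the inductively constructed $E_{n-1}$-Koszul duality functor applied variablewise — which is legitimate since $\cat{K}_{n-1}$ is op-lax symmetric monoidal, hence carries $E_1$-coalgebra objects to $E_1$-coalgebra objects. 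Dually, the Koszul duality for coalgebras factors as the outer-$E_1$ cobar construction $\gamma$ followed by the variablewise $\cat{K}_{n-1}$ in the reverse direction.

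First I would verify that these composites land in the asserted subcategories. Each bar step raises the connectivity of the augmentation ideal by $-\omega$: the associated graded pieces of the skeletal filtration of $\unity\tensor_A\unity$ are the shifts $\susp^k(I^{\tensor k})$, which for a positive $A$ lie in $\cat{A}_{\ge k(1-\omega)}$, so $\beta(A)$ has augmentation ideal in $\cat{A}_{\ge 1-\omega}$; iterating through the $n-1$ further bar steps packaged in $\cat{K}_{n-1}$ lands one in $\cat{A}_{\ge 1-n\omega}$, which is exactly copositivity. This tracking of positivity levels is routine — it is the content of the $E_n$-analogues of Lemmas \ref{lem:positive-koszul-dual} and \ref{lem:copositive-koszul-dual} remarked on above, proved by the same totalization and realization estimates with the positivity level carried along — and the coalgebra direction is dual. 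Next, by the inductive hypothesis $\cat{K}_{n-1}$ restricts to an equivalence between positive $E_{n-1}$-algebras and copositive $E_{n-1}$-coalgebras; applying $\Coalg_{E_1*}(-)$ to this equivalence yields an equivalence, and the positivity conditions — being conditions on the underlying object of $\cat{A}$, which is unchanged when one passes between an $E_1$-(co)algebra object and its underlying $E_{n-1}$-(co)algebra — correspond correctly. Hence the second arrow above is already an equivalence on the relevant subcategories, and it remains to treat $\beta$ (and $\gamma$).

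The remaining point is that $\beta$ and $\gamma$ are inverse equivalences between the relevant full subcategories of $\Coalg_{E_1*}(\Alg_{E_{n-1}*}(\cat{A}))$ and $\Alg_{E_1*}(\Coalg_{E_{n-1}*}(\cat{A}))$. Since $\Alg_{E_{n-1}*}(\cat{A})$ is not stable, one cannot apply Theorem \ref{thm:koszul-equivalence-of-modules} to it directly; instead I would descend to $\cat{A}$. The unit $A\to\gamma\beta(A)$ is a map of augmented $E_n$-algebras, hence an equivalence if and only if it is one on underlying objects of $\cat{A}$, the forgetful functor being conservative; and there it is precisely the associative double-Koszul-dual unit $A\to A^{!!}$ for the underlying associative algebra of $A$, because by Proposition \ref{prop:realization-preserved} (and completeness of $\cat{A}$, which renders the ``after completion'' clause vacuous) the underlying object of $\beta(A)$ is the ordinary associative bar construction in $\cat{A}$, and dually for $\gamma$ via Proposition \ref{prop:tatalization-preserved}. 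Theorem \ref{thm:josh-complete-implies-koszul-complete} then gives that this unit is an equivalence, and dually Lemma \ref{lem:koszul-complete-coalgebra} handles the counit on copositive $E_n$-coalgebras. Composing the two equivalences (and their reverses) completes the induction.

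The step I expect to be the main obstacle is exactly this descent. The machinery of Sections \ref{sec:filtered}--\ref{sec:monoidal-filtered} lives inside the \emph{stable} category $\cat{A}$, whereas the inductive construction forces the associative Koszul duality to be run internally to the non-stable categories $\Alg_{E_{n-1}*}(\cat{A})$ and $\Coalg_{E_{n-1}*}(\cat{A})$; the resolution is that every assertion one needs — membership in the positivity classes, the double-dual equivalence, and compatibility of the outer bar/cobar with the inner $\cat{K}_{n-1}$ — can be checked after applying the conservative forgetful functors down to $\cat{A}$, where the required commutations of iterated bar and cobar constructions are exactly what Propositions \ref{prop:realization-preserved} and \ref{prop:tatalization-preserved} (together with completeness of $\cat{A}$) provide.
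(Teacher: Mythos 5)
Your proposal is correct and follows essentially the same route as the paper: induction on $n$, factoring the duality through the outer associative bar/cobar construction followed by the inductively given $E_{n-1}$-duality applied inside $E_1$-(co)algebra objects, with positivity levels tracked via the $E_n$-analogues of Lemmas \ref{lem:positive-koszul-dual} and \ref{lem:copositive-koszul-dual} and the identification of underlying objects via Propositions \ref{prop:realization-preserved} and \ref{prop:tatalization-preserved}. The only presentational difference is that the paper makes the level-tracking part of the induction hypothesis explicit (for every $r$, $I\in\cat{A}_{\ge r}$ corresponds to $J\in\cat{A}_{\ge r-n\omega}$), which is exactly the quantitative refinement your phrase ``with the positivity level carried along'' is implicitly invoking.
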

\begin{proof}
The proof will be by induction on $n$.
We claim the equivalence as well as the following.
Namely, under the claimed equivalence, if $A\in\Alg_{E_n+}$ and
$C\in\Coalg_{E_n+}$ correspond to each other, then we further
claim that for every integer $r$, the condition that the augmentation
ideal $I$ of $A$ belongs to $\cat{A}_{\ge r}$, is equivalent to that
the augmentation ideal $J$ of $C$ belongs to $\cat{A}_{\ge
  r-n\omega}$ for the uniform bound $\omega$ for loops in $\cat{A}$
satisfying the condition stated in Definition
\ref{def:translated-filtration}.
We prove these claims by induction on $n$.

The case $n=0$ is obvious, so assume the claims for an integer $n\ge
0$.
Then we would like to prove the claims for $n+1$.

With the preparation above, the arguments of the previous sections
apply, under a modification, to augmented associative algebras and
coalgebras in the category of $E_n$-algebras.
The modification needed is as follows.
Namely, the arguments refer to depth of objects in the filtration.
Since we are here dealing not with objects of $\cat{A}$, but with
$E_n$-algebras in $\cat{A}$, we should understood the depth of
algebras as the depth of the underlying objects in the filtration of
$\cat{A}$.
From this, we obtain that the constructions of the Koszul duals
restrict to an equivalence
\[
\Alg_{E_1}(\Alg_{E_n}(\cat{A})_+)_+\xlongleftrightarrow{\equiv}\Coalg_{E_1}(\Alg_{E_n}(\cat{A})_+)_+.
\]

On the other hand, from the inductive hypothesis, we have an
equivalence
\[
\Alg_{E_n}(\cat{A})_\positive \xlongleftrightarrow{\equiv}\Coalg_{E_n}(\cat{A})_\positive
\]
(which is symmetric monoidal by iteration of Lemma
\ref{lem:monoidality-of-koszul-duality}), in which the condition
$I\in\cat{A}_{\ge r}$ corresponds to $J\in\cat{A}_{\ge r-n\omega}$.

We obtain the desired equivalence for $n+1$ from these.
Moreover, suppose $A\in\Alg_{E_{n+1}+}$ and $C\in\Coalg_{E_{n+1}+}$
correspond to each other in the equvalence.
Then it follows from Lemmas \ref{lem:positive-koszul-dual} and
\ref{lem:copositive-koszul-dual} that the condition $I\in\cat{A}_{\ge
  r}$ is equivalent to that the augmentation ideal of the
associative Koszul dual of $A$ belongs to $\cat{A}_{\ge r-\omega}$.
Moreover, by the inductive hypothesis, this is equivalent to that
$J\in\cat{A}_{\ge r-(n+1)\omega}$.

This completes the inductive step.
\end{proof}

\subsection{Morita structure of the Koszul duality}
\label{sec:morita-structure}
\setcounter{subsubsection}{-1}

\subsubsection{}
 
Let $\cat{A}$ be a symmetric monoidal category whose monoidal
multiplication functor preserves geometric realizations
(variable-wise).
Then there is an ($n+1$)-category of $E_n$-algebras, generalizing the
Morita $2$-category of associative algebras.

If the condition of the preservation of geometric realization is
dropped, then one has to be cautious.
Let us work in $\cat{A}^\op$ instead, and see a case where a suitably
restricted higher dimensional Morita category of coalgebras in
$\cat{A}$ makes sense.

Specifically, for $\cat{A}$ a symmetric monoidal complete soundly
filtered stable
category with uniformly bounded sequential limits, we would like to
construct a version $\Coalg^\positive_n(\cat{A})$ of the Morita
$(n+1)$-category, of augmented coalgebras, in which $k$-morphisms are
copositive as an augmented $E_{n-k}$-coalgebra in $\cat{A}$.
The construction will be indeed the same as in (a version where
everything is augmented, of) the familiar case.
We shall simply observe that the usual construction makes sense under
the mentioned restriction on the objects to be morphisms in
$\Coalg^\positive_n(\cat{A})$.

Let us see why this is true.
We shall follow the construction outlined by Lurie in \cite{tft}.
Firstly, an object of $\Coalg^\positive_n(\cat{A})$ will be a
copositive augmented $E_n$-coalgebra in $\cat{A}$.
Given objects $C$, $D$ as such, then we would like to define the
morphism $n$-category $\Map(C,D)$ in $\Coalg^\positive_n(\cat{A})$ to
be what we shall denote by
$\Coalg^\positive_{n-1}(\Bimod_{D\naga C}(\cat{A}))$.
By this, we mean the Morita $n$-category to be seen to be
well-defined, for the $E_{n-1}$-monoidal category of
$C$--$D$-bimodules, in which ($k-1$)-morphisms are copositive as an
augmented $E_{n-k}$-coalgebra in $\cat{A}$.
(We understand an augmentation of an coalgebra in
$\Bimod_{C\naga D}(\cat{A})_{>-\infty}$ to be given by a map from
$\unity$, but \emph{not} from the unit of $\Bimod_{C\naga
  D}(\cat{A})_{>-\infty}$.)
For the moment, it will suffice to see that the cotensor product over
$C^\op\tensor D$ makes the category $\Bimod_{C\naga
  D}(\cat{A})_{>-\infty}$ of bounded below bimodules into an
$E_{n-1}$-monoidal category.
The reason why this will suffice is that it will be clear as we
proceed that the rest of the arguments for well-definedness of
$\Coalg^\positive_{n-1}(\Bimod_{C\naga D}(\cat{A}))$ is similar
(with obvious minor modifications) to the arguments for
well-definedness of $\Coalg^\positive_n$ we are discussing right now,
so we can ignore the issue of well-definedness of
$\Coalg^\positive_{n-1}(\Bimod_{C\naga D}(\cat{A}))$ for the moment
by understanding that the whole argument will be inductive at the end
(as in Lurie's description of the construction in the more familiar
case).

To investigate the cotensor product operation in $\Bimod_{C\naga
  D}(\cat{A})_{>-\infty}$, the associativity follows from
Proposition \ref{lem:cotensor-associative}.
If $n-1\ge 2$, we need to have compatibility of this operation with
itself.
This follows from the following general considerations.

\begin{lemma}\label{lem:cotensors-compatible}
Let $C$ be a copositive augmented $E_2$-coalgebra.
Let $C_i$, $D_i$, $i=0,1$, be associative coalgebras in the category
$(\Mod_C)_{\unity/}$ of augmented $C$-modules in $\cat{A}$.
Assume that these are copositive as an augmented associative coalgebra
in $\cat{A}$.

Let $D_{ij}$, $j=0,1$, be a bounded below $D_j$--$C_i$-bimodule if
$i+j$ is even, and a bounded below $C_i$--$D_j$-bimodule if $i+j$ is
odd.
Then the canonical map in $\cat{A}$ from
$(D_{00}\cotensor_{D_0^\op}D_{01})\cotensor_{C_0\cotensor_CC_1^\op}(D_{10}\cotensor_{D_1}D_{11})$
to the totalization of the corresponding bicosimplicial bar
construction is an equivalence.
\end{lemma}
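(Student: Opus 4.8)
The plan is to imitate the proof of Lemma \ref{lem:cotensor-associative} (hence that of Lemma \ref{lem:associativity-restricted}), systematically replacing geometric realizations by totalizations and using Proposition \ref{prop:tatalization-preserved} as the key device for interchanging a totalization with a bounded below exact functor. Since $\cat{A}$ is complete, the ``equivalence after completion'' in the conclusion of Proposition \ref{prop:tatalization-preserved} is an honest equivalence here, so the argument will need only the stated boundedness and copositivity hypotheses, no completion bookkeeping at the end.

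First I would write every cotensor product occurring in the source as the totalization of the cosimplicial bar construction without degeneracies (a diagram over $\Simpface$), exactly as in the proof of Lemma \ref{lem:positive-koszul-dual}: set $P:=D_{00}\cotensor_{D_0^\op}D_{01}=\tot B^\dot(D_{00},J_0,D_{01})$, $Q:=D_{10}\cotensor_{D_1}D_{11}=\tot B^\dot(D_{10},J_1,D_{11})$, where $J_0,J_1$ are the augmentation ideals of $D_0,D_1$, and $E:=C_0\cotensor_CC_1^\op=\tot B^\dot(C_0,J_C,C_1^\op)$ with $J_C$ the augmentation ideal of $C$. By Lemma \ref{lem:cotensor-bounded-below} the objects $P$, $Q$, $E$ are bounded below, and the same lemma, combined with copositivity of $C$, $C_i$ and $D_i$ and the filtration-depth tracking used in the proof of Lemma \ref{lem:positive-koszul-dual}, shows that $E$ is copositive, say with augmentation ideal $J_E\in\cat{A}_{\ge 1-\omega}$ for the sound uniform bound $\omega$ for loops. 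One then writes the outer cotensor as $\tot B^\dot(P,J_E,Q)$.

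Next I would push the bar constructions past one another. For each fixed cosimplicial degree $m$, the functor $(-)\tensor J_E^{\tensor m}\tensor Q$ appearing in $B^\dot(-,J_E,Q)$ is exact and bounded below, because $Q$ and the powers $J_E^{\tensor m}$ are bounded below; hence Proposition \ref{prop:tatalization-preserved}, applied to $X^\dot=B^\dot(D_{00},J_0,D_{01})$, replaces $(\tot B^\dot(D_{00},J_0,D_{01}))\cotensor_E Q$ by $\tot\bigl(B^\dot(D_{00},J_0,D_{01})\cotensor_E Q\bigr)$. Running the same argument for the $Q$-side and once more for the cobar construction $B^\dot(C_0,J_C,C_1^\op)$ hidden inside $E$, the iterated cotensor collapses, degreewise, to the multi-cosimplicial (in the statement, bicosimplicial) bar construction, and its iterated totalization is a single totalization over a product of copies of $\Simpface$. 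Finally, as in the proof of Lemma \ref{lem:associativity-restricted}, a left Kan extension argument along $\Simpface\into\Simp$ identifies the version without degeneracies with the usual one, so the totalization obtained is the one named in the statement, and all the comparison maps produced are the canonical ones.

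The main obstacle is the filtration bookkeeping needed to invoke Proposition \ref{prop:tatalization-preserved} in each cosimplicial direction: one must exhibit, for each such $X^\dot$, a sequence $(r_n)_n$ of integers tending to $\infty$ together with $\omega$ such that $X^n\in\cat{A}_{\ge-\omega n+r_n}$. This comes down to the fact that the $n$-th term of each of the bar constructions in play is, up to a fixed shift of the filtration coming from the bounded below ``coefficient'' bimodules, a tensor power $J^{\tensor n}$ of a copositive augmentation ideal (of $D_0$, $D_1$, $C$, or $E$), which by compatibility of the monoidal structure with the filtration lies in $\cat{A}_{\ge n(1-\omega)}=\cat{A}_{\ge-\omega n+n}$, so that $\Loop^n$ of the $n$-th term is pushed arbitrarily deep as $n\to\infty$. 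The only genuinely new input is the copositivity of $E=C_0\cotensor_CC_1^\op$, which is exactly what Lemma \ref{lem:cotensor-bounded-below} (with the $E_2$-structure of $C$ supplying the requisite coalgebra structure on $E$) delivers; once these bounds are in place the interchange is automatic and the claimed equivalence follows.
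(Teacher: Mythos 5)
Your proposal is correct and follows essentially the same route as the paper: both express the iterated cotensor products as totalizations of bar constructions without degeneracies and invoke Proposition \ref{prop:tatalization-preserved}, with the depth estimates coming from copositivity of $C$, $C_i$, $D_i$ and bounded-belowness of the bimodules, to interchange the inner totalizations with the tensor operations (the paper performs all of the inner interchanges in a single step, totalizing the index $\star$ first). The one quibble is your attribution: Lemma \ref{lem:cotensor-bounded-below} yields bounded-belowness, not copositivity, of $C_0\cotensor_CC_1^\op$ (copositivity is only established afterwards in Corollary \ref{cor:cotensors-preserve-positivity}), but since only bounded-belowness of that object is actually used in the interchange, nothing is lost.
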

\begin{proof}
Let us denote the augmentation ideal of $C$ and $C_i$ by $I$ and $I_i$
respectively, and the augmentation ideal of $D_j$ by $J_j$.
The bicosimplicial bar construction is
\[
B^\dot\bigl(B^\star(D_{00},J_0^\op,D_{01}),B^\star(I_0,I,I_1^\op),B^\star(D_{10},J_1,D_{11})\bigr),
\]
where the cosimplicial indices are $\dot$ and $\star$.
The result follows since the totalization of this in the index $\star$
is
\[
B^\dot(D_{00}\cotensor_{D_0^\op}D_{01},C_0\cotensor_CC_1^\op,D_{10}\cotensor_{D_1}D_{11})
\]
by Proposition \ref{prop:tatalization-preserved} and boundedness below
of the monoidal operations.
\end{proof}

\begin{lemma}
Let $C$ be an augmented associative coalgebra in $\cat{A}$, and let
$K$, $L$ be a right and a left $C$-modules respectively, both of which
are augmented.
Let $\varepsilon$ be the augmentation map of $C$, $K$, or $L$, and
assume that, for an integer $r$ and a uniform bound $\omega$ for loops
in $\cat{A}$, the cofibre in $\cat{A}$ of $\varepsilon$ belongs to
$\cat{A}_{\ge r-\omega}$ for $C$, and to $\cat{A}_{\ge r}$ for $K$ and
$L$.
Then the cofibre of the map
\[
\unity=\unity\cotensor_\unity\unity\xlongrightarrow{\varepsilon}K\cotensor_CL
\]
belongs to $\cat{A}_{\ge r}$.
\end{lemma}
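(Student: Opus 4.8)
The plan is to compare $\unity\cotensor_\unity\unity$ with $K\cotensor_CL$ through the external product $K\tensor L$, which is the object of $0$-cosimplices of the bar construction computing the cotensor product. Write $K'':=\Cofib(\varepsilon\colon\unity\to K)$, $L'':=\Cofib(\varepsilon\colon\unity\to L)$ and $I:=\Cofib(\varepsilon\colon\unity\to C)$, so that $K'',L''\in\cat{A}_{\ge r}$ and $I\in\cat{A}_{\ge r-\omega}$ by hypothesis. In all cases to which the lemma is applied one has $r\ge 0$, and we shall use this; since $\unity\in\cat{A}_{\ge 0}$, the cofibre sequences $\unity\to K\to K''$, $\unity\to L\to L''$ and $\unity\to C\to I$ together with closure of $\cat{A}_{\ge 0}$ under extensions (Corollary \ref{cor:closure-under-extension}) then show $K,L,C\in\cat{A}_{\ge 0}$. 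Following Lemma \ref{lem:cotensor-bounded-below}, write $K\cotensor_CL=\tot B^\dot(K,I,L)$ for the totalization of the cosimplicial bar construction without degeneracies for the coaction of the non-unital coalgebra $I$, so $B^n(K,I,L)=K\tensor I^{\tensor n}\tensor L$ and $\sk_0\tot B^\dot(K,I,L)=B^0(K,I,L)=K\tensor L$.

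The augmentation $\varepsilon\colon\unity=\unity\cotensor_\unity\unity\to K\cotensor_CL$ is the totalization of the evident map of cosimplicial bar constructions induced by $\varepsilon\colon\unity\to C$ and by the module augmentations; since the source cosimplicial object has $0$-th term $\unity\tensor\unity$ and vanishing higher terms, composing $\varepsilon$ with the projection $\pi\colon\tot B^\dot(K,I,L)\to B^0(K,I,L)=K\tensor L$ recovers the external product $\beta:=(\varepsilon\tensor\varepsilon)\colon\unity=\unity\tensor\unity\to K\tensor L$ of augmentations. Applying the octahedral axiom to $\unity\xrightarrow{\varepsilon}K\cotensor_CL\xrightarrow{\pi}K\tensor L$ yields a cofibre sequence $\Fibre(\pi)\to\Cofib(\varepsilon)\to\Cofib(\beta)$ (using $\Loop\Cofib(\pi)\equivwith\Fibre(\pi)$), so by closure of $\cat{A}_{\ge r}$ under extensions it suffices to show $\Fibre(\pi)\in\cat{A}_{\ge r}$ and $\Cofib(\beta)\in\cat{A}_{\ge r}$.

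For $\Fibre(\pi)$ I would rerun the argument of Lemma \ref{lem:cotensor-bounded-below}: in the skeletal tower $\tot B^\dot(K,I,L)=\lim_n\sk_n\tot B^\dot(K,I,L)$ the layer $\Fibre(\sk_n\tot B^\dot\to\sk_{n-1}\tot B^\dot)\equivwith\Loop^nB^n(K,I,L)=\Loop^n(K\tensor I^{\tensor n}\tensor L)$ lies in $\cat{A}_{\ge nr}$, because $K,L\in\cat{A}_{\ge 0}$ and $I\in\cat{A}_{\ge r-\omega}$. The $n=1$ layer lies in $\cat{A}_{\ge r}$; for $n\ge 2$ the layers lie in $\cat{A}_{\ge 2r}$, so $\Fibre(\tot B^\dot\to\sk_1\tot B^\dot)$, being a sequential limit of iterated extensions of such layers, lies in $\cat{A}_{\ge 2r+d}$ for $d\le 0$ a uniform bound for sequential limits, and $2r+d\ge r$ since $r\ge-d$ in our applications. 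The extension $\Fibre(\tot B^\dot\to\sk_1\tot B^\dot)\to\Fibre(\pi)\to\Loop B^1(K,I,L)$ then gives $\Fibre(\pi)\in\cat{A}_{\ge r}$. For $\Cofib(\beta)$, factor $\beta$ as $\unity=\unity\tensor\unity\xrightarrow{\unity\tensor\varepsilon}L\xrightarrow{\varepsilon\tensor L}K\tensor L$; the octahedral axiom gives a cofibre sequence $L''\to\Cofib(\beta)\to K''\tensor L$, and tensoring $\unity\to L\to L''$ with $K''$ exhibits $K''\tensor L$ as an extension of $K''\tensor L''\in\cat{A}_{\ge 2r}$ by $K''\tensor\unity=K''\in\cat{A}_{\ge r}$, whence $K''\tensor L\in\cat{A}_{\ge r}$ and, again by closure under extensions, $\Cofib(\beta)\in\cat{A}_{\ge r}$.

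I expect the main work to be in the $\Fibre(\pi)$ step: one must separate the $n=1$ contribution, which accounts for exactly the wanted depth $r$, from the $n\ge2$ contributions, which have depth $2r$ to spare so that only they are affected by the uniform bound for sequential limits --- this is essentially the bookkeeping already carried out in the proof of Lemma \ref{lem:cotensor-bounded-below}. The identification of $\varepsilon$ with the totalization of the cosimplicial map, hence of $\pi\varepsilon$ with $\beta$, is routine but should be made explicit, and the hypothesis ``$\Cofib(\varepsilon)\in\cat{A}_{\ge r-\omega}$ for $C$'' is used precisely to make $\Loop^nB^n(K,I,L)$ land in $\cat{A}_{\ge nr}$.
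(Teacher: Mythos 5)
Your overall strategy matches the paper's intent --- the paper's proof is literally ``Similar to Lemma \ref{lem:cotensor-bounded-below}'', i.e., run the skeletal tower of the cobar construction and control its layers --- and your repackaging via the octahedral axiom (splitting $\Cofib(\varepsilon)$ into $\Fibre(\pi)$ and $\Cofib(\beta)$ rather than totalizing the levelwise cofibres) is a harmless variant. The estimate for $\Cofib(\beta)$ and the layer estimate $\Loop^nB^n(K,I,L)\in\cat{A}_{\ge nr}$ are correct, granted $r\ge 0$; that standing assumption is indeed needed for the statement to hold at all, and it is satisfied in the only application (Corollary \ref{cor:cotensors-preserve-positivity}, where $r=1-k\omega\ge 1$), so importing it is a fair reading of an imprecision in the statement.

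The genuine gap is in your treatment of the tail of the tower. You bound every layer with $n\ge 2$ uniformly by $\cat{A}_{\ge 2r}$, conclude $\Fibre[\tot B^\dot\to\sk_1\tot B^\dot]\in\cat{A}_{\ge 2r+d}$, and then need $2r+d\ge r$, i.e., $r\ge -d$. This is not implied by the hypotheses, nor even by the applications: $r$ and the uniform bound $d$ for sequential limits are independent parameters ($r=1-k\omega$ can equal $1$ while $d$ can be any nonpositive integer). The point of the argument of Lemma \ref{lem:cotensor-bounded-below} --- which you say you are rerunning, but at this step do not --- is to cut the tower at $\sk_{-d}$ rather than at $\sk_1$, exploiting that the layers escape to infinity: for $n>-d$ one has $(n-1)r\ge n-1\ge -d$ (using $r\ge 1$), hence $\Loop^nB^n\in\cat{A}_{\ge nr}\sub\cat{A}_{\ge r-d}$, so the sequential limit $\Fibre[\tot B^\dot\to\sk_{-d}\tot B^\dot]$ lands in $\cat{A}_{\ge r-d+d}=\cat{A}_{\ge r}$; meanwhile $\Fibre[\sk_{-d}\tot B^\dot\to\sk_0\tot B^\dot]$ is a \emph{finite} iterated extension of layers each in $\cat{A}_{\ge nr}\sub\cat{A}_{\ge r}$, so no loss from $d$ occurs there. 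With this replacement (and with the assumption $r\ge 1$ made explicit, which is what the escape-to-infinity step actually uses), your proof is complete.
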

\begin{proof}
Similar to Lemma \ref{lem:cotensor-bounded-below}.
\end{proof}

\begin{corollary}\label{cor:cotensors-preserve-positivity}
Let $k\ge 0$ and $m$ be integers such that $m\ge k+1$.
In Lemma, if $C$ is a copositive augmented $E_m$-coalgebra, and $K$
and $L$ are further $E_k$-coalgebras in $(\Mod_C)_{\unity/}$ which are
copositive as augmented $E_k$-coalgebras in $\cat{A}$, then
$K\cotensor_CL$ is copositive as an augmented $E_k$-coalgebra in
$\cat{A}$.
\end{corollary}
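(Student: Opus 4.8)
The plan is to obtain the corollary by a single application of the preceding Lemma, with the integer $r$ chosen correctly and the filtration degrees tracked carefully. Since $\cat{A}$ is soundly filtered, I would first fix once and for all the greatest uniform bound $\omega$ for loops in $\cat{A}$ (Definition \ref{def:suspension-raising} and the remark following it); in particular $\omega\le 0$. Observe that every copositivity hypothesis at hand can then be expressed with this single $\omega$: if the augmentation ideal of an augmented $E_j$-coalgebra lies in $\cat{A}_{\ge 1-j\omega'}$ for some uniform bound $\omega'$ for loops, then $\omega'\le\omega$ gives $1-j\omega'\ge 1-j\omega$, so the augmentation ideal lies in $\cat{A}_{\ge 1-j\omega}$ as well.

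Next I would pin down what must be checked. The $E_k$-coalgebra $K\cotensor_C L$ is the one built by the cotensor-product constructions recalled above (Lemma \ref{lem:cotensor-associative}, Lemma \ref{lem:cotensors-compatible}, and their higher analogues): its underlying object in $\cat{A}$ is the cotensor product $K\cotensor_C L$, and its augmentation is the map
\[
\unity=\unity\cotensor_\unity\unity\xlongrightarrow{\varepsilon}K\cotensor_C L
\]
treated in the preceding Lemma. Hence the augmentation ideal of $K\cotensor_C L$ is exactly the cofibre in $\cat{A}$ of this map, and, by Definition \ref{def:e-n-copositive}, the assertion that $K\cotensor_C L$ is copositive as an augmented $E_k$-coalgebra in $\cat{A}$ is precisely the statement that this cofibre belongs to $\cat{A}_{\ge 1-k\omega}$. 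Since copositivity constrains only this underlying object, no further input from the $E_k$-coalgebra structure is needed.

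Then I would invoke the preceding Lemma with $r:=1-k\omega$ and the above $\omega$. Its hypothesis on $K$ and $L$ requires the cofibres of their augmentations to lie in $\cat{A}_{\ge r}=\cat{A}_{\ge 1-k\omega}$; but these cofibres are the augmentation ideals of $K$ and $L$, which lie there by the copositivity of $K$ and $L$ as augmented $E_k$-coalgebras. Its hypothesis on $C$ requires the cofibre of the augmentation of $C$, i.e.\ the augmentation ideal of $C$, to lie in $\cat{A}_{\ge r-\omega}=\cat{A}_{\ge 1-(k+1)\omega}$; by copositivity of $C$ as an augmented $E_m$-coalgebra it lies in $\cat{A}_{\ge 1-m\omega}$, and since $\omega\le 0$ and $m\ge k+1$ we have $1-m\omega\ge 1-(k+1)\omega$, so $\cat{A}_{\ge 1-m\omega}\subseteq\cat{A}_{\ge 1-(k+1)\omega}$ and the hypothesis holds. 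The preceding Lemma then yields that the cofibre of $\unity\to K\cotensor_C L$ lies in $\cat{A}_{\ge 1-k\omega}$, which by the previous paragraph is the desired conclusion.

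I do not expect a genuine obstacle here. The only points calling for (minor) care are, first, the identification of the augmentation ideal of the $E_k$-coalgebra $K\cotensor_C L$ with the cofibre handled by the preceding Lemma, which reduces copositivity to that Lemma's conclusion; and, second, the bookkeeping remark that the slack $m\ge k+1$, together with $\omega\le 0$, is exactly what upgrades the copositivity of $C$ as an $E_m$-coalgebra to the hypothesis the preceding Lemma demands of $C$ after the shift $r\mapsto r-\omega$. Everything else is direct substitution with $r=1-k\omega$.
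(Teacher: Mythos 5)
Your proposal is correct and is exactly the intended derivation: the paper offers no separate argument for this corollary, treating it as an immediate consequence of the preceding Lemma, and your bookkeeping (normalizing all copositivity hypotheses to the greatest uniform bound $\omega\le 0$ from soundness, taking $r=1-k\omega$, and using $m\ge k+1$ to convert the $E_m$-copositivity of $C$ into the $\cat{A}_{\ge r-\omega}$ hypothesis) fills in precisely the substitution the paper leaves implicit.
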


It follows that $n-1$ monoidal structures on
$\Bimod_{C\naga D}(\cat{A})_{>-\infty}$, all of which are given by the
cotensor product over $C^\op\tensor D$, has the compatibility required
for them to together define an $E_{n-1}$-monoidal structure on this
category of bounded below bimodules.

Thus, we can try to see if the construction of the Morita category
can be applied for restricted class of augmented coalgebras in
$\Bimod_{C\naga D}(\cat{A})_{>-\infty}$, to give an $n$-category
$\Coalg^+_{n-1}(\Bimod_{C\naga D}(\cat{A}))$.
This step will be similar to the argument we shall now give to observe
that the construction of $\Coalg^+_n(\cat{A})$ can be done assuming
that the construction of $\Coalg^+_{n-1}(\Bimod_{C\naga D}(\cat{A}))$
could be done.
Namely, what we shall do next is essentially an inductive step, which
will close our argument.
Let us do this now.

We would like to see that cotensor product operations between
categories of the form $\Coalg^+_{n-1}(\Bimod_{C\naga D}(\cat{A}))$
for copositive augmented $E_n$-coalgebras $C$, $D$, define
composition in the desired category
$\Coalg^\positive_n(\cat{A})$ enriched in $n$-categories.
Cotensor product of copositive objects remain copositive by
Corollary \ref{cor:cotensors-preserve-positivity}.
The functoriality of the cotensor operations follows from
Lemma \ref{lem:cotensors-compatible} (in the case $C_i$ are $C$).
Finally, the associativity of the composition defined by cotensor
product follows from Proposition \ref{lem:cotensor-associative}.

To summarize, the usual construction of the Morita category (as
outlined in \cite{tft}) works under our assumptions on $\cat{A}$ and
the copositivity of the class of objects we include, since in
construction of any composition of morphisms, application of the
totalization functor to any iterated multicosimplicial bar
construction which appear can be always postponed to the last step.

\subsubsection{}

In the next theorem, we shall see that the Koszul duality construction
is functorial on the positive Morita category, and gives an equivalence
of the algebraic and coalgebraic Morita categories.

Let us assume that the monoidal multiplication functor on $\cat{A}$
preserves geometric realization, and denote by
$\Alg^\positive_n(\cat{A})$ the positive part of the augmented version
of the Morita ($n+1$)-category of (augmented) $E_n$-algebras in
$\cat{A}$.
\begin{theorem}\label{thm:koszul-in-morita}
Let $\cat{A}$ be a symmetric monoidal complete soundly filtered
stable category with uniformly bounded sequential limits.
Assume that the monoidal multiplication functor on $\cat{A}$
preserves geometric realization.

Then for every $n$, the construction of the Koszul dual define a
symmetric monoidal functor
\[
\blank^!\colon\Alg^\positive_n(\cat{A})\longto\Coalg^\positive_n(\cat{A}).
\]
It is an equivalence with inverse given by the Koszul duality
construction.
\end{theorem}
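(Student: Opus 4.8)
The plan is to prove the theorem by induction on $n$, establishing simultaneously that $\blank^!$ is an equivalence of $(n+1)$-categories and that, under this equivalence, the Koszul dual of a positive bimodule (in any of the relevant $E_{n-k}$-senses) is copositive, with the same depth bookkeeping already used in the proof of Theorem \ref{thm:associative-koszul-duality}. For the induction to close, one should read the statement — as with Lurie's construction in \cite{tft} — in the generality of an $E_m$-monoidal (rather than merely symmetric monoidal) complete soundly filtered stable category with uniformly bounded sequential limits: the mapping $n$-categories of $\Alg^\positive_n(\cat{A})$ are Morita-type $n$-categories built from the $E_{n-1}$-monoidal categories of bimodules constructed in the previous subsection, so the inductive hypothesis must already be available at that level. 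The base case $n=0$ is immediate, and the first genuine case $n=1$ is the combination of Theorems \ref{thm:associative-koszul-duality}, \ref{thm:josh-complete-implies-koszul-complete} and \ref{thm:koszul-equivalence-of-modules}.

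First I would construct $\blank^!$. On objects it sends a positive augmented $E_n$-algebra $A$ to its Koszul dual coalgebra $A^!$; this is the equivalence of Theorem \ref{thm:associative-koszul-duality} on objects, with copositivity of $A^!$ supplied by the $E_n$-version of Lemma \ref{lem:copositive-koszul-dual}. Given positive $E_n$-algebras $A$ and $B$, the mapping $n$-category of $\Alg^\positive_n(\cat{A})$ between them is the Morita $n$-category built from the $E_{n-1}$-monoidal category $\Bimod_{A\naga B}(\cat{A})_{>-\infty}$ of bounded below bimodules under $\tensor_B$, and similarly for the coalgebraic side with $\cotensor_{B^!}$. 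The module-level Koszul dual $M\mapsto\unity\tensor_AM\tensor_B\unity$ carries bounded below $A$--$B$-bimodules to bounded below $A^!$--$B^!$-bicomodules; by Theorem \ref{thm:josh-complete-implies-koszul-complete}, Corollary \ref{cor:twice-dual-of-comodule} and Lemma \ref{lem:koszul-complete-coalgebra} it is an equivalence with inverse the coalgebraic version $L\mapsto\unity\cotensor_{A^!}L\cotensor_{B^!}\unity$, and by Proposition \ref{prop:commuting-tensor-and-cotensor} it is monoidal, taking $\tensor_B$ to $\cotensor_{B^!}$, hence an $E_{n-1}$-monoidal equivalence for the $E_{n-1}$-structures inherited from the $E_n$-structures on $A$ and $B$. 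It preserves augmentations and, by the depth computation in the proof of Theorem \ref{thm:associative-koszul-duality} carried out inside $\Bimod_{A\naga B}(\cat{A})$, takes positive sub-bimodules to copositive ones. Applying the inductive hypothesis to this $E_{n-1}$-monoidal equivalence yields the required equivalence on mapping $n$-categories, $\Map_{\Alg^\positive_n}(A,B)\equivwith\Map_{\Coalg^\positive_n}(A^!,B^!)$.

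Next I would check compatibility with composition, identities and the external monoidal structure. Composition in $\Alg^\positive_n(\cat{A})$ is tensor over the middle algebra and in $\Coalg^\positive_n(\cat{A})$ is cotensor over the middle coalgebra, and Proposition \ref{prop:commuting-tensor-and-cotensor} — together with the compatibility lemmas \ref{lem:cotensors-compatible} and \ref{cor:cotensors-preserve-positivity} used to set up the coalgebraic Morita category — says exactly that the module-level Koszul dual intertwines them; as in the well-definedness discussion preceding the theorem, all of the higher coherence reduces to the fact that application of the (iterated) totalization functor to any multicosimplicial bar construction that appears can be postponed to the last step, so the coherence data transports across the duality. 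Units correspond since $\unity^!\equivwith\unity$. Symmetric monoidality of $\blank^!$ follows from the iterated form of Lemma \ref{lem:monoidality-of-koszul-duality}, $(A\tensor B)^!\equivwith A^!\tensor B^!$, combined with Proposition \ref{prop:realization-preserved} to identify the tensor products taken in the various categories, compatibly with the external products on the two Morita categories. The inverse is the coalgebraic Koszul duality construction, which is a functor by the same argument applied in $\cat{A}^\op$, and the two composites are equivalent to the identity by Theorem \ref{thm:josh-complete-implies-koszul-complete} and Lemma \ref{lem:koszul-complete-coalgebra} on objects, morphisms and higher morphisms alike.

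The main obstacle is not any individual estimate — all the needed boundedness and completeness inputs are the results of Sections \ref{sec:filtered} and \ref{sec:monoidal-filtered} — but organizing the induction so that the $E_{n-1}$-monoidal, genuinely non-symmetric, bimodule categories are handled uniformly, and verifying that the monoidal (indeed $E_{n-1}$-monoidal) structure on the module-level Koszul dual carries all of its higher coherences. This is precisely the same bookkeeping that makes the coalgebraic Morita category $\Coalg^\positive_n(\cat{A})$ well-defined in the first place, so in practice the functoriality of $\blank^!$ is proved in parallel with, and by the same inductive scheme as, that construction.
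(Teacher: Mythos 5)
Your proposal follows essentially the same route as the paper's proof: induction on $n$ carried out in the generality of bimodule categories over positive $E_{n+1}$-algebras, with the morphism-level Koszul dual factored into the ``inner'' duality on the $E_{n-1}$-algebra structure and the module-level dual $M\mapsto\unity\tensor_AM\tensor_BM\unity$-type construction on the bimodule structure, and with Propositions \ref{prop:commuting-tensor-and-cotensor}, \ref{lem:cotensors-compatible}, Theorem \ref{thm:josh-complete-implies-koszul-complete} and Lemma \ref{lem:koszul-complete-coalgebra} supplying the compatibilities and invertibility, exactly as in the paper. The only substantive presentational difference is that the paper makes the intermediate category $\Bimod_{A_0^{(!,1)}\naga A_1^{(!,1)}}$ explicit and performs the algebra-structure dualization \emph{before} the bimodule-level one (so that all bar constructions are taken for plain tensor products), a point worth keeping in mind when making your ``apply the inductive hypothesis to this $E_{n-1}$-monoidal equivalence'' step precise.
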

\begin{proof}
\proofsec
Let us first describe the functor underlying the claimed symmetric
monoidal functor.
In order to do this, it suffices to consider the following,
more general case.
Namely, let $A_i$, $i=0,1$, be positive augmented $E_{n+1}$-algebras.
Then we would like to see that the Koszul duality constructions define
a functor
\begin{equation}\label{eq:morita-koszul-general}
\Alg^\positive_n\bigl(\Bimod_{A_0\naga
  A_1}\bigr)\longto\Coalg^\positive_n\bigl(\Bimod_{A_0^!\naga A_1^!}\bigr).
\end{equation}
Indeed, the original case is when $A_i$ are the unit algebra in
$\cat{A}$.

Similarly to how it was in the construction of the higher Morita
category, we need to consider here algebras $A_i$ possibly in the
category of bimodules over some $E_{n+2}$-algebras.
In order to understand \eqref{eq:morita-koszul-general} including this
case, recall first that in general, an $E_{k+1}$-algebra can be
considered as an $E_k$-algebra in the category of $E_1$-algebras.
Given an $E_{k+1}$-algebra $A$, let us denote by $A^{(!,1)}$ the
$E_1$-algebra in $E_k$-\emph{co}algebras which is obtained as the
$E_k$-Koszul dual of $A$.
If $A_i$, $i=0,1$, are $E_{k+1}$-algebras (possibly again in a
bimodule category, and inductively), and $B$ is an augmented
$E_k$-algebra in $(\Bimod_{A_0\naga A_1})_{>-\infty}$, then by $B^!$,
we mean the canonical augmented $E_k$-coalgebra in
$\bigl(\Bimod_{A_0^{(!,1)}\naga A_1^{(!,1)}}\bigr)_{>-\infty}$
(bimodules with respect to the $E_1$-\emph{algebra} structures of
$A_i^{(!,1)}$) lifting the $E_k$-Koszul dual (in inductively the
similar sense) of the augmented $E_k$-algebra underlying $B$ after
forgetting the bimodule structure of $B$ over $A_0$ and $A_1$.
Note that the $E_k$-monoidal structure of
$\bigl(\Bimod_{A_0^{(!,1)}\naga A_1^{(!,1)}}\bigr)_{>-\infty}$ is the
`plain' tensor product, lifting the $E_k$-monoidal structure
(underlying the $E_{k+1}$-monoidal structure) of the underlying
objects.
Note that if $A_i$ here are again algebras in a bimodule category,
then $A_i^{(!,1)}$ are interpreted in the similar way, and
inductively.
In particular, if one forgets all the way down to $\cat{A}$, then as
an augmented $E_k$-coalgebra in $\cat{A}$, $B^!$ is the Koszul dual of
the augmented $E_k$-algebra in $\cat{A}$ underlying $B$.
We are just taking into account the natural algebraic structures
carried by it.

\proofsec
Let us now describe the construction of
\eqref{eq:morita-koszul-general}.
Note that $A^!=(A^{(!,1)})^{(1,!)}$, where $\blank^{(1,!)}$ is the
Koszul duality construction with respect to the remaining
$E_1$-algebra structure.
Using this, \eqref{eq:morita-koszul-general} will be constructed as
the composition of two functors.
Namely, it will be constructed as a functor factoring through
$\Coalg^\positive_n\bigl(\Bimod_{A_0^{(!,1)}\naga A_1^{(!,1)}}\bigr)$.

\proofsec
The functor
\[
\Coalg^\positive_n\bigl(\Bimod_{A_0^{(!,1)}\naga
  A_1^{(!,1)}}\bigr)\longto\Coalg^\positive_n\bigl(\Bimod_{A_0^!\naga
  A_1^!}\bigr)
\]
to be one of the factors, will be induced from an op-lax
$E_n$-monoidal functor $\bigl(\Bimod_{A_0^{(!,1)}\naga
  A_1^{(!,1)}}\bigr)_{>-\infty}\to\bigl(\Bimod_{A_0^!\naga
  A_1^!}\bigr)_{>-\infty}$ whose underlying functor is
$\Dual_{A_0^{(!,1)}\naga A_1^{(!,1)}}\colon
K\mapsto\unity\tensor_{A_0^{(!,1)}}K\tensor_{A_0^{(!,1)}}\unity$.
Note that this functor will preserve copositivity of coalgebras once
it is given an $E_n$-monoidal structure.

To see the op-lax $E_n$-monoidal structure of
$\Dual:=\Dual_{A_0^{(!,1)}\naga A_1^{(!,1)}}$, let $S$ be a finite
set, and let $m$ be an $S$-ary operation in the operad $E_n$.
Then for a family $K=(K_s)_{s\in S}$ of objects of
$\bigl(\Bimod_{A_0^{(!,1)}\naga A_1^{(!,1)}}\bigr)_{>-\infty}$, we have
\[
\Dual m_!K\longto\Delta_m^*\Dual_{A_0^{(!,1)\,\tensor m}\naga
  A_1^{(!,1)\,\tensor m}}\widebar{m}_!K=m_*\Dual K,
\]
where
\begin{itemize}
\item
  $m_!\colon\bigl(\Bimod_{A_0^{(!,1)}\naga
    A_1^{(!,1)}}\bigr)^S\to\Bimod_{A_0^{(!,1)}\naga A_1^{(!,1)}}$
  is the monoidal multiplication along $m$,
\item $\Delta_m^*\colon\Bimod_{A_0^{!\,\tensor m}\naga A_1^{!\,\tensor
      m}}\to\Bimod_{A_0^!\naga A_1^!}$ is the (``co''-)extension of
  scalars along the comultiplication operations $\Delta_m$ along $m$
  of $A_0^!$ and $A_1^!$,
\item
  $\widebar{m}_!\colon\bigl(\Bimod_{A_0^{(!,1)}\naga
    A_1^{(!,1)}}\bigr)^S\to\Bimod_{A_0^{(!,1)\,\tensor m}\naga
    A_1^{(!,1)\,\tensor m}}$ is the external monoidal multiplication
  along $m$ (so $m_!=\Delta_m^*\widebar{m}_!$),
\item
  $m_*\colon\bigl(\Bimod_{A_0^!\naga A_1^!}\bigr)^S\to\Bimod_{A_0^!\naga
    A_1^!}$ is the monoidal multiplication along $m$,
\end{itemize}
and the map is the instance for $\widebar{m}_!K$ of the extension of
scalars of the $A_0^{!\,\tensor m}$--$A_1^{!\,\tensor m}$-bimodule map
$\Delta_{m*}\Dual\Delta_m^*\to\Dual_{A_0^{(!,1)\,\tensor m}\naga
  A_1^{(!,1)\,\tensor m}}$ induced from $\Delta_m$ of $A_0^{(!,1)}$
and $A_1^{(!,1)}$.

\proofsec
Next, we would like to describe the other factor
\begin{equation}\label{eq:morita-koszul-preparation}
\Alg^\positive_n\bigl(\Bimod_{A_0\naga
  A_1}\bigr)\longto\Coalg^\positive_n\bigl(\Bimod_{A_0^{(!,1)}\naga
  A_1^{(!,1)}}\bigr).
\end{equation}
If $B$ is an object of the source, then the object of
$\Coalg^\positive_n\bigl(\Bimod_{A_0^{(!,1)}\naga A_1^{(!,1)}}\bigr)$
associated to it is the $E_n$-Koszul dual $B^!$.
To see the functoriality of this construction, let $B_i$,
$i=0,1$, be objects of $\Alg^\positive_n(\Bimod_{A_0\naga A_1})$.
Then we first need a functor
\begin{equation}\label{eq:morita-koszul-on-map}
\Alg^\positive_{n-1}\bigl(\Bimod_{B_0\naga
  B_1}\bigr)\longto\Coalg^\positive_{n-1}\bigl(\Bimod_{B_0^!\naga
  B_1^!}\bigr).
\end{equation}
Note that this is the same form of functor as
\eqref{eq:morita-koszul-general}.
Therefore, we may assume that we have this functor by assuming we have
\eqref{eq:morita-koszul-preparation} for $n-1$ by an inductive
hypothesis, once we check the base case.
However, the base case is the identity functor of
$(\Bimod_{B_0\naga B_1})_{\ge 1}$ for positive $E_1$-algebras $B_i$
(in the bimodule category in the bimodule category in ...).

Next, we would like to see the compatibility of the functors
\eqref{eq:morita-koszul-on-map} with the compositions.
Thus, let $B_2$ be another object, and let maps
\[
B_0\xlongrightarrow{K_{01}}B_1\xrightarrow{K_{12}}B_2
\]
be given in $\Alg^\positive_n(\Bimod_{A_0\naga A_1})$.
Then the version of Lemma \ref{lem:cotensors-compatible} for positive
algebras implies that the $E_{n-1}$-Koszul dual
$(K_{01}\tensor_{B_1}K_{12})^!$ is equivalent by the canonical map to
the realization of a bicosimiplicial object which is also equivalent
to $K_{01}^!\tensor_{B_1^{(!,1)}}K_{12}^!$ by the canonical map, again
by Lemma \ref{lem:cotensors-compatible}.
Moreover, the canonical map
\begin{equation*}
\Dual_{B_0^{(!,1)}\naga
  B_2^{(!,1)}}\bigl(K_{01}^!\tensor_{B_1^{(!,1)}}K_{12}^!\bigr)\longto\bigl(\Dual_{B_0^{(!,1)}\naga
    B_1^{(!,1)}}K_{01}^!\bigr)\cotensor_{(B_1^{(!,1)})_{\phantom{1}}^{(1,!)}}\bigl(\Dual_{B_1^{(!,1)}\naga
    B_2^{(!,1)}}K_{12}^!\bigr),
\end{equation*}
is an equivalence by
Proposition \ref{prop:commuting-tensor-and-cotensor} and
Theorem \ref{thm:josh-complete-implies-koszul-complete}.

\proofsec
This essentially completes the inductive step, so we have given a
description of the underlying functor of the desired symmetric
monoidal functor.
Moreover, the symmetric monoidality of the functor is straightforward.

It follows in the same way that we also have a functor in the other
direction, and it follows from
Theorems \ref{thm:josh-complete-implies-koszul-complete} and
Lemma \ref{lem:koszul-complete-coalgebra} that these are inverse to
each other.
\end{proof}

\begin{remark}
As the proof shows, the equivalence is in fact more than an
equivalence of ($n+1$)-categories.
Namely, the equivalence $A\equivwith A^{!!}$ for $A$ in any dimension
is an honest equivalence of algebras, rather than merely an
equivalence in the Morita category.
\end{remark}

\begin{remark}\label{rem:dropping-preservation-of-realization}
Theorem seems to be suggesting that $\Coalg^\positive_n(\cat{A})$ is a
meaningful thing at least in the case where the monoidal operation of
$\cat{A}$ preserves geometric realizations.
However, the construction of $\Coalg^\positive_n(\cat{A})$ was
independent of this assumption, and a similar construction for
$\Alg^\positive_n(\cat{A})$ works without preservation of
geometric realizations.
Moreover, Theorem remains true in this generality.
\end{remark}

Recall that any $E_n$-algebra $A$, as an object of the Morita
($n+1$)-category $\Alg_n(\cat{A})$, is $n$-dualizable.
All dualizability data are in fact given by $A$, considered as
suitable morphisms in $\Alg_n(\cat{A})$.

It is then immediate to see that if $A$ is an augmented
$E_n$-algebra, then the dualizability data (and the field theory) for
$A$ in $\Alg_n(\cat{A})$ can be lifted to those for $A$ in
$\Alg^*_n(\cat{A})$, the augmented version of $\Alg_n(\cat{A})$.
Moreover, if $A$ is positive, then those data belongs to
$\Alg^\positive_n(\cat{A})$.
In particular, $A$ will be $n$-dualizable in
$\Alg^\positive_n(\cat{A})$.

\begin{corollary}
Let $\cat{A}$ be a symmetric monoidal complete soundly filtered
stable category with uniformly bounded sequential limits.
Then any object of the symmetric monoidal category
$\Coalg^\positive_n(\cat{A})$ is $n$-dualizable.
\end{corollary}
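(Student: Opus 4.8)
The plan is to transport the $n$-dualizability of objects of $\Alg^\positive_n(\cat{A})$ across the Koszul duality equivalence. As recalled in the paragraph preceding the statement, every augmented $E_n$-algebra is $n$-dualizable in $\Alg_n(\cat{A})$, with all of the dualizability data furnished by the algebra itself regarded as suitable higher morphisms; when the algebra is moreover augmented and positive, all of those data lie in $\Alg^\positive_n(\cat{A})$, so every object of $\Alg^\positive_n(\cat{A})$ is $n$-dualizable.

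Next I would invoke Theorem \ref{thm:koszul-in-morita}, in the generality provided by Remark \ref{rem:dropping-preservation-of-realization} (so that no hypothesis on preservation of geometric realizations by the monoidal product of $\cat{A}$ is needed): the Koszul dual construction gives a symmetric monoidal equivalence
\[
\Alg^\positive_n(\cat{A})\xlongleftrightarrow{\equiv}\Coalg^\positive_n(\cat{A})
\]
of symmetric monoidal $(n+1)$-categories, with inverse again the Koszul dual. Now a symmetric monoidal functor between symmetric monoidal $(n+1)$-categories carries $n$-dualizable objects to $n$-dualizable objects, the dual object, the evaluation and coevaluation morphisms, and all the higher coherence data exhibiting $n$-fold dualizability being preserved. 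Since the functor above is moreover an equivalence, hence essentially surjective, every object $C$ of $\Coalg^\positive_n(\cat{A})$ is equivalent to $A^!$ for some positive augmented $E_n$-algebra $A$ (one may take $A = C^!$, using $C^{!!}\equivwith C$ from Lemma \ref{lem:koszul-complete-coalgebra}), hence is the image of an $n$-dualizable object, hence itself $n$-dualizable.

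The only point requiring care is that Theorem \ref{thm:koszul-in-morita} is literally stated under the hypothesis that the monoidal multiplication of $\cat{A}$ preserves geometric realizations, whereas the present corollary omits it; this is exactly what Remark \ref{rem:dropping-preservation-of-realization} licenses, since neither the construction of $\Coalg^\positive_n(\cat{A})$ nor the argument that Koszul duality defines the asserted symmetric monoidal equivalence uses that hypothesis. Granting that remark, the argument above is purely formal, and no further calculation is involved.
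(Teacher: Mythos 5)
Your proposal is correct and is essentially the argument the paper intends: transport $n$-dualizability from $\Alg^\positive_n(\cat{A})$ (established in the paragraph preceding the corollary) across the symmetric monoidal equivalence of Theorem \ref{thm:koszul-in-morita}, using Remark \ref{rem:dropping-preservation-of-realization} to dispense with the geometric-realization hypothesis. You have also correctly flagged the only delicate point, namely the mismatch of hypotheses between the corollary and the theorem, and resolved it as the paper does.
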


There is a concrete description of the fully extended $n$-dimensional
framed topological field theory associated to an object
$A\in\Alg_n(\cat{A})$, using the topological chiral homology.
See Lurie \cite{tft}.
In \cite{poincare}, we shall give a concrete description of the
framed topological field theory associated to a copositive
$E_n$-coalgebra, using \emph{compactly supported} topological chiral
homology.
See also Francis \cite{francis} for an earlier, and closely related
result.
Specifically, we use the Poincar\'e type duality theorem on the
compactly supported topological chiral homology, analogous to Lurie's
``non-abelian'' Poincar\'e duality theorem \cite{higher-alg}.

\begin{remark}
The key for all the results of this section was good control of the
behaviour of the limits and colimits with respect to the monoidal
structure.
Another symmetric monoidal category in which both the limits and
colimits behave well is the Cartesian symmetric monoidal category of
spaces.
This is the context in which Lurie considers his generalization of the
Poincar\'e duality theorem.

The coalgebraic higher Morita category in a Cartesian symmetric
monoidal category (which is closed under the finite limits) was
identified by Ben-Zvi and Nadler with the ($n+1$)-category of iterated
correspondences \cite[Remark 1.17]{bzn}.
The Koszul duality in the category of spaces is given by the iterated
looping and delooping constructions.
Suitable analogues of our results hold in this context, and are
consequences of the iterated loop space theory.
\end{remark}


\begin{thebibliography}{99}
\setcounter{enumiv}{-1}
\bibitem{ayala-fran}Ayala, David; Francis, John,
  \emph{Poincar\'e\slash Koszul duality}, arXiv:1409.2478.

\bibitem{benabou}B\'enabou, Jean. \emph{Introduction to bicategories.}
  Reports of the Midwest Category Seminar, 1--77. Springer Berlin
  Heidelberg, 1967.

\bibitem{bzn}Ben-Zvi, David; Nadler, David. \emph{Nonlinear traces.}
  arXiv:1305.7175.

\bibitem{bv}Boardman, J.~M.; Vogt, R.~M.,
\emph{Homotopy invariant algebraic structures on topological spaces}.
Lecture Notes in Mathematics, Vol.~\textbf{347}. Springer-Verlag,
Berlin-New York, 1973. x$+$257 pp.

\bibitem{yangian}Costello, Kevin, \emph{Supersymmetric gauge theory
    and the Yangian.} arXiv:1303.2632.

\bibitem{cg}Costello, Kevin; Gwilliam, Owen,
  \emph{Factorization algebras in quantum field theory}, draft
  available at http:\slash\slash www.math.northwestern.edu\slash\~{}costello\slash.

\bibitem{tangent}Francis, John, \emph{The tangent complex and
    Hochschild cohomology of $E_n$-rings}.  Compos.~Math.~149 (2013),
  no.~3, 430--480.

\bibitem{francis}Francis, John, \emph{Factorization homology of
  topological manifolds}, arXiv:1206.5522.

\bibitem{goodwi}Goodwillie, Thomas G., \emph{Calculus. III. Taylor
    series.} Geom.\ Topol.~\textbf{7} (2003), 645--711 (electronic).

\bibitem{hirsch}Hirsch, Joseph, Ph.D.~thesis, The City University of
  New York, 2013.

\bibitem{topos}Lurie, Jacob, \emph{Higher Topos Theory.}
Annals of Mathematics Studies, \textbf{170}. Princeton University
Press, Princeton, NJ, 2009. xviii$+$925 pp. ISBN: 978-0-691-14049-0;
0-691-14049-9

\bibitem{tft}Lurie, Jacob, \emph{On the classification of
    topological field theories.} Current developments in mathematics,
  \textbf{2008}, 129--280, Int.~Press, Somerville, MA, 2009.

\bibitem{higher-alg}Lurie, Jacob, \emph{Higher Algebra}, 2012
  version, available
  at http:\slash\slash www.math.harvard.edu\slash\~{}lurie\slash
  outdated.html

\bibitem{formal}Lurie, Jacob, \emph{Formal moduli problems},
  http:\slash\slash www.math.harvard.edu\slash\~{}lurie\slash

\bibitem{poincare}Matsuoka, T., \emph{Koszul duality for locally
    constant factorization algebras.}  arXiv:1409.6945.

\bibitem{positsel}Positselski, Leonid, \emph{Two kinds of derived
    categories, Koszul duality, and comodule-contramodule
    correspondence.} Mem.~Amer.~Math.~Soc.~\textbf{212} (2011),
  no.~996, vi$+$133 pp. ISBN: 978-0-8218-5296-5
\end{thebibliography}
\end{document}